%%%%%%%%%%%%%%%%%%%%%%%%%%%%%%
%%
%% Title: Isothermic submanifolds of symmetric $R$-spaces
%% Authors: F.E. Burstall, N.M. Donaldson, F. Pedit, U. Pinkall
%% Version: 2
%% Last edit: 2010/03/17
%%
%%%%%%%%%%%%%%%%%%%%%%%%%%%%%%

\begin{filecontents*}{r-space-1.mps}
%!PS
%%BoundingBox: -9 -46 146 46 
%%HiResBoundingBox: -8.9499 -45.87135 145.01253 45.87135 
%%Creator: MetaPost 0.993
%%CreationDate: 2008.09.23:1011
%%Pages: 1
%*Font: cmmi10 9.96265 9.96265 3d:8000000000408
%*Font: cmr7 6.97385 6.97385 31:c
%*Font: cmr10 9.96265 9.96265 00:8000000000c0000000000002
%*Font: cmmi7 6.97385 6.97385 66:8
%*Font: cmr5 4.98132 4.98132 31:c
%%BeginProlog
%%EndProlog
%%Page: 1 1
 0 0 0 setrgbcolor 0 0.5 dtransform truncate idtransform setlinewidth pop
 [] 0 setdash 1 setlinecap 1 setlinejoin 10 setmiterlimit
newpath 0 0 moveto
68.03131 34.01566 lineto
136.06262 0 lineto
68.03131 -34.01566 lineto
0 0 lineto stroke
-8.9499 -2.49066 moveto
(f) cmmi10 9.96265 fshow
 0 3 dtransform truncate idtransform setlinewidth pop
newpath 0 0 moveto 0 0 rlineto stroke
63.35786 38.95285 moveto
(f) cmmi10 9.96265 fshow
68.23546 37.45845 moveto
(1) cmr7 6.97385 fshow
newpath 68.03131 34.01566 moveto 0 0 rlineto stroke
141.20743 -1.17616 moveto
(^) cmr10 9.96265 fshow
139.06262 -3.80516 moveto
(f) cmmi10 9.96265 fshow
newpath 136.06262 0 moveto 0 0 rlineto stroke
63.35786 -43.93416 moveto
(f) cmmi10 9.96265 fshow
68.23546 -45.42856 moveto
(2) cmr7 6.97385 fshow
newpath 68.03131 -34.01566 moveto 0 0 rlineto stroke
18.69896 20.60222 moveto
(m) cmmi10 9.96265 fshow
27.44626 19.10782 moveto
(1) cmr7 6.97385 fshow
104.14696 20.60222 moveto
(m) cmmi10 9.96265 fshow
112.89426 19.10782 moveto
(2) cmr7 6.97385 fshow
104.14696 -23.39732 moveto
(m) cmmi10 9.96265 fshow
112.89426 -24.89172 moveto
(1) cmr7 6.97385 fshow
18.69896 -23.39732 moveto
(m) cmmi10 9.96265 fshow
27.44626 -24.89172 moveto
(2) cmr7 6.97385 fshow
 0 0.5 dtransform truncate idtransform setlinewidth pop
 [3 3 ] 0 setdash
newpath 8.50392 -1.02043 moveto
27.95624 -5.98987 47.95427 -8.50392 68.03131 -8.50392 curveto
88.10837 -8.50392 108.1064 -5.98987 127.55872 -1.02043 curveto stroke
 [] 0 setdash
newpath 124.3376 -3.39133 moveto
125.41759 -2.61017 126.49133 -1.81984 127.55872 -1.02043 curveto
126.23871 -0.83092 124.92061 -0.6305 123.60457 -0.41916 curveto
 closepath
gsave fill grestore stroke
41.20065 -2.6346 moveto
(\000) cmr10 9.96265 fshow
47.42735 2.153 moveto
(f) cmmi7 6.97385 fshow
51.31215 1.15671 moveto
(2) cmr5 4.98132 fshow
47.42735 -5.6399 moveto
(f) cmmi7 6.97385 fshow
51.31215 -6.6362 moveto
(1) cmr5 4.98132 fshow
55.69846 -2.6346 moveto
(\() cmr10 9.96265 fshow
59.57285 -2.6346 moveto
(m) cmmi10 9.96265 fshow
68.32016 -4.129 moveto
(2) cmr7 6.97385 fshow
72.78955 -2.6346 moveto
(=m) cmmi10 9.96265 fshow
86.51825 -4.129 moveto
(1) cmr7 6.97385 fshow
90.98755 -2.6346 moveto
(\)) cmr10 9.96265 fshow
showpage
%%EOF

\end{filecontents*}
\begin{filecontents*}{r-space-2.mps}
%!PS
%%BoundingBox: -102 -97 102 97 
%%Creator: MetaPost
%%CreationDate: 2007.12.12:1722
%%Pages: 1
%*Font: cmmi10 9.96265 9.96265 66:81
%*Font: cmr7 6.97385 6.97385 31:e
%%EndProlog
%%Page: 1 1
 0 0.5 dtransform truncate idtransform setlinewidth pop [] 0 setdash
 1 setlinecap 1 setlinejoin 10 setmiterlimit
newpath 0 -42.5197 moveto
42.5197 0 lineto
0 42.5197 lineto
-42.5197 0 lineto
0 -42.5197 lineto stroke
newpath 0 -85.0394 moveto
85.0394 0 lineto
0 85.0394 lineto
-85.0394 0 lineto
0 -85.0394 lineto stroke
 0.5 0 dtransform exch truncate exch idtransform pop setlinewidth
newpath 0 -42.5197 moveto
0 -85.0394 lineto stroke
 0 0.5 dtransform truncate idtransform setlinewidth pop
newpath 42.5197 0 moveto
85.0394 0 lineto stroke
newpath -42.5197 0 moveto
-85.0394 0 lineto stroke
 0.5 0 dtransform exch truncate exch idtransform pop setlinewidth
newpath 0 42.5197 moveto
0 85.0394 lineto stroke
2.09999 -51.5382 moveto
(f) cmmi10 9.96265 fshow
 0 3 dtransform truncate idtransform setlinewidth pop
newpath 0 -42.5197 moveto 0 0 rlineto stroke
-4.67345 -94.9579 moveto
(f) cmmi10 9.96265 fshow
0.20415 -96.4523 moveto
(3) cmr7 6.97385 fshow
newpath 0 -85.0394 moveto 0 0 rlineto stroke
88.0394 -2.49066 moveto
(f) cmmi10 9.96265 fshow
92.91699 -3.98506 moveto
(13) cmr7 6.97385 fshow
newpath 85.0394 0 moveto 0 0 rlineto stroke
44.61969 4.03719 moveto
(f) cmmi10 9.96265 fshow
49.49728 2.54279 moveto
(1) cmr7 6.97385 fshow
newpath 42.5197 0 moveto 0 0 rlineto stroke
-53.96658 4.03719 moveto
(f) cmmi10 9.96265 fshow
-49.08899 2.54279 moveto
(2) cmr7 6.97385 fshow
newpath -42.5197 0 moveto 0 0 rlineto stroke
-101.3576 -2.49066 moveto
(f) cmmi10 9.96265 fshow
-96.48001 -3.98506 moveto
(23) cmr7 6.97385 fshow
newpath -85.0394 0 moveto 0 0 rlineto stroke
-8.6447 89.9766 moveto
(f) cmmi10 9.96265 fshow
-3.7671 88.4822 moveto
(123) cmr7 6.97385 fshow
newpath 0 85.0394 moveto 0 0 rlineto stroke
2.09999 46.55688 moveto
(f) cmmi10 9.96265 fshow
6.97758 45.06248 moveto
(12) cmr7 6.97385 fshow
newpath 0 42.5197 moveto 0 0 rlineto stroke
-16.2167 -65.17712 moveto
(m) cmmi10 9.96265 fshow
-7.4694 -66.67152 moveto
(3) cmr7 6.97385 fshow
-16.2167 62.38199 moveto
(m) cmmi10 9.96265 fshow
-7.4694 60.88759 moveto
(3) cmr7 6.97385 fshow
57.1712 -7.2895 moveto
(m) cmmi10 9.96265 fshow
65.9185 -8.7839 moveto
(3) cmr7 6.97385 fshow
-70.38791 -7.2895 moveto
(m) cmmi10 9.96265 fshow
-61.64061 -8.7839 moveto
(3) cmr7 6.97385 fshow
23.35985 -27.64934 moveto
(m) cmmi10 9.96265 fshow
32.10715 -29.14374 moveto
(1) cmr7 6.97385 fshow
44.61969 -48.9092 moveto
(m) cmmi10 9.96265 fshow
53.36699 -50.4036 moveto
(1) cmr7 6.97385 fshow
23.35983 24.85425 moveto
(m) cmmi10 9.96265 fshow
32.10713 23.35985 moveto
(2) cmr7 6.97385 fshow
44.61969 46.11409 moveto
(m) cmmi10 9.96265 fshow
53.36699 44.61969 moveto
(2) cmr7 6.97385 fshow
-36.57654 24.85425 moveto
(m) cmmi10 9.96265 fshow
-27.82924 23.35985 moveto
(1) cmr7 6.97385 fshow
-57.8364 46.11409 moveto
(m) cmmi10 9.96265 fshow
-49.0891 44.61969 moveto
(1) cmr7 6.97385 fshow
-36.57655 -27.64934 moveto
(m) cmmi10 9.96265 fshow
-27.82925 -29.14374 moveto
(2) cmr7 6.97385 fshow
-57.8364 -48.9092 moveto
(m) cmmi10 9.96265 fshow
-49.0891 -50.4036 moveto
(2) cmr7 6.97385 fshow
showpage
%%EOF

\end{filecontents*}
\begin{filecontents*}{r-space-3.mps}
%!PS
%%BoundingBox: -7 -5 52 46 
%%HiResBoundingBox: -6.4323 -4.2539 51.01988 45.97893 
%%Creator: MetaPost 0.993
%%CreationDate: 2008.09.23:1011
%%Pages: 1
%*Font: cmmi10 9.96265 9.96265 69:f
%%BeginProlog
%%EndProlog
%%Page: 1 1
 0 0 0 setrgbcolor 0 0.5 dtransform truncate idtransform setlinewidth pop
 [] 0 setdash 1 setlinejoin 10 setmiterlimit
newpath 0 0 moveto
42.51968 0 lineto
42.51968 42.51968 lineto
0 42.51968 lineto
 closepath stroke
-6.4323 -3.28531 moveto
(i) cmmi10 9.96265 fshow
 0 3 dtransform truncate idtransform setlinewidth pop 1 setlinecap
newpath 0 0 moveto 0 0 rlineto stroke
45.51968 -2.31671 moveto
(j) cmmi10 9.96265 fshow
newpath 42.51968 0 moveto 0 0 rlineto stroke
45.51968 39.06042 moveto
(k) cmmi10 9.96265 fshow
newpath 42.51968 42.51968 moveto 0 0 rlineto stroke
-6.1687 39.06042 moveto
(l) cmmi10 9.96265 fshow
newpath 0 42.51968 moveto 0 0 rlineto stroke
showpage
%%EOF

\end{filecontents*}
\documentclass[a4paper]{amsart}
%% Packages
\usepackage{amssymb,amsmath,amsthm}
\usepackage{euscript}
\usepackage[neverdecrease]{paralist}
\usepackage{ifpdf} 
\ifpdf
	\usepackage[pdftex]{graphicx}
	\DeclareGraphicsExtensions{.pdf}
\usepackage[pdftex,plainpages=false,pdfpagelabels,linktocpage,pdfstartview=FitH,colorlinks=true,linkcolor=blue,citecolor=magenta]{hyperref}
\else
	\usepackage[dvips]{graphicx}
	\usepackage[hypertex,linkcolor=cyan]{hyperref}
\fi
%% 
%\usepackage[initials]{amsrefs}[2007/10/22]
%%

%% Bigger pages
\addtolength{\textheight}{2.5cm}
\addtolength{\textwidth}{2.5cm}
\addtolength{\oddsidemargin}{-1cm}
\addtolength{\evensidemargin}{-2cm}
\addtolength{\topmargin}{-1cm}
\setlength{\parindent}{0cm}
\setlength{\parskip}{6pt}

% References to sections
\newcommand{\MySec}{\S}
\newcommand{\MySecs}{\S\S}

%Tweak paralist

%Theorems
\theoremstyle{plain}
\newtheorem{thm}{Theorem}[section]
\newtheorem{cor}[thm]{Corollary}
\newtheorem{lemm}[thm]{Lemma}
\newtheorem{prop}[thm]{Proposition}
\theoremstyle{definition}
\newtheorem{defn}[thm]{Definition}
\theoremstyle{remark}
\newtheorem{rem}[thm]{Remark}

\numberwithin{equation}{section}

% Macros

\newcommand{\D}{\mathrm{d}}

\newcommand{\diff}[2][]{\frac{\D #1}{\D #2}}
\newcommand{\diffat}[3][]{\left.\diff[#1]{#2}\right|_{#3}}

\DeclareMathOperator{\Stab}{Stab}
\DeclareMathOperator{\stab}{stab}
\DeclareMathOperator{\Ad}{Ad} \DeclareMathOperator{\ad}{ad}
 \DeclareMathOperator{\Aut}{Aut}
 \DeclareMathOperator{\rank}{rank}
\DeclareMathOperator{\trace}{trace}
\DeclareMathOperator{\Fix}{Fix}
\DeclareMathOperator{\Lag}{Lag}
%% font for \Wedge
\DeclareSymbolFont{script}{U}{eus}{m}{n}
\DeclareSymbolFontAlphabet{\mathscr}{script}
\DeclareMathSymbol{\EuWedge}{0}{script}{"5E}
\newcommand{\Wedge}{\EuWedge} \newcommand{\R}{\mathbb{R}}
\newcommand{\C}{\mathbb{C}} \newcommand{\Z}{\mathbb{Z}}
\newcommand{\K}{\mathbb{K}} \newcommand{\pr}{\mathbb{P}}
 \newcommand{\HH}{\mathbb{H}}
\newcommand{\cL}{\mathcal{L}} \newcommand{\cD}{\mathcal{D}}
\newcommand{\cN}{\mathcal{N}} \newcommand{\cG}{\mathcal{G}}
\newcommand{\fs}{\mathfrak{s}} \newcommand{\fso}{\mathfrak{so}}
\newcommand{\fsl}{\mathfrak{sl}} \newcommand{\fp}{\mathfrak{p}}
\newcommand{\fg}{\mathfrak{g}} \newcommand{\fh}{\mathfrak{h}}
\newcommand{\fq}{\mathfrak{q}} 
\newcommand{\fk}{\mathfrak{k}} \newcommand{\fm}{\mathfrak{m}}
\newcommand{\fa}{\mathfrak{a}} 
 \newcommand{\fr}{\mathfrak{r}}

\def\fc{\mathfrak{c}} 
 
\newcommand{\rO}{\mathrm{O}} \newcommand{\rSO}{\mathrm{SO}}
\newcommand{\rSL}{\mathrm{SL}} \newcommand{\rSp}{\mathrm{Sp}}
 \newcommand{\rSU}{\mathrm{SU}}
\newcommand{\rU}{\mathrm{U}} \newcommand{\rPSL}{\mathrm{PSL}}

\newcommand{\ttrans}{\EuScript{T}} \newcommand{\darb}{\EuScript{D}}

% Math Commands
\newcommand{\ul}[1]{\underline{#1}}
\newcommand{\cl}[1]{\overline{#1}}  
\DeclareMathOperator{\im}{Im} \DeclareMathOperator{\cross}{cr}
\DeclareMathOperator{\dom}{dom} 
\newcommand{\Cl}{{C\ell}} 
\newcommand{\half}{\tfrac{1}{2}}
\newcommand{\ci}{\theta} %Cartan involution
\newcommand{\Span}[1]{\langle#1\rangle} \newcommand{\Dt}{\nabla^t}
\newcommand{\set}[1]{\{#1\}}
\newcommand{\act}{\,} %adjoint action---may change later

\begin{document}
%% Top matter
\title{Isothermic submanifolds of symmetric $R$-spaces} 
\subjclass[2000]{53C42;37K35,53C30}
\author[F.E. Burstall]{Francis E. Burstall}
\address{Department of Mathematical Sciences\\ University of Bath\\
Bath BA2 7AY\\UK}
\email{feb@maths.bath.ac.uk}
\author[N.M. Donaldson]{Neil M. Donaldson}
\address{Department of Mathematics\\University of
California at Irvine\\Irvine, CA 92697-3875\\USA}
\email{ndonalds@math.uci.edu}
\author[F. Pedit]{Franz Pedit}
\address{Mathematisches Institut der  Universit{\"a}t T\"ubingen\\
  Auf der Morgenstelle 10\\
  72076 T\"ubingen\\GERMANY\\
  and
  Department of Mathematics \\
  University of Massachusetts\\
  Amherst, MA 01003, USA }
\email{pedit@mathematik.uni-tuebingen.de}
\author[U. Pinkall]{Ulrich Pinkall}
\address{Fachbereich Mathematik, MA 3-2\\
Technische Universit\"at Berlin\\
Strasse des 17. Juni 136\\
10623 Berlin\\
GERMANY}
\email{pinkall@math.tu-berlin.de}

\begin{abstract}
We extend the classical theory of isothermic surfaces in conformal
$3$-space, due to Bour, Christoffel, Darboux, Bianchi and others, to
the more general context of submanifolds of symmetric $R$-spaces with
essentially no loss of integrable structure.
\end{abstract}
\maketitle

\section*{Introduction}

\subsection*{Background}
\label{sec:background}

A surface in $\R^3$ is \emph{isothermic} if, away from umbilics, it
admits coordinates which are simultaneously conformal and curvature
line or, more invariantly, if it admits a holomorphic quadratic
differential $q$ which commutes with the (trace-free) second
fundamental form (then the coordinates are $z=x+iy$ for which $q=\D
z^{2}$).  Examples include surfaces of revolution, cones and
cylinders; quadrics and constant mean curvature surfaces (where
$q$ is the Hopf differential).

Starting with the work of Bour \cite[\S54]{Bour1862}, isothermic
surfaces were the focus of intensive study by the geometers of the
late 19th and early 20th centuries with contributions from
Christoffel, Cayley, Darboux, Demoulin, Bianchi, Calapso, Tzitz\'eica
and many others.  There has been a recent revival of interest in the
topic thanks to Cie{\'s}li{\'n}ski--Goldstein--Sym
\cite{Cieslinski1995} who pointed out the links with soliton theory:
indeed, isothermic surfaces constitute an integrable system with a
particularly beautiful and intricate transformation theory, some
aspects of which we list below:
\begin{asparadesc}
\item[Conformal invariance] Since the trace-free second fundamental
form is invariant (up to scale) under conformal diffeomorphisms of
$\R^{3}$, such diffeomorphisms preserve the class of isothermic
surfaces.  Thus isothermic surfaces are more properly to be viewed as
surfaces in the conformal $3$-sphere.
\item[Deformations]At least locally, isothermic surfaces admit a
$1$-parameter deformation preserving the conformal structure and
trace-free second fundamental form: this is the
\emph{$T$-transformation} of Bianchi \cite{Bianchi1905} and Calapso
\cite{Calapso1903}.  In fact, isothermic surfaces are characterised
by the existence of such a deformation \cite{Cartan1923} (see
\cite{Musso1995,Burstall2002b,Burstall2004c} for modern treatments).
\item[Darboux transformations] According to Darboux
\cite{Darboux1899}, given an isothermic surface, one may locally
construct a $4$-parameter family of new isothermic surfaces.
Analytically, this is accomplished by solving an integrable system of
linear differential equations; geometrically, the two surfaces
envelop a conformal Ribaucour sphere congruence\footnote{That is, the
two surfaces admit a common parametrisation under which both
conformal structures and curvature lines coincide and, through each
pair of corresponding points, there is a $2$-sphere tangent to both
surfaces.}.  These transformations, the \emph{Darboux
transformations}, are analogous to the B\"acklund transformations of
constant curvature surfaces (indeed, specialise to the latter in
certain circumstances \cite{Hertrich-Jeromin1997,Inoguchi2005}) and there is a
Bianchi permutability theorem \cite{Bianchi1905a} relating iterated
Darboux transforms.  There are additional permutability theorems relating
$T$-transforms and Darboux transforms \cite{Hertrich-Jeromin2003} and
also relating Darboux transforms with the Euclidean Christoffel
transform \cite{Christoffel1867} (see also \cite{Bour1862}) via
$T$-transforms \cite{Bianchi1905}.
\item[Curved flats] Curved flats were introduced by Ferus--Pedit
\cite{Ferus1996} and are an integrable system defined on
submanifolds of a symmetric space $G/H$ which, in non-degenerate
cases, coincides with the $G/H$-system of Terng \cite{Terng1997}.  In
particular, the space $S^3\times S^3\setminus\Delta$ of pairs of
distinct points in $S^3$ is a (pseudo-Riemannian) symmetric space for
the diagonal action of the conformal diffeomorphism group and curved
flats in this space are the same as pairs of isothermic surfaces
related by a Darboux transformation \cite{Burstall1997a}.
\item[Discrete theory] Bobenko--Pinkall \cite{BobenkoPinkall96a} show
that the combinatorics of iterated Darboux transforms give rise to an
integrable system on quad-graphs that is a convincing discretisation
of isothermic surfaces and shares with them all the classical
transformation properties listed above
\cite{JerominHoffmannPinkall1999,Jeromin2000,BobenkoSuris2007}.
\end{asparadesc}
For more information on isothermic surfaces, we refer the Reader to
Hertrich-Jeromin's encyclopedic monograph \cite{Hertrich-Jeromin2003}.

\subsection*{Manifesto}
\label{sec:manifesto}

In this paper, we will show that the preceding theory continues to
hold, in almost every detail, when we replace the conformal
$3$-sphere by an arbitrary \emph{symmetric $R$-space}.  There are a number
of approaches to symmetric $R$-spaces \cite{Takeuchi1965}: they
comprise Hermitian symmetric spaces and their real forms; they are
the compact Riemannian symmetric spaces which admit a Lie group of
diffeomorphisms strictly larger than their isometry group
\cite{Nagano1965} and, basic for us, they are the conjugacy classes
of parabolic subalgebras of a real semisimple Lie algebra with
abelian nilradicals.  Examples include projective spaces and
Grassmannians (real, complex and quaternionic) and quadrics (thus
conformal spheres of arbitrary signature).  The symmetric
$R$-spaces $G/P$, for $G$ simple, are enumerated in
Table~\ref{tab:r-space} on page~\pageref{tab:r-space}.

To see how an isothermic submanifold could be defined in this
setting, let us begin with the following manifestly conformally
invariant reformulation of the isothermic condition.  View the
holomorphic quadratic differential $q$ of an isothermic surface
$f:\Sigma\to S^{3}$ as a $T^{*}\Sigma$-valued $1$-form and thus, via
$\D f$, as a $f^{-1}T^{*}S^3$-valued $1$-form.  Now contemplate the
group $G$ of conformal diffeomorphisms of $S^{3}$, an open subgroup
of $\rO(4,1)$ and so semisimple, and its Lie algebra $\fg$.  Since
$G$ acts transitively on $S^3$, we have, for each $x\in S^{3}$, an
isomorphism $T_xS^3\cong\fg/\fp_{x}$, where $\fp_{x}$ is the
infinitesimal stabiliser of $x$, and so, via the Killing form, an
isomorphism $T_x^{*}S^3\cong\fp_x^{\perp}\subset\fg$.  Thus $q$ may be
identified with a certain $\fg$-valued $1$-form $\eta$.  The key
observation now is that $q$ is a holomorphic quadratic differential
commuting with the second fundamental form of $f$ if and only if
$\eta$ is a \emph{closed} $1$-form \cite{Burstall2004c}.

The decisive property enjoyed by the conformal sphere is that each
$\fp_{x}^{\perp}$ is an abelian subalgebra of $\fg$ from which it
quickly follows that each of the connections $\D+t\eta$, $t\in\R$, is
flat.  This provides a zero-curvature formulation of isothermic
surfaces from which the whole theory may ultimately be deduced.

However, according to Grothendieck \cite{Grothendieck1957},the
condition that an infinitesimal stabiliser $\fp_{x}$ has abelian
Killing polar is precisely that $\fp_{x}$ is a parabolic subalgebra
with abelian nilradical $\fp_{x}^{\perp}$.  The map $x\mapsto\fp_{x}$
then identifies $S^3$ with a conjugacy class of such parabolic
subalgebras. 

It is now clear that this formulation of the isothermic condition
makes sense for maps into any symmetric $R$-space.  For a semisimple Lie
algebra $\fg$ and conjugacy class $N$ of parabolic subalgebras
$\fp<\fg$ with abelian nilradicals, we may view $T_{\fp}^{*}N$ as an
abelian subalgebra $\fp^{\perp}<\fg$ and say that a map $f:\Sigma\to N$ is
isothermic if there is an $f^{-1}T^{*}N$-valued $1$-form $\eta$ which
is closed as an element of $\Omega_{\Sigma}^1\otimes\fg$.  We
immediately get a pencil of flat connections $\D+t\eta$ and, as we
will show, the theory of isothermic maps can be developed in complete
analogy with the conformal theory sketched above.

\subsection*{Road-map}
\label{sec:road-map}

To orient the Reader, we briefly sketch the contents of the paper.

The first two sections are preparatory in nature.  We recall in
\S\ref{sec:parab} the basic definitions and facts about parabolic
subalgebras $\fp$ of a real semisimple Lie algebra $\fg$.  Particular
emphasis is given to the notion of parabolic subalgebras
\emph{complementary} to a given parabolic $\fp$.  Then, in
\S\ref{sec:r-spaces-symmetric}, we rehearse what we need of the
theory of $R$-spaces and symmetric $R$-spaces viewed as conjugacy
classes of parabolics.  In particular, we note that such spaces have
convenient charts generalising those given by stereoprojection on
spheres.  Complementary parabolic subalgebras arise in two different
ways.  Firstly, the set of parabolic subalgebras complementary to
some element of a conjugacy class $M$ is itself a conjugacy class
$M^{*}$ which we call the \emph{dual} $R$-space.  It may happen that
$M=M^{*}$ (conformal spheres are an example) and then we say that $M$
is \emph{self-dual}.  Another, non-self-dual, example of this duality
is given by the dual projective spaces of lines and hyperplanes in a
vector space.  Secondly, the set $Z\subset M\times M^{*}$ of
pair-wise complementary parabolic subalgebras is shown to be a
pseudo-Riemannian symmetric space (in fact, a \emph{para-Hermitian}
symmetric space as studied by Kaneyuki
\cite{Kaneyuki1985,Kaneyuki1987}).  This symmetric space plays the
role of $S^3\times S^{3}\setminus\Delta$ in the conformal theory.

With these preliminaries out of the way, in \S\ref{sec:isoth-subm} we
define isothermic maps into symmetric $R$-spaces and explore their
transformation properties.  Just as in the conformal case, we can
define $T$-transforms, Darboux transforms and Christoffel transforms
and prove some permutability theorems.  We also show how Christoffel
transforms are limits of Darboux transforms: a result which may have
some novelty even in the classical context.

Analogues of the classical Bianchi Permutability Theorem for Darboux
transforms are explored in \S\ref{sec:bianchi-perm-self}.  Here we
must restrict attention to self-dual targets but, in that setting,
find that the classical results carry through in every detail.  An
important role is played in this analysis by a $G$-invariant family
of circles in a self-dual symmetric $R$-space which share many
properties of circles in a conformal sphere: they are determined by three
generic points and carry an invariant projective structure (and
thus a cross-ratio).  We then find that four isothermic maps in the
configuration of the permutability theorem have corresponding points
concircular with prescribed constant cross-ratio (a theorem of
Demoulin \cite{Demoulin1910} in the classical case).  Our methods
here seem to be more efficient than those known in the conformal case
and yield a version of Bianchi's cube theorem with almost no extra
work.  A side-benefit of our approach is an integrable theory of
\emph{discrete} isothermic nets in self-dual symmetric $R$-spaces
which we sketch in \S\ref{sec:discr-isoth-surf}.  The cube theorem in
this context amounts to 3D consistency of our discrete integrable
system in the sense of Bobenko--Suris \cite{Bobenko2002}.

Curved flats are considered in \S\ref{sec:curved}: a point-wise
complementary pair of isothermic maps define a map
$\phi=(f,\hat{f}):\Sigma\to Z\subset M\times M^{*}$ and, in strict
analogy with the conformal case \cite{Burstall1997a}, $\phi$ is a
curved flat if and only if $f$ and $\hat f$ are Darboux transforms of
each other.  In addition, we consider the dressing transforms
\cite{Bruck2002,Burstall2004} of curved flats and prove that, in the
self-dual case, the four isothermic maps comprising a curved flat and
its dressing transform are a Bianchi quadrilateral of isothermic maps
related by Darboux transforms.  This last appears to be a new result
even in the conformal case.

Finally, in \S\ref{sec:examples}, we show that the $1$-form $\eta$
defines a quadratic differential on $\Sigma$ and consider the case
where this is non-degenerate (in particular, such isothermic maps
immerse).  We show that there is a sharp Lie-theoretic upper bound on
the dimension of these non-degenerate isothermic submanifolds and so,
in particular, settle the existence question for isothermic maps.

Isothermic submanifolds of maximal dimension in $\R P^n$ turn out to
be curves and we contemplate the simplest case of isothermic curves
in $\R P^1$.  These are the same as parametrised curves but, perhaps
surprisingly, our theory still has something to offer: at the level
of curvatures, the relation between a curved flat and its constituent
isothermic maps amounts to the Miura transform and we find an
integrable dynamics of isothermic curves, commuting with $T$ and
Darboux transforms.  Passing to curvatures, the dynamics of
isothermic curves gives the KdV equation, that of curved flats the
mKdV equation while the Darboux transform extends to yield the
Wahlquist--Estabrook B\"acklund transform
\cite{WahlquistEstabrook1971} of the KdV equation.

\subsection*{Acknowledgements}

During the preparation of this work, we have derived great benefit
from conversations with David Calderbank; Udo Hertrich-Jeromin; Nigel
Hitchin; Soji Kaneyuki; Katrin Leschke and Chuu-Lian Terng.  Special
thanks are due from the first author to Yoshihiro Ohnita who first
taught him about symmetric $R$-spaces over green tea ice cream in 1998.

\section{Algebraic preliminaries}\label{sec:parab}

An $R$-space is a conjugacy class of parabolic subalgebras in a
non-compact semisimple Lie algebra.  We begin by collecting some
elements of the underlying algebra.

Here, and throughout this paper, $\fg$ will be a real non-compact
semisimple Lie algebra with adjoint group $G$. We denote the adjoint
action of $G$ on $\fg$ as well as the induced action on subalgebras
of $\fg$ by juxtaposition: thus, for $g\in G$, $\zeta\in\fg$, we
write $g\act\zeta$ for $\Ad(g)\zeta$.

\subsection{Parabolic subalgebras}
\label{sec:parab-subalg}

A subalgebra $\fp\leq\fg$ is parabolic if its complexification
$\fp^\C\leq\fg^\C$ is parabolic in the sense that it contains a Borel
(thus maximal solvable) subalgebra of $\fg^\C$.  However, it will be
useful for us to adopt the following equivalent definition
\cite[Proposition~4.2]{Burstall1990} (see also \cite[Lemma
4.2]{Grothendieck1957} and \cite[Definition~2.1]{Calderbank2005}):
\begin{defn}\label{defn:para} A subalgebra $\fp$ of a non-compact
semisimple Lie algebra $\fg$ is \emph{parabolic} if and only if the
polar $\fp^\perp$ with respect to the Killing form is a nilpotent
subalgebra. 

In this case, $\fp^\perp$ is the nilradical of $\fp$ (in particular,
$\fp^\perp<\fp$) and we say that $\fp$ has \emph{height} $n$ if
$\fp^\perp$ is $n$-step nilpotent.
\end{defn}
In particular, $\fp$ has height one if and only if $\fp^\perp$ is
abelian. 

There are a finite number of conjugacy classes of parabolic
subalgebras of $\fg$.  If $\fg$ is complex\footnote{Real parabolic
subalgebras of a complex $\fg$ are in fact complex: indeed,
\eqref{eq:graded} gives a decomposition into complex subspaces.},
they are in bijective correspondence with subsets of nodes (the
\emph{crossed nodes}) of the Dynkin diagram of $\fg$.  If, in
addition, $\fg$ is simple, one computes the height of a parabolic
subalgebra by summing the coefficients in the highest root of the
simple roots corresponding to crossed nodes (see, for example,
\cite{Baston1989}).  If $\fg$ is semisimple, any parabolic subalgebra
is a direct sum of parabolic subalgebras (possibly not
proper\footnote{Thus height zero.}!) of simple ideals and the height
is the maximum of the heights of these.  If $\fg$ is not complex, the
same analysis is available if Dynkin diagrams are replaced by Satake
diagrams with the caveat that the crossed nodes must all be white and
any node joined by an arrow to a crossed node must also be crossed
\cite[Th\'eor\`eme~3.1]{Matsumoto1964}.

In particular, height one parabolic subalgebras correspond to simple
roots with coefficient one in the highest root and if $\fg$ is not
complex, the corresponding node on the Satake diagram must be white
with no arrows.  It is now an easy matter to arrive at the
classification of conjugacy classes given in Table~\ref{tab:r-space}
on page~\pageref{tab:r-space}.   For example, there are no height one
parabolic subalgebras in any $\fg_2$, $\mathfrak{f}_4$ or
$\mathfrak{e}_8$ (no coefficient one simple roots) and none in
$\mathfrak{su}(p,q)$ for $p\neq q$ (no white nodes without arrows).

\subsection{Complementary subalgebras}
\label{sec:compl-subalg}

A parabolic subalgebra $\fp$ of height $n$ makes $\fg$ into a
filtered algebra: define inductively
\begin{equation*}
\fp^{(0)}=\fp,\ \fp^{(-1)}=\fp^\perp,\ \fp^{(j)}=
\begin{cases}
[\fp^\perp,\fp^{(j+1)}]&j\le -2,\\ 
{\fp^{(-1-j)}}^\perp&j\ge 1. 
\end{cases}
\end{equation*}
and observe that 
\begin{equation*}
\fg=\fp^{(n)}\supsetneq\cdots\supsetneq\fp^{(1)}\supsetneq\fp\supsetneq 
\fp^{(-1)}\supsetneq\cdots\supsetneq\fp^{(-n)}\supsetneq\fp^{(-n-1)}=\{0\}
\end{equation*}
while $[\fp^{(j)},\fp^{(k)}]\subset\fp^{(j+k)}$. 

\begin{defn}
Two parabolic subalgebras $\fp,\fq$ of height $n$ are
\emph{complementary} if
\begin{equation*}
  \fg=\fp^{(j)}\oplus\fq^{(-1-j)},
\end{equation*}
for all $j=-n,\ldots,n$. 
\end{defn}
Any non-trivial parabolic subalgebra admits complementary subalgebras:
\begin{lemm}[{\cite[Lemma~2.2]{Calderbank2005}}]
For any Cartan involution $\ci$ of $\fg$ and any parabolic $\fp<\fg$,
$\fp$ and $\ci\fp$ are complementary. 
\end{lemm}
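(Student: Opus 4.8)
The plan is to verify the defining condition $\fg=\fp^{(j)}\oplus(\ci\fp)^{(-1-j)}$ for each $j=-n,\dots,n$ by exploiting the interplay between the Cartan involution $\ci$ and the Killing form $B$. Recall that $B_\ci(\zeta,\xi):=-B(\zeta,\ci\xi)$ is a positive-definite inner product on $\fg$, so in particular $B$ restricted to any $\ci$-invariant subspace is non-degenerate, and for any subspace $V$ one has $\dim V+\dim V^\perp=\dim\fg$ together with $\fg=V\oplus(\ci V)^\perp$ — since $(\ci V)^\perp$ is precisely the $B_\ci$-orthogonal complement of $V$. First I would record the key compatibility: the filtration of $\ci\fp$ is the $\ci$-image of the filtration of $\fp$, i.e. $(\ci\fp)^{(k)}=\ci(\fp^{(k)})$ for all $k$. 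This follows by induction from the inductive definition, using that $\ci$ is a Lie algebra automorphism (so it commutes with brackets) and that $\ci$ is a $B$-isometry (so it commutes with taking polars: $\ci(W^\perp)=(\ci W)^\perp$). Hence the claim to be proved becomes
\begin{equation*}
\fg=\fp^{(j)}\oplus\ci\bigl(\fp^{(-1-j)}\bigr).
\end{equation*}

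Next I would identify $\fp^{(-1-j)}$ with a polar of $\fp^{(j)}$. By the very definition of the filtration, for $j\ge 0$ we have $\fp^{(-1-j)}=(\fp^{(j)})^\perp$, and dualizing the bracket-compatibility $[\fp^{(a)},\fp^{(b)}]\subset\fp^{(a+b)}$ shows the two halves of the filtration are mutually polar throughout, so in fact $\fp^{(-1-j)}=(\fp^{(j)})^\perp$ for every $j=-n,\dots,n$. (Concretely: $B(\fp^{(j)},\fp^{(k)})=0$ whenever $j+k\le -1$, and a dimension count — the dimensions of consecutive filtration quotients on the two sides match because $B$ pairs $\fp^{(j)}/\fp^{(j-1)}$ non-degenerately with $\fp^{(-j)}/\fp^{(-j+1)}$ — forces equality rather than strict inclusion.) Therefore $\ci(\fp^{(-1-j)})=\ci((\fp^{(j)})^\perp)=(\ci\fp^{(j)})^\perp$, and the desired decomposition reads $\fg=\fp^{(j)}\oplus(\ci\fp^{(j)})^\perp$. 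Writing $V=\fp^{(j)}$, this is exactly the statement that $V$ is a complement to the $B_\ci$-orthogonal complement of $V$, which holds because $B_\ci$ is positive-definite: $V\cap(\ci V)^\perp=\{\zeta\in V: B_\ci(\zeta,V)=0\}=\{0\}$, and then dimensions add up.

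The main obstacle is the dimension-count bookkeeping needed to promote the inclusions $\fp^{(-1-j)}\subseteq(\fp^{(j)})^\perp$ (which are immediate from $[\fp^{(a)},\fp^{(b)}]\subset\fp^{(a+b)}$ and the fact that $B$ is ad-invariant) to equalities; one must check that the filtration of $\fp$ is "self-polar" in the precise graded sense, i.e. that $B$ induces a perfect pairing between $\fp^{(j)}/\fp^{(j-1)}$ and $\fp^{(-j)}/\fp^{(-j+1)}$. This in turn follows from non-degeneracy of $B$ on all of $\fg$ once one knows $\fg=\fp^{(j)}\oplus(\fp^{(j)})^\perp$ as vector spaces — but that last fact is not automatic for an indefinite form and is in general false; it holds here because $\fp^{(j)}$, being defined via iterated brackets and polars from the $\ci$-invariant-adjacent data, turns out to be exactly $\ci$-orthogonally split. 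A clean way to sidestep the circularity is to prove everything simultaneously by downward induction on $|j|$, starting from $j=n$ (where $\fp^{(n)}=\fg$, $\fp^{(-n-1)}=\{0\}$, trivially complementary) and $j=n-1$, using at each step the positive-definiteness of $B_\ci$ to split off the next graded piece. Once the graded pieces are controlled, assembling them gives $\fg=\fp^{(j)}\oplus\ci(\fp^{(-1-j)})$ for all $j$, which is the definition of $\fp$ and $\ci\fp$ being complementary.
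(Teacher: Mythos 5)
Your argument is correct, and it is worth noting that the paper itself offers no proof of this lemma at all: it is quoted directly from Calderbank--Diemer--Sou\v{c}ek \cite[Lemma~2.2]{Calderbank2005}. So any proof you give is ``a different route'' by default, and yours is the natural one: (i) $\ci$ is an automorphism preserving the Killing form, so it commutes with brackets and polars and hence $(\ci\fp)^{(k)}=\ci\bigl(\fp^{(k)}\bigr)$; (ii) $\fp^{(-1-j)}=\bigl(\fp^{(j)}\bigr)^\perp$ for all $j$; (iii) since $B_\ci(\zeta,\xi)=-(\zeta,\ci\xi)$ is positive definite and $(\ci V)^\perp$ is exactly the $B_\ci$-orthocomplement of $V$, one has $\fg=V\oplus(\ci V)^\perp$ for \emph{every} subspace $V$, which with $V=\fp^{(j)}$ gives the defining condition of complementarity. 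The one criticism is that your third paragraph manufactures an obstacle that is not there: the identity (ii) needs no bracket-dualising, no perfect pairing of graded quotients, no dimension bookkeeping and no downward induction. For $j\le -2$ it is literally the definition of $\fp^{(-1-j)}$ (the positive-index terms of the filtration are \emph{defined} as the polars of the negative-index ones, applied at index $-1-j\ge 1$); for $j=0$ it is the definition of $\fp^{(-1)}$; and for $j=-1$ and $j\ge 1$ it follows by taking polars once more and using $V^{\perp\perp}=V$, which requires only nondegeneracy of the Killing form on $\fg$ --- not any splitting $\fg=\fp^{(j)}\oplus(\fp^{(j)})^\perp$, which, as you rightly observe, is false in general. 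With that detour deleted, your first two paragraphs already constitute a complete and clean proof of the cited lemma.
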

Moreover, the complementary subalgebras to a fixed $\fp$ are all
conjugate:
\begin{lemm}[{\cite[Lemma~2.5]{Calderbank2005}}]\label{lemm:simptrans}
$\exp\fp^\perp<G$ acts simply transitively on the set of parabolic
subalgebras complementary to $\fp$. 
\end{lemm}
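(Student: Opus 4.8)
The plan is to identify the parabolic subalgebras complementary to $\fp$ with the gradings of $\fg$ that refine the canonical filtration of $\fp$, to rephrase the $\exp\fp^\perp$-action as an action on the associated \emph{grading elements}, and then to solve the resulting equation by a recursion that is triangular with respect to the grading.

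First, using the preceding lemma, fix one parabolic $\fq_0$ complementary to $\fp$, say $\fq_0=\ci\fp$ for a Cartan involution $\ci$. The complementarity identities $\fg=\fp^{(k)}\oplus\fq_0^{(-1-k)}$ split the filtration of $\fp$: setting $\fg_j:=\fp^{(-j)}\cap\fq_0^{(j)}$ one gets, using $[\fp^{(a)},\fp^{(b)}]\subseteq\fp^{(a+b)}$, a grading $\fg=\bigoplus_{j=-n}^{n}\fg_j$ with $[\fg_i,\fg_j]\subseteq\fg_{i+j}$, $\fp=\bigoplus_{j\geq 0}\fg_j$, $\fp^\perp=\bigoplus_{j\geq 1}\fg_j$, $\fq_0=\bigoplus_{j\leq 0}\fg_j$, while $\fg_{\geq k}:=\bigoplus_{j\geq k}\fg_j=\fp^{(-k)}$ depends on $\fp$ alone. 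Since $\fg$ is semisimple its grading derivation (acting as $j$ on $\fg_j$) is inner, equal to $\ad\xi$ for a unique $\xi\in\fg_0$. The same construction applied to any complement $\fq$ of $\fp$ produces a grading $\tilde\fg_j:=\fp^{(-j)}\cap\fq^{(j)}$ with grading element $\tilde\xi$ and $\fq=\bigoplus_{j\leq 0}\tilde\fg_j=\bigoplus_{j\leq 0}\ker(\ad\tilde\xi-j)$, so $\fq$ is determined by $\tilde\xi$.

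The next step records two facts. (i) $\tilde\xi-\xi\in\fp^\perp$ for every complement $\fq$: both $\ad\xi$ and $\ad\tilde\xi$ preserve each $\fg_{\geq k}=\fp^{(-k)}$ and act on $\fg_{\geq k}/\fg_{\geq k+1}$ as multiplication by $k$, so $\ad(\tilde\xi-\xi)$ maps $\fg_{\geq k}$ into $\fg_{\geq k+1}$ for all $k$; running this over all $k$ and using that $\fg$ has no centre forces the degree $\leq 0$ components of $\tilde\xi-\xi$ to vanish. (ii) For $X\in\fp^\perp$, $\exp(X)\act\fq_0=\fq$ if and only if $e^{\ad X}\xi=\tilde\xi$: as $X\in\fg_{\geq 1}$, the automorphism $e^{\ad X}=\Ad(\exp X)$ preserves each $\fg_{\geq j}$ and sends $\fg_j$ into $\fg_{\geq j}$, so a dimension count gives $e^{\ad X}\fg_j=\fp^{(-j)}\cap e^{\ad X}\fg_{\leq j}$; thus $(e^{\ad X}\fg_j)_j$ is exactly the grading attached to the complement $e^{\ad X}\fq_0=\exp(X)\act\fq_0$, and since a grading is determined by its (unique) grading element $e^{\ad X}\xi$, we get the stated equivalence. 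By (i) and (ii) it now suffices to prove: for every $Y\in\fp^\perp$ there is a unique $X\in\fp^\perp$ with $e^{\ad X}\xi=\xi+Y$.

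For this I would write $X=\sum_{k\geq 1}X_k$, $Y=\sum_{k\geq 1}Y_k$ with $X_k,Y_k\in\fg_k$, and note that the $\fg_k$-component of $e^{\ad X}\xi-\xi$ equals $-kX_k$ plus a universal Lie polynomial in $X_1,\dots,X_{k-1}$ only, since each degree-$k$ summand of $\tfrac{1}{m!}(\ad X)^m\xi$ with $m\geq 2$ involves only factors $X_i$ with $i\leq k-1$ (the $X_i$ have positive degrees summing to $k$, so at least two of them drop the degree below $k$). Hence $e^{\ad X}\xi=\xi+Y$ is the triangular system $-kX_k+P_k(X_1,\dots,X_{k-1})=Y_k$, $k=1,\dots,n$, which solves uniquely for $X_1$, then $X_2$, and so on, giving a unique $X\in\fg_{\geq 1}=\fp^\perp$. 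Assembling everything, $X\mapsto\exp(X)\act\fq_0$ is a bijection from $\fp^\perp$ onto the set of parabolics complementary to $\fp$ (surjective by the solvability just proved, injective by uniqueness); since $\fp^\perp$ is nilpotent, $\exp$ maps it bijectively onto the subgroup $\exp\fp^\perp$, which fixes $\fp$ and so permutes its complements, and the two statements together are precisely simple transitivity. I expect the main obstacle to be step (ii): correctly matching the geometric condition $\exp(X)\act\fq_0=\fq$ with the linear-algebraic one $e^{\ad X}\xi=\tilde\xi$, and checking that the recursion in the last paragraph really is triangular; once the graded picture is set up the rest is bookkeeping.
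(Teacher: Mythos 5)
Your argument is correct, and in fact the paper offers no proof of this lemma at all: it is quoted verbatim from Calderbank--Diemer--Sou\v{c}ek, and your route -- identify a complement $\fq$ of $\fp$ with its grading element, observe that all such grading elements differ from a fixed one by elements of $\fp^\perp$, and solve $e^{\ad X}\xi=\xi+Y$ by the degree-by-degree triangular recursion -- is essentially the standard proof of the cited result (the affine-space structure on Weyl structures in parabolic geometry). The only ingredient you use beyond the paper's stated facts is that the grading of a complementary pair splits the whole filtration, $\fp^{(-k)}=\bigoplus_{j\ge k}\fg_j$ (and likewise for $\fq$), but this follows in one line from $\fg=\bigoplus_j\fg_j$ together with the complementarity identities by the dimension count you gesture at, so there is no gap; your steps (i), (ii) and the triangularity of the recursion all check out, for parabolics of arbitrary height as required.
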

Since $\fp^\perp$ is nilpotent,
$\exp|_{\fp^\perp}:\fp^\perp\to\exp\fp^\perp$ is a diffeomorphism so
that Lemma~\ref{lemm:simptrans} tells us that this set of
complementary parabolics is an affine space modelled on $\fp^\perp$. 

A complementary pair $(\fp,\fq)$ of height $n$ parabolic subalgebras
splits the filtered structure of $\fg$ into a graded one: set
$\fg_j:=\fp^{(j)}\cap\fq^{(-j)}$
and then $\fg=\bigoplus_{j=-n}^n\fg_j$ while
$[\fg_j,\fg_k]\subset\fg_{j+k}$. 
Thus the map given by $\zeta\mapsto j \zeta$, for $\zeta\in\fg_j$ is
a derivation and so, since $\fg$ is semisimple, given by the adjoint
action of a unique element $\xi$ in the centre of $\fg_0$.  We call
$\xi=\xi_\fp^\fq$ the \emph{grading} or \emph{canonical element} of the pair
$(\fp,\fq)$. 

Of course, we can recover the complementary pair from the grading
element: 
\begin{align}\label{eq:graded}
\fp&=\bigoplus_{j=-n}^0\fg_j &
\fq&=\bigoplus_{j=0}^n\fg_j. 
\end{align}

\begin{rem}\label{h:comp-ht-1}
When $\fp$ has height $1$, the theory simplifies considerably.  In
particular, $\fq$ is complementary to $\fp$ if and only if
$\fp^\perp\cap\fq^\perp=\{0\}$ if and only if $\fg=\fp\oplus\fq^\perp$.
\end{rem}

As a simple application of the existence of grading elements, we see
that parabolic subalgebras are self-normalising:
\begin{lemm}
\label{lem:normal}
For $\zeta\in\fg$, $[\zeta,\fp]\subset\fp$ if and only if $\zeta\in\fp$. 
\end{lemm}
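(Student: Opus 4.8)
The plan is to prove the two implications separately. The forward implication, $\zeta\in\fp\Rightarrow[\zeta,\fp]\subset\fp$, is immediate since $\fp$ is a subalgebra, so all the content is in the converse. For that, the idea is to bring in the grading element of a complementary pair, which turns the self-normalising statement into an elementary assertion about eigenspaces of a semisimple operator.

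Concretely, I would first fix a Cartan involution $\ci$; then $\fq:=\ci\fp$ is complementary to $\fp$, so the pair $(\fp,\fq)$ has a grading element $\xi=\xi_\fp^\fq$ whose adjoint action is diagonalisable with $j$-eigenspace $\fg_j$ for $j=-n,\dots,n$. By \eqref{eq:graded} we have $\fp=\bigoplus_{j\le 0}\fg_j$ and $\fq^\perp=\bigoplus_{j\ge 1}\fg_j$, so $\fg=\fp\oplus\fq^\perp$ and, crucially, $\fp$ is exactly the sum of the non-positive eigenspaces of $\ad\xi$, with $\xi\in\fg_0\subset\fp$. Now assume $[\zeta,\fp]\subset\fp$ and decompose $\zeta=\sum_{j=-n}^{n}\zeta_j$ with $\zeta_j\in\fg_j$. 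Since $\xi\in\fp$ we have $[\zeta,\xi]\in\fp$, and since $\fp$ is a subalgebra containing $\xi$ it is $\ad\xi$-stable, so $(\ad\xi)^k\zeta\in\fp$, i.e. $\sum_j j^k\zeta_j\in\fp$, for every $k=1,\dots,n$. Applying the projection $\fg\to\fq^\perp$ along $\fp$ annihilates the $\fp$-component and leaves $\sum_{j=1}^{n} j^k\zeta_j=0$ for $k=1,\dots,n$. The coefficient matrix $(j^k)_{1\le j,k\le n}$ is invertible (its determinant is $n!$ times a nonvanishing Vandermonde determinant), forcing $\zeta_1=\dots=\zeta_n=0$, whence $\zeta=\sum_{j\le 0}\zeta_j\in\fp$.

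I do not expect a genuinely hard step here: the whole point is the translation via the grading element. The only things requiring a little care are that one really does need a \emph{complementary} $\fq$ — that is what supplies both the direct sum $\fg=\fp\oplus\fq^\perp$ and the semisimple $\ad\xi$ for which $\fp$ is precisely the non-positive part — and that the number of positive eigenvalues is exactly $n$, matching the range $k=1,\dots,n$ needed to run the Vandermonde/interpolation argument; once these are in place the conclusion follows with no root-space computations at all.
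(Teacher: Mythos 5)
Your proof is correct, and it takes a genuinely different route from the paper's, although both hinge on the grading element $\xi$ of the complementary pair $(\fp,\ci\fp)$. The paper's argument is a two-line duality trick: since $\ad\xi$ is invertible on $\fp^\perp$, one has $[\fp,\fp^\perp]=\fp^\perp$, and then $[\zeta,\fp]\subset\fp$ forces $[\zeta,\fp]$ to be Killing-orthogonal to $\fp^\perp$, whence $\zeta\in[\fp,\fp^\perp]^\perp=(\fp^\perp)^\perp=\fp$ by invariance and non-degeneracy of the Killing form. You instead stay entirely inside the eigenspace decomposition of $\ad\xi$: from $[\zeta,\xi]\in\fp$ and $\ad\xi$-stability of $\fp$ you get $(\ad\xi)^k\zeta\in\fp$ for $k=1,\dots,n$, project onto $\fq^\perp=\bigoplus_{j\ge1}\fg_j$ along $\fp$, and kill the positive components via the invertible matrix $(j^k)_{1\le j,k\le n}$. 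This is sound (the possible vanishing of some intermediate $\fg_j$ is harmless, since the scalar system still forces each $\zeta_j=0$), and it is noteworthy that you only use the consequence $[\zeta,\xi]\in\fp$ of the hypothesis. What the paper's route buys is brevity and no need for the Vandermonde computation, since the Killing form does the bookkeeping; what yours buys is a completely elementary linear-algebra argument that never invokes orthogonality of $[\zeta,\fp]$ to $\fp^\perp$, at the modest cost of the interpolation step.
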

\begin{proof}
We know that $\ad\xi$ is invertible on $\fp^\perp$ so that
$[\xi,\fp^\perp]=\fp^\perp$ giving $[\fp,\fp^\perp]=\fp^\perp$.  Now,
if $[\zeta,\fp]\subset\fp$ then $[\zeta,\fp]$ is Killing orthogonal
to $\fp^\perp$ or, equivalently, $\zeta\in [\fp,\fp^\perp]^\perp=\fp$. 
\end{proof}

\section{$R$-spaces and symmetric $R$-spaces}
\label{sec:r-spaces-symmetric}

\subsection{$R$-spaces}
\label{sec:r-spaces}

\begin{defn}[\cite{Tits1954}]
An \emph{$R$-space}  of \emph{height $n$} is a
conjugacy class of height $n$ parabolic subalgebras. 
\end{defn}

Let $M$ be an $R$-space.  Thus $M$ is a homogeneous space of $G$ with
stabilisers which are, by definition, \emph{parabolic subgroups} of
$G$.  By Lemma~\ref{lem:normal}, the stabiliser $P$ of $\fp\in M$ has
Lie algebra $\fp$.  
% We shall often write $M=G/P$ to bring into
% evidence the groups involved but our results will be independent of
% the choice of base-point $\fp$. 

Now let $K$ be a maximal compact subgroup of $G$.  By virtue of the
Iwasawa decomposition of $G$, $K$ acts transitively on $M$
\cite[Theorem~7]{Takeuchi1965} and so, in particular, $M$ is compact. 

Let $\fp_0\in M$ and fix a complementary subalgebra $\fp_\infty$.  Set 
\begin{equation*}
\Omega_{\fp_\infty}=\{\fp\in M: \text{$\fp$ is complementary to
$\fp_\infty$}\}\subset M. 
\end{equation*}
Then $\Omega_{\fp_\infty}$ is an open, dense neighbourhood of $\fp_0$
in $M$: it is the ``big cell'' in the cell decomposition of $M$
induced by the Bruhat decomposition of $G$
\cite[Theorem~8]{Takeuchi1965}.  Moreover, from
Lemma~\ref{lemm:simptrans}, we see that the map
\begin{equation}
\label{eq:1}%inv-stereo
\zeta\mapsto
\exp\zeta\,\fp_0:\fp_\infty^\perp\to\Omega_{\fp_\infty}
\end{equation}
is a diffeomorphism.  We call this map
\emph{inverse-stereoprojection} with respect to $(\fp_0,\fp_\infty)$.

If $\fg$ is a complex Lie algebra, the $R$-spaces are the well-known
generalised flag manifolds: these are homogeneous complex (in fact,
projective, algebraic) manifolds with a $K$-invariant K\"ahler
structure. The remaining $R$-spaces are real forms of generalised
flag manifolds. Indeed, let $\fg$ be non-complex and $M$ an $R$-space
for $G$. For $\fp\in M$, $\fp^\C$ is a parabolic subalgebra of
$\fg^\C$ of the same height as $\fp$ and we denote by $M^\C$ the
$G^\C$-conjugacy class of the $\fp^\C$.  The map $\fp\mapsto\fp^\C$
embeds $M$ in $M^\C$ as the fixed set of an anti-holomorphic
involution of $M^\C$: indeed, the conjugation $\sigma$ of $\fg^\C$
across $\fg$ induces such an involution $\sigma:\fp\mapsto\sigma\fp$
on $M^\C$ with $M\subset\Fix(\sigma)$ and equality in this last
inclusion follows from Matsumoto's classification result 
\cite[Th\'eor\`eme~3.1]{Matsumoto1964} since there is at most
one real conjugacy class of parabolic subalgebras in any complex
one (see \cite[Theorem~10]{Takeuchi1965} for an alternative argument).

\subsection{Symmetric $R$-spaces: definition and examples}
\label{sec:symmetric-r-spaces}

The following definition is due to Takeuchi \cite{Takeuchi1965}. 
\begin{defn}
A \emph{symmetric $R$-space} is an $R$-space of height 1. 
\end{defn}

Symmetric $R$-spaces are distinguished by the fact that they are
Riemannian symmetric spaces for the maximal compact subgroup $K$.
Indeed, with $\ci$ the corresponding Cartan involution of $\fg$, the
involution at $\fp\in M$ is given by
$\tau_\fp=\exp(i\pi\xi)\in\Aut(\fg)$ where $\xi$ is the grading
element for the complementary pair $(\fp,\ci\fp)$.

Thus symmetric $R$-spaces are compact Riemannian symmetric spaces
admitting a Lie group of diffeomorphisms $G$ properly containing the
isometry group.  It is a celebrated theorem of Nagano
\cite[Theorem~3.1]{Nagano1965} that, up to covers, these are
essentially the only compact Riemannian symmetric spaces admitting
such a group. 

However, in what follows, we will be exclusively interested in the
$G$-invariant geometry of symmetric $R$-spaces and shall
relegate their Riemannian structure (which requires the additional
choice of a maximal compact subgroup $K$) firmly to the background
(see, however, \MySec\ref{sec:dimension-bounds} and
\MySec\ref{sec:curv-flats-riem}).

For $\fg$ complex, the symmetric $R$-spaces $M$ are precisely the
Hermitian symmetric spaces of compact type.  Here $G$ is the
group of biholomorphisms.  As we saw in \MySec\ref{sec:r-spaces},
any symmetric $R$-space for non-complex $\fg$ is a real form of such
a Hermitian symmetric space. 

Let us now illustrate some of the above theory with some concrete
examples:

\subsubsection{Example: the conformal $n$-sphere}
\label{sec:example:-conformal-n}

Contemplate the Lorentzian vector space $\R^{n+1,1}$; an
$(n+2)$-dimensional vector space with an inner product $(\,,\,)$ of
signature $(n+1,1)$\footnote{Thus $n+1$ positive directions and $1$
negative direction.}.  We distinguish the light-cone
$\cL=\{v\in\R^{n+1,1}_\times\colon (v,v)=0\}$ and consider its
projectivisation $\pr(\cL)\subset\pr(\R^{n+1,1})$ on which
$G=\Ad\rSO(n+1,1)\cong\rSO_0(n+1,1)$ acts transitively. 

There is a $G$-invariant conformal structure on $\pr(\cL)$: view a
section $s$ of the principal bundle $\cL\to\pr(\cL)$ as a map
into $\R^{n+1,1}$ and set $g_s=s^*(\,,\,)$.  Thus
\begin{equation*}
g_s(X,Y)=(\D_Xs,\D_Ys). 
\end{equation*}
It is not difficult to see that $g_s$ is positive definite and
that, for any $u:\pr(\cL)\to\R$, $g_{e^us}=e^{2u}g_s$ so
that we have a well-defined conformal structure. 

Let us identify the infinitesimal stabiliser
$\stab(\Lambda)\leq\fg=\fso(n+1,1)$ of a null line
$\Lambda\in\pr(\cL)$. First $\Lambda$ defines a flag
$\Lambda\subset\Lambda^\perp\subset\R^{n+1,1}$ and then, under the
canonical isomorphism
\begin{gather}\label{eq:orthiso}
{\Wedge}^2\R^{n+1,1}\cong\fso(n+1,1):u\wedge v\mapsto (u,\cdot)v-(v,\cdot)u,
\end{gather}
we readily see that
\begin{equation*}
\stab(\Lambda)=\Lambda\wedge\R^{n+1,1}+{\Wedge}^2\Lambda^\perp
\end{equation*}
The Killing polar of $\stab(\Lambda)$ is then the abelian subalgebra
\begin{equation*}
\stab(\Lambda)^\perp=\Lambda\wedge\Lambda^\perp. 
\end{equation*}
Thus $\stab(\Lambda)$ is a parabolic subalgebra of $\fg$ and the map
$\Lambda\mapsto\stab(\Lambda)$ is a $G$-isomorphism from $\pr(\cL)$
to the conjugacy class of $\stab(\Lambda)$ which is a symmetric
$R$-space. 

The general theory now tells us that $\pr(\cL)$ is a Riemannian
symmetric $K$-space for $K\cong\rSO(n+1)$ a maximal compact subgroup
of $G$.  In fact, $\pr(\cL)\cong S^n$ as $K$-spaces.  Indeed, a
choice of maximal compact amounts to a choice of unit time-like
vector $t_0\in\R^{n+1,1}$.  Then $K$ is the identity component of
$\Stab(t_0)\leq\rSO(n+1,1)$ and we identify $\pr(\cL)$ with the unit
sphere $S^n\subset\langle t_0\rangle^\perp$ via:
\begin{equation*}
x\mapsto \langle x+t_0\rangle:S^n\to\pr(\cL). 
\end{equation*}
For $s$ the unique section of $\cL$ with $(s,t_0)\equiv
-1$, this map is an isometry
$(S^n,g_{\mathrm{can}})\to\bigl(\pr(\cL),g_s\bigr)$ so that we
conclude that $\pr(\cL)$ is conformally equivalent to $S^n$. 
Moreover, this equivalence identifies $G$ with the group of
orientation-preserving conformal diffeomorphisms of $S^n$

This model of the conformal $n$-sphere is due to Darboux
\cite[Chapitre~VI]{Darboux1972}. 

For $\Lambda\in\pr(\cL)$, the parabolic subalgebras of $\fg$
complementary to $\stab(\Lambda)$ are precisely the stabilisers
$\stab(\hat\Lambda)$ for
$\hat\Lambda\in\pr(\cL)\setminus\{\Lambda\}$.  Indeed, given such a
$\hat\Lambda$, set $W=(\Lambda\oplus\hat\Lambda)^\perp$ to get a
decomposition 
\begin{equation*}
\R^{n+1,1}=\Lambda\oplus W\oplus\hat\Lambda
\end{equation*}
with $\Lambda\oplus W=\Lambda^\perp$,
$\hat\Lambda^\perp=W\oplus\hat\Lambda$. Thus
$\stab(\Lambda)^\perp=\Lambda\wedge W$ and
$\stab{\hat\Lambda}^\perp=\hat\Lambda\wedge W$ and these have zero
intersection whence $\stab(\Lambda)$ and $\stab(\hat\Lambda)$ are
complementary (see Remark~\ref{h:comp-ht-1}).  The corresponding
grading element has eigenvalues $-1$, $0$, $1$ on $\Lambda$, $W$,
$\hat\Lambda$ respectively.  Conversely, if $\xi$ is the grading
element of a complementary pair $(\stab(\Lambda),\fq)$, then
$\fq=\stab(\hat\Lambda)$ where $\hat\Lambda$ is the $+1$-eigenspace
for the action of $\xi$ on $\R^{n+1,1}$.  

In particular, we note that the parabolic subalgebras complementary
to some $\stab(\Lambda)$ lie in a single conjugacy class which
coincides with that of $\stab(\Lambda)$.

The inverse-stereoprojection introduced in \MySec\ref{sec:r-spaces}
coincides with the classical notion: choose
$\Lambda_0\neq\Lambda_\infty\in\pr(\cL)$ with infinitesimal
stabilisers $\fp_0,\fp_\infty$ and set
$\R^n=(\Lambda_0\oplus\Lambda_\infty)^\perp$.  Then
$\Omega_{\fp_\infty}=\pr(\cL)\setminus\{\Lambda_\infty\}$ and
\eqref{eq:1} reads:
\begin{equation*}
\zeta\mapsto\exp(\zeta)\Lambda_0:
\Lambda_\infty\wedge\R^n\to\pr(\cL)\setminus\{\Lambda_\infty\}. 
\end{equation*}
If we now choose $v_0\in\Lambda_0$, $v_\infty\in\Lambda_\infty$ with
$(v_0,v_\infty)=-1$, we can identify $\Lambda_\infty\wedge\R^n$ with
$\R^n$ via $x\wedge v_\infty \mapsto x=(x\wedge v_\infty)v_0$ and
then the diffeomorphism is
\begin{equation*}
x\mapsto\langle\exp(x\wedge v_\infty)v_0\rangle=\langle v_0+x+\half(x,x)v_\infty\rangle
\end{equation*}
which last is the expression for classical inverse-stereoprojection
that can be found in \cite{Bryant1984}. 

One can perform a similar analysis of the projective lightcone
$\pr(\cL^{p+1,q+1})$ of $\R^{p+1,q+1}$ to get a symmetric $R$-space
diffeomorphic to $(S^q\times S^q)/\Z_2$ which is the conformal
compactification via \eqref{eq:1} of $\R^{p,q}$. 

\subsubsection{Example: Grassmannians}
\label{sec:grassmannians}

Contemplate the Grassmannian $G_k(\C^n)$ of $k$-dimensional linear
subspaces of $\C^n$ on which $G=\rPSL(n,\C)$ acts transitively (thus,
for $k=1$, we are studying the projective geometry of $\C\pr^n$).  The
infinitesimal stabiliser of $W\in G_k(\C^n)$ has Killing polar
$\stab(W)^\perp=\hom(\C^n/W,W)$ which is an abelian subalgebra of
$\fsl(n,\C)$ so that the conjugacy class $M$ of these stabilisers is a
symmetric $R$-space.  The map $W\mapsto\stab(W):G_k(\C^n)\to M$ is
now an isomorphism of $G$-spaces. 

For $W\in G_k(\C^n)$, the parabolic subalgebras complementary to
$\stab(W)$ are all of the form $\stab(\hat W)$ for $\hat W\in
G_{n-k}(\C^n)$ with $W\oplus\hat W=\C^n$.  The grading
element of the pair $(W,\hat W)$ has eigenvalues $(k-n)/n$, $k/n$ on
$W$, $\hat W$ respectively. Again we see that the set of parabolic
subalgebras complementary to some $\stab(W)$, $W\in G_{k}(\C^n)$,
lies in a single conjugacy class but, in contrast to the case of
spheres, this conjugacy class does not coincide with that of the
$\stab(W)$ unless $2k=n$.

For Grassmannians, stereoprojection coincides with the usual affine
chart: given complementary $W_0\in G_k(\C^n)$ and $W_\infty\in
G_{n-k}(\C^n)$ with infinitesimal stabilisers $\fp_0,\fp_\infty$, we
identify $\Omega_{\fp_\infty}$ with $\bigl\{W\in G_k(\C^n):
W\cap W_\infty=\{0\}\bigr\}$, $\C^n/W_\infty$ with $W_0$ and thus
$\fp_\infty^\perp$ with $\hom(W_0,W_\infty)$.  Now \eqref{eq:1} reads
\begin{equation*}
T\mapsto (1+T)W_0: \hom(W_0,W_\infty)\cong \Omega_{\fp_\infty}. 
\end{equation*}

A similar analysis is available for the real Grassmannians
$G_k(\R^n)$ with $G=\rPSL(n,\R)$ and the quaternionic Grassmannians
$G_k(\HH^n)$ with $G=\mathrm{PSU}^*(2n)$.  These symmetric $R$-spaces
are real forms of the complex Grassmannians: indeed, they may be
viewed as the fixed set of the anti-holomorphic involution $W\mapsto
jW$ on $G_k(\C^n)$ induced by a real or quaternionic structure $j$ on
$\C^{n}$ (thus $j$ is an antilinear endomorphism of $\C^{n}$ with
$j=\pm1$).

There is one other real form of $G_{n}(\C^{2n})$: this is the set
$G^{\perp}_{n}(\C^{n,n})$ of maximal isotropic subspaces for a Hermitian
inner product of signature $(n,n)$ and so is the fixed set of the
anti-holomorphic involution $W\mapsto W^{\perp}$.

\subsubsection{Example: isotropic Grassmannians}
\label{sec:exampl-isotr-grassm}

Equip $\C^{2n}$ with a nondegenerate complex bilinear inner product
$q$ and contemplate the set of maximal isotropic subspaces of
$\C^{2n}$.  The group $\Ad\rSO(2n,\C)$ acts on these and there are
two orbits: for $W\subset\C^{2n}$ maximal isotropic, $\bigwedge^{n}W$
lies in an eigenspace of the Hodge $*$-operator and so we have two
orbits $J^{+}(\C^{2n})$ and $J^{-}(\C^{2n})$ according the sign of the
eigenvalue.

Under the isomorphism \eqref{eq:orthiso}, the infinitesimal
stabiliser of $W\in J^{\pm}(\C^{2n})$ is $\C^{2n}\wedge W$ with
Killing polar $\bigwedge^{2}W$, an abelian subalgebra of
$\fso(2n,\C)$.  Thus the conjugacy class $M^{\pm}$ of $\stab(W)$ is a symmetric
$R$-space and $W\mapsto\stab(W):J^{\pm}(\C^{2n})\to M^{\pm}$ is an
isomorphism of $G$-spaces.

The parabolic subalgebras complementary to $\stab(W)$ are those of
the form $\stab(\hat{W})$ where $\hat W\in J^{\pm}(\C^{2n})$ and
$\C^{2n}=W\oplus\hat W$.  Now, for $W^{\pm}\in J^{\pm}(\C^{2n})$, one
uses the characterisation of the orbits via the $*$-operator to see
that $\dim W^{+}\cap W^{-}$ has the opposite parity to $n$.  Thus
complementary $W,\hat W$ lie in the same orbit if and only if $n$ is
even.

The real forms of $J^{\pm}(\C^{2n})$ arise from real or quaternionic
structures $j$ on $\C^{2n}$ compatible with $q$: $j^{*}q=\cl{q}$.
This leads to two orbits $J^{\pm}(\R^{n,n})$ of real maximal
isotropic subspaces of $\R^{n,n}$ and one\footnote{Only one of
$J^{\pm}(\C^{4m})$ can contain quaternionic elements since the
intersection of any two such has even dimension.} quaternionic orbit
$J(\HH^{2m})$ when $n=2m$.

The choice of a maximal compact subgroup of $\rSO(n,n)$ amounts to a
decomposition $\R^{n,n}=\R^{n,0}\oplus\R^{0,n}$ into orthogonal
definite spaces.  Now any $W\in J^{\pm}(\R^{n,n})$ is precisely the
graph of an anti-isometry $\R^{n,0}\to\R^{0,n}$ and
$\rSO(n)=\rSO(n,0)$ acts freely and transitively on these by
precomposition.  In this way, both $J^{\pm}(\R^{n,n})$ are identified
as $K$-spaces with the group manifold $\rSO(n)$.  It is an amusing
exercise to see that, in this setting, inverse-stereoprojection
amounts to the Cayley parametrisation of $\rSO(n)$.

A similar analysis can also be made for $\C^{2n}$ equipped with a
complex symplectic form and leads to the complex Lagrangian
Grassmannian $\Lag(\C^{2n})$ with real forms $\Lag(\R^{2n})$ and
$\Lag(\HH^{2m})$, the latter being isomorphic as a $K$-space to the
group manifold $\rSp(m)$.

\subsection{Duality of $R$-spaces}
\label{sec:duality-r-spaces}

Let $M$ be an $R$-space.  Let $M^*$ be the set of parabolic
subalgebras $\fq$ of $\fg$ for which $\fq$ is complementary to some
$\fp\in M$.  In the examples we have just inspected, $M^*$ is a
single conjugacy class which may or may not coincide with $M$.  This
is an entirely general phenomenon and leads to a notion of duality
among $R$-spaces which will be important to us. 

\begin{prop}
$M^*$ comprises a single conjugacy class and so is an $R$-space of
the same height as $M$.
\end{prop}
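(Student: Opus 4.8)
The plan is to prove directly that $G$ acts transitively on $M^*$; transitivity immediately yields that $M^*$ is a single conjugacy class, and the assertion about its height will then be automatic. Indeed, since complementarity is only defined between parabolic subalgebras of a common height, every element of $M^*$ is a parabolic of height $n$ by construction, so once $M^*$ is known to be a single $G$-orbit it is an $R$-space of height $n$ and there is nothing further to check.

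First I would record the two easy structural facts. Nonemptiness of $M^*$ follows from the lemma that $\fp$ and $\ci\fp$ are complementary for any Cartan involution $\ci$: fixing any $\fp\in M$ we get $\ci\fp\in M^*$. Second, $M^*$ is $G$-invariant. This is because the filtration $\fp^{(j)}$ is built from $\fp$, brackets, and Killing polars, all of which commute with the adjoint action (the Killing form being $\Ad$-invariant); hence $(g\act\fp)^{(j)}=g\act\fp^{(j)}$ for all $j$, and applying $\Ad(g)$ to the defining identities $\fg=\fp^{(j)}\oplus\fq^{(-1-j)}$ shows that $g\act\fq$ is complementary to $g\act\fp$ whenever $\fq$ is complementary to $\fp$. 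Since $g\act\fp\in M$ as $M$ is a conjugacy class, this gives $g\act\fq\in M^*$.

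For transitivity, take $\fq,\fq'\in M^*$ and choose $\fp,\fp'\in M$ with $\fq$ complementary to $\fp$ and $\fq'$ complementary to $\fp'$. Because $M$ is a single $G$-conjugacy class there is $g\in G$ with $g\act\fp=\fp'$, and then by the $G$-invariance above both $g\act\fq$ and $\fq'$ are complementary to $\fp'$. Now Lemma~\ref{lemm:simptrans} tells us that $\exp\fp'^\perp<G$ acts (simply) transitively on the set of parabolics complementary to $\fp'$, so there is $h\in\exp\fp'^\perp$ with $h\act(g\act\fq)=\fq'$, i.e.\ $(hg)\act\fq=\fq'$. Thus $\fq$ and $\fq'$ are $G$-conjugate, $M^*$ is a single conjugacy class of height-$n$ parabolic subalgebras, and therefore an $R$-space of the same height as $M$. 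The only nontrivial input is Lemma~\ref{lemm:simptrans}, which supplies transitivity along the "fibres" $\{\fq : \fq \text{ complementary to a fixed }\fp\}$; combining it with the transitivity of $G$ on $M$ is the crux, and everything else is bookkeeping with the $\Ad$-equivariance of the filtration.
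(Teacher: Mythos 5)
Your proof is correct and follows essentially the same route as the paper's: conjugate the base points in $M$ using transitivity of $G$ on the conjugacy class, then use Lemma~\ref{lemm:simptrans} to move one complement onto the other within the fibre over a fixed parabolic, with the height statement handled by the observation that complementary parabolics share the same height. The extra bookkeeping you include (non-emptiness via a Cartan involution and $\Ad$-equivariance of the filtration) is implicit in the paper's argument and does no harm.
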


\begin{proof}
Let $\fq,\fq'\in M^*$ with $\fq$ complementary to $\fp\in M$ and
$\fq'$ complementary to $g\act\fp$.  We must show that $\fq,\fq'$ are
conjugate.  Now $g^{-1}\act\fq'$ is complementary to $\fp$ so by
Lemma~\ref{lemm:simptrans} there is a unique $n\in\exp\fp^\perp$ with
$ng^{-1}\act\fq'=\fq$ and we are done.

$M$ and $M^*$ have the same height since the heights of
complementary parabolic subalgebras coincide. 
\end{proof}

Clearly $(M^*)^*=M$ so we make the following:
\begin{defn}
$M^*$ is the \emph{dual} of $M$. 
\end{defn}

It can happen that $M=M^*$ in which case we say that $M$ is
\emph{self-dual}.  Otherwise, $M$ is said to be \emph{non-self-dual}. 
For example, we have seen that the conformal $n$-sphere is self-dual
but the Grassmannians $G_k(\R^n)$, for $n\neq 2k$, are non-self-dual. 

Even in the non-self-dual case, $M$ and $M^*$ are non-canonically
diffeomorphic: for $\ci$ the Cartan involution corresponding to a
maximal compact $K<G$, the map $\fp\mapsto\ci\fp:M\to M^*$
is a diffeomorphism, in fact an isomorphism of $K$-spaces. 

% More generally, any $\sigma\in\Aut(\fg)$ induces a diffeomorphism
% $\fp\mapsto\sigma\fp$ of $R$-spaces since
% $\sigma\bigl(\Ad(g)\fp\bigr)=\Ad\bigl(\sigma(g)\bigr)\sigma\fp$.  In
% particular, since conjugacy classes are disjoint, we have the
% following lemma which will be useful to us in the sequel:

% \begin{lemm}\label{lemm:isoaut}
% Suppose $\sigma\fp=\fp$ for some $\fp\in M$, $\sigma\in\Aut(\fg)$. 
% Then $M$ is preserved by $\sigma$. 
% \end{lemm}

In \MySec\ref{sec:parab-subalg}, we described how $R$-spaces are
parametrised by certain subsets of nodes of a Dynkin or Satake
diagram. One can detect the duality relation from a certain
automorphism of the diagram. For this, first note that $\fp,\fq\leq\fg$
are complementary if and only if $\fp^\C,\fq^\C\leq\fg^\C$ are, so
that $M$ is self-dual if and only if its complexification
$M^\C=\{g\act\fp^\C:\fp\in M,g\in G^\C\}$ is self-dual (recall $\fp$ is
$G$-conjugate to $\fq$ if and only if $\fp^\C$ is $G^\C$-conjugate to
$\fq^\C$). This reduces the question to the case when $\fg$ is
complex. For $\fp\leq\fg$ parabolic, fix a Borel subalgebra
$\mathfrak{b}\leq\fp$ and a Cartan subalgebra $\fh<\mathfrak{b}$. 
The Borel subalgebra determines a Weyl chamber $C$ and there is a
unique element $w_0$ of the Weyl group such that $w_0 C=-C$
\cite[V,1.6]{Bourbaki1968}. Define $\sigma=-w_0$ so that $\sigma$
preserves $C$ and therefore permutes the corresponding simple roots
giving an automorphism of the Dynkin diagram. If $\fp\in M$
corresponds to a set of simple roots $I$, then the parabolic
subalgebra corresponding to $\sigma I$ lies in $M^*$: it is conjugate
via $w_0$ to the opposite parabolic subalgebra to $\fp$ which
comprises $\fh$ and the root spaces $\fg_{-\alpha}$ for
$\fg_\alpha\subset\fp$ and this last is clearly complementary to
$\fp$. 

Thus our duality relation is implemented on subsets of the Dynkin
diagram by $\sigma$.  For $\fg$ simple, $\sigma$ is readily
determined: it is the identity if and only if $-1$ lies in the Weyl
group (since the Weyl group acts simply transitively on Weyl
chambers) and this is the case unless $\fg$ is one of $\fsl(n)$,
$n>1$; $\fso(2n)$, $n$ odd; or $\mathfrak{e}_6$
\cite[VI,4.5--13]{Bourbaki1968}. In these cases, $\sigma$ is the
usual involution of the Dynkin diagram. 

It is now a simple matter to determine which $R$-spaces are
self-dual: we simply ask that $\sigma$ preserve the corresponding set
of crossed nodes.  The results for symmetric $R$-spaces are tabulated
in Table~\ref{tab:r-space} on page~\pageref{tab:r-space}.

\subsection{The space of complementary pairs}
\label{sec:space-compl-pairs}

Let $M$ be a symmetric $R$-space.  While the Riemannian symmetric
structure of $M$ will not concern us, there is a pseudo-Riemannian
symmetric $G$-space associated to $M$ which will play an important
role in what follows.  This is the \emph{space of complementary
pairs} $Z\subset M\times M^*$:
\begin{equation*}
Z=\{(\fp,\fq)\in M\times M^*:\text{$\fp$ and $\fq$ are complementary}\}. 
\end{equation*}

We have:
\begin{prop}\label{th:3}%Z is symm space
$G$ acts transitively on $Z$ so that $Z$ is a pseudo-Riemannian
symmetric $G$-space.  
\end{prop}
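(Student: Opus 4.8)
The plan is to establish transitivity directly, then invoke the general structure theory to conclude that $Z$ is a pseudo-Riemannian symmetric space. For transitivity, let $(\fp,\fq)$ and $(\fp',\fq')$ be two complementary pairs in $Z$. Since $M$ is a single conjugacy class, there is $g\in G$ with $g\act\fp=\fp'$. Then $g\act\fq$ is complementary to $\fp'$, as is $\fq'$, so both $g\act\fq$ and $\fq'$ are parabolic subalgebras complementary to the fixed parabolic $\fp'$. By Lemma~\ref{lemm:simptrans}, $\exp(\fp'^\perp)$ acts simply transitively on the set of parabolics complementary to $\fp'$, so there is $n\in\exp(\fp'^\perp)$ with $n\act(g\act\fq)=\fq'$. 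Since $\fp'^\perp\subset\fp'$, the element $n$ fixes $\fp'$, hence $(ng)\act(\fp,\fq)=(\fp',\fq')$ and transitivity follows.

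For the symmetric space structure, I would exhibit the symmetry at a base point $(\fp,\fq)\in Z$. Let $\xi=\xi_\fp^\fq$ be the grading element of the complementary pair, so that $\fg=\bigoplus_j\fg_j$ with $\ad\xi$ acting as multiplication by $j$ on $\fg_j$; since $M$ has height one, only $j\in\{-1,0,1\}$ occur. The candidate symmetry is the involutive inner automorphism $s=\exp(i\pi\xi)\in G$ (the same construction used for the Riemannian symmetry at $\fp$, but now $\fq$ replaces $\ci\fp$); it acts as $+1$ on $\fg_0$ and $-1$ on $\fg_{\pm1}$. One checks $s\act\fp=\fp$ and $s\act\fq=\fq$ from the explicit description \eqref{eq:graded}, so $s$ fixes $(\fp,\fq)$, and $s$ is involutive. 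The fixed-point subalgebra of $\Ad(s)$ is $\fg_0$, which is exactly the stabiliser subalgebra $\fp\cap\fq$ of $(\fp,\fq)$; the complement $\fg_{-1}\oplus\fg_1$ is the $(-1)$-eigenspace and is identified with the tangent space $T_{(\fp,\fq)}Z$. Thus $(\fg,\Ad(s))$ is a symmetric pair and $Z=G/(G\cap\exp\fg_0)$ carries the structure of an affine symmetric space in the usual way; the $G$-invariant pseudo-Riemannian metric is induced by the Killing form restricted to $\fg_{-1}\oplus\fg_1$, which is nondegenerate there (it pairs $\fg_{-1}$ with $\fg_1$) though of indefinite signature, since the Killing form is positive on neither summand in general.

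The main obstacle is verifying that this really produces a genuine pseudo-Riemannian \emph{symmetric} space rather than merely a reductive homogeneous space: one must confirm that the Killing form restricts nondegenerately to the tangent space $\fg_{-1}\oplus\fg_1$ (this uses that $[\fg_0,\fg_{\pm1}]\subseteq\fg_{\pm1}$ and that $\fg_{\pm1}$ are isotropic but mutually dual under the Killing form, which follows from $[\fg_j,\fg_k]\subseteq\fg_{j+k}$ and semisimplicity of $\fg$), and that the resulting metric is $\Ad$-invariant under the stabiliser. Identifying the symmetry $s$ precisely and checking it fixes the pair is routine given the grading element formalism already set up in \S\ref{sec:compl-subalg}; the reference to Kaneyuki's work on para-Hermitian symmetric spaces should cover the finer structural claims. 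I expect the authors will be brief here, simply noting that the grading element furnishes both the reductive decomposition and the involution, and pointing to \cite{Kaneyuki1985,Kaneyuki1987} for the para-Hermitian refinement.
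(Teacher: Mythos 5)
Your proposal is correct and follows essentially the same route as the paper: transitivity is obtained exactly as there, by first moving $\fp$ to $\fp'$ and then adjusting by an element of $\exp\fp'^\perp\leq\Stab(\fp')$ via Lemma~\ref{lemm:simptrans}, and the symmetric structure comes from the same involution $\exp(i\pi\xi)$ attached to the grading element, with the stabiliser $\Stab(\fp)\cap\Stab(\fq)$ (Lie algebra $\fp\cap\fq=\fg_0$) open in its fixed set and the metric induced by the Killing form. The extra remarks you make (nondegeneracy of the Killing form on $\fg_{-1}\oplus\fg_1$, the duality of the $\pm1$ eigenspaces) are correct refinements that the paper leaves implicit.
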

\begin{proof}
Let $(\fp,\fq),(\fp',\fq')\in Z$.  There is $h\in G$ with
$h\act\fp=\fp'$ and then $h^{-1}\act\fq'$ is complementary to
$\fp$.  Lemma~\ref{lemm:simptrans} now supplies
$n\in\exp\fp^\perp\leq\Stab(\fp)$ with $n\act\fq=h^{-1}\act\fq'$
and then with $g=hn$ we immediately get $g\act\fp=\fp'$ and
$g\act\fq=\fq'$.  Thus $G$ acts transitively on $Z$. 

The stabiliser of $(\fp,\fq)\in Z$ is clearly
$\Stab(\fp)\cap\Stab(\fq)$ with Lie algebra $\fp\cap\fq$.  This last
is the centraliser of the grading element $\xi_{\fp}^{\fq}$ of the pair
so that $\Stab(\fp)\cap\Stab(\fq)$ is open in the fixed set of the
involution $\tau_{\fp,\fq}=\exp(i\pi\xi)\in\Aut(\fg)$.  Thus $Z$
is a symmetric $G$-space with invariant pseudo-Riemannian metric
induced by the Killing form of $\fg$. 
\end{proof}

Kaneyuki \cite[Theorem~3.1]{Kaneyuki1987} shows that $Z$ is a dense
open subset of $M\times M^*$.  Moreover $Z$ is a \emph{parahermitian
symmetric space} in the sense of Kaneyuki--Kozai \cite{Kaneyuki1985}
who show that essentially all such spaces for semisimple $G$ arise
this way. 

\subsection{Homogeneous geometry and the solder form}
\label{sec:homog-geom-sold}

The geometry of a homogeneous $G$-space is tied to the Lie theory of
$G$ via the \emph{solder form} which is defined as follows.

Let $N$ be a homogeneous $G$-space.  Let $H_x\leq G$ be the
stabiliser of $x\in N$ with Lie algebra $\fh_x$.  Since $G$ acts
transitively, we have a surjection $\fg\to T_xN$ given by
\begin{equation*}
\zeta\mapsto\diffat{t}{t=0}\exp(t\zeta)\act x
\end{equation*} 
with kernel $\fh_x$ and so an isomorphism $\fg/\fh_x\cong T_xN$ whose
inverse is the solder form $\beta^N_x:T_xN\to\fg/\fh_{x}$.  Globally,
the $\fh_x$ are the fibres of a vector subbundle $\fh$ of the trivial
bundle\footnote{We shall consistently denote the trivial
bundle over a manifold $N$ with fibre $V$ by ${\ul V}_{N}$ or even
just $\ul V$ if the context is clear.} ${\ul\fg}_N=N\times\fg$ and
the solder form is then a bundle isomorphism
$\beta^{N}:TN\cong{\ul\fg}_{N}/\fh$.  Dually, the Killing form now
gives an isomorphism $T^{*}N\cong\fh^{\perp}\leq\ul\fg$.

In particular, applying this analysis to an $R$-space $M$, the bundle
of infinitesimal stabilisers is just the tautological bundle of
parabolic subalgebras
\begin{equation*}
\fh_{\fp}=\fp
\end{equation*}
and we have a canonical identification of $T_{\fp}^{*}N$ with
$\fp^{\perp}$, for each $\fp\in M$.  It follows that the fibration of
the symmetric space $Z\subset M\times M^{*}$ over $M$ is
non-canonically isomorphic to the cotangent bundle $T^*M\to M$
\cite[Theorem~4.3]{Kaneyuki1985}: indeed, the fibre $Z_\fp$ at $\fp$
is an affine space modelled on $\fp^\perp$ (see
Lemma~\ref{lemm:simptrans} and the remarks following it) so that any
section of $Z\to M$ allows us to identify $Z_\fp$ with $\fp^\perp$
and so with $T^*_\fp M$.  In particular, a Cartan involution $\ci$ of
$\fg$ with fixed set $K$ gives such a section $\fp\mapsto
(\fp,\ci\fp)$ and $Z\cong T^*M$ as $K$-spaces.  Moreover, this
section gives a totally geodesic embedding of $M$, \emph{qua}
Riemannian symmetric $K$-space, in $Z$.  We shall return to this last
point in \MySec\ref{sec:curv-flats-riem}.

The following lemma provides a helpful characterisation of the solder
form:
\begin{lemm}
\label{th:solder}
Let $s\in\Gamma\fh$ then
\begin{equation}\label{eq:18}
\D s\equiv [\beta^N,s]\mod\fh.
\end{equation}
Moreover, if the fibres of $\fh$ are self-normalising, $\beta^N$ is
the unique ${\ul\fg}_{N}/\fh$-valued $1$-form with this property.
\end{lemm}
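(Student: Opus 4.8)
The claim has two parts: first that the solder form satisfies the structure equation $\D s\equiv[\beta^N,s]\bmod\fh$ for sections $s$ of $\fh$, and second that this property characterises $\beta^N$ uniquely when the fibres of $\fh$ are self-normalising. I will prove each in turn, working pointwise on $N$.

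\begin{proof}
Fix $x\in N$ and $X\in T_xN$; choose $\zeta\in\fg$ representing $\beta^N_x(X)\in\fg/\fh_x$, so that $X=\diffat{t}{t=0}\exp(t\zeta)\act x$. Let $s\in\Gamma\fh$, so $s(y)\in\fh_y=\stab(y)$ for every $y\in N$. The curve $t\mapsto\exp(t\zeta)\act x$ has velocity $X$ at $t=0$, and $\Ad(\exp(t\zeta))\,\fh_x=\fh_{\exp(t\zeta)\act x}$ since the stabiliser transforms equivariantly; more generally, for any $\fg$-valued function, differentiating the relation "$s(\exp(t\zeta)\act x)\in\fh_{\exp(t\zeta)\act x}$" together with the fact that $\Ad(\exp(t\zeta))^{-1}s(\exp(t\zeta)\act x)\in\fh_x$ for all $t$ yields, upon differentiating at $t=0$,
\begin{equation*}
\D_Xs-[\zeta,s(x)]\in\fh_x.
\end{equation*}
Since $\zeta$ represents $\beta^N_x(X)$ modulo $\fh_x$ and $[\fh_x,s(x)]\subset[\fh_x,\fh_x]\subset\fh_x$, the bracket $[\zeta,s(x)]$ depends only on $\beta^N_x(X)$ modulo $\fh_x$, so this reads $\D_Xs\equiv[\beta^N_x(X),s(x)]\bmod\fh_x$, which is \eqref{eq:18}.

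For uniqueness, suppose $\alpha$ is another ${\ul\fg}_N/\fh$-valued $1$-form with $\D s\equiv[\alpha,s]\bmod\fh$ for all $s\in\Gamma\fh$. Subtracting, we get $[\beta^N-\alpha,s]\equiv0\bmod\fh$ for all $s\in\Gamma\fh$. Fix $x\in N$ and $X\in T_xN$, and pick any $\zeta\in\fg$ representing $(\beta^N-\alpha)_x(X)$. Every element of $\fh_x$ arises as $s(x)$ for some $s\in\Gamma\fh$ (extend by a local section), so $[\zeta,\fh_x]\subset\fh_x$. By the self-normalising hypothesis on the fibres of $\fh$, this forces $\zeta\in\fh_x$, i.e. $(\beta^N-\alpha)_x(X)=0$. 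Since $x$ and $X$ were arbitrary, $\alpha=\beta^N$.
\end{proof}

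The one step that deserves care is the derivation of $\D_Xs-[\zeta,s(x)]\in\fh_x$ in the first paragraph: the cleanest route is to note that $\Ad(\exp(-t\zeta))s(\exp(t\zeta)\act x)$ is a curve in $\fh_x$ (a fixed subspace), hence its derivative at $t=0$, namely $\D_Xs-[\zeta,s(x)]$, lies in $\fh_x$. This is the only place where the equivariance of the tautological/stabiliser bundle is used, and it is the conceptual heart of the argument; everything else is bracket bookkeeping. Note that when $N=M$ is an $R$-space, Lemma~\ref{lem:normal} guarantees the fibres $\fh_\fp=\fp$ are self-normalising, so the uniqueness clause applies in the case of principal interest.
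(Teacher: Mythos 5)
Your proof is correct and follows essentially the same route as the paper: you differentiate the equivariance relation $\fh_{\exp(t\zeta)\act x}=\Ad(\exp t\zeta)\fh_x$ along the curve $\exp(t\zeta)\act x$ (your curve $\Ad(\exp(-t\zeta))\,s(\exp(t\zeta)\act x)$ in $\fh_x$ is exactly the paper's $\sigma(t)$), obtaining $\D_Xs-[\zeta,s(x)]\in\fh_x$. Your uniqueness argument via self-normalising fibres is just an explicit spelling-out of what the paper dismisses as clear, so there is nothing to add.
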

\begin{proof}
Let $X\in T_xN$, $s\in\Gamma\fh$ and $\zeta\in\fg$ a representative
of $\beta^N_X\in\fg/\fh_x$.  For any $g\in G$, $\fh_{g\act
x}=\Ad(g)\fh_{x}$ so that $s(\exp(t\zeta)\act
x)=\Ad\exp(t\zeta)\sigma(t)$ for some curve $\sigma$ in $\fh_x$
with $\sigma(0)=s(x)$.  Differentiating this at $t=0$ yields
\begin{equation*}
\D_Xs=[\zeta,s(x)]+\sigma'(0)\equiv [\beta^N_X,s(x)]\mod\fh.
\end{equation*}
The uniqueness assertion is clear.
\end{proof}

If $N$ is reductive\footnote{$R$-spaces are emphatically \emph{not}
reductive but the symmetric spaces $Z$ of complementary pairs are.}
so that each $\fh_{x}$ has an $\Ad H_{x}$-invariant complement
$\fm_{x}$, we have a decomposition into $H$-bundles
\begin{equation*}
\ul\fg=\fh\oplus\fm
\end{equation*}
which allows us to identify $\ul\fg/\fh$ with $\fm$ and so view
$\beta^{N}$ as an $\fm$-valued $1$-form.  With this understood, the
corresponding reduction of $\D$ to an $H$-connection now reads
\begin{equation*}
\D=\cD+\beta^{N}
\end{equation*}
(see, for example, \cite[Chapter 1]{Burstall1990}).

\section{Isothermic maps}
\label{sec:isoth-subm}

We now come to the main object of our discussion: the isothermic maps
to and submanifolds of a symmetric $R$-space and their transformation
theory.

Henceforth, we fix a symmetric $R$-space $M$, a conjugacy
class of parabolic subalgebras of $\fg$, with dual space $M^*$ and
space of complementary pairs $Z\subset M\times M^*$.

\subsection{Definition and $G$-invariance}
\label{sec:defin-zero-curv}

Let $f:\Sigma\to M$ be a map of a manifold $\Sigma$.  We identify $f$
with a subbundle, also called $f$, of parabolic subalgebras of the
trivial bundle $\ul\fg=\Sigma\times\fg$ via
\begin{equation*}
f_x=f(x),
\end{equation*}
for $x\in\Sigma$.  Any such subbundle over connected $\Sigma$ arises
in this way from a map to an $R$-space since the conjugacy classes
are the components of the set of all parabolic subalgebras inside the
Grassmannian of $\fg$. 

\begin{defn}
A map $f:\Sigma\to M$ is \emph{isothermic} if there is a
non-zero closed $1$-form $\eta\in\Omega^1_\Sigma(f^\perp)$. 

If $f$ immerses, $(f,\eta)$ is called an \emph{isothermic submanifold} of
$M$. 
\end{defn}

Thus $\eta$ is a $1$-form taking values in the bundle
$f^\perp$ of nilradicals which is closed when viewed as a
$\fg$-valued form.  Alternatively, the solder isomorphism identifies
$f^{\perp}$ with $f^{-1}T^{*}M$ and we may therefore view $\eta$ as
an $f^{-1}T^{*}M$-valued $1$-form: we shall exploit this in
\MySec\ref{sec:quadratic-form}.

\begin{rem}
For some symmetric $R$-spaces, the $1$-form $\eta$ is uniquely
determined up to (constant) scale by the isothermic map $f$ when $f$
immerses.  However, for other symmetric $R$-spaces, this is far from
the case as we shall see in \MySec\ref{sec:projective-spaces}. 
\end{rem}

We note that the isothermic property is manifestly $G$-invariant: for
isothermic $(f,\eta)$ and $g\in G$, $(g\act f,g\act\eta)$ is also
isothermic. 

\subsection{Stereoprojection and the Christoffel transform}

We saw in \MySec\ref{sec:r-spaces} that a fixed complementary pair
$(\fp_0,\fp_\infty)$ gives us distinguished charts
$\fp_0\in\Omega_{\fp_\infty}\subset M$,
$\fp_\infty\in\Omega_{\fp_0}\subset M^*$ and
stereoprojections $\Omega_{\fp_\infty}\cong\fp_\infty^\perp$,
$\Omega_{\fp_0}\cong\fp_0^\perp$. 

We now show that this data also induces a duality between isothermic
maps.  Indeed, suppose that $f:\Sigma\to M$ has image in
$\Omega_{\fp_\infty}$ and let $F:\Sigma\to\fp_\infty^\perp$ be its
stereoprojection.  Thus
\begin{equation*}
f=\exp(F)\act\fp_0. 
\end{equation*}
If $\eta\in\Omega_\Sigma^1(f^\perp)$, we have a $1$-form
$\omega\in\fp_0^\perp$ such that $\eta=\exp(F)\act\omega$.  Since $F$
takes values in the fixed abelian subalgebra $\fp_\infty^\perp$, we readily
compute the exterior derivative of $\eta$:
\begin{equation*}
\D\eta=\exp(F)\act(\D\omega+[\D F\wedge \omega]). 
\end{equation*}
The two summands on the right lie in $\fp^\perp_0$ and
$\fp_0\cap\fp_\infty$ respectively and so must vanish separately if
and only if $\D\eta=0$.  We therefore conclude:
\begin{prop}\label{th:2}%Christoffel
A map $f:\Sigma\to M$ with stereoprojection
$F:\Sigma\to\fp_\infty^\perp$ with respect to $(\fp_0,\fp_\infty)$ is
isothermic if and only if there is a $1$-form
$\omega\in\Omega^1_\Sigma(\fp_0^\perp)$ such that
\begin{enumerate}
\item $\omega$ is closed;
\item $[\D F\wedge\omega]=0$. 
\end{enumerate}
\end{prop}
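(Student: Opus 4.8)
The plan is to transport the whole problem across the stereoprojection diffeomorphism and then read off the assertion by separating graded components of $\fg$. First I would record that, since $f=\exp(F)\act\fp_0$, the bundle of nilradicals is $f^\perp=\exp(F)\act\fp_0^\perp$; hence $\eta\mapsto\omega:=\exp(-F)\act\eta$ is a ($\fg$-linear, hence non-vanishing-preserving) bijection between $\Omega^1_\Sigma(f^\perp)$ and $\Omega^1_\Sigma(\fp_0^\perp)$. So $f$ is isothermic exactly when some non-zero $\omega\in\Omega^1_\Sigma(\fp_0^\perp)$ satisfies $\D\bigl(\exp(F)\act\omega\bigr)=0$, and everything comes down to computing this exterior derivative.

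The key computation is the Leibniz-type identity $\D\bigl(\exp(F)\act\omega\bigr)=\exp(F)\act\bigl(\D\omega+[\D F\wedge\omega]\bigr)$, after which I would invoke the grading of $\fg$ associated to the complementary pair $(\fp_0,\fp_\infty)$, namely $\fg=\fp_0^\perp\oplus(\fp_0\cap\fp_\infty)\oplus\fp_\infty^\perp$ with $\fp_0^\perp$ and $\fp_\infty^\perp$ the $(-1)$- and $(+1)$-eigenbundles of the grading element. Since $\fp_0^\perp$ is a fixed subspace of $\fg$, the term $\D\omega$ lies in $\Omega^2_\Sigma(\fp_0^\perp)$, while $[\D F\wedge\omega]$ takes values in $[\fp_\infty^\perp,\fp_0^\perp]\subseteq\fp_0\cap\fp_\infty$ because $\D F$ is $\fp_\infty^\perp$-valued and $\omega$ is $\fp_0^\perp$-valued. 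These two summands therefore lie in complementary subspaces of $\fg$, and as $\exp(F)\act$ is a bundle isomorphism, $\D\eta=0$ if and only if $\D\omega=0$ and $[\D F\wedge\omega]=0$ hold separately — which is precisely conditions (1) and (2).

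The one step that needs care — and the only real obstacle — is justifying the Leibniz identity above: this is where the abelianness of $\fp_\infty^\perp$ is essential. Since $F$ takes values in this abelian subalgebra, $F$ commutes with $\D F$, so the (left) Maurer--Cartan form of the $G$-valued map $x\mapsto\exp(F(x))$ is simply $\D F$, with no higher-order Baker--Campbell--Hausdorff corrections; feeding this into the standard formula for the derivative of a gauge-transformed $\fg$-valued form gives the displayed expression. This is exactly the computation already indicated in the run-up to the statement, so beyond the bookkeeping it is routine, and the conceptual content of the proposition resides entirely in the grading decomposition.
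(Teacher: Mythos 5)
Your proposal is correct and is essentially the paper's own argument: write $\eta=\exp(F)\act\omega$, use the abelianness of $\fp_\infty^\perp$ to get $\D\eta=\exp(F)\act\bigl(\D\omega+[\D F\wedge\omega]\bigr)$, and observe that the two summands lie in the complementary pieces $\fp_0^\perp$ and $\fp_0\cap\fp_\infty$ of the grading, hence vanish separately. Your extra remarks (the $\exp(F)\act$ correspondence preserving non-vanishing, and the Maurer--Cartan justification of the Leibniz identity) only make explicit what the paper leaves as a routine computation.
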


In this situation, we can (locally) integrate to find
$F^c:\Sigma\to\fp_0^\perp$ with $\D F^c=\omega$ and thus $[\D F\wedge
\D F^c]=0$.  Everything is now symmetric in $F$ and $F^c$ and we
conclude that $f^c=\exp(F^c)\act\fp_\infty:\Sigma\to M^*$ is
isothermic with $1$-form $\eta^c=\exp(F^c)\act\D F$.  By strict
analogy with the classical situation (see \MySec\ref{sec:exampl-isoth-surf} below), we call
$f^c$ (or $F^c$) the \emph{Christoffel transform} of $f$ (or $F$). 

Of course, if we change our choice of initial complementary pair
$(\fp_0, \fp_\infty)$ then we will obtain a quite different
Christoffel transform.  However, these (or rather their
stereoprojections) are all obtained in a very simple way from a
primitive of $\eta$.  For this, observe that 
\begin{equation*}
\eta=\exp(F)\act\omega=\omega+[F,\omega]+\half[F,[F,\omega]]
\end{equation*}
with the latter three summands taking values in $\fp_0^\perp$,
$\fp_0\cap\fp_\infty$ and $\fp_\infty^\perp$ respectively.  Thus
$\omega=\pi_{\fp_0^\perp}\eta$ for $\pi_{\fp_0^\perp}$ the projection
onto $\fp_0^\perp$ along $\fp_\infty$.  We therefore conclude:
\begin{prop}\label{th:25}
%Universal Ch transform
Let $(f,\eta)$ be isothermic and $\Phi:\Sigma\to\fg$ a primitive of
$\eta$: $\D\Phi=\eta$.  Then the Christoffel transform of $(f,\eta)$
with respect to $(\fp_0, \fp_\infty)$ has stereoprojection
$F^c=\pi_{\fp_0^\perp}\Phi$. 
\end{prop}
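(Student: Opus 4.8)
The plan is to combine Proposition~\ref{th:2} with the explicit expansion of $\eta$ in terms of stereoprojection coordinates that immediately precedes the statement. Recall from that discussion that once we fix the complementary pair $(\fp_0,\fp_\infty)$ and write $f=\exp(F)\act\fp_0$ with $F:\Sigma\to\fp_\infty^\perp$, the $1$-form $\eta\in\Omega^1_\Sigma(f^\perp)$ has the form $\eta=\exp(F)\act\omega$ for a unique $\omega\in\Omega^1_\Sigma(\fp_0^\perp)$, and moreover the computation $\eta=\omega+[F,\omega]+\half[F,[F,\omega]]$ displays its three components in $\fp_0^\perp$, $\fp_0\cap\fp_\infty$, and $\fp_\infty^\perp$ respectively. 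In particular, $\omega=\pi_{\fp_0^\perp}\eta$, where $\pi_{\fp_0^\perp}$ is the projection onto $\fp_0^\perp$ along $\fp_\infty=\bigl(\fp_0\cap\fp_\infty\bigr)\oplus\fp_\infty^\perp$. By Proposition~\ref{th:2}, the Christoffel transform with respect to $(\fp_0,\fp_\infty)$ has stereoprojection $F^c:\Sigma\to\fp_0^\perp$ determined up to an additive constant by $\D F^c=\omega$.

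First I would observe that the projection $\pi_{\fp_0^\perp}:\fg\to\fp_0^\perp$ is a \emph{constant} linear map (it depends only on the fixed pair $(\fp_0,\fp_\infty)$, not on the point of $\Sigma$), so it commutes with exterior differentiation: for any smooth $\Phi:\Sigma\to\fg$ we have $\D(\pi_{\fp_0^\perp}\Phi)=\pi_{\fp_0^\perp}(\D\Phi)$. Next, given a primitive $\Phi$ of $\eta$, i.e.\ $\D\Phi=\eta$, apply $\pi_{\fp_0^\perp}$ to both sides: $\D(\pi_{\fp_0^\perp}\Phi)=\pi_{\fp_0^\perp}\eta=\omega$. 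Thus $\pi_{\fp_0^\perp}\Phi$ is a primitive of $\omega$, hence differs from $F^c$ by a constant; since $F^c$ is itself only defined up to such a constant, we may take $F^c=\pi_{\fp_0^\perp}\Phi$, which is the assertion.

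The only point requiring a word of care is the identification $\omega=\pi_{\fp_0^\perp}\eta$ itself, i.e.\ that the direct sum $\fg=\fp_0^\perp\oplus(\fp_0\cap\fp_\infty)\oplus\fp_\infty^\perp$ used to define $\pi_{\fp_0^\perp}$ is exactly the grading $\fg_{-1}\oplus\fg_0\oplus\fg_1$ attached to the complementary height-one pair $(\fp_0,\fp_\infty)$ via \eqref{eq:graded}, and that the three summands $\omega$, $[F,\omega]$, $\half[F,[F,\omega]]$ of $\eta$ land in $\fg_{-1}$, $\fg_0$, $\fg_1$ respectively. This is precisely the bracket-grading computation already carried out in the paragraph before Proposition~\ref{th:25}: $\omega\in\Omega^1(\fp_0^\perp)=\Omega^1(\fg_{-1})$, $F\in\fp_\infty^\perp=\fg_1$, and $[\fg_j,\fg_k]\subset\fg_{j+k}$ together with $\fg_j=0$ for $|j|>1$ force $[F,\omega]\in\fg_0=\fp_0\cap\fp_\infty$ and $[F,[F,\omega]]\in\fg_1=\fp_\infty^\perp$. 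So no new work is needed here; I would simply cite that expansion. I do not expect a serious obstacle: the argument is essentially the observation that "a constant projection commutes with $\D$", and the substance has all been established in the preceding paragraphs. The only mild subtlety is bookkeeping about the additive constant of integration, which is harmless since both $F^c$ and any primitive $\Phi$ carry exactly such an ambiguity.
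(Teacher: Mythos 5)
Your proposal is correct and follows exactly the paper's route: the paper establishes $\omega=\pi_{\fp_0^\perp}\eta$ via the expansion $\eta=\omega+[F,\omega]+\half[F,[F,\omega]]$ in the paragraph immediately preceding the proposition, and the conclusion is then just that the constant projection commutes with $\D$, so $\pi_{\fp_0^\perp}\Phi$ is a primitive of $\omega=\D F^c$. Your remarks on the grading bookkeeping and the integration constant match the (implicit) argument in the paper.
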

Thus $\Phi$ is a universal Christoffel transform for $f$.

\subsubsection{Example: isothermic surfaces in $S^n$}
\label{sec:exampl-isoth-surf}

Let us take $M$ to be the conformal $n$-sphere as in
\MySec\ref{sec:example:-conformal-n} and show that, in this case, our
theory recovers the classical notion of an isothermic surface.

A map $f:\Sigma\to M=\pr(\cL)$ is the same as a null line-subbundle
$\Lambda\subset\ul\R^{n+1,1}$ and the bundle of nilradicals is
$f^\perp=\Lambda\wedge\Lambda^\perp$.  Now $f$ is isothermic if there
is a non-zero, closed $\fso(n+1,1)$-valued $1$-form $\eta$ taking
values in $\Lambda\wedge\Lambda^\perp$.  

We stereoproject to compare with the classical definition.  Thus,
choose
$\Lambda_0=\Span{v_0},\Lambda_\infty=\Span{v_\infty}\in\pr(\cL)$ with
$(v_0,v_\infty)=-1$ and, as in
\MySec\ref{sec:example:-conformal-n}, identify both
$\stab(\Lambda_0)$ and $\stab(\Lambda_\infty)$ with
$\R^n=\Span{v_0,v_\infty}^\perp$.  With both $F,F^c$ viewed as maps
$\Sigma\to\R^n$, the bracket $[\D F\wedge\D F^c]$ takes values in
$\fso(n)\oplus\fso(1,1)$ and the vanishing of these two components
amounts to
\begin{align*}
(\D F_X,\D F^c_Y)&=(\D F_Y,\D F^c_X)\\
\D F_X\wedge\D F^c_Y&=\D F_Y\wedge\D F^c_X,
\end{align*}
for vector fields $X,Y$ on $\Sigma$. 
These, in turn, can be conveniently packaged as
\begin{equation}
\label{eq:3}%Clifford
\D F\wedge_{\Cl_n}\D F^c=0
\end{equation}
where here we use Clifford multiplication to multiply coefficients
so that the left hand side of \eqref{eq:3} is a $2$-form with values
in the Clifford algebra of $\R^n$.  To summarise:
\begin{prop}
$F:\Sigma\to\R^n$ is the stereoprojection of an isothermic map
$\Sigma\to\pr(\cL)$ if and only if there is (locally) a map
$F^c:\Sigma\to\R^n$ such that \eqref{eq:3} holds. 
\end{prop}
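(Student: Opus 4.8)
The plan is to read off the Proposition from Proposition~\ref{th:2} once the algebra of \MySec\ref{sec:example:-conformal-n} is made fully explicit. By Proposition~\ref{th:2}, $F$ is the stereoprojection of an isothermic map if and only if there is a closed $1$-form $\omega\in\Omega^1_\Sigma(\fp_0^\perp)$ with $[\D F\wedge\omega]=0$. Since $\fp_0^\perp=\Lambda_0\wedge\R^n$ is a fixed vector subspace of $\fg$, the vector-valued Poincar\'e lemma says that such an $\omega$ is closed precisely when, locally, $\omega=\D F^c$ for some $F^c\colon\Sigma\to\fp_0^\perp$, which under the identification $x\wedge v_0\mapsto x$ is exactly a map $F^c\colon\Sigma\to\R^n$. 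So the whole content to be proved reduces to: for $\R^n$-valued maps $F,F^c$, one has $[\D F\wedge\D F^c]=0$ if and only if \eqref{eq:3} holds (with the understanding that, for the genuine isothermic condition $\eta\neq 0$, one also wants $\D F^c\not\equiv 0$, the same caveat already lurking in Proposition~\ref{th:2}).

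For the equivalence $[\D F\wedge\D F^c]=0\Leftrightarrow\eqref{eq:3}$ I would compute the bracket explicitly via \eqref{eq:orthiso}. Writing a general value of $\D F$ as $a\wedge v_\infty$ and of $\D F^c$ as $b\wedge v_0$ with $a,b\in\R^n$, the standard formula for the Lie bracket of decomposable bivectors, together with $(v_0,v_\infty)=-1$ and $\R^n\perp\Span{v_0,v_\infty}$, yields
\[
[a\wedge v_\infty,\,b\wedge v_0]=(a,b)\,v_\infty\wedge v_0-a\wedge b.
\]
Consequently $[\D F\wedge\D F^c](X,Y)$ splits into its $\Wedge^2\Span{v_0,v_\infty}\cong\fso(1,1)$ component $\bigl((\D F_X,\D F^c_Y)-(\D F_Y,\D F^c_X)\bigr)v_\infty\wedge v_0$ and its $\Wedge^2\R^n=\fso(n)$ component $-\bigl(\D F_X\wedge\D F^c_Y-\D F_Y\wedge\D F^c_X\bigr)$. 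These live in complementary subalgebras of $\fg_0=\fp_0\cap\fp_\infty$, so $[\D F\wedge\D F^c]=0$ iff both vanish, i.e.\ $(\D F_X,\D F^c_Y)=(\D F_Y,\D F^c_X)$ and $\D F_X\wedge\D F^c_Y=\D F_Y\wedge\D F^c_X$ for all vector fields $X,Y$ on $\Sigma$.

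Finally I would identify these two identities as the degree-zero and degree-two parts of \eqref{eq:3}. In $\Cl_n$ the product of two vectors decomposes as $a\,b=a\wedge b-(a,b)$ (degree-two part plus scalar part; the sign depends on the Clifford convention and is immaterial), so
\[
(\D F\wedge_{\Cl_n}\D F^c)(X,Y)=\bigl(\D F_X\wedge\D F^c_Y-\D F_Y\wedge\D F^c_X\bigr)-\bigl((\D F_X,\D F^c_Y)-(\D F_Y,\D F^c_X)\bigr),
\]
whose vanishing is equivalent to the simultaneous vanishing of the two expressions above. Chaining the three reductions proves the Proposition. The only step with real substance is the bracket identity; the foreseeable pitfalls are merely an overall sign in the bracket of bivectors (harmless, since the resulting conditions are symmetric under $X\leftrightarrow Y$ and $F\leftrightarrow F^c$) and remembering to exclude the degenerate case $\D F^c\equiv 0$ in order to obtain a genuinely isothermic map.
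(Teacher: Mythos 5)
Your proposal is correct and follows essentially the same route as the paper: reduce via Proposition~\ref{th:2} and a local primitive $\omega=\D F^c$, then split $[\D F\wedge\D F^c]$ into its $\fso(n)$ and $\fso(1,1)$ components and package their vanishing as the Clifford identity \eqref{eq:3}. Your explicit bivector bracket computation and the remark on excluding $\D F^c\equiv0$ merely spell out details the paper leaves implicit.
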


According to \cite[\S2.1]{Burstall2004}, this coincides with the
classical formulation of the isothermic property due to Christoffel
\cite{Christoffel1867}, for $n=3$, and Palmer \cite{Palmer1988} for
$n>3$. 

One can show \cite[Lemma~1.6]{Burstall2004} that an isothermic
$F:\Sigma\to\R^n$ has rank at most $2$ so that isothermic
submanifolds are necessarily surfaces.  As we shall see in
\MySec\ref{sec:dimension-bounds}, similar restrictions hold for any
symmetric $R$-space.

\subsection{Zero curvature representation and spectral deformation}
\label{sec:zero-curv-repr}

Isothermic maps comprise an integrable system with a transformation
theory that exactly mirrors the classical theory of isothermic
surfaces in $S^3$.  This is all a consequence of the following simple
observation: for $f:\Sigma\to M$ and
$\eta\in\Omega^1_{\Sigma}(f^{\perp})$, view $\eta$ as a gauge
potential and contemplate the pencil of $G$-connections on $\ul\fg$
given by
\begin{equation}
\label{eq:2}%flat pencil
\Dt=\D+t\eta,
\end{equation}
for $t\in\R$.  Of course, $\nabla^0=\D$. 

We have:
\begin{prop}
\label{th:1}%flat pencil
$(f,\eta)$ is isothermic if and only if $\Dt$ is a flat connection for each $t\in\R$. 
\end{prop}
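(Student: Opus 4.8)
The plan is to compute the curvature of the connection $\Dt=\D+t\eta$ explicitly and to exploit the height-one hypothesis built into the notion of a symmetric $R$-space. Since $\D$ is the trivial (flat) connection on $\ul\fg=\Sigma\times\fg$ and $t\eta\in\Omega^1_\Sigma(\ul\fg)$ is a gauge potential, the curvature of $\Dt$ is the $\fg$-valued $2$-form
\[
R^t=t\,\D\eta+\tfrac{t^2}{2}[\eta\wedge\eta],
\]
where $[\eta\wedge\eta](X,Y)=2[\eta_X,\eta_Y]$ for vector fields $X,Y$ on $\Sigma$. So the first step is just to record this standard formula.

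The key observation is that the quadratic term vanishes identically. Indeed, $\eta$ takes values in the bundle $f^\perp$ of nilradicals, and for each $x\in\Sigma$ the fibre $f(x)^\perp$ is the nilradical of the \emph{height-one} parabolic $f(x)$, hence an abelian subalgebra of $\fg$ (Definition~\ref{defn:para} and the remark following it). Therefore $[\eta_X,\eta_Y]=0$ for all $X,Y$, so $[\eta\wedge\eta]=0$ and $R^t=t\,\D\eta$.

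It then follows at once that $\Dt$ is flat for every $t\in\R$ if and only if $\D\eta=0$; in fact, since $R^t$ is now linear in $t$, flatness of $\Dt$ for a single $t\neq0$ already forces $\D\eta=0$. As $\eta$ is by hypothesis a non-zero element of $\Omega^1_\Sigma(f^\perp)$, this is precisely the statement that $(f,\eta)$ is isothermic. There is no real obstacle here beyond keeping the curvature and wedge-bracket conventions consistent: the entire content is that the abelian-nilradical (equivalently, height-one) property is exactly what kills the otherwise-obstructing $t^2$ term, which is the mechanism singled out in the introduction as the decisive feature of symmetric $R$-spaces.
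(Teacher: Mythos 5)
Your argument is correct and is essentially the paper's own proof: compute $R^{\Dt}=t\,\D\eta+\tfrac{t^2}{2}[\eta\wedge\eta]$, kill the quadratic term because $\eta$ takes values in the abelian nilradicals $f^\perp$ (the height-one property), and read off that flatness for all $t$ is equivalent to $\D\eta=0$. Your extra remark that flatness for a single $t\neq 0$ already suffices is a harmless and valid strengthening.
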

\begin{proof}
We compute:
\begin{equation*}
R^{\Dt}=R^\D+t\D\eta+\half t^2[\eta\wedge \eta]=t\D\eta
\end{equation*}
since $\D$ is flat and $\eta$ takes values in the bundle of
abelian\footnote{It is here that $M$ being a \emph{symmetric}
$R$-space comes to the fore.} subalgebras $f^\perp$.  Thus $\D\eta=0$
if and only if $R^{\Dt}=0$ for all $t\in\R$. 
\end{proof}

As a first application, we show that isothermic maps possess a
\emph{spectral deformation}: that is, they come locally in
$1$-parameter families.  There are two ways to see this corresponding
to the active and passive viewpoints on gauge transformations but the
key observation is that 
\begin{equation*}
\D^{\Dt}\eta=\D\eta+t[\eta\wedge\eta]=0. 
\end{equation*}
Thus $(f,\eta)$ remains isothermic when the flat connection $\D$ on
$\ul\fg$ is replaced by the flat connection $\Dt$. 

Equivalently, we can trivialise $\Dt$:  locally, we can find
gauge transformations $\Phi_t$, unique
up to left multiplication by a constant element of $G$, such that
$\Phi_t\cdot\Dt=\D$ and then
$(\Phi_t f,\Phi_t\eta)$ is isothermic in the usual sense.  Here, and
below, $\Phi_t\cdot\Dt=\Phi_t\circ\Dt\circ\Phi_t^{-1}$ is the usual
left action of gauge transformations on connections.

We write $\ttrans_t f$ for $\Phi_t f$ and, following Bianchi
\cite{Bianchi1905}, say that $\ttrans_t f$ is a
\emph{$T$-transform}\footnote{In the case of isothermic surfaces in
$S^3$, it will follow from the results of \MySec\ref{sec:curved-flats-z}
that our notion of $T$-transform coincides with the classical one due
to Calapso \cite{Calapso1903} and Bianchi \cite{Bianchi1905} since
both are given by the curved flat spectral deformation of Darboux
pairs \cite{Burstall1997a}.} of
$f$. 

The gauge transformation $\Phi_t$ does more than intertwine $\Dt$ and
$\D$: we clearly have $\Phi_t\cdot(\Dt+s\eta)=\D+s\Phi_t\eta$, for
all $s\in\R$.  Now, if $\Phi_s^t$ implements the transform
$\ttrans_s$ of $\ttrans_t f$ so that
$\Phi_s^t\cdot(\D+s\Phi_t\eta)=\D$, we have
$(\Phi^t_s\Phi_t)\cdot\nabla^{s+t}=\D$ so that
$\Phi^t_s\Phi_t=\Phi_{s+t}$.  We therefore obtain an identity on
$T$-transforms due to Hertrich-Jeromin--Musso--Nicolodi
\cite{Hertrich-Jeromin2001} for the case where $M$ is the conformal
$3$-sphere:
\begin{equation}\label{eq:9}%T-trans additive
\ttrans_{s+t}=\ttrans_s\circ\ttrans_t. 
\end{equation}

\subsection{Darboux transforms}\label{sec:darboux}

Flat connections have many parallel sections locally and, in our
situation, we can use the parallel sections of $\Dt$ to construct new
isothermic surfaces in the dual symmetric $R$-space. 

\begin{defn}
Let $(f,\eta)$ be isothermic and $m\in\R^\times$.  A map $\hat
f:\Sigma\to M^*$ into the dual $R$-space is a \emph{Darboux transform
of $f$ with parameter $m$} if
\begin{enumerate}
\item $\hat f \subset \ul\fg$ is $\nabla^m$-parallel;
\item $f$ and $\hat f$ are pointwise complementary parabolic subalgebras. 
\end{enumerate}
In this case, we write $\hat f=\darb_m f$. 
\end{defn}

\begin{rem}\item[]
\begin{enumerate}
\item The condition that $\hat f$ be $\nabla^m$-parallel means that
any section $\zeta$ of $\hat f$ has
$\nabla^m\zeta\in\Omega^1_\Sigma(\hat f)$.  Since $\nabla^m$ is
flat, such $\hat f$ are determined by their value at a fixed point
$p_0\in \Sigma$ and any complement to $f(p_0)$ extends locally to a
parallel $\hat f$ (which may eventually fail to be pointwise
complementary to $f$). 
\item That $(f,\hat f)$ be pointwise complementary amounts to
demanding that $(f,\hat f)$ takes values in the symmetric space $Z$
of complementary pairs.  As we shall see in \MySec\ref{sec:curved},
$\hat f$ is a Darboux transform if and only if $(f,\hat f)$ is a
\emph{curved flat} in the sense of Ferus--Pedit \cite{Ferus1996}. 
\end{enumerate}
\end{rem}

We are going to show that $\hat f$ is isothermic and that the
relationship between $f$ and $\hat f$ is reciprocal: $f=\darb_m \hat
f$.  In fact, we shall show more and exhibit an explicit gauge
transformation between the two pencils of flat connections.  All of
this will take a little preparation. 

So let $(f,\eta)$ be isothermic, $\hat f=\darb_m f$ and contemplate
the bundle decomposition
\begin{equation}
\label{eq:5}% decomp of \g
\ul\fg=f^\perp\oplus (f\cap\hat f)\oplus \hat f^\perp. 
\end{equation}
This is the eigenbundle decomposition of $\ad\xi$ where
$\xi(p)=\xi_{f(p)}^{\hat f(p)}$ is the grading element of the
complementary pair at $p\in\Sigma$.  There is a corresponding
decomposition of the flat connection $\D$: since $\xi$ takes values
in a single conjugacy class, $\D\xi$ takes values in
$\im\ad\xi=f^\perp+\hat f^\perp$ on which $\ad\xi$ is invertible and
we may write $\D\xi=[\xi,\beta+\hat\beta]$ where
$\beta\in\Omega^1_\Sigma(\hat f^\perp)$ and $\hat
\beta\in\Omega^1_\Sigma(f^\perp)$.  Now define a new
$G$-connection $\cD$ by
\begin{equation}
\label{eq:4}%decomp of d
\D=\cD-\beta-\hat\beta
\end{equation}
and note that $\cD\xi=0$ so that each summand in \eqref{eq:5} is
$\cD$-parallel. 

We have that $\hat f$ is $\nabla^m$-parallel.  Since $\hat f$ is also
$\cD$-parallel and preserved by $\ad\beta$, we conclude that it is also
preserved by $\ad(m\eta-\hat\beta)$.  Since $\hat f$ is
self-normalising, this gives that $m\eta-\hat\beta$ takes values in
$\hat f\cap f^\perp=\{0\}$ so that $\hat\beta=m\eta$. 

The symmetry of the situation now suggests that if we define
$\hat\eta=\tfrac{1}{m}\beta\in\Omega^1_\Sigma(\hat f^\perp)$
then $(\hat f,\hat\eta)$ should be isothermic.  This will be the case
if and only if the connections $\hat\nabla^t=\D+t\hat\eta$ are flat
for all $t\in\R$ and we will prove this by writing down an explicit
gauge transformation intertwining $\Dt$ and $\hat\nabla^t$. 

For this, we introduce a class of elements of $\Aut(\fg)$ that will
appear frequently in what follows.  For $(\fp,\fq)\in M\times M^*$
complementary and $s\in\R^\times$, define $\Gamma_\fp^\fq(s)$ by
\begin{equation}
\label{eq:6}%simple factor
\Gamma_\fp^\fq(s)=
\begin{cases}
s& \text{on $\fq^\perp$}\\
1& \text{on $\fp\cap\fq$}\\
s^{-1}& \text{on $\fp^\perp$}\end{cases}
\end{equation}
We note that $s\mapsto\Gamma_\fp^\fq(s)$ is a homomorphism
$\R^\times\to\Aut(\fg)$, that $\Gamma_\fp^\fq(s)=\exp((\ln
s)\act\xi_{\fp}^{\fq})\in G$, for $s>0$, and that $\Gamma(-1)$ is the
involution at $(\fp,\fq)$ defining the symmetric space structure of
$Z$ in Proposition~\ref{th:3}. 

\begin{rem}
These $\Gamma_\fp^\fq$ are the analogues, for real semisimple $G$,
of the \emph{simple factors} considered by Terng--Uhlenbeck
\cite{Terng2000}.  We shall return to this point in
\MySec\ref{sec:curved-flats}. 
\end{rem}

Now consider the action of the gauge transformation $\Gamma_f^{\hat
f}(s)$ on the connection $\Dt$:
\begin{equation}\label{eq:7}%gauge transf
\begin{split}
\Gamma_f^{\hat f}(s)\cdot\Dt&=\Gamma_f^{\hat f}(s)\cdot
(\cD-\beta-\tfrac{m-t}{m}\hat\beta)\\
&=\cD - s\beta-s^{-1}\tfrac{m-t}{m}\hat\beta,
\end{split}
\end{equation}
since $\Gamma_f^{\hat f}(s)$ is $\cD$-parallel, having constant
eigenvalues and $\cD$-parallel eigenspaces, and both $\beta$,
$\hat\beta$ take values in eigenspaces of $\Gamma_f^{\hat f}$. 

In particular, 
\begin{equation*}
\Gamma_f^{\hat f}(1-\tfrac{t}{m})\cdot\Dt=\cD
-\tfrac{m-t}{m}\beta-\hat\beta=\hat\nabla^t 
\end{equation*}
whence each $\hat\nabla^t$ is flat (for $t\neq m$ and then for all
$t$ by continuity) since $\Dt$ is flat.  Moreover,
$\hat\nabla^m=\cD-\hat\beta$ so that $f$ is clearly
$\hat\nabla^m$-parallel, that is $f=\darb_m \hat f$.  To summarise:
\begin{thm}
\label{th:4}%D-trans are isothermic
Let $(f,\eta)$ be isothermic and $\hat f=\darb_m f$ a Darboux
transform of $f$ with $\hat\eta$ defined as above.  Then:
\begin{enumerate}
\item $(\hat f,\hat\eta)$ is isothermic;
\item $\Gamma_f^{\hat f}(1-\frac{t}{m})\cdot(\D+t\eta)= \D+t\hat\eta$ for all $t\neq m$;
\item $f=\darb_m \hat f$. 
\end{enumerate}
\end{thm}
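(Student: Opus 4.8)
The plan is to assemble the structural facts already in place before the statement and then verify the three assertions, proving the gauge identity (2) first since (1) and (3) follow from it almost at once. I would start by fixing notation: let $\xi(p)=\xi_{f(p)}^{\hat f(p)}$ be the pointwise grading element, so $\ul\fg=f^\perp\oplus(f\cap\hat f)\oplus\hat f^\perp$ is its $\ad$-eigenbundle splitting; let $\cD$ be the connection with $\D=\cD-\beta-\hat\beta$, which satisfies $\cD\xi=0$ and hence makes each summand — and so also $f=f^\perp\oplus(f\cap\hat f)$ and $\hat f=(f\cap\hat f)\oplus\hat f^\perp$, by \eqref{eq:graded} — parallel; and recall the identity $\hat\beta=m\eta$ established above. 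These yield the normal forms $\Dt=\cD-\beta-\tfrac{m-t}{m}\hat\beta$ and $\D+t\hat\eta=\cD-\tfrac{m-t}{m}\beta-\hat\beta$.

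To prove (2) I would apply the gauge transformation $\Gamma_f^{\hat f}(s)$ to $\Dt$. Since $\Gamma_f^{\hat f}(s)$ has constant eigenvalues on the $\cD$-parallel eigenbundles it is $\cD$-parallel, and since $\beta$, $\hat\beta$ take values in its $s$- and $s^{-1}$-eigenbundles $\hat f^\perp$ and $f^\perp$ one gets $\Gamma_f^{\hat f}(s)\cdot\Dt=\cD-s\beta-s^{-1}\tfrac{m-t}{m}\hat\beta$. The choice $s=1-\tfrac{t}{m}$, which lies in $\R^\times$ precisely when $t\neq m$, reduces the right-hand side to $\cD-\tfrac{m-t}{m}\beta-\hat\beta=\D+t\hat\eta$, giving (2).

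Assertion (1) is then immediate from Proposition~\ref{th:1}: every $\Dt$ is flat, hence so is the gauge-equivalent connection $\D+t\hat\eta$ for $t\neq m$, and since $R^{\D+t\hat\eta}=t\,\D\hat\eta$ is polynomial in $t$ it vanishes at $t=m$ as well; thus $\D\hat\eta=0$ and $\hat\eta\in\Omega^1_\Sigma(\hat f^\perp)$ exhibits $(\hat f,\hat\eta)$ as isothermic by the converse half of Proposition~\ref{th:1}. For (3) I would evaluate the pencil of $(\hat f,\hat\eta)$ at parameter $m$: $\D+m\hat\eta=\cD-\hat\beta$, and because $\cD$ preserves $f$ while $\hat\beta=m\eta$ takes values in the nilradical $f^\perp$ of $f$, this connection preserves $f$; so $f$ is parallel for the $\nabla^m$ attached to $(\hat f,\hat\eta)$ and, being pointwise complementary to $\hat f$, is by definition $\darb_m\hat f$.

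I expect no real obstacle. The one genuine input is the identity $\hat\beta=m\eta$, already obtained by using the $\nabla^m$- and $\cD$-parallelism of $\hat f$ together with its preservation under $\ad\beta$ to see that $\ad(m\eta-\hat\beta)$ preserves $\hat f$, whence self-normality of the parabolic (Lemma~\ref{lem:normal}) forces $m\eta-\hat\beta\in\hat f\cap f^\perp=\{0\}$; everything else is bookkeeping with the eigenbundle splitting, and the only place to slip is in keeping the exponents of $s$ straight in the $\Gamma_f^{\hat f}(s)$ computation.
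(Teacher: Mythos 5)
Your proposal is correct and follows essentially the same route as the paper: the eigenbundle splitting with $\cD$-parallel summands, the identity $\hat\beta=m\eta$, the computation $\Gamma_f^{\hat f}(s)\cdot\Dt=\cD-s\beta-s^{-1}\tfrac{m-t}{m}\hat\beta$ specialised at $s=1-\tfrac tm$, flatness at $t=m$ by the polynomial (continuity) argument, and evaluation of $\hat\nabla^m=\cD-\hat\beta$ to get $f=\darb_m\hat f$. Nothing to add.
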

There is a converse to Theorem~\ref{th:4} which detects when two
isothermic surfaces are Darboux transforms of each other:
\begin{prop}
\label{th:24}%criterion for Darboux pair
Let $(f,\eta)$, $(\hat f,\hat\eta)$ be pointwise complementary isothermic
surfaces such that $\Gamma_f^{\hat
f}(1-\frac{t}{m})\cdot(\D+t\eta)=\D+t\hat\eta$, for all $t\neq m$.

Then $\hat f=\darb_mf$ (whence $f=\darb_m\hat f$).
\end{prop}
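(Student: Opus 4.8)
The plan is to reduce the hypothesis to the two identities $\hat\beta = m\eta$ and $\hat\eta = \tfrac{1}{m}\beta$ that are implicit in the proof of Theorem~\ref{th:4}, and then read off $\nabla^m$-parallelism of $\hat f$ directly.

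First I would run the canonical construction from the proof of Theorem~\ref{th:4} verbatim, noting that it uses only the complementary pair $(f,\hat f)$ and not the $1$-forms: let $\xi$ be the field of grading elements $\xi(p)=\xi_{f(p)}^{\hat f(p)}$, form the $\ad\xi$-eigenbundle decomposition $\ul\fg=f^\perp\oplus(f\cap\hat f)\oplus\hat f^\perp$, write $\D\xi=[\xi,\beta+\hat\beta]$ with $\beta\in\Omega^1_\Sigma(\hat f^\perp)$ and $\hat\beta\in\Omega^1_\Sigma(f^\perp)$, and define $\cD$ by $\D=\cD-\beta-\hat\beta$ as in \eqref{eq:4}; each of the three summands is then $\cD$-parallel. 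Since $\eta$ and $\hat\eta$ take values in $f^\perp$ and $\hat f^\perp$ respectively, I can rewrite $\D+t\eta=\cD-\beta-(\hat\beta-t\eta)$ and $\D+t\hat\eta=\cD-(\beta-t\hat\eta)-\hat\beta$, where in each case the two bracketed $1$-forms lie in $\hat f^\perp$ and $f^\perp$ respectively.

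Next, applying the gauge transformation $\Gamma_f^{\hat f}(\tfrac{m-t}{m})$ exactly as in the computation \eqref{eq:7} — it is $\cD$-parallel with $\cD$-parallel eigenbundles $\hat f^\perp$ (eigenvalue $\tfrac{m-t}{m}$) and $f^\perp$ (eigenvalue $\tfrac{m}{m-t}$) — I obtain
\[
\Gamma_f^{\hat f}\!\left(\tfrac{m-t}{m}\right)\cdot(\D+t\eta)=\cD-\tfrac{m-t}{m}\,\beta-\tfrac{m}{m-t}\,(\hat\beta-t\eta).
\]
Comparing this with $\D+t\hat\eta=\cD-(\beta-t\hat\eta)-\hat\beta$ (the hypothesis) and equating the $\hat f^\perp$- and $f^\perp$-components of the direct sum — legitimate since both sides carry the same $\cD$ — gives, for any single $t\notin\{0,m\}$, the identities $\hat\eta=\tfrac{1}{m}\beta$ and $\hat\beta=m\eta$.

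Finally, substituting $\hat\beta=m\eta$ into \eqref{eq:4} yields $\nabla^m=\D+m\eta=\cD-\beta$. Since $\beta$ takes values in $\hat f^\perp$, which is an ideal of $\hat f$, and since $\hat f=(f\cap\hat f)\oplus\hat f^\perp$ is $\cD$-parallel, the connection $\cD-\beta$ preserves $\hat f$; thus $\hat f$ is $\nabla^m$-parallel. Together with the hypothesis that $f$ and $\hat f$ are pointwise complementary, this is precisely $\hat f=\darb_m f$, and the parenthetical $f=\darb_m\hat f$ then follows from part (3) of Theorem~\ref{th:4} (equivalently, $\hat\eta=\tfrac{1}{m}\beta$ gives $\D+m\hat\eta=\cD-\hat\beta$, which preserves $f$ by the symmetric argument). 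I do not expect a serious obstacle: the only delicate point is the bookkeeping of the three-summand decomposition when matching terms, together with the observation — already the heart of Theorem~\ref{th:4} — that $\hat\beta=m\eta$ collapses $\nabla^m$ to the manifestly $\hat f$-preserving connection $\cD-\beta$.
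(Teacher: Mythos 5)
Your proof is correct and takes essentially the same route as the paper's: decompose $\D=\cD-\beta-\hat\beta$, gauge by $\Gamma_f^{\hat f}(1-\tfrac{t}{m})$, compare the $f^\perp$- and $\hat f^\perp$-components to obtain $\hat\beta=m\eta$ and $\beta=m\hat\eta$, and conclude that $\nabla^m=\cD-\beta$ preserves $\hat f$. (Your two identities are in fact written the right way round; the paper's displayed equations transpose the factor $m$, but its conclusion $\D+m\eta=\cD-\beta$ matches yours.)
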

\begin{proof}
As above, write $\D=\cD-\beta-\hat\beta$ so that we have
\begin{equation*}
\cD-\tfrac{m-t}{m}\beta-\tfrac{m}{m-t}(\hat\beta-t\eta)=\cD-\beta-\hat\beta+t\hat\eta
\end{equation*}
and compare components in $f^{\perp},\hat f^{\perp}$ to get
\begin{equation*}
m\beta=\hat\eta,\qquad m\hat\beta=\eta.
\end{equation*}
In particular, $\D+m\eta=\cD-\beta$ so that $\hat f$ is
$(\D+m\eta)$-parallel and thus a Darboux transform of $f$ with
parameter $m$.
\end{proof}

Part~(2) of Theorem~\ref{th:4} enables us to prove a permutability
theorem relating Darboux and $T$-transforms: indeed, with $\hat
f=\darb_s f$ and $\hat\Dt=\D+t\hat\eta$, the $T$-transform $\ttrans_f
\hat f$ is given by $\hat\Phi_t\hat f$ for $\hat\Phi_t$ a gauge
transformation with $\hat\Phi_t\cdot\hat\Dt=\D$.  On the other hand,
the $T$-transform of $f$ is implemented by $\Phi_t$ with
$\Phi_t\cdot\Dt=\D$ so that we conclude that
\begin{equation*}
\hat\Phi_t\Gamma_f^{\hat f}(1-\tfrac{t}{s})=\Phi_t, 
\end{equation*}
up to left multiplication by constants in $G$.  Apply this to $\hat
f$ to conclude that $\hat\Phi_t\hat f=\Phi_t \hat f$.  Now, $\hat f$
is $\nabla^s=\Dt+(s-t)\eta$-parallel, whence $\Phi_t \hat f$ is
$\D+(s-t)\Phi_t\eta$-parallel.  Thus $\Phi_t \hat f$ is a Darboux
transform of $\Phi_t f$ with parameter $s-t$.  To summarise, we have
proved a result which can be found in
\cite[Theorem~5.6.15]{Hertrich-Jeromin2003} for the classical case:
\begin{thm}
\label{th:5}%Perm of T and D
For $t,s\in\R$ with $0\neq s\neq t$, $\ttrans_t\darb_s=\darb_{s-t}\ttrans_t$. 
\end{thm}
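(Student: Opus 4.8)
The plan is to run the gauge-transformation bookkeeping set up in \MySec\ref{sec:darboux}, using part~(2) of Theorem~\ref{th:4} as the pivot. Fix isothermic $(f,\eta)$, a Darboux transform $\hat f=\darb_sf$ and the associated $1$-form $\hat\eta$, so $(\hat f,\hat\eta)$ is isothermic by Theorem~\ref{th:4}(1); write $\hat\nabla^t=\D+t\hat\eta$. Locally choose gauge transformations $\Phi_t,\hat\Phi_t$ with $\Phi_t\cdot\Dt=\D$ and $\hat\Phi_t\cdot\hat\nabla^t=\D$, so that by definition $\ttrans_tf=\Phi_tf$, $\ttrans_t\eta=\Phi_t\eta$ and $\ttrans_t\hat f=\hat\Phi_t\hat f$.

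First I would compare the two trivialisations. Theorem~\ref{th:4}(2) with $m=s$ gives $\Gamma_f^{\hat f}(1-\tfrac{t}{s})\cdot\Dt=\hat\nabla^t$ — here $s\neq t$ is needed so the simple factor \eqref{eq:6} is defined — hence $\hat\Phi_t\Gamma_f^{\hat f}(1-\tfrac{t}{s})$ also trivialises $\Dt$. Since a flat connection determines its trivialising gauge up to left multiplication by a constant of $G$, we may normalise $\Phi_t$ so that $\hat\Phi_t\Gamma_f^{\hat f}(1-\tfrac{t}{s})=\Phi_t$. Now $\Gamma_f^{\hat f}(\cdot)$ acts by scalars on each summand of the decomposition \eqref{eq:5}, so it preserves the subbundle $\hat f=(f\cap\hat f)\oplus\hat f^\perp$; applying the identity above to $\hat f$ therefore yields $\Phi_t\hat f=\hat\Phi_t\hat f=\ttrans_t\hat f$.

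It remains to recognise $\Phi_t\hat f$ as a Darboux transform of $\ttrans_tf$. Since $\hat f=\darb_sf$, the subbundle $\hat f$ is $\nabla^s$-parallel; writing $\nabla^s=\Dt+(s-t)\eta$ and gauging by $\Phi_t$ gives $\Phi_t\cdot\nabla^s=\D+(s-t)\Phi_t\eta$, the $(s-t)$-member of the pencil belonging to $(\ttrans_tf,\ttrans_t\eta)$, and parallelism is preserved under gauging, so $\Phi_t\hat f$ is parallel for that connection. A gauge transformation acts pointwise, so $\Phi_t\hat f$ and $\Phi_tf=\ttrans_tf$ stay pointwise complementary; these are precisely the two defining conditions, so $\Phi_t\hat f=\darb_{s-t}(\ttrans_tf)$, the parameter being nonzero again because $s\neq t$ (and $\darb_sf$ was meaningful because $s\neq0$). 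Combining with the previous paragraph, $\ttrans_t(\darb_sf)=\darb_{s-t}(\ttrans_tf)$. The one point requiring care is that $\Phi_t$, $\hat\Phi_t$, and the Darboux and $T$-transforms are all only defined up to the residual $G$-action, so the assertion is an equality of transforms after compatible normalisation; once $\Phi_t$ is pinned down the rest is routine, the only genuine checks being that $\Gamma_f^{\hat f}$ stabilises $\hat f$ and that gauging preserves parallelism and pointwise complementarity.
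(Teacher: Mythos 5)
Your proof is correct and follows essentially the same route as the paper's: invoke Theorem~\ref{th:4}(2) to identify $\hat\Phi_t\Gamma_f^{\hat f}(1-\tfrac{t}{s})$ with $\Phi_t$ up to a constant, apply it to $\hat f$ (using that $\Gamma_f^{\hat f}$ stabilises $\hat f$), and then observe that $\hat f$ is $\nabla^s=\Dt+(s-t)\eta$-parallel so $\Phi_t\hat f$ is $\bigl(\D+(s-t)\Phi_t\eta\bigr)$-parallel. Your explicit checks of complementarity and the stabilisation of $\hat f$ are points the paper leaves tacit, but the argument is the same.
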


There is a similar result, due to Bianchi \cite[\S3]{Bianchi1905} in
the classical setting, that uses the $T$-transform to intertwine
Christoffel and Darboux transformations.  Again, the key for us is to
find the gauge transformation relating the pencils of flat
connections corresponding to an isothermic map and its Christoffel
transform.  For this, let $(f,\eta)$ be
isothermic and fix a complementary pair $(\fp_0,\fp_\infty)\in
M\times M^*$.  Then we have maps $F:\Sigma\to\fp_\infty^\perp,
F^c:\Sigma\to\fp_0^\perp$ such that $f=\exp(F)\act\fp_0$,
$\eta=\exp(F)\act\D F^c$ while the corresponding Christoffel transform
is $(f^c,\eta^c)$ with $f^c=\exp(F^c)\act\fp_\infty$ and
$\eta^c=\exp(F^c)\act\D F$.  Since $F$ takes values in a fixed abelian
subalgebra, we have
\begin{equation*}
\D=\exp(F)\cdot(\D+\D F)
\end{equation*}
so that
\begin{equation*}
\D+t\eta=\exp(F)\cdot(\D+\D F + t\D F^c)
\end{equation*}
and, similarly,
\begin{equation*}
\D+t\eta^c=\exp(F^c)\cdot(\D+t\D F + \D F^c). 
\end{equation*}
Thus define
$\Gamma^c(t)=\exp(F^c)\Gamma_{\fp_0}^{\fp_\infty}(t)\exp(-F)$ and
conclude:
\begin{lemm}\label{th:6}%gauge trans for Ch. transform
With $f,f^c$ a Christoffel pair of isothermic surfaces and
$\Gamma^c(t)$ defined as above,
$\Gamma^c(t)\cdot(\D+t\eta)=\D+t\eta^c$, for all
$t\neq 0$. 
\end{lemm}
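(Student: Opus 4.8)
The plan is to assemble the three gauge identities displayed immediately above the statement; essentially all the real computation has already been carried out there, and what remains is to see how the constant automorphism $\Gamma_{\fp_0}^{\fp_\infty}(t)$ acts. First I would write out $\Gamma^c(t)=\exp(F^c)\,\Gamma_{\fp_0}^{\fp_\infty}(t)\,\exp(-F)$ and apply it to $\D+t\eta$. Using the identity $\D+t\eta=\exp(F)\cdot(\D+\D F+t\D F^c)$, the trailing $\exp(-F)$ cancels the $\exp(F)$, leaving
\[
\Gamma^c(t)\cdot(\D+t\eta)=\exp(F^c)\cdot\bigl(\Gamma_{\fp_0}^{\fp_\infty}(t)\cdot(\D+\D F+t\D F^c)\bigr),
\]
so the lemma reduces to evaluating $\Gamma_{\fp_0}^{\fp_\infty}(t)\cdot(\D+\D F+t\D F^c)$.

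For that step I would use two facts: (i) $\Gamma_{\fp_0}^{\fp_\infty}(t)$ is \emph{constant} over $\Sigma$, so as a gauge transformation it fixes the trivial connection $\D$ and acts on the $1$-form part through the automorphism action, $g\cdot(\D+\alpha)=\D+g\act\alpha$; and (ii) by construction $\D F$ is valued in $\fp_\infty^\perp$ and $\D F^c$ in $\fp_0^\perp$, which by \eqref{eq:6} are precisely the $t$- and $t^{-1}$-eigenbundles of $\Gamma_{\fp_0}^{\fp_\infty}(t)$. Hence $\Gamma_{\fp_0}^{\fp_\infty}(t)\act\D F=t\,\D F$ and $\Gamma_{\fp_0}^{\fp_\infty}(t)\act(t\,\D F^c)=\D F^c$, so
\[
\Gamma_{\fp_0}^{\fp_\infty}(t)\cdot(\D+\D F+t\D F^c)=\D+t\,\D F+\D F^c.
\]
Substituting this back and invoking the companion identity $\D+t\eta^c=\exp(F^c)\cdot(\D+t\D F+\D F^c)$ gives $\Gamma^c(t)\cdot(\D+t\eta)=\D+t\eta^c$, which is the claim.

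I do not expect a genuine obstacle: the statement is a piece of bookkeeping, which is exactly why the manipulations $\D=\exp(F)\cdot(\D+\D F)$ and its $t$-dependent refinements were set up in the paragraph preceding it. The one point that deserves a word of care is that for $t<0$ the element $\Gamma_{\fp_0}^{\fp_\infty}(t)$ lies in $\Aut(\fg)$ but not in $\Ad G$, so the gauge action must be read in the extended sense of acting through $\Aut(\fg)$; the formula $g\cdot(\D+\alpha)=\D+g\act\alpha$ for constant $g$ is unaffected, and the hypothesis $t\neq0$ is precisely what makes $\Gamma_{\fp_0}^{\fp_\infty}(t)$, and hence $\Gamma^c(t)$, well defined.
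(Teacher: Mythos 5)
Your proposal is correct and is exactly the argument the paper intends: the three displayed gauge identities preceding the lemma, plus the observation that the constant automorphism $\Gamma_{\fp_0}^{\fp_\infty}(t)$ fixes $\D$ and scales $\D F$, $\D F^c$ by $t$, $t^{-1}$ via \eqref{eq:6}, so that conjugating by $\exp(-F)$ and $\exp(F^c)$ converts $\D+t\eta$ into $\D+t\eta^c$. Your remark that for $t<0$ one works in $\Aut(\fg)$ rather than $G$ is a fair point of care and is consistent with the paper's own use of $\Gamma_\fp^\fq$ as a homomorphism $\R^\times\to\Aut(\fg)$.
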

Note that $\Gamma^c(t)f\equiv\fp_0$ while
$\Gamma^c(t)\fp_\infty\equiv f^c$. 

We can now argue as for Theorem~\ref{th:5} above: the $T$-transform
of $f^c$ is $\Phi^c_t f^c$ where $\Phi^c_t\cdot(\D+t\eta^c)=\D$. 
Thus Lemma~\ref{th:6} tells us that
\begin{equation*}
\Phi^c_t\Gamma^c(t)=\Phi_t
\end{equation*}
so that $\Phi^c_t f^c=\Phi_t\Gamma^c(t)^{-1}f^c=\Phi_t \fp_\infty$. 
Now the constant section $\fp_\infty$ is $\D=\Dt-t\eta$-parallel so
that $\Phi_t \fp_\infty$ is $\D-t\Phi_t\eta$-parallel, that is, $\Phi_t
\fp_\infty$ is a Darboux transform of $\Phi_t f$ with parameter
$-t$.  To summarise:
\begin{thm}
\label{th:7}% T, C and D
For $t\in\R^\times$, $\ttrans_t f^c=\darb_{-t}\ttrans_t f$. 
\end{thm}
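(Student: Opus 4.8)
The plan is to run the argument used for Theorem~\ref{th:5}, with the simple factor $\Gamma_f^{\hat f}$ there replaced by the gauge transformation $\Gamma^c(t)$ of Lemma~\ref{th:6}. I would fix a complementary pair $(\fp_0,\fp_\infty)$ and the associated stereoprojections $F,F^c$, so that $f=\exp(F)\act\fp_0$, $f^c=\exp(F^c)\act\fp_\infty$, $\eta=\exp(F)\act\D F^c$ and $\eta^c=\exp(F^c)\act\D F$. Choose gauge transformations $\Phi_t,\Phi^c_t$, unique up to left multiplication by a constant in $G$, with $\Phi_t\cdot(\D+t\eta)=\D$ and $\Phi^c_t\cdot(\D+t\eta^c)=\D$, so that by definition $\ttrans_t f=\Phi_t f$ and $\ttrans_t f^c=\Phi^c_t f^c$. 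Recall also that $(\ttrans_t f,\Phi_t\eta)$ is itself isothermic, with pencil of flat connections $s\mapsto\D+s\,\Phi_t\eta$.

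The first step is to compose the gauge transformations. By Lemma~\ref{th:6}, $\Phi^c_t\Gamma^c(t)$ also trivialises $\D+t\eta$, so uniqueness gives $\Phi^c_t\Gamma^c(t)=\Phi_t$ up to a constant in $G$. Since $\Gamma^c(t)\fp_\infty\equiv f^c$ (as noted after Lemma~\ref{th:6}), this yields $\ttrans_t f^c=\Phi^c_t f^c=\Phi_t\,\Gamma^c(t)^{-1}f^c=\Phi_t\fp_\infty$. Thus the theorem reduces to identifying the transported constant subbundle $\Phi_t\fp_\infty$ as the Darboux transform $\darb_{-t}\ttrans_t f$.

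For this last step I would use that the constant subbundle $\fp_\infty$ is $\D$-parallel, together with the identity $\D=(\D+t\eta)-t\eta$ of gauge potentials. Applying $\Phi_t$, which carries $\D+t\eta$ to $\D$, gives $\Phi_t\cdot\D=\D-t\,\Phi_t\eta$, whence $\Phi_t\fp_\infty$ is parallel for $\D-t\,\Phi_t\eta$; this is precisely the member of the pencil $\D+s\,\Phi_t\eta$ of $(\ttrans_t f,\Phi_t\eta)$ at spectral parameter $s=-t$. Moreover $\fp_\infty\in M^*$ is pointwise complementary to $f$ because $f$ takes values in $\Omega_{\fp_\infty}$, and $\Phi_t$ acts by inner automorphisms of $\fg$, so $\Phi_t\fp_\infty$ is a map $\Sigma\to M^*$ pointwise complementary to $\ttrans_t f$; since $-t\in\R^\times$, this says exactly $\Phi_t\fp_\infty=\darb_{-t}\ttrans_t f$. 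Combined with the previous paragraph, $\ttrans_t f^c=\darb_{-t}\ttrans_t f$.

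Everything here is routine once the gauge-theoretic dictionary is in place; the only point needing care — and the origin of the parameter $-t$ — is the bookkeeping of how the spectral parameter of the pencil shifts when the $\D$-parallel subbundle $\fp_\infty$ is pushed forward by $\Phi_t$, entirely parallel to the shift $s\mapsto s-t$ appearing in Theorem~\ref{th:5}.
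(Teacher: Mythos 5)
Your proposal is correct and follows essentially the same route as the paper: compose the trivialising gauge transformations via Lemma~\ref{th:6} to get $\ttrans_tf^c=\Phi_t\fp_\infty$, then observe that the $\D$-parallel constant bundle $\fp_\infty$ becomes $(\D-t\Phi_t\eta)$-parallel under $\Phi_t$, identifying it as $\darb_{-t}\ttrans_tf$. Your explicit check of pointwise complementarity of $\Phi_t\fp_\infty$ with $\ttrans_tf$ is a small detail the paper leaves implicit, but otherwise the arguments coincide.
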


\subsection{Christoffel transform as a blow-up of Darboux transforms}
\label{sec:christ-transf-as}

If we let $t\to 0$ in Theorem~\ref{th:7}, it appears that the Christoffel transform
$f^c$ is some kind of limit of Darboux transforms.  This is indeed
the case as we now show.

Fix $(f,\eta)$ isothermic, $f:\Sigma\to M$, and contemplate the gauge
transformations $\Phi_s:\Sigma\to G$ that implement the
$T$-transforms of $f$.  Thus $\Phi_s\cdot\nabla^s=\D$ and each
$\Phi_s$ is determined uniquely up to left multiplication by a
constant in $\Aut(\fg)$.  In particular, we may assume that
$\Phi_{s}$ depends smoothly on $s$ and that $\Phi_0=1$.  Now define
$\dot\Phi:\Sigma\to\fg$ by
\begin{equation*}
\dot\Phi=\left.\frac{\partial\Phi_s}{\partial s}\right|_{s=0}.
\end{equation*}
Then:
\begin{prop}
$\D\dot\Phi=\eta$.
\end{prop}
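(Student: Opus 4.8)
The plan is to read the identity off the defining relation $\Phi_s\cdot\nabla^s=\D$ by differentiating in $s$ at $s=0$. First I would record the gauge action in potential form: for a $\fg$-valued $1$-form $A$ and a smooth map $\Phi\colon\Sigma\to G$, the left action $\Phi\cdot(\D+A)=\Phi\circ(\D+A)\circ\Phi^{-1}$ is given by $\D+\Ad(\Phi)A-(\D\Phi)\Phi^{-1}$, where $(\D\Phi)\Phi^{-1}\in\Omega^1_\Sigma(\fg)$ is the pulled-back right Maurer--Cartan form. Taking $A=s\eta$, the condition $\Phi_s\cdot\nabla^s=\D$ thus amounts to the identity
\begin{equation*}
s\,\Ad(\Phi_s)\,\eta=(\D\Phi_s)\Phi_s^{-1}
\end{equation*}
of $\fg$-valued $1$-forms on $\Sigma$, holding for all $s$ near $0$. (Flatness of $\nabla^s$, i.e.\ Proposition~\ref{th:1}, is what makes the trivialising $\Phi_s$ exist in the first place; for the computation itself only this identity is needed.)

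Then I would differentiate both sides in $s$ and evaluate at $s=0$, using $\Phi_0=1$. On the left, the Leibniz rule gives $\Ad(\Phi_s)\eta+s\,\partial_s\bigl(\Ad(\Phi_s)\eta\bigr)$, which at $s=0$ collapses to $\eta$ since $\Ad(\Phi_0)=\Id$. On the right, $\partial_s\bigl((\D\Phi_s)\Phi_s^{-1}\bigr)$ at $s=0$ has two terms; because $\Phi_0\equiv 1$ is constant we have $\D\Phi_0=0$, which kills the term involving $\partial_s(\Phi_s^{-1})$, and what remains is $\D\bigl(\partial_s\Phi_s|_{s=0}\bigr)\cdot\Phi_0^{-1}=\D\dot\Phi$. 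Comparing the two sides yields $\D\dot\Phi=\eta$.

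There is no real obstacle here beyond bookkeeping with the gauge-action sign convention; the one point deserving a word is the meaning of $\D$ applied to the $G$-valued maps $\Phi_s$ and the interchange of $\partial_s$ with $\D$. This is harmless: since $\Phi_0=1$, the object $\dot\Phi=\partial_s\Phi_s|_{s=0}$ is a genuine $\fg$-valued function on $\Sigma$, so $\D\dot\Phi$ is just its ordinary exterior derivative, and $\partial_s$ commutes with $\D$ by the assumed smooth dependence of $\Phi_s$ on $s$. (Equivalently, one may differentiate the right-logarithmic derivative $\alpha_s:=(\D\Phi_s)\Phi_s^{-1}$ directly, using that $\partial_s\alpha_s|_{s=0}=\D\dot\Phi$ whenever $\Phi_0=1$.)
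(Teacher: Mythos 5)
Your proof is correct and follows essentially the same route as the paper: the paper records the defining relation in the equivalent left-logarithmic form $\Phi_s^{-1}\D\Phi_s=s\eta$ and differentiates at $s=0$, exactly as you do with the right-logarithmic form $(\D\Phi_s)\Phi_s^{-1}=s\,\Ad(\Phi_s)\eta$, the two being conjugate by $\Ad(\Phi_s)$ and agreeing at $s=0$ since $\Phi_0=1$. Your extra bookkeeping about the gauge-action convention and the interchange of $\partial_s$ with $\D$ is fine but adds nothing beyond the paper's one-line argument.
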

\begin{proof}
The defining property of the $\Phi_{s}$ amounts to
\begin{equation*}
\Phi_s^{-1}\D\Phi_{s}=s\eta
\end{equation*}
and we differentiate this with respect to $s$ at $s=0$ to draw the
conclusion.
\end{proof}
Thus $\dot\Phi$ is a primitive for $\eta$ and so, by
Proposition~\ref{th:25}, is a universal Christoffel transform for
$f$.  That is, for $(\fp_0,\fp_{\infty})\in M\times M^{*}$ a
complementary pair with $\fp_{\infty}$ pointwise complementary to $f$,
$f^c:=\exp(\pi_{\fp_0^{\perp}}\dot\Phi)\act\fp_{\infty}:\Sigma\to
M^{*}$ is a Christoffel transform of $(f,\eta)$ with respect to
$(\fp_0,\fp_{\infty})$.

We use this to give two alternative characterisations of $f^c$.
First we see that $f^c$ is a blow-up of Darboux transforms of $f$.
Indeed let $\hat{f}_s:\Sigma\to M^{*}$ be a smooth variation of maps
through $\hat{f}_0\equiv\fp_{\infty}$ with each $\hat{f}_s$ $\nabla^s$-parallel
and pointwise complementary to $f$.  Thus, for $s\neq 0$, $\hat{f}_s$ is a
Darboux transform of $(f,\eta)$ with parameter $s$: $\hat
f_s=\darb_sf$.  Then $\partial/\partial s|_{s=0}\hat f_{s}:\Sigma\to
T_{\fp_{\infty}}M^{*}$. Now the derivative at $1$ of $g\mapsto
g\act\fp_{\infty}$ induces the soldering isomorphism
$\fg/\fp_{\infty}\cong T_{\fp_{\infty}}M^{*}$ and thus we get an
identification $\fp_{0}^{\perp}\cong T_{\fp_{\infty}}M^{*}$.  With this
in mind, observe that we may choose $\Phi_{s}$ so
that $\hat f_s=\Phi_s^{-1}\act\fp_{\infty}$ whence, since
$\partial/\partial s|_{s=0}\Phi_s^{-1}=-\dot\Phi$, we conclude:
\begin{cor}
\label{th:26}%Ch as blow up
Under the identification $T_{\fp_{\infty}}M^{*}\cong\fp_{0}^{\perp}$,
$\partial/\partial s|_{s=0}\hat{f}_{s}=-\pi_{\fp_{\infty}}\dot\Phi$ so that
\begin{equation*}
f^c=\exp(-\partial/\partial s|_{s=0}\hat{f}_{s})\act\fp_{\infty}.
\end{equation*}
\end{cor}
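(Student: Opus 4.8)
The plan is to use the freedom in the choice of the trivialising gauge transformations $\Phi_s$ to arrange that $\hat f_s=\Phi_s^{-1}\act\fp_\infty$ holds identically, and then simply to differentiate this relation at $s=0$, reading off the answer with the solder-form identification recalled in \MySec\ref{sec:homog-geom-sold}. Once that normalisation is in place the statement is essentially immediate, because $\dot\Phi$ is already known to be a primitive of $\eta$ and hence, by Proposition~\ref{th:25}, a universal Christoffel transform.

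First I would observe that, since $\Phi_s\cdot\nabla^s=\D$, conjugation by $\Phi_s$ carries $\nabla^s$-parallel subbundles of $\ul\fg$ to $\D$-parallel ones and hence to \emph{constant} subbundles; applied to the $\nabla^s$-parallel bundle $\hat f_s$ this gives $\Phi_s\act\hat f_s\equiv\fq_s$ for a constant parabolic subalgebra $\fq_s\in M^*$, with $\fq_0=\fp_\infty$ because $\Phi_0=1$ and $\hat f_0\equiv\fp_\infty$. As $\Phi_s$ is determined only up to left multiplication by a constant in $G$ (or in $\Aut(\fg)$), and the path $s\mapsto\fq_s$ in $M^*=G/\Stab(\fp_\infty)$ starts at $\fp_\infty$, I can lift it to a smooth path $s\mapsto g_s$ in $G$ with $g_0=1$ and $g_s\act\fp_\infty=\fq_s$ and replace $\Phi_s$ by $g_s^{-1}\Phi_s$; this keeps $\Phi_s$ smooth in $s$, equal to $1$ at $s=0$, and a trivialisation of $\nabla^s$, and now achieves $\hat f_s=\Phi_s^{-1}\act\fp_\infty$. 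With $\dot\Phi=\partial/\partial s|_{s=0}\Phi_s$ for this choice, $\dot\Phi$ remains a primitive of $\eta$, so by Proposition~\ref{th:25} the map $f^c:=\exp(\pi_{\fp_\infty}\dot\Phi)\act\fp_\infty$ is a Christoffel transform of $(f,\eta)$ with respect to $(\fp_0,\fp_\infty)$, where $\pi_{\fp_\infty}$ denotes the projection onto $\fp_0^\perp$ with kernel $\fp_\infty$.

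It then remains to differentiate $\hat f_s=\Phi_s^{-1}\act\fp_\infty$ at $s=0$. Since the derivative at the identity of $g\mapsto g\act\fp_\infty$ is the canonical isomorphism $\fg/\fp_\infty\cong T_{\fp_\infty}M^*$ (the inverse of the solder form) and $\partial/\partial s|_{s=0}\Phi_s^{-1}=-\dot\Phi$, the chain rule identifies $\partial/\partial s|_{s=0}\hat f_s$ with the class of $-\dot\Phi$ in $\fg/\fp_\infty$; under the further identification $T_{\fp_\infty}M^*\cong\fp_0^\perp$ furnished by the splitting $\fg=\fp_0^\perp\oplus\fp_\infty$ this is exactly $-\pi_{\fp_\infty}\dot\Phi$, and substituting into the expression for $f^c$ yields $f^c=\exp(-\partial/\partial s|_{s=0}\hat f_s)\act\fp_\infty$. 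The only point needing genuine care is the normalisation of $\Phi_s$ in the second paragraph — checking that the available gauge freedom is precisely enough to absorb the constant $\fq_s$ into $\fp_\infty$ while retaining smoothness in $s$ and the value $\Phi_0=1$ — and it is worth recording that both sides of the identity inherit the same residual ambiguity (a Christoffel transform is defined only up to an additive constant in $\fp_0^\perp$, which matches the freedom in the choice of $\Phi_s$, equivalently of the Darboux family $\hat f_s$), so no canonicity is being asserted.
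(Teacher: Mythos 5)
Your argument is correct and follows essentially the same route as the paper: the paper likewise observes that one may choose $\Phi_s$ with $\hat f_s=\Phi_s^{-1}\act\fp_\infty$ (since $\Phi_s$ carries the $\nabla^s$-parallel bundle $\hat f_s$ to a constant and the constant can be absorbed by the gauge freedom), then differentiates at $s=0$ using $\partial/\partial s|_{s=0}\Phi_s^{-1}=-\dot\Phi$ and the solder-form identification $T_{\fp_\infty}M^*\cong\fp_0^\perp$, together with Proposition~\ref{th:25}. Your extra care in checking that the renormalisation $\Phi_s\mapsto g_s^{-1}\Phi_s$ keeps $\Phi_0=1$, smoothness and the primitive property of $\dot\Phi$ is exactly the detail the paper leaves implicit, so no discrepancy arises.
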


This leads us to another realisation of both $f^c$ and its
accompanying $1$-form $\eta^{c}$, this time as a limit of conjugates
of $(\hat{f}_s,\hat\eta_s)$ which will be useful below.
\begin{thm}
\label{th:27}%Ch as limit
Let $(\hat{f}_s,\hat\eta_s)$ be a smooth family of Darboux transforms
of $(f,\eta)$ with parameter $s$ such that $\lim_{s\to
0}\hat{f}_s=\fp_{\infty}$.  Then a Christoffel transform
$(f^c,\eta^c)$ of $(f,\eta)$ with respect to $(\fp_0,\fp_{\infty})$
is given by
\begin{align*}
f^{c}&=\lim_{s\to0}\Gamma_{\fp_0}^{\fp_{\infty}}(-s)\hat{f}_{s},\\
\eta^{c}&=\lim_{s\to0}\Gamma_{\fp_0}^{\fp_{\infty}}(-s)\hat{\eta}_{s}.
\end{align*}
In particular, $\D+t\eta^c=\lim_{s\to
0}\Gamma_{\fp_0}^{\fp_{\infty}}(-s)\cdot(\D+t\hat\eta_s)$, for all $t\in\R$.
\end{thm}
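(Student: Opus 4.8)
The plan is to pass to the stereoprojection chart determined by $(\fp_0,\fp_\infty)$ and reduce the assertion to the gauge-theoretic identities of Theorem~\ref{th:4} and Lemma~\ref{th:6}. Write $f=\exp(F)\act\fp_0$ with $F\colon\Sigma\to\fp_\infty^\perp$. Since each $\hat f_s$ is pointwise complementary to $f$ and $\hat f_s\to\fp_\infty$, for $s$ near $0$ stereoprojection of $\exp(-F)\hat f_s\in\Omega_{\fp_0}\subset M^*$ lets us write $\hat f_s=\exp(F)\exp(\zeta_s)\act\fp_\infty$ with $\zeta_s\colon\Sigma\to\fp_0^\perp$ smooth in $s$ and $\zeta_0=0$. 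Put $\dot\zeta=\left.\partial_s\right|_{s=0}\zeta_s$ and $F^c:=-\dot\zeta$. First I would check that $F^c$ is the stereoprojection of a Christoffel transform, i.e.\ that $\D F^c=\pi_{\fp_0^\perp}\eta$. For this, gauge the Darboux condition: since $\hat f_s$ is $\nabla^s$-parallel and $\fp_\infty$ is self-normalising, $\exp(-\zeta_s)\exp(-F)\cdot\nabla^s=\D+\theta_s$ with $\theta_s\in\Omega^1_\Sigma(\fp_\infty)$; as $F$ and $\zeta_s$ take values in abelian subalgebras one computes $\theta_s=\Ad(\exp(-\zeta_s)\exp(-F))(s\eta)+\Ad(\exp(-\zeta_s))\D F+\D\zeta_s$, and differentiating at $s=0$ and extracting the $\fp_0^\perp$-component (which vanishes, since $\theta_s\in\fp_\infty$) gives $\D\dot\zeta+\pi_{\fp_0^\perp}\eta=0$. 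Thus $(f^c,\eta^c)$ with $f^c=\exp(F^c)\act\fp_\infty$ and $\eta^c=\exp(F^c)\act\D F$ is a Christoffel transform of $(f,\eta)$ with respect to $(\fp_0,\fp_\infty)$.

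For the limit of $\hat f_s$: writing $g_s=\exp(F)\exp(\zeta_s)$ we have $(f,\hat f_s)=g_s\act(\fp_0,\fp_\infty)$, so, using that the constant automorphism $\Gamma_{\fp_0}^{\fp_\infty}(-s)$ fixes $\fp_\infty$,
\[
\Gamma_{\fp_0}^{\fp_\infty}(-s)\,\hat f_s=\bigl(\Gamma_{\fp_0}^{\fp_\infty}(-s)\,g_s\,\Gamma_{\fp_0}^{\fp_\infty}(-s)^{-1}\bigr)\act\fp_\infty=\bigl(\exp((-s)F)\exp((-s)^{-1}\zeta_s)\bigr)\act\fp_\infty,
\]
because $\Gamma_{\fp_0}^{\fp_\infty}(-s)$ scales $\fp_\infty^\perp$ by $-s$ and $\fp_0^\perp$ by $(-s)^{-1}$. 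As $s\to0$ one has $(-s)F\to0$ and $(-s)^{-1}\zeta_s\to-\dot\zeta=F^c$, so the right-hand side tends to $\exp(F^c)\act\fp_\infty=f^c$.

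For the connections (and hence for $\hat\eta_s$): since $\Gamma_\fp^\fq(r)$ depends on the pair $(\fp,\fq)$ only through its grading element, $(f,\hat f_s)=g_s\act(\fp_0,\fp_\infty)$ gives $\Gamma_f^{\hat f_s}(r)=\Ad(g_s)\circ\Gamma_{\fp_0}^{\fp_\infty}(r)\circ\Ad(g_s)^{-1}$. Combining this with $\D+t\hat\eta_s=\Gamma_f^{\hat f_s}(1-\tfrac{t}{s})\cdot(\D+t\eta)$ from Theorem~\ref{th:4}(2) yields
\[
\Gamma_{\fp_0}^{\fp_\infty}(-s)\cdot(\D+t\hat\eta_s)=\bigl(\Gamma_{\fp_0}^{\fp_\infty}(-s)\,\Ad(g_s)\,\Gamma_{\fp_0}^{\fp_\infty}(1-\tfrac{t}{s})\,\Ad(g_s)^{-1}\bigr)\cdot(\D+t\eta).
\]
Now I would commute each copy of $\Gamma_{\fp_0}^{\fp_\infty}(\cdot)$ past the adjacent $\Ad(g_s)$; each commutator is an $\Ad(\exp(\cdot))$ whose argument is a rescaling of $F$ or $\zeta_s$, while the two surviving copies of $\Gamma_{\fp_0}^{\fp_\infty}$ collapse, by the one-parameter-group property, to $\Gamma_{\fp_0}^{\fp_\infty}(-s)\Gamma_{\fp_0}^{\fp_\infty}(1-\tfrac{t}{s})=\Gamma_{\fp_0}^{\fp_\infty}(t-s)$. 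Letting $s\to0$ — every residual $\Ad(\exp(\cdot))$-factor converges and $\Gamma_{\fp_0}^{\fp_\infty}(t-s)\to\Gamma_{\fp_0}^{\fp_\infty}(t)$ — and using once more how $\Gamma_{\fp_0}^{\fp_\infty}(t)$ scales $\fp_0^\perp$ and $\fp_\infty^\perp$, one identifies the limiting gauge transformation as $\Gamma^c(t)=\exp(F^c)\Gamma_{\fp_0}^{\fp_\infty}(t)\exp(-F)$ of Lemma~\ref{th:6}. By continuity of the gauge action on connections, $\Gamma_{\fp_0}^{\fp_\infty}(-s)\cdot(\D+t\hat\eta_s)\to\Gamma^c(t)\cdot(\D+t\eta)=\D+t\eta^c$ for $t\neq0$; the case $t=0$ is trivial since $\Gamma_{\fp_0}^{\fp_\infty}(-s)$ fixes $\D$. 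Writing $\Gamma_{\fp_0}^{\fp_\infty}(-s)\cdot(\D+t\hat\eta_s)=\D+t\,\Gamma_{\fp_0}^{\fp_\infty}(-s)\hat\eta_s$ and comparing coefficients of $t$ then gives $\Gamma_{\fp_0}^{\fp_\infty}(-s)\hat\eta_s\to\eta^c$.

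The main obstacle is that the natural gauge transformation $\Gamma_f^{\hat f_s}(1-\tfrac{t}{s})$ intertwining $\D+t\eta$ and $\D+t\hat\eta_s$ diverges as $s\to0$, since its parameter runs off to $\pm\infty$. The whole argument hinges on the fact that precomposing with $\Gamma_{\fp_0}^{\fp_\infty}(-s)$ and using that $r\mapsto\Gamma_{\fp_0}^{\fp_\infty}(r)$ is a homomorphism cancels this divergence exactly, leaving the finite limit $\Gamma^c(t)$; once that cancellation is in place, tracking the convergent $\Ad(\exp(\cdot))$-correction terms and the vanishing of the $\fp_0^\perp$-component of $\partial_s\theta_s$ is routine.
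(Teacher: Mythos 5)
Your proof is correct, and while it shares the paper's basic device of conjugating by $\Gamma_{\fp_0}^{\fp_\infty}(-s)$ and exploiting its scaling on $\fp_0^{\perp}$ and $\fp_{\infty}^{\perp}$ (your computation of $\lim_{s\to0}\Gamma_{\fp_0}^{\fp_\infty}(-s)\hat f_s$ is essentially the paper's, just in the parametrisation $\hat f_s=\exp(F)\exp(\zeta_s)\act\fp_\infty$ rather than $\exp(\hat F_s)\act\fp_\infty$), the two key inputs are replaced. The paper gets the fact that $-\partial_s|_{s=0}$ of the stereoprojection of $\hat f_s$ is a Christoffel stereoprojection from the machinery of \MySec\ref{sec:christ-transf-as}: $\dot\Phi$ is a primitive of $\eta$, Proposition~\ref{th:25} and Corollary~\ref{th:26}. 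You prove it directly by gauging the $\nabla^{s}$-parallelity of $\hat f_s$ into the chart, using self-normalisation (Lemma~\ref{lem:normal}) to see $\theta_s\in\Omega^1_\Sigma(\fp_\infty)$, and extracting the $\fp_0^{\perp}$-component of $\partial_s|_{s=0}\theta_s$; this yields $\D F^c=\pi_{\fp_0^{\perp}}\eta$ and makes your argument independent of Corollary~\ref{th:26}. For the $1$-forms, the paper computes $\lim_{s\to0}\Gamma_{\fp_0}^{\fp_\infty}(-s)\hat\eta_s$ directly from $\hat\eta_s=\beta_s/s$ and the identification $\beta_0=-\D F$, whereas you derive both the connection statement and the limit of the $\hat\eta_s$ from Theorem~\ref{th:4}(2) combined with Lemma~\ref{th:6}, via the exact cancellation $\Gamma_{\fp_0}^{\fp_\infty}(-s)\Gamma_{\fp_0}^{\fp_\infty}(1-\tfrac{t}{s})=\Gamma_{\fp_0}^{\fp_\infty}(t-s)$; I checked the bookkeeping (the limiting gauge is $\Gamma_{\fp_0}^{\fp_\infty}(t)\Ad\bigl(\exp(tF^c)\bigr)\Ad\bigl(\exp(-F)\bigr)=\Ad\bigl(\exp F^c\bigr)\Gamma_{\fp_0}^{\fp_\infty}(t)\Ad\bigl(\exp(-F)\bigr)=\Gamma^c(t)$, since $\Gamma_{\fp_0}^{\fp_\infty}(t)$ scales $\fp_0^{\perp}$ by $t^{-1}$), so this route is sound and arguably cleaner for the ``in particular'' clause. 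The only tacit assumption, needed for $\dot\zeta$ to exist and for $\zeta_s/s\to\dot\zeta$ together with its $\Sigma$-derivatives, is that the family extends differentiably through $s=0$; the paper makes exactly the same assumption when it writes $\partial/\partial s|_{s=0}\hat F_s$, so this is not a gap relative to the original.
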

\begin{proof}
Let $\hat F_s:\Sigma\to\fp_0^{\perp}$ be the stereoprojection of
$\hat f_s$ from $\fp_{0}$ so that $\hat{f}_s=\exp(\hat
F_{s})\act\fp_{\infty}$.  The derivative of stereoprojection at
$\fp_{\infty}$ is precisely our identification
$T_{\fp_{\infty}}M^{*}\cong\fp_0^{\perp}$ under which
$\partial/\partial s|_{s=0}\hat{f}_{s}=\partial/\partial
s|_{s=0}\hat{F}_{s}$ so that Corollary~\ref{th:26} reads
\begin{equation*}
f^c=\exp(-\partial/\partial s|_{s=0}\hat{F}_{s})\act\fp_{\infty}.
\end{equation*}
However, $\lim_{s\to 0}\hat F_{s}=0$ so that
\begin{equation*}
f^c=\lim_{s\to 0}\exp(-\hat F_s/s)\act\fp_{\infty}=
\lim_{s\to
0}\Gamma_{\fp_0}^{\fp_{\infty}}(-s)\exp(\hat F_s)\act\fp_{\infty}=
\lim_{s\to0}\Gamma_{\fp_0}^{\fp_{\infty}}(-s)\hat f_{s}
\end{equation*}
as required.

As for the $1$-forms, recall that $\hat\eta_{s}=\beta_{s}/s$ where
$\beta_s\in\Omega_{\Sigma}^1(\hat f_{s}^{\perp})$ and
$\hat\beta_s\in\Omega_{\Sigma}^1(f^{\perp})$ are given by 
$\D\xi_f^{\hat f_s}=[\xi_f^{\hat f_s},\beta_s+\hat\beta_s]$.
Meanwhile, $\eta^c=\exp(F^{c})\act\D F$ where
$F:\Sigma\to\fp_{\infty}^{\perp}$ is the stereoprojection of $F$ and,
as we have just seen, $F^{c}=-\partial/\partial s|_{s=0}\hat F_{s}$
is the stereoprojection of $f^{c}$.
Write $\hat\eta_s=\exp(\hat F_{s})\act b_{s}/s$ with
$b_s\in\Omega_{\Sigma}^1(\fp_{\infty}^{\perp})$.  Then
\begin{equation*}
\lim_{s\to
0}\Gamma_{\fp_0}^{\fp_{\infty}}(-s)\hat\eta_s=
\lim_{s\to
0}\Gamma_{\fp_0}^{\fp_{\infty}}(-s)\exp(\hat F_{s})\act b_{s}/s=
-\lim_{s\to
0}\exp(-\hat F_{s}/s)\act b_{s}=-\exp(-\partial/\partial s|_{s=0}\hat F_{s})\act b_{0}.
\end{equation*}
Now $b_{0}=\beta_{0}$ and it remains to identify this with $-\D F$.  However,
$\xi_{f}^{\fp_{\infty}}=\exp(F)\act\xi_{\fp_0}^{\fp_{\infty}}$ and we
differentiate to get $\D\xi_{f}^{\fp_{\infty}}=[\D
F,\xi_{f}^{\fp_{\infty}}]$ yielding $\beta_{0}=-\D F$ as required
(and $\hat\beta_0=0$).
\end{proof}

\section{Bianchi permutability in self-dual spaces}
\label{sec:bianchi-perm-self}

Let us recall Bianchi's celebrated Permutability Theorem
\cite[\S5--\S9]{Bianchi1905a} for Darboux
transforms of isothermic surfaces in $\R^3$: given such an isothermic
surface $f$ and two distinct Darboux transforms $f_1=\darb_{m_1}f$,
$f_2=\darb_{m_2}f$ with $m_1\neq m_2$, then there is a fourth
isothermic surface $\hat f$ which is a simultaneous Darboux transform
of $f_1$ and $f_2$: $\hat f=\darb_{m_2}f_1=\darb_{m_1}f_2$.  Moreover,
$\hat f$ can be constructed algebraically from the first three
surfaces: in fact, according to Demoulin \cite[p.~157]{Demoulin1910},
corresponding points on the four surfaces are concircular with
constant cross-ratio $m_2/m_1$.

We now show that these results still hold \emph{in every detail} for
isothermic maps into any \emph{self-dual} symmetric $R$-space.  For
this, we begin by describing a distinguished family of circles in
such a space.
  
\subsection{Circles in self-dual symmetric $R$-spaces}
\label{sec:circles-self-dual}

Three distinct points in $S^n$ determine a unique circle on which
they lie.  This fact generalises to self-dual symmetric $R$-spaces
$M$: three pairwise complementary points in $M$ determine a
submanifold of $M$, conformally diffeomorphic to a circle, on which
they lie.

Let $\fp_0,\fp_\infty\in M$ be complementary points in a self-dual
symmetric $R$-space.  The set of parabolic subalgebras in $M$ that
are complementary to both $\fp_0$ and $\fp_\infty$ is the dense open
set $\Omega_{\fp_0}\cap\Omega_{\fp_\infty}$.  We begin with a simple
criterion for membership of this set.

\begin{lemm}
\label{th:8}% adx^2
Let $x_\infty\in\fp_\infty^\perp$.  Then
$\exp(x_\infty)\act\fp_0\in\Omega_{\fp_0}\cap\Omega_{\fp_\infty}$ if
and only if $\ker(\ad x_\infty)^2=\fp_\infty$.
\end{lemm}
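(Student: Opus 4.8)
The plan is to reduce the statement to a short computation inside the height-one grading $\fg=\fg_{-1}\oplus\fg_0\oplus\fg_1$ determined by $(\fp_0,\fp_\infty)$, where by \eqref{eq:graded} one has $\fg_{-1}=\fp_0^\perp$, $\fg_1=\fp_\infty^\perp$, $\fg_0=\fp_0\cap\fp_\infty$, $\fp_0=\fg_{-1}\oplus\fg_0$ and $\fp_\infty=\fg_0\oplus\fg_1$. Write $a=\ad x_\infty$. Since $x_\infty\in\fg_1$, $a$ carries $\fg_j$ into $\fg_{j+1}$, so $a^3=0$ and $a^2$ annihilates $\fp_\infty=\fg_0\oplus\fg_1$; hence $\fp_\infty\subseteq\ker a^2$ always, and decomposing along $\fg=\fg_{-1}\oplus\fp_\infty$ gives $\ker a^2=\fp_\infty\oplus(\ker a^2\cap\fg_{-1})$. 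Thus the condition $\ker(\ad x_\infty)^2=\fp_\infty$ is equivalent to $\ker a^2\cap\fg_{-1}=\{0\}$, and it is this I will match with the left-hand side.

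Next I would observe that $\exp(x_\infty)\act\fp_0$ automatically lies in $\Omega_{\fp_\infty}$: because $x_\infty\in\fp_\infty^\perp\subset\fp_\infty$ and $\fp_\infty$ is a subalgebra, $\Ad(\exp x_\infty)$ preserves $\fp_\infty$, and since the $G$-action preserves complementarity, $\exp(x_\infty)\act\fp_0$ is complementary to $\exp(x_\infty)\act\fp_\infty=\fp_\infty$. So the only thing to check is when $\fp':=\exp(x_\infty)\act\fp_0$ is complementary to $\fp_0$.

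For this, since $\Ad(\exp x_\infty)$ is a Killing isometry, $\fp'^\perp=\exp(a)\,\fp_0^\perp=\exp(a)\,\fg_{-1}$; and because both parabolics have height one, Remark~\ref{h:comp-ht-1} says $\fp'$ is complementary to $\fp_0$ iff $\fg=\fp_0\oplus\fp'^\perp$, which by a dimension count (as $\dim\fp'^\perp=\dim\fg_{-1}$) amounts to $\fp_0\cap\exp(a)\fg_{-1}=\{0\}$. Now for $v\in\fg_{-1}$ one has $\exp(a)v=v+av+\tfrac12 a^2v$ with the three terms lying in $\fg_{-1},\fg_0,\fg_1$ respectively; since $\fp_0=\fg_{-1}\oplus\fg_0$, this vector lies in $\fp_0$ exactly when its $\fg_1$-component $\tfrac12 a^2v$ vanishes, i.e.\ when $v\in\ker a^2$. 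As $\exp(a)$ is a linear isomorphism of $\fg$, this gives $\fp_0\cap\exp(a)\fg_{-1}=\{0\}$ iff $\ker a^2\cap\fg_{-1}=\{0\}$, which by the first paragraph is iff $\ker(\ad x_\infty)^2=\fp_\infty$. Assembling these equivalences proves the lemma.

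There is no substantial obstacle here: the whole argument is bookkeeping in the grading. The only place where a little care is needed is the reduction in the second step — recognising that membership in $\Omega_{\fp_\infty}$ is automatic (because $\exp(x_\infty)$ fixes $\fp_\infty$), so that only complementarity with $\fp_0$ need be analysed — together with keeping straight which summand of $\exp(a)v$ lives in which $\fg_j$. (Incidentally, playing the two forms of Remark~\ref{h:comp-ht-1} against each other, via $\fp_0^\perp\cap\fp'^\perp$ versus $\fp_0\oplus\fp'^\perp$, also yields the mildly surprising fact that $\ker(\ad x_\infty)\cap\fg_{-1}=\{0\}$ iff $\ker(\ad x_\infty)^2\cap\fg_{-1}=\{0\}$, although this is not needed.)
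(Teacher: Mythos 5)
Your proof is correct and follows essentially the same route as the paper's: the paper likewise reduces the question to complementarity with $\fp_0$ alone, expands $\exp(x_\infty)\act x=x+[x_\infty,x]+\half(\ad x_\infty)^2x$ for $x\in\fp_0^\perp$ to see that the intersection condition amounts to injectivity of $(\ad x_\infty)^2$ on $\fp_0^\perp$, and then uses $\fg=\fp_0^\perp\oplus\fp_\infty$ together with $\fp_\infty\subset\ker(\ad x_\infty)^2$ to conclude. Your explicit use of the grading $\fg_{-1}\oplus\fg_0\oplus\fg_1$ is only a notational repackaging of the same bookkeeping.
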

\begin{proof}
$\exp(x_\infty)\act\fp_0\in\Omega_{\fp_0}\cap\Omega_{\fp_\infty}$ if
and only if $\exp(x_\infty)\act\fp_0\in\Omega_{\fp_0}$, that is,
$\exp(x_\infty)\act\fp_0$ and $\fp_0$ are complementary. This
means that $\exp(x_\infty)\act\fp_0^\perp\cap\fp_0=\{0\}$.  However,
for $x\in\fp_0^\perp$, we have
\begin{equation*}
\exp(x_\infty)\act x=x+[x_\infty, x]+\half(\ad x_\infty)^2x
\end{equation*}
with the first two summands in $\fp_0$ and the last in
$\fp_\infty^\perp$.  Now $\fg=\fp_0\oplus\fp_\infty^\perp$ so
$\exp(x_\infty)\act x\in\exp(x_\infty)\act\fp_0^\perp\cap\fp_0$ if and
only if $(\ad x_\infty)^2x=0$.  Thus
$\exp(x_{\infty})\act\fp_0\in\Omega_{\fp_0}\cap\Omega_{\fp_{\infty}}$
if and only if $(\ad x_{\infty})^2$ injects on $\fp_0^{\perp}$.
Since $\fg=\fp_0^{\perp}\oplus\fp_{\infty}$ and $\fp_\infty\subset\ker(\ad
x_\infty)^2$, for any $x_\infty\in\fp_\infty^\perp$, the lemma follows.
\end{proof}
Note that the condition on $x_\infty$ is independent of the choice
of complementary $\fp_0$. 

Suppose now that we have three mutually complementary points
$\fp_0,\fp_1,\fp_\infty\in M$ and write $\fp_1=\exp(x_\infty)\act\fp_0$
for a unique $x_\infty\in\fp_\infty^\perp$.  For $t\in\R^\times$,
$(\ad tx_\infty)^2=t^2(\ad x_\infty)^2$ and so has kernel
$\fp_\infty$ also whence, by Lemma~\ref{th:8},
$\fp_t:=\exp(tx_\infty)\act\fp_0$ is again complementary to $\fp_0$
and $\fp_\infty$.

\begin{defn}
The \emph{circle through $\fp_0,\fp_1,\fp_\infty$} is the subset
$C\subset M$ given by
\begin{equation*}
C=\set{\fp_t\colon t\in\R\cup\set{\infty}}.
\end{equation*}
\end{defn}
Note that, for $t\neq s$, $\fp_t,\fp_s$ are the images under $\exp
sx_\infty$ of the complementary pair $(\fp_{t-s},\fp_0)$ and so are
also complementary.

We now show that our construction is independent of choices, that is,
any three points of $C$ determine the same circle.  For this we give
an alternative approach to $C$ which also shows that the projective
structure on $C$ given by the coordinate $t$ is also independent of
choices.

For $i\neq j\in\set{0,1,\infty}$, let $\xi^i_j$ be the grading
element of the pair $(\fp_j,\fp_i)$.  Thus
\begin{equation*}
\ad\xi^i_j=
\begin{cases}
1&\text{on $\fp_i^\perp$}\\
0&\text{on $\fp_i\cap\fp_j$}\\
-1&\text{on $\fp_j^\perp$}
\end{cases}
\end{equation*}
and $\xi^j_i=-\xi^i_j$. 

\begin{prop}
\label{th:9}%TDS
The span $\fs=\Span{\xi_0^1,\xi_1^\infty,\xi_0^\infty}\subset\fg$ is
a subalgebra of $\fg$ isomorphic to $\fsl(2,\R)$.  Indeed, with
$\fp_1=\exp(x_\infty)\act\fp_0=\exp(x_0)\act\fp_\infty$, for
$x_i\in\fp_i^\perp$, we have $x_0,x_\infty\in\fs$ and
\begin{align*}
[\xi_0^\infty,x_\infty]&=x_\infty &
[x_\infty,x_0]&=2\xi_0^\infty&
[\xi_0^\infty,x_0]&=-x_0.
\end{align*}
\end{prop}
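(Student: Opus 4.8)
The plan is to transport the whole configuration to the complementary pair $(\fp_0,\fp_\infty)$ and its canonical element $\xi:=\xi_0^\infty$, which splits $\fg$ as $\fg=\fg_{-1}\oplus\fg_0\oplus\fg_1$ into $\ad\xi$-eigenspaces with eigenvalues $-1,0,1$, where $\fg_0=\fp_0\cap\fp_\infty$ and $\fg_{\pm1}$ are the two nilradicals $\fp_0^\perp$, $\fp_\infty^\perp$. The first step is to read off the other two canonical elements. Since $x_\infty\in\fp_\infty^\perp\subset\fp_\infty$ the element $\exp(x_\infty)$ fixes $\fp_\infty$, so $(\fp_\infty,\fp_1)=\exp(x_\infty)\act(\fp_\infty,\fp_0)$ and, by the naturality of the canonical element under $G$ (it is extracted from the adjoint representation), $\xi_1^\infty=\Ad(\exp x_\infty)\xi$; as $x_\infty$ is an $\ad\xi$-eigenvector with $[x_\infty,x_\infty]=0$ the exponential series terminates and $\xi_1^\infty=\xi\pm x_\infty$. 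The symmetric computation with $x_0\in\fp_0^\perp\subset\fp_0$ and $\fp_1=\exp(x_0)\act\fp_\infty$ gives $\xi_0^1=\Ad(\exp x_0)\xi=\xi\pm x_0$. Hence $\fs=\Span{\xi,x_0,x_\infty}$; this already yields $x_0,x_\infty\in\fs$, and the three generators sit in distinct $\ad\xi$-eigenspaces and are nonzero (as $\fp_0,\fp_1,\fp_\infty$ are pairwise distinct), so $\dim\fs=3$.

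Two of the three bracket relations are then immediate: $x_\infty$ and $x_0$ are $\ad\xi$-eigenvectors, so $[\xi,x_\infty]$ and $[\xi,x_0]$ are the stated multiples of $x_\infty$ and $x_0$ (alternatively, one reads these off from $(\ad\xi_1^\infty)^3=\ad\xi_1^\infty$ and $(\ad\xi_0^1)^3=\ad\xi_0^1$, since these are again canonical elements). The real content is the last relation, $[x_\infty,x_0]=2\xi_0^\infty$: a priori $[x_\infty,x_0]$ lies only in $\fg_0=\fp_0\cap\fp_\infty$, and one must show it is in fact a multiple of the grading element. I expect this to be the main obstacle.

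For the last relation I would use the one fact not yet exploited, namely $\exp(x_\infty)\act\fp_0=\exp(x_0)\act\fp_\infty$, equivalently: $\Ad(g)$ carries the nilradical $\fp_0^\perp$ onto $\fp_\infty^\perp$, where $g=\exp(-x_0)\exp(x_\infty)$. Expanding $\Ad(g)w$ for $w\in\fp_0^\perp$ by the terminating exponential series and sorting into $\ad\xi$-graded pieces, the vanishing of the two ``off-diagonal'' components gives operator identities on $\fp_0^\perp$ that combine to $\ad x_0\,\ad x_\infty=2\,\Id$ there; since $\fp_0^\perp$ is abelian, $\ad x_0$ annihilates it, so $\ad[x_\infty,x_0]=-\ad x_0\,\ad x_\infty=-2\,\Id$ on $\fp_0^\perp$, which is a nonzero multiple of $\ad\xi_0^\infty$ there. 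The mirror computation (interchanging the roles of $\fp_0$ and $\fp_\infty$, legitimate because $M$ is self-dual) does the same on $\fp_\infty^\perp$, with the consistent multiple. Therefore $N:=[x_\infty,x_0]-2\xi_0^\infty$ lies in $\fg_0$ and annihilates both nilradicals; as $[x_\infty,x_0]\in[\fp_\infty^\perp,\fp_0^\perp]$ and $\xi_0^\infty$ both lie in the ideal $\fg_1\oplus[\fg_1,\fg_{-1}]\oplus\fg_{-1}$ (which is all of $\fg$ when $\fg$ is simple; in general the simple summands on which $\fp$ is improper contribute nothing to any element in play), $N$ acts trivially on an ideal containing it, whence $N=0$ by semisimplicity. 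With all three relations established, $\fs$ is closed under bracket, and setting $h=2\xi_0^\infty$, $e=x_0$, $f=\mp x_\infty$ one obtains the standard relations $[h,e]=2e$, $[h,f]=-2f$, $[e,f]=h$, so $\fs\cong\fsl(2,\R)$.
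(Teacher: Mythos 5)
Your argument is correct, and while its first half is the paper's, the way you obtain the crucial bracket $[x_\infty,x_0]=2\xi_0^\infty$ is genuinely different.  The common part: conjugating grading elements by $\exp(x_\infty)$, $\exp(x_0)$ and expanding the terminating series is exactly how the paper begins, yielding $\xi_i^k=\xi_i^j+x_{jk}$ with $x_{jk}\in\fp_i^\perp$ for every pair with a common ``pole'' $\fp_i$ (in particular $x_\infty=\xi_0^\infty-\xi_1^\infty$ and $x_0=\xi_0^1-\xi_0^\infty$, which settles your undetermined signs).  The paper then stays inside $\fs$: since $\xi_i^k-\xi_i^j\in\fp_i^\perp$ is an eigenvector of $\ad\xi_i^j$, it gets the closed formula $[\xi_i^j,\xi_i^k]=\xi_i^j-\xi_i^k$, which gives closure of $\fs$ and reduces $[x_\infty,x_0]=2\xi_0^\infty$ to a two-line expansion.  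You instead extract from $\exp(x_\infty)\act\fp_0=\exp(x_0)\act\fp_\infty$ the operator identity $\ad x_0\,\ad x_\infty=2\,\Id$ on $\fp_0^\perp$ --- your graded bookkeeping does work: the $\fg_0$-component gives $\ad x_0(\ad x_\infty)^2=2\ad x_\infty$ on $\fp_0^\perp$, and feeding this into the $\fg_{-1}$-component gives $\ad x_0\,\ad x_\infty=2\Id$ there --- together with its mirror, and you then remove the residual $\fg_0$-ambiguity by noting that $[x_\infty,x_0]-2\xi_0^\infty$ lies in, and centralises, the ideal generated by the two nilradicals, hence vanishes by semisimplicity; your parenthetical about improper simple summands is precisely the care that step requires.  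What your route buys is the identity $\ad x_0\,\ad x_\infty=2\Id$ on the nilradical (an $\fsl_2$-triple fact obtained before one knows there is an $\fsl_2$); what it costs is the extra ideal/centre argument, which the paper's computation avoids altogether.  Note also that the mirror computation needs no appeal to self-duality: the hypotheses on $(\fp_0,\fp_1,\fp_\infty)$ are already symmetric under $0\leftrightarrow\infty$.  One small slip at the end: with $h=2\xi_0^\infty$ you must take $e=x_\infty$ and $f=x_0$, since your choice $e=x_0$ gives $[h,e]=-2e$; with that correction the standard $\fsl(2,\R)$ relations hold and the conclusion stands.
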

\begin{proof}
For distinct $i,j,k\in\set{0,1,\infty}$, there is, by
Lemma~\ref{lemm:simptrans}, $x_{jk}\in\fp_i^\perp$ such that
$\exp(x_{jk})\act\xi^j_i=\xi_i^k$.  That is,
\begin{equation}\label{eq:8}
\xi_i^k=\xi_i^j+[x_{jk},\xi_i^j]=\xi_i^j+x_{jk}.
\end{equation}
Thus $\xi_i^k-\xi_i^j=x_{jk}\in\fp_i^\perp$ so that
\begin{equation}
\label{eq:10}
[\xi_i^j,\xi_i^k]=[\xi_i^j,\xi_i^k-\xi_i^j]=\xi_i^j-\xi_i^k
\end{equation}
which shows that $\fs$ is a subalgebra.

Moreover, we have $\exp(x_\infty)\act\xi^0_\infty=\xi_\infty^1$ so
\eqref{eq:8} gives $x_\infty=\xi_\infty^1-\xi_\infty^0\in\fs$ and,
similarly, $x_0=\xi_0^1-\xi_0^\infty\in\fs$.  Now use \eqref{eq:10}
and $\xi_i^j=-\xi_j^i$ to compute:
\begin{align*}
[x_\infty,x_0]&=[\xi_\infty^1-\xi_\infty^0,\xi_0^1-\xi_0^\infty]\\
&=[\xi_1^\infty,\xi_1^0]+[\xi_\infty^1,\xi_\infty^0]+[\xi_0^\infty,\xi_0^1]\\
&=2\xi_0^\infty.
\end{align*}
\end{proof}

The Killing form $(\,,\,)_\fs$ of $\fs$ has signature $(2,1)$
(indeed, $x_0, x_\infty$ are null and
$(\xi_0^\infty,\xi_0^\infty)_\fs=2$).  The light-cone
$\cL_\fs\subset\fs$ for this inner product is precisely the set of
nilpotent elements of $\fs$.  The projective light-cone
$\pr(\cL_{\fs})$ is then, on the one hand, a conformal diffeomorph
of $S^1$ and, on the other, the set of nilradicals of the single
conjugacy class of parabolic subalgebras of $\fs$.

Let $S\leq G$ be the analytic subgroup of $G$ with Lie algebra $\fs$.
Then $S\cong\rPSL(2,\R)$
% \footnote{Since $S\leq G\leq\rGL(\fg)$, the
% action of $S$ on $\fg$ is faithful while, from
% Proposition~\ref{th:9}, we see that $2\xi_0^\infty$ is mono-semisimple
% in the sense of Kostant \cite{Kostant1959} and $\ad\xi_0^\infty$ has
% only even eigenvalues.  It follows that $\fg$ splits into
% odd-dimensional $S$-irreducibles\dots}
and acts transitively on $\pr(\cL_\fs)$.  Thus any
$\Span{x}\in\pr(\cL_\fs)$ is of the form $h\act\Span{x_\infty}$, for
some $h\in S$, so that $\ker(\ad x)^2=h\act\ker(\ad
x_\infty)^2=h\act\fp_\infty\in M$.  We have therefore defined an
$S$-equivariant map $\Psi:\pr(\cL)\to M$ by
\begin{equation*}
\Psi(\Span{x})=\ker(\ad x)^2
\end{equation*}
which injects since $\fs\cap\Psi(\Span{x})^\perp=\Span{x}$.  It is
easy to see that the circle $C$ constructed earlier coincides with
the image of $\Psi$: indeed, since $\fp_0=\Psi(\Span{x_0})$, the
$S$-equivariance of $\Psi$ gives
\begin{equation}\label{eq:11}
\fp_t=\exp(tx_\infty)\act\fp_0=\exp(tx_\infty)\act\Psi(\Span{x_0})=
\Psi(\exp(tx_\infty)\act\Span{x_0}),
\end{equation}
for $t\in\R$.  Moreover, since $S$ acts (simply) transitively on
triples of distinct points of $\pr(\cL_\fs)$, we see that any three
distinct (and so complementary) $\fp'_0,\fp'_1,\fp'_\infty\in C$
define the same $\fs$ (the grading elements $\xi'{}_i^j$ lie in $\fs$
and therefore span it) and thus the same $S$ and $C$.

We have now equipped our circle $C$ with a conformal structure, or,
what is the same thing in dimension one, a projective structure.
Indeed, fix a double cover $\rSL(2,\R)\to S$ and then we have an
equivariant isomorphism $\R\pr^1\cong \pr(\cL_\fs)$ given by
$\ell\mapsto\stab(\ell)^\perp$.  From \eqref{eq:11}, we see that the
coordinate $\fp_t\mapsto t$ on $C$ is the pull-back by $\Psi^{-1}$ of the
coordinate on $S^1$ given by stereoprojection.  According to
\MySec\ref{sec:grassmannians}, this last is an affine coordinate on
$\R\pr^1$.

Here are some consequences of this circle of ideas.  First, four
distinct points on $C$ have, via the identification with $\R\pr^1$,
an $S$-invariant cross-ratio and, in particular, the cross-ratio of
$\fp_1,\fp_\infty,\fp_t,\fp_0$ is exactly $t$:
\begin{equation}
\label{eq:12}
%cross
\cross(\fp_1,\fp_\infty,\fp_t,\fp_0)=t.
\end{equation}
 
For the second, let us first relate the parametrisation
$t\mapsto\fp_t$ to the gauge transformations $\Gamma_{\fp}^\fq$ of
\MySec\ref{sec:darboux}: for $t\in\R^\times$,
\begin{align*}
\Gamma_{\fp_0}^{\fp_\infty}(t)\fp_1&=\Gamma_{\fp_0}^{\fp_\infty}(t)\exp(x_\infty)\act\fp_0\\
&=\exp\bigl(\Gamma_{\fp_0}^{\fp_\infty}(t)x_\infty\bigr)\bigl(\Gamma_{\fp_0}^{\fp_\infty}(t)\fp_0\bigr)\\
&=\exp(tx_\infty)\act\fp_0=\fp_t,
\end{align*}
since $\Gamma_{\fp_0}^{\fp_\infty}(t)$ preserves $\fp_0$ and has
$\fp_\infty^\perp$ as eigenspace with eigenvalue $t$.  Thus we may
set $\Gamma_{\fp_0}^{\fp_\infty}(0)\fp_1=\fp_0$,
$\Gamma_{\fp_0}^{\fp_\infty}(\infty)\fp_1=\fp_\infty$ and see that
$t\mapsto\Gamma_{\fp_0}^{\fp_\infty}(t)\fp_1$ is the parametrisation
of $C$ inverting the affine coordinate $\fp_t\mapsto t$.  Now any two
affine coordinates on $\R\pr^1$ are related by a unique linear
fractional transformation so the same is true of the corresponding
parametrisations.  We summarise the situation in the following
proposition which will enable us to avoid several tedious
computations below.
\begin{prop}\label{th:10}
%circles argument
Let $(\fp_0,\fp_1,\fp_\infty)$, $(\fp'_0,\fp'_1,\fp'_\infty)$ be two
triples of mutually complementary points determining the same circle $C\subset
M$.
\begin{enumerate}
\item There is a unique linear fractional transformation $s(t)$ such
that
\begin{equation*}
\Gamma_{\fp_0}^{\fp_\infty}(t)\fp_1=\Gamma_{\fp'_0}^{\fp'_\infty}(s(t))\fp'_1,
\end{equation*}
for all $t\in\R\cup\set{\infty}$.  In particular,
$\Gamma_{\fp_0}^{\fp_\infty}\fp_1$ is determined by its values at any
three distinct points of $\R\cup\set{\infty}$.
\item
$\cross(\fp_1,\fp_\infty,\Gamma_{\fp_0}^{\fp_\infty}(t)\fp_1,\fp_0)=t$,
for all $t\in\R\cup\set{\infty}$.
\end{enumerate}
\end{prop}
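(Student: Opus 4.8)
The plan is to read off everything from the identification of the circle $C$ with $\R\pr^1$ that was set up immediately before the statement. Recall that $C$ carries a canonical projective structure: fixing the equivariant isomorphism $\R\pr^1\cong\pr(\cL_\fs)$ and composing with $\Psi^{-1}$ identifies $C$ with $\R\pr^1$, and, by \eqref{eq:11} together with the observation that stereoprojection furnishes an affine coordinate on $\R\pr^1$, the parametrisation $t\mapsto\Gamma_{\fp_0}^{\fp_\infty}(t)\fp_1=\fp_t$ is the inverse of an affine coordinate on $\R\pr^1$. The first thing I would check is that the second triple determines the \emph{same} projective structure: by hypothesis it determines the same circle $C$, and $\fp'_0,\fp'_1,\fp'_\infty$ are three distinct, hence pairwise complementary, points of $C$, so — exactly as in the paragraph following \eqref{eq:11} — they span the same subalgebra $\fs$, give the same group $S$ and the same map $\Psi$; consequently $t\mapsto\Gamma_{\fp'_0}^{\fp'_\infty}(t)\fp'_1$ is again the inverse of an affine coordinate on the same $\R\pr^1$.

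For part~(1), write $\psi(t)=\Gamma_{\fp_0}^{\fp_\infty}(t)\fp_1$ and $\psi'(t)=\Gamma_{\fp'_0}^{\fp'_\infty}(t)\fp'_1$, both viewed as projective isomorphisms $\R\pr^1\to C$. Then $s:=(\psi')^{-1}\circ\psi$ is a projective automorphism of $\R\pr^1$, i.e.\ a linear fractional transformation, and by construction
\begin{equation*}
\Gamma_{\fp_0}^{\fp_\infty}(t)\fp_1=\Gamma_{\fp'_0}^{\fp'_\infty}(s(t))\fp'_1,\qquad t\in\R\cup\set{\infty}.
\end{equation*}
Uniqueness of $s$ is immediate from the injectivity of $\psi'$. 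For the final clause, since $\psi$ is a projective isomorphism between projective lines it is determined by its values at any three distinct points of $\R\cup\set{\infty}$: given $\Gamma_{\fp_0}^{\fp_\infty}(t_i)\fp_1$ for three distinct $t_i$, those three points already fix $C$ together with its projective structure, and $\psi$ is then the \emph{unique} projective isomorphism $\R\pr^1\to C$ carrying $t_i$ to $\Gamma_{\fp_0}^{\fp_\infty}(t_i)\fp_1$.

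Part~(2) is then a one-line translation: since $\Gamma_{\fp_0}^{\fp_\infty}(t)\fp_1=\fp_t$, the cross-ratio in question is $\cross(\fp_1,\fp_\infty,\fp_t,\fp_0)$, which equals $t$ by \eqref{eq:12}; the boundary values $t=0,\infty$ are handled by reading $\cross(\fp_1,\fp_\infty,\cdot,\fp_0)$ as the projective isomorphism $C\to\R\pr^1$ normalised to send $\fp_1,\fp_\infty,\fp_0$ to $1,\infty,0$, so that it agrees with the affine coordinate $\fp_t\mapsto t$ everywhere. I do not expect a genuine obstacle here, since the substance is already contained in the discussion preceding the statement; the only steps that need care are verifying that the two triples induce the same projective structure on $C$ (which is precisely the content of the paragraph after \eqref{eq:11}) and the bookkeeping at $t\in\set{0,\infty}$, where $\Gamma_{\fp_0}^{\fp_\infty}(t)\fp_1$ and the cross-ratio must be taken as the relevant limiting values rather than from the naive formulas.
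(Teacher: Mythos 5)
Your proposal is correct and follows essentially the same route as the paper, which proves the proposition in the discussion immediately preceding it: any three distinct points of $C$ span the same $\fs$ and hence induce the same projective structure, so the two parametrisations are inverses of affine coordinates on $\R\pr^1$ related by a unique linear fractional transformation, and part~(2) is just \eqref{eq:12} applied to $\fp_t=\Gamma_{\fp_0}^{\fp_\infty}(t)\fp_1$. Your added care about the same projective structure and the values at $t=0,\infty$ matches the paper's own conventions, so there is nothing to correct.
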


\begin{rem}\label{th:21}
If $\fg$ is a complex Lie algebra so that $M$ is a conjugacy class of
complex parabolic subalgebras, the entire discussion goes through
unchanged with $t\in\C$.  Thus, in this case, pairwise complementary
$\fp_0,\fp_1,\fp_\infty$ determine a rational curve $C$ in $M$ with
$C\cong\C P^1$ carrying a complex projective structure and
well-defined complex cross-ratio.  Moreover, Proposition~\ref{th:10}
is still valid so long as $t$ is taken to be complex.  We will
persist in calling $C$ the circle through $\fp_0,\fp_1,\fp_\infty$.
\end{rem}

With this in hand, we conclude this section with a lemma whose
unpromising statement is tailored for an application in
\MySec\ref{sec:dress-pairs-curv}.
\begin{lemm}
\label{th:22}%p's and w's
Let $\fg$ be complex and $M$ a self-dual symmetric $R$-space for
$\fg$.  Let $\fp,\fp_1,\fp_2\in M$ with $(\fp,\fp_1)$ complementary and
set $\tau=\Gamma_{\fp}^{\fp_1}(-1)\in\Aut(\fg)$.  Finally, let
$w\in\C^\times$ and put $\fr=\Gamma_{\fp}^{\fp_1}(w)\fp_2$.

Then $\fp,\fp_1,\fp_2$ are pairwise complementary if and only if
$\fr,\tau\fr$ are complementary.  In this case, all five parabolic
subalgebras are concircular.
\end{lemm}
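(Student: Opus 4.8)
The plan is to transport everything to the grading of the complementary pair $(\fp,\fp_1)$ and to use that, for complex $\fg$, both $\Gamma_\fp^{\fp_1}(w)$ and $\tau$ lie in $G$, hence preserve $M$ and the relation of being complementary. Write $\fg=\fg_{-1}\oplus\fg_0\oplus\fg_1$ for the grading of $(\fp,\fp_1)$, so that $\fg_{-1}=\fp^\perp$, $\fg_0=\fp\cap\fp_1$ and $\fg_1=\fp_1^\perp$; the homomorphism $s\mapsto\Gamma_\fp^{\fp_1}(s)=\exp\bigl((\ln s)\act\xi_\fp^{\fp_1}\bigr)$ acts as $s^{j}$ on $\fg_j$, takes values in $G$ for all $s\in\C^\times$ (the branch of $\ln$ being immaterial since $\xi_\fp^{\fp_1}$ has integer eigenvalues), so $\fr\in M$, and at $s=-1$ it specialises to the involution $\tau=\exp(i\pi\act\xi_\fp^{\fp_1})\in G$, which acts as $+1$ on $\fg_0$ and as $-1$ on $\fg_{-1}\oplus\fg_1$; in particular $\tau\fp=\fp$, $\tau\fp_1=\fp_1$ and $\tau\fr\in M$. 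Since $\Gamma_\fp^{\fp_1}(w^{-1})\in G$ sends $\fr$ to $\fp_2$ and, by the homomorphism property, $\tau\fr=\Gamma_\fp^{\fp_1}(-w)\fp_2$ to $\Gamma_\fp^{\fp_1}(-1)\fp_2=\tau\fp_2$, and inner automorphisms preserve complementarity, it suffices to show that $\{\fp,\fp_1,\fp_2\}$ is pairwise complementary if and only if $\{\fp_2,\tau\fp_2\}$ is complementary, and that in that case all five of $\fp,\fp_1,\fp_2,\fr,\tau\fr$ are concircular.

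For the direction showing that complementarity of $(\fp_2,\tau\fp_2)$ forces $\{\fp,\fp_1,\fp_2\}$ to be pairwise complementary, I would argue with nilradicals. By Remark~\ref{h:comp-ht-1} (and $(\tau\fp_2)^\perp=\tau(\fp_2^\perp)$), the pair $(\fp_2,\tau\fp_2)$ is complementary exactly when $\fp_2^\perp\cap\tau(\fp_2^\perp)=\{0\}$. Now for $v\in\fg_{-1}\oplus\fg_1$ one has $v\in\fp_2^\perp\iff\tau v=-v\in\fp_2^\perp\iff v\in\tau(\fp_2^\perp)$, so $\fp_2^\perp\cap(\fg_{-1}\oplus\fg_1)\subseteq\fp_2^\perp\cap\tau(\fp_2^\perp)=\{0\}$. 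Hence $\fp_2^\perp\cap\fp^\perp=\{0\}=\fp_2^\perp\cap\fp_1^\perp$, and Remark~\ref{h:comp-ht-1} again shows that $\fp_2$ is complementary to $\fp$ and to $\fp_1$; together with the standing hypothesis that $(\fp,\fp_1)$ is complementary, $\{\fp,\fp_1,\fp_2\}$ is pairwise complementary.

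For the converse implication and for concircularity I would invoke the circle machinery of \MySec\ref{sec:circles-self-dual}, in the complex form of Remark~\ref{th:21}. When $\fp,\fp_1,\fp_2$ are pairwise complementary they lie on the circle $C\subset M$ parametrised by $t\mapsto\Gamma_\fp^{\fp_1}(t)\fp_2$, with $\fp$, $\fp_2$, $\fp_1$ occurring at $t=0,1,\infty$, and any two distinct points of $C$ are complementary. Then $\fr=\Gamma_\fp^{\fp_1}(w)\fp_2\in C$ and $\tau\fr=\Gamma_\fp^{\fp_1}(-w)\fp_2\in C$; since $w\in\C^\times$ forces $w\neq-w$, these are distinct points of $C$, hence complementary, and all five parabolic subalgebras lie on $C$. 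Combining the three steps proves the lemma. The main obstacle is the bookkeeping in the middle step: one must keep the grading conventions and the $\tau$-action straight so that the single condition ``$\fp_2^\perp$ meets the $(-1)$-eigenspace of $\tau$ trivially'' genuinely encodes complementarity of $\fp_2$ with \emph{both} $\fp$ and $\fp_1$; everything else reduces to Remark~\ref{h:comp-ht-1}, Remark~\ref{th:21}, and the homomorphism property of $s\mapsto\Gamma_\fp^{\fp_1}(s)$.
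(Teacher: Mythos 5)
Your proposal is correct, and its easy direction (pairwise complementary $\Rightarrow$ $\fr=\Gamma_\fp^{\fp_1}(w)\fp_2$ and $\tau\fr=\Gamma_\fp^{\fp_1}(-w)\fp_2$ are distinct, hence complementary, points of the circle through $\fp,\fp_1,\fp_2$) is exactly the paper's. The converse is where you genuinely deviate: the paper keeps the pair $(\fr,\tau\fr)$, introduces its grading element $\xi=\xi_\fr^{\tau\fr}$, notes $\tau\xi=-\xi$, and kills $(\fp^\perp\oplus\fp_1^\perp)\cap\fr^\perp$ by the computation $x=-\tau x=[-\xi,-x]=-x$; having thus made $\fp,\fp_1,\fr$ pairwise complementary, it invokes the circle through these three points a second time to place $\fp_2=\Gamma_\fp^{\fp_1}(1/w)\fr$ on it at parameter $1/w\neq0,\infty$ and so conclude $\fp_2$ is complementary to $\fp$ and $\fp_1$. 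You instead conjugate by $\Gamma_\fp^{\fp_1}(w^{-1})$ to replace $(\fr,\tau\fr)$ by $(\fp_2,\tau\fp_2)$ and then use only the known $\tau$-eigenspace decomposition $\fg_0\oplus(\fg_{-1}\oplus\fg_1)$ of the original pair together with Remark~\ref{h:comp-ht-1}: any $v\in\fp_2^\perp$ lying in the $(-1)$-eigenspace $\fp^\perp\oplus\fp_1^\perp$ also lies in $(\tau\fp_2)^\perp$, hence vanishes, giving $\fp_2^\perp\cap\fp^\perp=\{0\}=\fp_2^\perp\cap\fp_1^\perp$ directly. The underlying mechanism is the same (the nilradical in question meets the $(-1)$-eigenspace of $\tau$ trivially), but your version dispenses with the grading element of the new pair and with the second appeal to the circle machinery of Proposition~\ref{th:10}/Remark~\ref{th:21}, at the small price of checking that $\Gamma_\fp^{\fp_1}(w^{-1})$ carries $\tau\fr$ to $\tau\fp_2$ and that complementarity is preserved under this automorphism — which you do. Both arguments are complete; yours is marginally more economical in the converse, the paper's has the aesthetic merit of exhibiting all five subalgebras on circles at every stage.
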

\begin{proof}
First suppose that $\fp,\fp_1,\fp_2$ are pairwise complementary then
\begin{equation*}
\tau\fr=\Gamma_\fp^{\fp_1}(-1)\Gamma_{\fp}^{\fp_1}(w)\fp_2=\Gamma_{\fp}^{\fp_1}(-w)\fp_2.
\end{equation*}
Thus, since $w\neq -w$, $\fr$ and $\tau\fr$ are distinct points on
the circle through $\fp,\fp_1,\fp_2$.

For the converse, we must work a little harder.  So suppose that
$\fr,\tau\fr$ are complementary and let $\xi=\xi_{\fr}^{\tau\fr}$ be
the corresponding grading element.  Clearly we have $\tau\xi=-\xi$
and we use this to see that $\fr$ (and so $\tau\fr$) is complementary
to both $\fp$ and $\fp_1$.  For this, let
$x\in(\fp^\perp\oplus\fp_1^\perp)\cap\fr^\perp$ so that, on the one
hand, $\tau x=-x$ and, on the other hand, $[\xi, x]=-x$. Then
\begin{equation*}
x=-\tau x=\tau[\xi,x]=[-\xi,-x]=-x
\end{equation*}
so that $x=0$.  Thus
$\fp^\perp\cap\fr^\perp=\set{0}=\fp_1^\perp\cap\fr^\perp$ so that
both $(\fp,\fr)$ and $(\fp_1,\fr)$ are complementary pairs.  We now
have $\fp_2=\Gamma_\fp^{\fp_1}(1/w)\fr$ so that $\fp_2$ lies on the
circle through $\fp,\fp_1,\fr$ and, since $1/w\neq 0,\infty$, is
distinct from and so complementary to $\fp$ and $\fp_1$.
\end{proof}

\subsection{Bianchi permutability for Darboux transforms}
\label{sec:bianchi-perm-darb}
Let $(f,\eta):\Sigma\to M$ be an isothermic map to a self-dual
symmetric $R$-space $M$ with Darboux transforms $(f_1,\eta_1)$,
$(f_2,\eta_2)$ with parameters $m_1\neq m_2$ respectively:
$f_1=\darb_{m_1}f$, $f_2=\darb_{m_2}f$.  Suppose, in addition, that
the $f_i$ are pointwise complementary.  We seek a fourth isothermic
map $\hat f$ so that $\hat f=\darb_{m_2}f_1=\darb_{m_1}f_2$.  

Recall from Theorem~\ref{th:4} that, for $i=1,2$ we have
$\Gamma_f^{f_i}(1-\frac{t}{m_i})\cdot(\D+t\eta)=\D+t\eta_i$.  In
particular, since $f_2$ is $(\D+m_2\eta)$-parallel,
$f_{12}=\Gamma_f^{f_1}(1-\frac{m_2}{m_1})f_2$ is
$(\D+m_2\eta_1)$-parallel.  Moreover $(f_1,f_{12})$ is the image by
$\Gamma_f^{f_1}(1-\frac{m_2}{m_1})$ of the complementary pair
$(f_1,f_2)$ and so is a complementary pair also.  Thus
$(f_{12},\eta_{12})$ is a Darboux transform of $(f_1,\eta_1)$ is
parameter $m_2$ and $\eta_{12}$ is determined by the requirement that
$\Gamma_{f_1}^{f_{12}}(1-\frac{t}{m_2})\cdot(\D+t\eta_1)=\D+t\eta_{12}$.

Similarly $(f_{21},\eta_{21})$ is a Darboux transform of
$(f_2,\eta_2)$ with parameter $m_1$ where
$f_{21}=\Gamma_f^{f_2}(1-\frac{m_1}{m_2})f_1$ and $\eta_{21}$ is
determined by the requirement that
$\Gamma_{f_2}^{f_{21}}(1-\frac{t}{m_1})\cdot(\D+t\eta_2)=\D+t\eta_{21}$.
It therefore suffices to prove that $f_{12}=f_{21}$ and
$\eta_{12}=\eta_{21}$.  

For the first of these identities, note that
$\Gamma_{f}^{f_{1}}(t)f_2$ and $\Gamma_{f}^{f_{2}}(t)f_1$ parametrise
the circles through corresponding points of $f, f_1, f_2$ and so
differ by a linear fractional transformation of $t$ by
Proposition~\ref{th:10}.  This linear fractional transformation must
fix $0$ and swop $1$ and $\infty$ so that, for all $t$,
\begin{equation*}
\Gamma_{f}^{f_{1}}(t)f_2=\Gamma_{f}^{f_{2}}(\tfrac{t}{t-1})f_1
\end{equation*}
and evaluating this at $t=1-m_2/m_1$ yields $f_{12}=f_{21}$.

The second identity is equivalent to $\D+t\eta_{12}=\D+t\eta_{21}$ and
since these connections are gauges of $\D+t\eta$ by
$\Gamma_{f_1}^{\hat
f}(1-\frac{t}{m_2})\Gamma_f^{f_1}(1-\frac{t}{m_1})$ and
$\Gamma_{f_2}^{\hat
f}(1-\frac{t}{m_1})\Gamma_f^{f_2}(1-\frac{t}{m_2})$, it suffices
to prove:
\begin{lemm}
\label{th:11}%Flat discrete
$\Gamma_{f_1}^{\hat
f}(1-\frac{t}{m_2})\Gamma_f^{f_1}(1-\frac{t}{m_1})=
\Gamma_{f_2}^{\hat
f}(1-\frac{t}{m_1})\Gamma_f^{f_2}(1-\frac{t}{m_2})=
\Gamma^{f_1}_{f_2}(\frac{1-t/m_{1}}{1-t/m_{2}})$, for all $t$.
\end{lemm}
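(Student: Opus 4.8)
The plan is to prove the first equality, that
$\Gamma_{f_1}^{\hat f}(1-\tfrac{t}{m_2})\,\Gamma_f^{f_1}(1-\tfrac{t}{m_1})=\Gamma^{f_1}_{f_2}\bigl(\tfrac{1-t/m_1}{1-t/m_2}\bigr)$, and then to obtain the second by interchanging the indices $1$ and $2$: this interchange carries the left side to $\Gamma_{f_2}^{\hat f}(1-\tfrac{t}{m_1})\,\Gamma_f^{f_2}(1-\tfrac{t}{m_2})$ and, since $\Gamma^{\fp}_{\fq}(s)=\Gamma^{\fq}_{\fp}(1/s)$ and $\hat f$ is symmetric in the two indices, fixes the right side.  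Write $\gamma(t)$ for the left side.  All three expressions are rational in $t$, so it suffices to establish the identity for $t$ outside a finite set; note also that $\gamma(0)=\Gamma^{f_1}_{f_2}(1)=\mathrm{id}$.  The heart of the matter is that $\gamma(t)$ fixes the two parabolic subbundles $f_1$ and $f_2$.

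That $\gamma(t)$ fixes $f_1$ is immediate, since each of $\Gamma_f^{f_1}$ and $\Gamma_{f_1}^{\hat f}$ fixes $f_1$.  For $f_2$ we bring in the circle.  Recall $\hat f$ was constructed as $\Gamma_f^{f_1}(1-\tfrac{m_2}{m_1})f_2$, so it lies on the circle $C$ through the pairwise complementary points $f,f_1,f_2$, and since $1-m_2/m_1\notin\set{0,1,\infty}$ it is a fourth distinct point of $C$; thus $f,f_1,f_2,\hat f$ all lie on $C$ and are pairwise complementary.  By Proposition~\ref{th:10}(1) and the discussion preceding it, for complementary $\fp,\fq\in C$ the transformation $\Gamma_\fp^\fq(s)$ acts on $C$, in a projective coordinate normalised so that $\fp\leftrightarrow 0$ and $\fq\leftrightarrow\infty$, simply as multiplication by $s$.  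Taking the coordinate $u$ on $C$ with $f\leftrightarrow 0$, $f_1\leftrightarrow\infty$, $f_2\leftrightarrow 1$, one reads off that $\Gamma_f^{f_1}(1-\tfrac{t}{m_1})$ sends $f_2$ to the point $u=1-\tfrac{t}{m_1}$, and then that $\Gamma_{f_1}^{\hat f}(1-\tfrac{t}{m_2})$, which fixes $f_1\leftrightarrow\infty$ and $\hat f\leftrightarrow 1-\tfrac{m_2}{m_1}$, sends this to the point
\begin{equation*}
u'=\Bigl(1-\tfrac{m_2}{m_1}\Bigr)+\Bigl(1-\tfrac{t}{m_2}\Bigr)^{-1}\Bigl(\bigl(1-\tfrac{t}{m_1}\bigr)-\bigl(1-\tfrac{m_2}{m_1}\bigr)\Bigr).
\end{equation*}
A one-line simplification gives $u'=1$, i.e.\ $\gamma(t)f_2=f_2$.

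Granting this, $\gamma(t)$ commutes with the grading element $\xi^{f_1}_{f_2}$ of the complementary pair $(f_2,f_1)$, hence preserves the eigenbundle decomposition $\ul\fg=f_2^\perp\oplus(f_1\cap f_2)\oplus f_1^\perp$, acting as scalars $\mu_{-1},\mu_0,\mu_1$ on the three summands.  Since $M$ has height one these are all the eigenbundles, and the bracket relations $[f_1^\perp,f_2^\perp]\subseteq f_1\cap f_2$ and $[f_1\cap f_2,f_1^\perp]\subseteq f_1^\perp$ — in each of which the bracket is non-trivial, the first because $\fg$ has no non-zero abelian ideal and the second because $[\xi^{f_1}_{f_2},f_1^\perp]=f_1^\perp$ (cf.\ Proposition~\ref{th:9}) — force $\mu_0=1$ and $\mu_{-1}\mu_1=1$ for any automorphism of this form.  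Hence $\gamma(t)=\Gamma^{f_1}_{f_2}(\mu_1)$, and it remains only to compute $\mu_1=\gamma(t)|_{f_1^\perp}$.  As $f_1^\perp$ is the $(+1)$-eigenbundle of $\xi^{f_1}_f$ it is scaled by $1-\tfrac{t}{m_1}$ under $\Gamma_f^{f_1}(1-\tfrac{t}{m_1})$, and as it is the $(-1)$-eigenbundle of $\xi^{\hat f}_{f_1}$ it is then scaled by $(1-\tfrac{t}{m_2})^{-1}$ under $\Gamma_{f_1}^{\hat f}(1-\tfrac{t}{m_2})$, so $\mu_1=\tfrac{1-t/m_1}{1-t/m_2}$ and $\gamma(t)=\Gamma^{f_1}_{f_2}\bigl(\tfrac{1-t/m_1}{1-t/m_2}\bigr)$, as required.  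The one substantial step is the middle paragraph — that $\gamma(t)$ fixes $f_2$ — the only place genuine geometry enters; this is exactly the computation that the circle machinery of Proposition~\ref{th:10} is designed to make painless, while the rest is eigenvalue bookkeeping and the passage from generic $t$ to all $t$ is immediate by rationality.
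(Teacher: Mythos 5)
Your strategy is essentially the paper's: reduce to the first equality by swapping the indices $1,2$, show that $\gamma(t)=\Gamma_{f_1}^{\hat f}(1-\tfrac{t}{m_2})\Gamma_f^{f_1}(1-\tfrac{t}{m_1})$ fixes $f_1$ and $f_2$, and then read off its action on the graded pieces. Your treatment of the geometric step, that $\gamma(t)f_2=f_2$, is a correct variant of the paper's: you compute explicitly in an affine coordinate on the circle through $f,f_1,f_2$ (on which $\hat f$ also lies), whereas the paper checks the equivalent identity $\Gamma^{f_1}_{\hat f}(1-\tfrac{t}{m_2})f_2=\Gamma_f^{f_1}(1-\tfrac{t}{m_1})f_2$ at $t=0,m_1,\infty$ and invokes Proposition~\ref{th:10}; this is the same machinery and both computations are fine.

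The gap is in your final paragraph. From the fact that $\gamma(t)$ fixes $f_1$ and $f_2$, hence commutes with $\xi_{f_2}^{f_1}$ and preserves $\ul\fg=f_2^\perp\oplus(f_1\cap f_2)\oplus f_1^\perp$, you assert that it ``acts as scalars $\mu_{-1},\mu_0,\mu_1$'' on the three summands. Preserving the eigenbundles of $\ad\xi_{f_2}^{f_1}$ does not make an automorphism scalar on them (for instance $\exp\ad z$ with $z\in f_1\cap f_2$ not proportional to $\xi_{f_2}^{f_1}$ preserves the decomposition without acting by scalars), and your bracket computation only determines the values $\mu_0=1$, $\mu_{-1}\mu_1=1$ \emph{granted} that the action on each summand is scalar; it does not establish that it is. The scalar action on $f_1^\perp$ is genuinely immediate, since both factors of $\gamma(t)$ act by scalars there, but for $f_1\cap f_2$ and $f_2^\perp$ an argument is missing. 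The paper closes exactly this hole: each factor of $\gamma(t)$ acts as the identity on $f_1/f_1^\perp$, and since the splitting $f_1=(f_1\cap f_2)\oplus f_1^\perp$ is preserved this gives $\gamma(t)=1$ on $f_1\cap f_2$; and $\gamma(t)$, being a Lie algebra automorphism, is orthogonal for the Killing form, which pairs $f_1^\perp$ and $f_2^\perp$ nondegenerately, so acting as the scalar $\tfrac{1-t/m_1}{1-t/m_2}$ on $f_1^\perp$ forces it to act as the reciprocal scalar on $f_2^\perp$. (One could instead argue that an automorphism fixing $f_1,f_2$ and equal to the identity on $f_1^\perp$ is the identity, via the faithfulness of the action of $f_1\cap f_2$ on $f_1^\perp$ and Killing duality, but some such supplement is required.) With that addition your proof is complete and otherwise coincides with the paper's.
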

\begin{proof}
It suffices to prove that $\Gamma_{f_1}^{\hat
f}(1-\frac{t}{m_2})\Gamma_f^{f_1}(1-\frac{t}{m_1})=
\Gamma^{f_1}_{f_2}(\frac{1-t/m_1}{1-t/m_2})$ for then the other
equality follows by swopping the roles of $f_1$ and $f_2$.
With $\Gamma(t)$ denoting $\Gamma_{f_1}^{\hat
f}(1-\frac{t}{m_2})\Gamma_f^{f_1}(1-\frac{t}{m_1})$, we observe
that, by definition, $\Gamma(t)$ acts as
$\frac{1-t/m_{1}}{1-t/m_2}$ on $f_1^\perp$ and as $1$ on
$f_1/f_1^\perp$.  Moreover, $\Gamma(t)$ preserves $f_2$: indeed, this
amounts to demanding that $\Gamma^{f_1}_{\hat
f}(1-\frac{t}{m_2})f_2=\Gamma_f^{f_1}(1-\frac{t}{m_1})f_2$.  We
readily check that these agree at $t=0,m_1,\infty$ and so everywhere
by Proposition~\ref{th:10}.

Now, since $\Gamma(t)$ is orthogonal with respect to the Killing
form, we see that it must preserve the decomposition
$\ul\fg=f_1^\perp\oplus(f_1\cap f_2)\oplus f_2^\perp$ and so acts as
$1$ on $f_1\cap f_2$.  Moreover, since $f_1^\perp$ and $f_2^\perp$ are
dual with respect to the Killing form, $\Gamma(t)$ acts as
$\frac{1-t/m_2}{1-t/m_1}$ on $f_2^\perp$ and the lemma follows.
\end{proof}

Finally, apply the identity of Lemma~\ref{th:11} to $f$ to get
\begin{equation*}
\Gamma^{f_1}_{f_2}(\tfrac{1-t/m_{1}}{1-t/m_{2}})f=
\Gamma_{f_1}^{\hat f}(1-\tfrac{t}{m_2})\Gamma^{f_1}_f(1-\tfrac{t}{m_1})f=
\Gamma_{f_1}^{\hat f}(1-\tfrac{t}{m_2})f
\end{equation*}
and evaluate at $t=\infty$ to conclude:
\begin{equation*}
\hat f=\Gamma^{f_1}_{f_2}(m_2/m_1)f
\end{equation*}
whence $\cross(f,f_1,\hat f,f_2)=m_2/m_1$.  To summarise:
\begin{thm}
\label{th:12}%BPT
Let $f_1,f_2:\Sigma\to M$ be complementary Darboux transforms of an
isothermic map $f:\Sigma\to M$ with parameters $m_1\neq m_2$
respectively.  Then there is a common Darboux transform $\hat
f=\darb_{m_1}f_2=\darb_{m_2}f_1$ of $f_1$ and $f_2$ which is
pointwise concircular with $f,f_1,f_2$ and has constant cross-ratio
$m_2/m_1$ with these.
\end{thm}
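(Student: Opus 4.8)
The plan is to assemble the explicit gauge transformations produced in the preceding discussion; since $M$ is self-dual, $M=M^*$, so all the Darboux transforms in sight again take values in $M$ and the whole configuration lives there. Everything is done pointwise on $\Sigma$, where at each point $f,f_1,f_2$ are pairwise complementary: $f$ is complementary to each $f_i$ because $f_i=\darb_{m_i}f$, and $f_1,f_2$ are complementary by hypothesis.

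First, set $\hat f:=\Gamma_f^{f_1}(1-\tfrac{m_2}{m_1})\,f_2$. Since $f_2$ is $(\D+m_2\eta)$-parallel and, by Theorem~\ref{th:4}(2), $\Gamma_f^{f_1}(1-\tfrac{t}{m_1})\cdot(\D+t\eta)=\D+t\eta_1$, the bundle $\hat f$ is $(\D+m_2\eta_1)$-parallel; and $(f_1,\hat f)$ is the image under $\Gamma_f^{f_1}(1-\tfrac{m_2}{m_1})$ of the complementary pair $(f_1,f_2)$, hence complementary. Thus $\hat f=\darb_{m_2}f_1$, and (by Theorem~\ref{th:4}) it is itself isothermic with a well-defined $1$-form $\eta_{12}$ characterised by $\Gamma_{f_1}^{\hat f}(1-\tfrac{t}{m_2})\cdot(\D+t\eta_1)=\D+t\eta_{12}$. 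Symmetrically, $f_{21}:=\Gamma_f^{f_2}(1-\tfrac{m_1}{m_2})\,f_1$ is $\darb_{m_1}f_2$ with $1$-form $\eta_{21}$. It remains to identify $\hat f$ with $f_{21}$ and $\eta_{12}$ with $\eta_{21}$.

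For the first identity: the maps $t\mapsto\Gamma_f^{f_1}(t)f_2$ and $t\mapsto\Gamma_f^{f_2}(t)f_1$ are, pointwise, two parametrisations of the \emph{same} circle through $f,f_1,f_2$ in the sense of \MySec\ref{sec:circles-self-dual}, so by Proposition~\ref{th:10} they differ by a linear fractional reparametrisation; matching values at $t=0,1,\infty$ (where they take $f,f_2,f_1$ and $f,f_1,f_2$ respectively) forces the reparametrisation to be $t\mapsto t/(t-1)$, and evaluating at $t=1-m_2/m_1$ gives $\hat f=f_{21}$. In particular $\hat f$ lies on the pointwise circle through $f,f_1,f_2$, so it is also complementary to $f_2$, and $\hat f=\darb_{m_1}f_2$ as well. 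For the second identity it suffices, by Theorem~\ref{th:4}(2), to show that the two gauge transformations $\Gamma_{f_1}^{\hat f}(1-\tfrac{t}{m_2})\Gamma_f^{f_1}(1-\tfrac{t}{m_1})$ and $\Gamma_{f_2}^{\hat f}(1-\tfrac{t}{m_1})\Gamma_f^{f_2}(1-\tfrac{t}{m_2})$, which carry $\D+t\eta$ to $\D+t\eta_{12}$ and $\D+t\eta_{21}$ respectively, agree; this is Lemma~\ref{th:11}. One reads off the eigenvalues of the first composite on the splitting $\ul\fg=f_1^\perp\oplus(f_1\cap f_2)\oplus f_2^\perp$: by construction it acts as $\tfrac{1-t/m_1}{1-t/m_2}$ on $f_1^\perp$ and as $1$ on $f_1/f_1^\perp$; it preserves $f_2$ because $\Gamma_f^{f_1}(1-\tfrac{t}{m_1})f_2$ and $\Gamma_{\hat f}^{f_1}(1-\tfrac{t}{m_2})f_2$ are circle parametrisations agreeing at three values of $t$ and so everywhere by Proposition~\ref{th:10} (legitimate since $\hat f$ is already known concircular with $f,f_1,f_2$); Killing-orthogonality then forces it to act as $1$ on $f_1\cap f_2$ and as the reciprocal $\tfrac{1-t/m_2}{1-t/m_1}$ on $f_2^\perp$, i.e.\ to equal $\Gamma^{f_1}_{f_2}\!\bigl(\tfrac{1-t/m_1}{1-t/m_2}\bigr)$, which is manifestly symmetric in $f_1,f_2$.

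Finally, applying the resulting identity to $f$ (fixed by $\Gamma_f^{f_1}$) and letting $t\to\infty$ yields $\hat f=\Gamma^{f_1}_{f_2}(m_2/m_1)f$, so that $\cross(f,f_1,\hat f,f_2)=m_2/m_1$ by Proposition~\ref{th:10}(2), and $\hat f$ is concircular with $f,f_1,f_2$. The only genuinely delicate point is Lemma~\ref{th:11}: the verification that the composite gauge transformation preserves $f_2$ must go through the projective/circle calculus of \MySec\ref{sec:circles-self-dual} rather than a direct bracket computation, and one has to keep careful track of which parabolic plays the role of $\fp$ and which of $\fq$ in each $\Gamma_\fp^\fq$; the rest is bookkeeping on the eigenbundle decomposition.
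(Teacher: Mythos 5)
Your proposal is correct and follows essentially the same route as the paper: the same gauge transformations $\Gamma_f^{f_i}(1-\tfrac{m_2}{m_1})$ applied to the parallel bundles, the same use of Proposition~\ref{th:10} to identify the two circle parametrisations (fixing $0$, swapping $1$ and $\infty$) to get $f_{12}=f_{21}$, the same eigenvalue/Killing-orthogonality argument of Lemma~\ref{th:11} for $\eta_{12}=\eta_{21}$, and the same evaluation at $t=\infty$ to read off $\hat f=\Gamma^{f_1}_{f_2}(m_2/m_1)f$ and the cross-ratio. No gaps.
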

We call such a configuration of four isothermic maps a \emph{Bianchi
quadrilateral} (see Figure~\ref{fig:quad}).

\begin{figure}[htp]
\begin{center}
\includegraphics{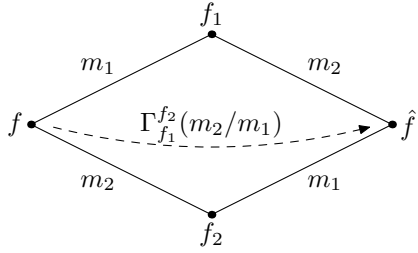}
  \caption{A Bianchi quadrilateral\label{fig:quad}}
\end{center}
\end{figure}

\subsection{The Cube Theorem}
\label{sec:cube-theorem}

With almost no extra effort, we can compute the effect of a third
Darboux transform on a Bianchi quadrilateral and so prove the
analogue of Bianchi's Cube Theorem \cite[\S11]{Bianchi1905a} in our
context.

So start with an isothermic map $f$, distinct
$m_1,m_2,m_3\in\R^\times$ and pairwise complementary Darboux
transforms $f_1,f_2,f_3$ of $f$: $f_i=\darb_{m_i}f$.  Then
Theorem~\ref{th:12} provides three simultaneous Darboux transforms
$f_{ij}=f_{ji}=\darb_{m_i}f_j=\darb_{m_j}f_i$, for $i,j\in\set{1,2,3}$
distinct.   Suppose now that the $f_{ij}$ are also pairwise
complementary and appeal once more to Theorem~\ref{th:12} to
construct three more Bianchi quadrilaterals $(f_i,
f_{ij},f_{i(jk)},f_{ik})$ so that
$f_{i(jk)}=\darb_{m_k}f_{ij}=\darb_{m_j}f_{ik}$.  The claim is that
these three new maps $f_{i(jk)}$ coincide (as do the corresponding
$\eta_{i(jk)}$) so that we have eight isothermic maps arranged into
six Bianchi quadrilaterals with the combinatorics of a cube (see
Figure~\ref{fig:cube}).

\begin{figure}[htp]
\begin{center}
\includegraphics{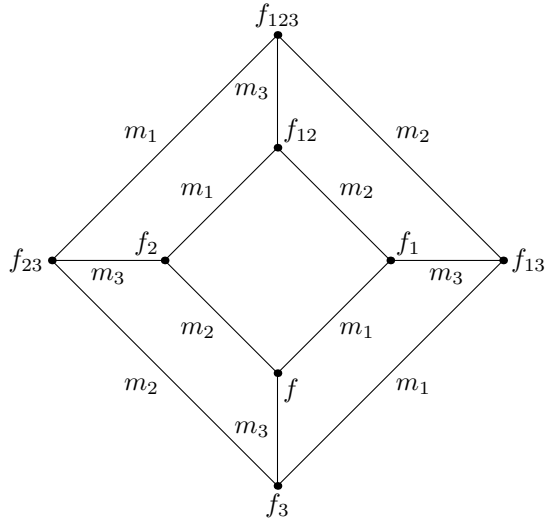}
  \caption{A Bianchi cube\label{fig:cube}}
\end{center}
\end{figure}

To prove this, it is enough to show that $f_{1(23)}=f_{2(31)}$ and
that $\eta_{1(23)}=\eta_{2(31)}$ for the rest follows by permuting
indices. Now
\begin{align*}
f_{1(23)}&=\Gamma_{f_1}^{f_{12}}(1-\tfrac{m_3}{m_2})f_{13}\\
&=\Gamma_{f_1}^{f_{12}}(1-\tfrac{m_3}{m_2})\Gamma_{f}^{f_1}(1-\tfrac{m_3}{m_1})f_3\\
\intertext{and similarly}
f_{2(31)}&=\Gamma_{f_2}^{f_{12}}(1-\tfrac{m_3}{m_1})\Gamma_{f}^{f_2}(1-\tfrac{m_3}{m_2})f_3.
\end{align*}
The identity of Lemma~\ref{th:11} evaluated at $t=m_3$ now yields
\begin{equation*}
f_{1(23)}=f_{2(31)}=\Gamma_{f_2}^{f_1}(\tfrac{1-{m_3}/{m_1}}{1-{m_3}/{m_2}})f_3
\end{equation*}
and we learn a little more: the eighth surface $f_{123}$ is
concircular with $f_1, f_2,f_3$ with constant cross-ratio:
\begin{equation*}
\cross(f_3,f_1,f_{123},f_2)=\tfrac{1-{m_3}/{m_1}}{1-{m_3}/{m_2}}.
\end{equation*}
For the classical case of isothermic surfaces in $S^n$, this last
statement is due to Bobenko--Suris \cite{Bobenko2002}.

Finally, we have
$\D+t\eta_{1(23)}=\Gamma_{f_{12}}^{f_{123}}(1-\tfrac{t}{m_3})\cdot(\D+t\eta_{12})=\D+t\eta_{2(31)}$
so that $\eta_{1(23)}=\eta_{2(31)}$.

To summarise: 
\begin{thm}
\label{th:13}%Cube theorem
Let $f:\Sigma\to M$ be an isothermic map and
$m_1,m_2,m_3\in\R^\times$ distinct parameters.  Let $f_1,f_2,f_3$ be
pairwise complementary Darboux transforms of $f$, $f_i=\darb_{m_i}f$,
and $f_{12},f_{23},f_{13}$ pairwise complementary simultaneous
Darboux transforms of the $f_i$ as in Theorem~\ref{th:12}:
$f_{ij}=\darb_{m_i}f_j=\darb_{m_j}f_i$.  Then there is an isothermic
map $(f_{123},\eta_{123})$ which is a simultaneous Darboux transform
of each $f_{ij}$: $f_{123}=\darb_{m_k}f_{ij}$, for $i,j,k$ distinct.

Moreover, $f_{123},f_1,f_2,f_3$ are concircular with cross-ratio
$\frac{1/m_3-1/m_1}{1/m_3-1/m_2}$.
\end{thm}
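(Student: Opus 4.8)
The plan is to follow the route indicated just before the statement: build the eight maps by iterating the Bianchi Permutability Theorem (Theorem~\ref{th:12}) and then verify that the three \emph{a priori} distinct ``third-level'' maps $f_{1(23)},f_{2(31)},f_{3(12)}$ coincide, together with their accompanying $1$-forms. First I would apply Theorem~\ref{th:12} to the three complementary pairs $\{f_i,f_j\}$ to obtain the simultaneous Darboux transforms $f_{ij}=\darb_{m_i}f_j=\darb_{m_j}f_i$; the added hypothesis that the $f_{ij}$ are pairwise complementary then permits a second application of Theorem~\ref{th:12} to each triangle $(f_i,f_{ij},f_{ik})$, producing $f_{i(jk)}=\darb_{m_k}f_{ij}=\darb_{m_j}f_{ik}$ and a corresponding $\eta_{i(jk)}$. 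Since every assertion is invariant under permutation of $\{1,2,3\}$, it suffices to prove $f_{1(23)}=f_{2(31)}$ and $\eta_{1(23)}=\eta_{2(31)}$.

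For the equality of maps I would unwind the construction in terms of the simple factors $\Gamma$. By part~(2) of Theorem~\ref{th:4} we have $f_{ij}=\Gamma_f^{f_i}(1-\tfrac{m_j}{m_i})f_j$ and, at the next level, $f_{i(jk)}=\Gamma_{f_i}^{f_{ij}}(1-\tfrac{m_k}{m_j})\Gamma_f^{f_i}(1-\tfrac{m_k}{m_i})f_k$. Writing out $f_{1(23)}$ and $f_{2(31)}$ this way, the two expressions are the images of $f_3$ under the composite gauge transformations $\Gamma_{f_1}^{f_{12}}(1-\tfrac{t}{m_2})\Gamma_f^{f_1}(1-\tfrac{t}{m_1})$ and $\Gamma_{f_2}^{f_{12}}(1-\tfrac{t}{m_1})\Gamma_f^{f_2}(1-\tfrac{t}{m_2})$ at $t=m_3$. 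But Lemma~\ref{th:11}, applied with $f_{12}$ in the role of $\hat f$ (legitimate, since $(f_1,f_{12})$ and $(f_2,f_{12})$ are complementary by construction of the Darboux transforms and $f_1,f_2$ are complementary by hypothesis), says precisely that these two composites are equal, both being $\Gamma_{f_2}^{f_1}(\tfrac{1-t/m_1}{1-t/m_2})$. Evaluating at $t=m_3$ gives $f_{1(23)}=f_{2(31)}=\Gamma_{f_2}^{f_1}\bigl(\tfrac{1-m_3/m_1}{1-m_3/m_2}\bigr)f_3=:f_{123}$. Applying the same identity to $f$ (where $\Gamma_f^{f_i}(\cdot)$ acts trivially) together with the cross-ratio normalisation in Proposition~\ref{th:10}(2) identifies $f_{123},f_1,f_2,f_3$ as pointwise concircular with cross-ratio $\tfrac{1-m_3/m_1}{1-m_3/m_2}=\tfrac{1/m_3-1/m_1}{1/m_3-1/m_2}$, as claimed.

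For the $1$-forms, with $f_{123}$ now defined, I would note that both $\D+t\eta_{1(23)}$ and $\D+t\eta_{2(31)}$ equal $\Gamma_{f_{12}}^{f_{123}}(1-\tfrac{t}{m_3})\cdot(\D+t\eta_{12})$ by the defining property in Theorem~\ref{th:4}(2) of the $1$-form attached to a Darboux transform (using $\eta_{12}=\eta_{21}$ from Theorem~\ref{th:12}); hence $\eta_{1(23)}=\eta_{2(31)}$. The remaining coincidences $f_{1(23)}=f_{3(12)}$ and their $\eta$-counterparts follow verbatim by relabelling, which arranges the eight isothermic maps into six Bianchi quadrilaterals with the combinatorics of a cube (Figure~\ref{fig:cube}), completing the proof.

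The only real friction here is bookkeeping rather than mathematics: at each stage one must check that the parabolic subalgebras in question are genuinely pairwise complementary, so that the $\Gamma$-factors are defined and Lemma~\ref{th:11}, Proposition~\ref{th:10}, and Theorem~\ref{th:4} all apply --- and the two complementarity hypotheses in the statement (on the $f_i$ and on the $f_{ij}$) are exactly what is required. One must also keep careful track of which linear fractional transformation is produced by Lemma~\ref{th:11} so that the specialisation $t=m_3$ lands on the correct coefficient. With those two results in hand there is no genuine obstacle; the conceptual content has already been absorbed into Proposition~\ref{th:10} and Lemma~\ref{th:11}.
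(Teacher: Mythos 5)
Your proposal is correct and follows essentially the same route as the paper: express $f_{1(23)}$ and $f_{2(31)}$ as images of $f_3$ under the two composite gauge transformations, identify them via Lemma~\ref{th:11} evaluated at $t=m_3$, read off the cross-ratio from Proposition~\ref{th:10}(2), and equate the $1$-forms by gauging $\D+t\eta_{12}$ by $\Gamma_{f_{12}}^{f_{123}}(1-\tfrac{t}{m_3})$, with the remaining coincidences following by permuting indices. The only cosmetic difference is attribution: the formula $f_{ij}=\Gamma_f^{f_i}(1-\tfrac{m_j}{m_i})f_j$ comes from the explicit construction in \S\ref{sec:bianchi-perm-darb} (which rests on Theorem~\ref{th:4}(2) plus parallelity of $f_j$), rather than from Theorem~\ref{th:4}(2) alone.
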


\begin{rem}
The cube configuration is highly symmetrical and, in particular,
starting the analysis at $f_{123}$ rather than $f$, one readily
deduces that the other tetrahedron in the cube has concircular
vertices with the same cross-ratio:
\begin{equation*}
\cross(f_{12},f_{23},f,f_{13})=\tfrac{1-m_3/m_2}{1-m_3/m_1}.
\end{equation*}
\end{rem}

\subsection{Permutability of Christoffel and Darboux transforms}
\label{sec:perm-christ-darb}

Theorem~\ref{th:27} realises the Christoffel transform of $f$ as a
limit of conjugated Darboux transforms.  Taken together with the
preceding permutability theorems for Darboux transforms, this allows
us to prove the analogue of two more permutability results of
Bianchi.

First we prove that Christoffel and Darboux transforms commute
(c.f.~Bianchi \cite[\S3]{Bianchi1905a}):
\begin{thm}
\label{th:28} %Ch and Darboux permute
Let $f,f^c,\hat{f}:\Sigma\to M$ be isothermic maps into a self-dual
symmetric $R$-space with $(f^{c},\eta^{c})$ a Christoffel transform
of $f$ and $(\hat{f},\hat{\eta})$ a Darboux transform of $f$ with
parameter $m$.  

Then there is a fourth isothermic map $\hat{f}^c:\Sigma\to M$ which
is simultaneously a Christoffel transform of $\hat{f}$ and a Darboux
transform of $f^c$ with parameter $m$: $\hat{f}^{c}=(\hat{f})^c=\darb_mf^c$.
\end{thm}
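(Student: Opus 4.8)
The plan is to obtain $\hat f^c$ as the degenerate limit of a one‑parameter family of Bianchi quadrilaterals, in exactly the spirit in which Theorem~\ref{th:27} presents $f^c$ itself as a limit of conjugated Darboux transforms of $f$. Working on a domain small enough that a complementary pair $(\fp_0,\fp_\infty)$ can be chosen with $\fp_\infty$ pointwise complementary to \emph{both} $f$ and $\hat f$ (possible since each $\Omega_{\fp}$ is open and dense), take $(f^c,\eta^c)$ to be the Christoffel transform of $(f,\eta)$ with respect to $(\fp_0,\fp_\infty)$, and choose, as in the discussion preceding Theorem~\ref{th:27}, a smooth family $g_s=\darb_sf$ of Darboux transforms with $g_0\equiv\fp_\infty$ and $1$-forms $\eta_s$ (the $\hat\eta_s$ of that theorem). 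For small $s\neq 0$ one has $s\neq m$ and $g_s$ pointwise complementary to $\hat f$, so Theorem~\ref{th:12} yields a common Darboux transform $h_s=\darb_mg_s=\darb_s\hat f$; the formula $h_s=\Gamma_{g_s}^{\hat f}(s/m)f$ from its proof shows $h_s$ is smooth in $s$ and, since $g_s\to\fp_\infty$ and $s/m\to 0$, extends continuously to $h_0=\Gamma_{\fp_\infty}^{\hat f}(0)f=\fp_\infty$.

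Now read the limit $s\to 0$ through the two descriptions of $h_s$. As $h_s=\darb_s\hat f$, the family $(h_s,\eta_{h_s})$ is a smooth family of Darboux transforms of $(\hat f,\hat\eta)$ with parameter $s$ and $\lim_{s\to 0}h_s=\fp_\infty$, so Theorem~\ref{th:27} applied to $\hat f$ with the pair $(\fp_0,\fp_\infty)$ (legitimate because $\fp_\infty$ is pointwise complementary to $\hat f$) identifies $\hat f^c:=\lim_{s\to 0}\Gamma_{\fp_0}^{\fp_\infty}(-s)h_s$ as a Christoffel transform of $\hat f$, and supplies its $1$-form $\hat\eta^c$ together with $\D+t\hat\eta^c=\lim_{s\to 0}\Gamma_{\fp_0}^{\fp_\infty}(-s)\cdot(\D+t\eta_{h_s})$. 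As $h_s=\darb_mg_s$, the subbundle $h_s$ is $(\D+m\eta_s)$‑parallel; conjugating by the constant automorphism $\Gamma_{\fp_0}^{\fp_\infty}(-s)$ keeps it parallel for $\D+m\,\Gamma_{\fp_0}^{\fp_\infty}(-s)\eta_s$, a connection that converges to $\D+m\eta^c$ by the final clause of Theorem~\ref{th:27} applied to $f$ (at $t=m$). Passing to the limit, $\hat f^c$ is $(\D+m\eta^c)$‑parallel, hence equals $\darb_mf^c$ once $f^c$ and $\hat f^c$ are known to be pointwise complementary; the $1$-form it then inherits as $\darb_mf^c$ is forced to coincide with $\hat\eta^c$ by comparing the two displayed limit formulas.

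The one point needing genuine care — and the main obstacle — is the pointwise complementarity of $f^c$ and $\hat f^c$ required for $\darb_mf^c$ to be a bona fide Darboux transform: parallelism and isothermicity survive the limit by Theorem~\ref{th:27}, but complementarity is an open condition, so the pairs $(\Gamma_{\fp_0}^{\fp_\infty}(-s)g_s,\Gamma_{\fp_0}^{\fp_\infty}(-s)h_s)\in Z$ could a priori limit to a pair $(f^c,\hat f^c)$ lying only in $\overline{Z}$. I would dispatch this by observing that $f^c$ and $\hat f^c$, both being Christoffel transforms relative to $(\fp_0,\fp_\infty)$, have stereoprojections $F^c,\hat F^c:\Sigma\to\fp_0^\perp$ from $\fp_0$, and that $f^c(x)$ and $\hat f^c(x)$ are complementary precisely when $\hat F^c(x)-F^c(x)$ is regular in the sense of Lemma~\ref{th:8} (equivalently $\ker\bigl(\ad(\hat F^c-F^c)\bigr)^2=\fp_0$); this holds on an open dense subset of $\Sigma$, which is all one can demand of a Darboux transform in any case (compare the remark following the definition of $\darb_m$). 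Everything else is bookkeeping: the cross‑ratio computations underlying Theorem~\ref{th:12}, layered onto the blow‑up mechanism of Theorem~\ref{th:27}, already contain all the analysis, and no fresh calculation is required.
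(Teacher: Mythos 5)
Your proposal is essentially the paper's own argument: realise $f^c$ via Theorem~\ref{th:27} as a blow-up of the Darboux transforms $\darb_sf$, use Theorem~\ref{th:12} to form the common transforms $\darb_m(\darb_sf)=\darb_s\hat f$, check that these also limit to $\fp_\infty$, and apply Theorem~\ref{th:27} once more to produce $\hat f^c$ together with the limiting Darboux relation. The only divergence is at the final step and is minor: you pass the $(\D+m\eta_s)$-parallelism of $h_s$ to the limit and then address the complementarity of $f^c$ and $\hat f^c$ by hand, whereas the paper passes the gauge identity $\Gamma_{f_s}^{\hat f_s}(1-\frac{t}{m})\cdot(\D+t\eta_s)=\D+t\hat\eta_s$ to the limit and invokes Proposition~\ref{th:24}, leaving that same pointwise complementarity as an implicit genericity assumption, much as you do.
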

\begin{proof}
Let $f^{c}$ be a Christoffel transform with respect to
$(\fp_{0},\fp_{\infty})$ and find, via Theorem~\ref{th:27}, a family
$({f}_s,{\eta}_{s})$ of Darboux transforms of $f$ with
parameter $s$ such that
\begin{equation*}
\lim_{s\to 0}{f}_s=\fp_{\infty},\qquad
\lim_{s\to
0}\Gamma_{\fp_0}^{\fp_{\infty}}(-s)\act{f}_s=f^{c},\qquad
\lim_{s\to
0}\Gamma_{\fp_0}^{\fp_{\infty}}(-s){\eta}_{s}=\eta^{c}.
\end{equation*}
Now apply Theorem~\ref{th:12} to $f_{s},\hat{f}$ to get
$(\hat{f}_{s},\hat{\eta}_{s})$ with
$\hat{f}_{s}=\darb_{m}f_s=\darb_{s}\hat{f}$.  We have
$\hat{f}_s=\Gamma_f^{\hat{f}}(1-s/m)f_s$ so that 
\begin{equation*}
\lim_{s\to 0}\hat{f}_s=\Gamma_f^{\hat{f}}(1)\fp_{\infty}=\fp_{\infty}
\end{equation*}
so that Theorem~\ref{th:27} applies to the $\hat{f}_s$ and we may
define a Christoffel transform $(\hat{f}^c,\hat{\eta}^c)$ of
$\hat{f}$ by
\begin{equation*}
\hat{f}^c=\lim_{s\to0}\Gamma_{\fp_0}^{\fp_{\infty}}(-s)\act\hat{f}_s,\qquad
\hat{\eta}^c=\lim_{s\to0}\Gamma_{\fp_0}^{\fp_{\infty}}(-s)\hat{\eta}_s.
\end{equation*}
It remains to show that $(\hat{f}^c,\hat{\eta}^c)$ is a Darboux
transform with parameter $m$ of $f^{c}$.  By Proposition~\ref{th:24},
this is the case if and only if
$\Gamma_{f^c}^{\hat{f}^c}(1-\tfrac{t}{m})\cdot(\D+t\eta^{c})=\D+t\hat{\eta}^{c}$
for all $t\neq m$.  However, we already know that, for each $s$,
$\Gamma_{f_{s}}^{\hat{f}_s}(1-\tfrac{t}{m})\cdot(\D+t\eta_s)=\D+t\hat{\eta}_s$
and applying $\Gamma_{\fp_0}^{\fp_{\infty}}(-s)$ to both sides yields
\begin{equation*}
\Gamma_{\Gamma_{\fp_0}^{\fp_{\infty}}(-s)f^{c}}^{\Gamma_{\fp_0}^{\fp_{\infty}}(-s)\hat{f}_{s}}(1-\tfrac{t}{m})
\Gamma_{\fp_0}^{\fp_{\infty}}(-s)\cdot(\D+t\eta_{s})=
\Gamma_{\fp_0}^{\fp_{\infty}}(-s)\cdot(\D+t\hat\eta_{s}).
\end{equation*}
Now let $s\to 0$ and use the last assertion of Theorem~\ref{th:27} to
get
$\Gamma_{f^c}^{\hat{f}^c}(1-\tfrac{t}{m})\cdot(\D+t\eta^{c})=\D+t\hat{\eta}^{c}$
as required.
\end{proof}

Similarly, we can take $m_{3}=s$ and let $s\to 0$ in the Cube
Theorem~\ref{th:13} to obtain, without further argument, the
following result due to Bianchi \cite[\S9]{Bianchi1905a} in the
classical case:
\begin{thm}
\label{th:29}%Ch of Bianchi quadrilateral
Let $f,f_{1},f_{2},f_{12}:\Sigma\to M$ be isothermic maps in a
self-dual symmetric $R$-space forming a Bianchi quadrilateral.  Let
$f^{c}$ be a Christoffel transform of $f$.

Then there are Christoffel transforms $f_{1}^{c},f_2^c,f_{12}^c$ of
$f_{1},f_{2},f_{12}$ respectively so that
$f^{c},f_{1}^{c},f_2^c,f_{12}^c$ also form a Bianchi quadrilateral.
\end{thm}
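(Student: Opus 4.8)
The plan is to mirror the proof of Theorem~\ref{th:28}, replacing the Bianchi permutability Theorem~\ref{th:12} by the Cube Theorem~\ref{th:13}; heuristically, one sets $m_3=s$ in the cube and lets $s\to 0$, so that the ``top face'' degenerates to the Christoffel quadrilateral. First I would fix a Christoffel transform $(f^c,\eta^c)$ of $(f,\eta)$ with respect to a complementary pair $(\fp_0,\fp_\infty)$, chosen (at least locally) so that $\fp_\infty$ is in addition pointwise complementary to $f_1$, $f_2$ and $f_{12}$. As in the proof of Theorem~\ref{th:28}, find via Theorem~\ref{th:27} a smooth family $(f_s,\eta_s)$ of Darboux transforms of $(f,\eta)$ with parameter $s$ such that $\lim_{s\to 0}f_s=\fp_\infty$ and $\D+t\eta^c=\lim_{s\to 0}\Gamma_{\fp_0}^{\fp_\infty}(-s)\cdot(\D+t\eta_s)$ for all $t$.

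Next, for small $s\neq 0$ the maps $f_1,f_2,f_s$ are pairwise complementary Darboux transforms of $f$ with parameters $m_1,m_2,s$, and the Cube Theorem~\ref{th:13} supplies the remaining six vertices of the cube: in particular $f_{1s}=\darb_{m_1}f_s=\darb_s f_1$, $f_{2s}=\darb_{m_2}f_s=\darb_s f_2$ and $f_{12s}=\darb_{m_2}f_{1s}=\darb_{m_1}f_{2s}=\darb_s f_{12}$, together with their accompanying $1$-forms, while $(f_s,f_{1s},f_{2s},f_{12s})$ is a Bianchi quadrilateral with parameters $m_1,m_2$. Each of $f_{1s},f_{2s},f_{12s}$ is a family of Darboux transforms with parameter $s$ of $f_1,f_2,f_{12}$ respectively which converges to $\fp_\infty$ as $s\to 0$ --- for instance $f_{1s}=\Gamma_f^{f_1}(1-\tfrac{s}{m_1})\act f_s\to\Gamma_f^{f_1}(1)\act\fp_\infty=\fp_\infty$ and $f_{12s}=\Gamma_{f_1}^{f_{12}}(1-\tfrac{s}{m_2})\act f_{1s}\to\fp_\infty$ --- so Theorem~\ref{th:27} applies to each and defines Christoffel transforms $f_1^c,f_2^c,f_{12}^c$ of $f_1,f_2,f_{12}$ (with respect to $(\fp_0,\fp_\infty)$) as the limits $f_i^c=\lim_{s\to 0}\Gamma_{\fp_0}^{\fp_\infty}(-s)\act f_{is}$, $f_{12}^c=\lim_{s\to 0}\Gamma_{\fp_0}^{\fp_\infty}(-s)\act f_{12s}$, with the corresponding limits of the $1$-forms giving $\eta_1^c,\eta_2^c,\eta_{12}^c$ and the connection forms converging as in the last line of Theorem~\ref{th:27}.

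It then remains to read off the Darboux relations in the limit, exactly as in Theorem~\ref{th:28}. Since $f_{1s}=\darb_{m_1}f_s$, Theorem~\ref{th:4}(2) gives $\Gamma_{f_s}^{f_{1s}}(1-\tfrac{t}{m_1})\cdot(\D+t\eta_s)=\D+t\eta_{1s}$; applying $\Gamma_{\fp_0}^{\fp_\infty}(-s)$ to both sides and letting $s\to 0$, using continuity of $(\fp,\fq)\mapsto\Gamma_\fp^\fq$ together with the convergences above, yields $\Gamma_{f^c}^{f_1^c}(1-\tfrac{t}{m_1})\cdot(\D+t\eta^c)=\D+t\eta_1^c$, whence $f_1^c=\darb_{m_1}f^c$ by Proposition~\ref{th:24}. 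The same argument gives $f_2^c=\darb_{m_2}f^c$, and, starting from $f_{12s}=\darb_{m_2}f_{1s}=\darb_{m_1}f_{2s}$, gives $f_{12}^c=\darb_{m_2}f_1^c=\darb_{m_1}f_2^c$. With $f_1^c,f_2^c$ pointwise complementary, Theorem~\ref{th:12} then shows $f^c,f_1^c,f_2^c,f_{12}^c$ form a Bianchi quadrilateral, pointwise concircular with constant cross-ratio $m_2/m_1$; equivalently, since $\Gamma_{\fp_0}^{\fp_\infty}(-s)\in\Aut(\fg)$ preserves $M$, complementarity, the distinguished circles and their cross-ratios, this configuration is just the $s\to 0$ limit of the images under $\Gamma_{\fp_0}^{\fp_\infty}(-s)$ of the top face of the cube.

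The hard part is the bookkeeping around the degeneration as $s\to 0$: one must check that the complementarity hypotheses of the Cube Theorem genuinely hold for all small $s\neq 0$ even though $f_{1s},f_{2s},f_{12s}$ all collapse onto the single point $\fp_\infty$ in the limit (the distinct pairs among these degenerate only at $s=0$, so the conditions persist for small $s\neq0$), and that pointwise complementarity of $f_1^c$ and $f_2^c$, as well as the concircularity and cross-ratio, survive the passage to the limit after conjugation. The purely algebraic content --- extracting each limiting Darboux relation from the gauge identity of Theorem~\ref{th:4}(2) via Proposition~\ref{th:24} --- is identical to the proof of Theorem~\ref{th:28} and presents no difficulty.
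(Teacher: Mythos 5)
Your proposal is correct and is essentially the paper's own argument: the paper proves Theorem~\ref{th:29} precisely by setting $m_3=s$ in the Cube Theorem~\ref{th:13} and letting $s\to 0$, exactly as in the proof of Theorem~\ref{th:28} via Theorem~\ref{th:27}. You have simply written out the limiting bookkeeping (convergence of $f_{1s},f_{2s},f_{12s}$ to $\fp_\infty$, the gauged identities of Theorem~\ref{th:4}(2) passing to the limit, and Proposition~\ref{th:24}) that the paper dispatches with ``without further argument''.
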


\subsection{Discrete isothermic surfaces in self-dual symmetric $R$-spaces}
\label{sec:discr-isoth-surf}

It is an experimental fact \cite{BobenkoPinkall96,BobenkoPinkall96a}
(with some theoretical underpinning \cite{BobenkoSuris2005}) that
the combinatorics of the B\"acklund transforms of an integrable
system provide an integrable discretisation of that system.  In
particular, such an analysis is available for isothermic surfaces in
the conformal sphere \cite{BobenkoPinkall96a,Jeromin2000}.

We now indicate how the same ideas may be applied to give a satisfying
theory of discrete isothermic nets in self-dual symmetric $R$-spaces
which replicates essentially all features of the smooth theory we
have been developing.  

For this we begin with a subset $\Omega\subset\Z^2$ of a
$2$-dimensional lattice.  If $i,j,k,l\in\Omega$ are the vertices of an
elementary quadrilateral as in  Figure~\ref{fig:discrete}, we denote the
oriented edge from $i$ to $j$ by $(j,i)$ and the quadrilateral by $(i,j,k,l)$.

\begin{figure}[htp]
\begin{center}
\includegraphics{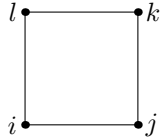}
  \caption{An elementary quadrilateral\label{fig:discrete}}
\end{center}
\end{figure}

Thanks to the discussion in \MySec\ref{sec:circles-self-dual},
Hertrich-Jeromin's definition of an isothermic net in the conformal
sphere \cite{Jeromin2000} carries straight over into our setting.
\begin{defn}
Let $f:\Omega\to M$ be a map into a self-dual symmetric $R$-space.

We say that $f$ is an \emph{isothermic net} if, for each
elementary quadrilateral $(i,j,k,l)$, the values
$f(i),f(j),f(k),f(l)$ are concircular and pairwise complementary
with
\begin{equation*}
\cross(f(i),f(j),f(k),f(l))={m(i,l)}/{m(i,j)},
\end{equation*}
for some \emph{factorising function} $m$ which is a real valued
function on the unoriented edges of $\Omega$ with equal values on
opposite edges\footnote{Thus $m(i,j)=m(j,i)$ for all edges and
$m(i,j)=m(l,k)$, $m(i,l)=m(j,k)$ on elementary quadrilaterals.}.
\end{defn}

Just as in the smooth case, isothermic nets have a zero-curvature
representation: let $(f,m)$ be such a net and let $V=\Omega\times\fg$
be the trivial $\fg$-bundle.  For each oriented edge $(j,i)$ and
$t\in\R$, define $\Gamma^{t}(j,i):V_{i}\to V_{j}$ by
\begin{equation*}
\Gamma^t(j,i)=\Gamma^{f(j)}_{f(i)}(1-\tfrac{t}{m(i,j)}).
\end{equation*}
It is immediate that $\Gamma^0(j,i)=1$ while
$\Gamma^t(j,i)\Gamma^t(i,j)=1_{V(j)}$, so that each $\Gamma^t$ is
(the holonomy of) a discrete connection on the bundle $V$.  A mild
reorganisation of the discussion leading to the proof of
Theorem~\ref{th:12} now gives:
\begin{thm}
$(f,m)$ is an isothermic net if and only if each $\Gamma^t$ is a flat
connection: that is, on each elementary quadrilateral $(i,j,k,l)$, we have
\begin{equation*}
\Gamma^t(k,j)\Gamma^t(j,i)=\Gamma^t(k,l)\Gamma^t(l,i).
\end{equation*}
\end{thm}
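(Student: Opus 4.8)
The plan is to localise everything to a single elementary quadrilateral, since flatness of $\Gamma^t$ is by definition a quadrilateral‑wise condition. So I fix $(i,j,k,l)$, abbreviate $a=f(i)$, $b=f(j)$, $c=f(k)$, $d=f(l)$ and $p=m(i,j)=m(l,k)$, $q=m(i,l)=m(j,k)$, and aim to show that
\[
\Gamma^c_b(1-\tfrac{t}{q})\,\Gamma^b_a(1-\tfrac{t}{p})=\Gamma^c_d(1-\tfrac{t}{p})\,\Gamma^d_a(1-\tfrac{t}{q})\qquad\text{for all }t
\]
holds if and only if $a,b,c,d$ are concircular, pairwise complementary, and $\cross(a,b,c,d)=q/p$. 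The observation that drives the whole argument is that, after the relabelling $f\mapsto a$, $f_1\mapsto b$, $\hat f\mapsto c$, $f_2\mapsto d$, $m_1\mapsto p$, $m_2\mapsto q$, the displayed equation is precisely the identity of Lemma~\ref{th:11} (whose content is that both sides equal $\Gamma^b_d\bigl(\tfrac{1-t/p}{1-t/q}\bigr)$); so essentially no new work is needed beyond re‑reading that lemma's proof.

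For the ``only if'' half I would argue as follows. Assuming $(f,m)$ is a net, the points $a,b,d$ are mutually complementary, so they determine a circle on which $c$ lies, and Proposition~\ref{th:10}(2), together with the routine reordering identity $\cross(d,b,c,a)=1-\cross(a,b,c,d)$, pins down $c=\Gamma^b_a(1-q/p)\,d$. This is exactly the fact about the fourth point that the proof of Lemma~\ref{th:11} invokes (there it was a consequence of the Darboux‑transform relation between $\hat f$ and $f_1$), and that proof uses nothing else about the four points beyond their being concircular, pairwise complementary and in the stated cross‑ratio; hence it applies verbatim and delivers the flatness identity.

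For the ``if'' half I would run this backwards. Suppose the flatness identity holds (which presupposes that the four edge‑values $a\sim b$, $b\sim c$, $c\sim d$, $d\sim a$ are complementary, so that the $\Gamma$'s are defined). First I would recover the diagonals: the common automorphism of $\ul\fg$ in the identity acts on $b^\perp$ (an eigenspace of its left‑hand side) by the scalar $\tfrac{1-t/p}{1-t/q}$ and on $d^\perp$ (an eigenspace of its right‑hand side) by $\tfrac{1-t/q}{1-t/p}$; for generic $t$ these differ, forcing $b^\perp\cap d^\perp=\set{0}$, i.e.\ $b$ and $d$ complementary by Remark~\ref{h:comp-ht-1}. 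With $b,d$ complementary, $s\mapsto\Gamma^b_a(s)\,d$ and $s\mapsto\Gamma^b_c(s)\,d$ are honest parametrised circles through $\set{a,d,b}$ and $\set{c,d,b}$; applying the flatness identity to the constant subalgebra $d$ reduces it to $\Gamma^b_a(1-t/p)\,d=\Gamma^b_c(1-t/q)\,d$, so these two curves coincide, whence $a,b,c,d$ lie on one circle and are therefore pairwise complementary. Finally, specialising this curve identity at $t=q$, where $\Gamma^b_c(0)\,d=c$, gives $c=\Gamma^b_a(1-q/p)\,d$, so $\cross(d,b,c,a)=1-q/p$ by Proposition~\ref{th:10}(2) and $\cross(a,b,c,d)=q/p$; that is, $(f,m)$ is a net.

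I expect the only real friction to be bookkeeping rather than ideas: keeping straight, in each $\Gamma^\fq_\fp$, which of $\fp,\fq$ is sub‑ and which is superscript, and hence which scalar acts on $\fp^\perp$ versus $\fq^\perp$ (this is what makes the eigenspace step in the ``if'' half delicate), together with the fact that the flatness identity is an equality in $\Aut(\fg)$ only for $t$ outside a finite bad set, so the specialisation at $t=q$ and the limiting values built into the circle parametrisations have to be read through the geometric action of the $\Gamma^\fq_\fp$ on individual parabolic subalgebras, using the conventions $\Gamma_{\fp_0}^{\fp_\infty}(0)\fp_1=\fp_0$ and $\Gamma_{\fp_0}^{\fp_\infty}(\infty)\fp_1=\fp_\infty$ recorded before Proposition~\ref{th:10}.
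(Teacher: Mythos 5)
Your proposal is correct and is essentially the paper's own argument: the paper proves this theorem simply by invoking ``a mild reorganisation of the discussion leading to the proof of Theorem~\ref{th:12}'', and your two directions are exactly that reorganisation --- Lemma~\ref{th:11} under your relabelling for the forward implication, and the circle machinery of Proposition~\ref{th:10} run backwards for the converse. The only point worth making explicit is that your eigenvalue argument for complementarity of the diagonal $f(j),f(l)$ requires $m(i,j)\neq m(i,l)$ on each quadrilateral, which is implicit throughout, since pairwise complementary concircular points cannot have cross-ratio $1$.
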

(See \cite{Bobenko2002,BurHerRos08} for this gauge-theoretic approach
to isothermic nets in the conformal case.) 

With this in hand, the transformation theory of isothermic nets can
be developed along the same lines as the smooth case.  Firstly,
$T$-transforms arise by trivialising the family of connections
$\Gamma^{t}$: locally we may find $\Phi_{t}:\Omega\to\Aut(\fg)$,
unique up to left multiplication by constants in $\Aut(\fg)$, such
that
\begin{equation*}
\Gamma^t(j,i)=\Phi_{t}(j)^{-1}\Phi_t(i)
\end{equation*}
and then define the $T$-transforms $\ttrans_{s}f$ of $f$ by
$\ttrans_{s}f=\Phi_{s}f$.  One can show:
\begin{thm}
Let $(f,m)$ be an isothermic net.  Then each $T$-transform
$\ttrans_{s}f$ is an isothermic net with factorising function $m-s$.
\end{thm}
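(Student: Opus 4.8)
The plan is to reduce to the characterisation theorem just proved — a pair consisting of a net and a factorising function is isothermic exactly when its family of discrete connections is flat — and then to use the explicit form of the trivialising cochain $\Phi_s$. Since $m-s$ differs from $m$ by a constant it is again factorising, so, writing $\tilde\Gamma^{t}$ for the discrete connection $(j,i)\mapsto\Gamma^{(\ttrans_{s}f)(j)}_{(\ttrans_{s}f)(i)}(1-\tfrac{t}{m(i,j)-s})$ attached to $(\ttrans_{s}f,m-s)$, it suffices to show that every $\tilde\Gamma^{t}$ is flat; this will simultaneously force the adjacent values of $\ttrans_{s}f$ to be complementary, as they must be for $\tilde\Gamma^{t}$ to be defined.

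The first observation is that for adjacent vertices $i,j$,
\begin{equation*}
(\ttrans_{s}f)(i)=\Phi_{s}(i)\act f(i)=\Phi_{s}(j)\,\Gamma^{s}(j,i)\act f(i)=\Phi_{s}(j)\act f(i),
\end{equation*}
since $\Gamma^{s}(j,i)=\Gamma^{f(j)}_{f(i)}(1-\tfrac{s}{m(i,j)})$ preserves the subalgebra $f(i)$ (which is the sum of its eigenspaces $f(i)^{\perp}$ and $f(i)\cap f(j)$). Thus $(\ttrans_{s}f)(i)$ and $(\ttrans_{s}f)(j)=\Phi_{s}(j)\act f(j)$ are the $\Phi_{s}(j)$-images of the complementary pair $(f(i),f(j))$ — in particular they are complementary — and the $\Aut(\fg)$-equivariance of $(\fp,\fq)\mapsto\Gamma^{\fq}_{\fp}$ gives
\begin{equation}\label{eq:Ttc}
\tilde\Gamma^{t}(j,i)=\Phi_{s}(j)\,\Gamma^{f(j)}_{f(i)}\bigl(1-\tfrac{t}{m(i,j)-s}\bigr)\,\Phi_{s}(j)^{-1}.
\end{equation}
Substituting $\Phi_{s}(j)^{-1}=\Gamma^{s}(j,i)\,\Phi_{s}(i)^{-1}$ into \eqref{eq:Ttc} and using that $r\mapsto\Gamma^{f(j)}_{f(i)}(r)$ is a homomorphism together with the elementary identity $\bigl(1-\tfrac{t}{m-s}\bigr)\bigl(1-\tfrac{s}{m}\bigr)=1-\tfrac{s+t}{m}$ (with $m=m(i,j)$), the two inner factors combine to $\Gamma^{f(j)}_{f(i)}(1-\tfrac{s+t}{m(i,j)})=\Gamma^{s+t}(j,i)$, whence
\begin{equation}\label{eq:Ttc2}
\tilde\Gamma^{t}(j,i)=\Phi_{s}(j)\,\Gamma^{s+t}(j,i)\,\Phi_{s}(i)^{-1}.
\end{equation}

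Now \eqref{eq:Ttc2} makes flatness immediate: on an elementary quadrilateral $(i,j,k,l)$ the interior $\Phi_{s}$-factors cancel, so $\tilde\Gamma^{t}(k,j)\tilde\Gamma^{t}(j,i)=\Phi_{s}(k)\,\Gamma^{s+t}(k,j)\Gamma^{s+t}(j,i)\,\Phi_{s}(i)^{-1}$ and likewise $\tilde\Gamma^{t}(k,l)\tilde\Gamma^{t}(l,i)=\Phi_{s}(k)\,\Gamma^{s+t}(k,l)\Gamma^{s+t}(l,i)\,\Phi_{s}(i)^{-1}$; since $(f,m)$ is isothermic the connection $\Gamma^{s+t}$ is flat, so the two right-hand sides agree and hence so do the left-hand sides. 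Applying the characterisation theorem to $(\ttrans_{s}f,m-s)$ finishes the proof.

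The one point needing care is the bookkeeping of domains: $\Phi_{s}$ exists only for $s$ avoiding the finitely many values attained by $m$, and \eqref{eq:Ttc2} is valid precisely for those $t$ with $s+t$ not a value of $m$ — but these are exactly the $t$ for which $\tilde\Gamma^{t}$ must be tested, so nothing is lost. I expect this, rather than any part of the algebra, to be the fussiest aspect. Identity \eqref{eq:Ttc2} is simply the discrete counterpart of the spectral-parameter shift $\Phi_{s}\cdot\nabla^{s+t}=\D+t\,\Phi_{s}\eta$ underlying the smooth $T$-transform (cf.\ Theorem~\ref{th:5}), so the argument is essentially forced by it.
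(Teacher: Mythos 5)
Your proposal is correct and follows essentially the same route as the paper: the paper's proof consists precisely of the gauge identity ${}^s\Gamma^t(j,i)=\Phi_s(j)\Gamma^{t+s}(j,i)\Phi_s(i)^{-1}$ (stated there as "an easy calculation"), which you derive explicitly via the equivariance of $\Gamma_{\fp}^{\fq}$, the homomorphism property and the factor identity $\bigl(1-\tfrac{t}{m-s}\bigr)\bigl(1-\tfrac{s}{m}\bigr)=1-\tfrac{s+t}{m}$, before concluding flatness and invoking the characterisation theorem. Your additional check that adjacent values of $\ttrans_s f$ are the $\Phi_s(j)$-image of a complementary pair, and the remark on the admissible values of $s$ and $t$, are welcome details the paper leaves implicit.
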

Indeed, an easy calculation shows that the connection ${}^s\Gamma^t$
associated to $(\ttrans_{s}f,m-s)$ is the gauge
transform\footnote{Thus
${}^s\Gamma^t(j,i)=\Phi_{s}(j)\Gamma^{t+s}(j,i)\Phi_s(i)^{-1}$, for
all edges $(j,i)$.} by $\Phi_s$ of $\Gamma^{t+s}$ and so is flat.

Again, Darboux transforms amount to parallel bundles of parabolic
subalgebras: a map $\hat f:\Omega\to M$ is a \emph{Darboux transform
of $f$ with parameter $\hat m\in\R$} if $\hat f$ is $\Gamma^{\hat
m}$-parallel.  Thus, for all edges $(i,j)$,
\begin{equation*}
\Gamma^{\hat m}(j,i)\hat f(i)=\hat f(j).
\end{equation*}
It follows from the definition that, for each edge $(j,i)$, the
points $f(i),f(j),\hat f(j),\hat f(i)$ are concircular with
cross-ratio $\hat m/m(i,j)$ and then Lemma~\ref{th:11} applies to
show that the connections $\hat\Gamma^{t}$ of $(\hat f,m)$ are the
gauges of $\Gamma^{t}$ by $i\mapsto \Gamma_{f(i)}^{\hat
f(i)}(1-\tfrac{t}{\hat m})$ so that: 
\begin{thm}
$\hat f$ is isothermic with the same factorising function $m$.
\end{thm}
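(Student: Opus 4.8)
The plan is to reduce the statement to the zero-curvature characterisation of isothermic nets just proved: I will exhibit a gauge transformation carrying the discrete connection $\Gamma^{t}$ of $(f,m)$ to the discrete connection $\hat\Gamma^{t}$ attached to the pair $(\hat f,m)$, so that $\hat\Gamma^{t}$ is flat for every $t$ and hence $(\hat f,m)$ is an isothermic net --- which is exactly the assertion, since being an isothermic net with factorising function $m$ \emph{means} precisely that the values of $\hat f$ on elementary quadrilaterals are concircular, pairwise complementary, and have cross-ratio $m(i,l)/m(i,j)$. Here $\hat\Gamma^{t}(j,i)=\Gamma^{\hat f(j)}_{\hat f(i)}(1-\tfrac{t}{m(i,j)})$, so the very first thing to secure is that $\hat f(i),\hat f(j)$ are complementary on each edge, so that $\hat\Gamma^{t}$ is even defined.

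First I would analyse a single edge $(j,i)$. Since $\hat f$ is $\Gamma^{\hat m}$-parallel and $\Gamma^{\hat m}$ is one of the flat connections of $(f,m)$, we have $\hat f(j)=\Gamma^{\hat m}(j,i)\hat f(i)=\Gamma^{f(j)}_{f(i)}(1-\tfrac{\hat m}{m(i,j)})\hat f(i)$, with the parameter $1-\hat m/m(i,j)$ avoiding $0,1,\infty$ since $\hat m\neq 0\neq m(i,j)$. By the circle construction of \MySec\ref{sec:circles-self-dual} together with Proposition~\ref{th:10}, the four parabolic subalgebras $f(i),\hat f(i),f(j),\hat f(j)$ then lie on a common circle, are pairwise complementary, and satisfy $\cross\bigl(f(i),f(j),\hat f(j),\hat f(i)\bigr)=\hat m/m(i,j)$; this is the edge configuration recorded before the theorem. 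In particular $(\hat f(i),\hat f(j))$ is complementary, so $\hat\Gamma^{t}$ is defined, and $g_{t}(i):=\Gamma^{\hat f(i)}_{f(i)}(1-\tfrac{t}{\hat m})\in\Aut(\fg)$ is well defined at every vertex.

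The heart of the argument is the gauge identity $\hat\Gamma^{t}(j,i)=g_{t}(j)\,\Gamma^{t}(j,i)\,g_{t}(i)^{-1}$, checked one edge at a time. This is Lemma~\ref{th:11} applied to the quadruple $f(i),\hat f(i),f(j),\hat f(j)$ in the roles of $f,f_{1},f_{2},\hat f$ with $m_{1}=\hat m$ and $m_{2}=m(i,j)$: the first member of that lemma's triple identity works out to $\hat\Gamma^{t}(j,i)\,g_{t}(i)$ and the middle member to $g_{t}(j)\,\Gamma^{t}(j,i)$, so equating them gives exactly the gauge identity. Granting it on every edge, flatness of $\hat\Gamma^{t}$ around an elementary quadrilateral $(i,j,k,l)$ follows from that of $\Gamma^{t}$ by telescoping of the vertex factors: $\hat\Gamma^{t}(k,j)\hat\Gamma^{t}(j,i)=g_{t}(k)\,\Gamma^{t}(k,j)\Gamma^{t}(j,i)\,g_{t}(i)^{-1}=g_{t}(k)\,\Gamma^{t}(k,l)\Gamma^{t}(l,i)\,g_{t}(i)^{-1}=\hat\Gamma^{t}(k,l)\hat\Gamma^{t}(l,i)$. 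Hence $\hat\Gamma^{t}$ is flat, and the zero-curvature characterisation of isothermic nets delivers the theorem.

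The step I expect to need the most care is the invocation of Lemma~\ref{th:11}. That lemma was phrased for a genuine Bianchi quadrilateral of smooth isothermic maps built through Theorem~\ref{th:4}, whereas here the relation between $f(i)$ and $f(j)$ is just the defining edge relation of the net. The way around this is to note that Lemma~\ref{th:11} is really a pointwise statement about four concircular, pairwise complementary parabolic subalgebras --- its proof goes entirely through Proposition~\ref{th:10} for the circle through those points, the ``$\darb$'' language merely encoding cross-ratios --- and that the edge quadruple above is such a configuration, with the cross-ratios arranged so that the factorising function of $\hat f$ comes out to be $m$ and not its reciprocal. The remaining points --- that $\hat\Gamma^{t}(i,j)$ is inverse to $\hat\Gamma^{t}(j,i)$, that $\hat\Gamma^{0}=1$, and the usual genericity that all the parabolic subalgebras in play are complementary where needed --- are routine. (Alternatively, one may recognise the eight values of $f$ and $\hat f$ on each elementary quadrilateral as a Bianchi cube in the sense of Theorem~\ref{th:13}, with the new Darboux direction of parameter $\hat m$, and read off directly that the top face is again a Bianchi quadrilateral with the same cross-ratio as the bottom.)
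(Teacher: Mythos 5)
Your proposal is correct and follows essentially the paper's own route: the edge configuration $f(i),f(j),\hat f(j),\hat f(i)$ is concircular with cross-ratio $\hat m/m(i,j)$, Lemma~\ref{th:11} (applied pointwise, exactly with your role assignment) exhibits $\hat\Gamma^{t}$ as the gauge of $\Gamma^{t}$ by $i\mapsto\Gamma_{f(i)}^{\hat f(i)}(1-\tfrac{t}{\hat m})$, and flatness plus the zero-curvature characterisation yields the theorem. Your explicit justification that Lemma~\ref{th:11} is a pointwise statement about four concircular, pairwise complementary parabolic subalgebras is precisely the ``mild reorganisation'' the paper leaves implicit.
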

Moreover, the argument of Theorem~\ref{th:13} now gives
\begin{equation*}
\hat f(k)=\Gamma^{f(j)}_{f(l)}\left(\tfrac{1-\hat m/m(i,j)}{1-\hat m/m(i,l)}\right)\hat f(i)
\end{equation*}
so that $\hat f(i),f(j),\hat f(k),f(l)$ are concircular with
\begin{equation*}
\cross(\hat f(i),f(j),\hat f(k),f(l))=\tfrac{1-\hat m/m(i,j)}{1-\hat m/m(i,l)}.
\end{equation*}
In particular, $\hat f(k)$ depends only on $\hat f(i),f(j)$ and
$f(l)$ which is the \emph{tetrahedron property} of Bobenko--Suris
\cite{Bobenko2002}.

One can go further and construct Christoffel transforms via blow-ups
of Darboux transforms or (with a little more work) via
stereoprojections and then see that all our permutability theorems
relating these transforms hold in the discrete setting (with
essentially the same proofs).  However, all this would take us too
far afield for the present.  We may return to these topics elsewhere.

\section{Curved flats and Darboux pairs}\label{sec:curved}

The main result of \cite{Burstall1997a} asserts that Darboux pairs
of isothermic surfaces in $S^3$ are precisely the curved flats in the
symmetric space of point-pairs $S^3\times S^3\setminus\Delta$.  This
was extended in \cite{Burstall2004} to isothermic surfaces in $S^n$
where the dressing transformation of curved flats was related to
Darboux transforms of Christoffel pairs.

We now show that this circle of ideas carries through in our general
context of isothermic maps to symmetric $R$-spaces.  We begin by
rehearsing the relevant details of the theory of curved flats.

\subsection{Curved flats and their transformations}
\label{sec:curved-flats}
Let $N$ be a symmetric $G$-space\footnote{We retain our standing
assumption that $G$ is the adjoint group of $\fg$ but much of the
theory of curved flats carries through without it.}: thus each $x\in N$ has stabiliser
$H_x$ open in the fixed set of an involution $\tau_x\in\Aut(G)$.  The
derivative of $\tau_x$, also called $\tau_x\in\Aut(\fg)$, has
eigenvalues $\pm1$ with eigenspaces $\fh_x,\fm_x$ where $\fh_x$ is
the Lie algebra of $H_x$ and $T_xN\cong\fm_{x}$ via the solder form $\beta^N_x$.

\begin{defn}[\cite{Ferus1996}]
A \emph{curved flat} in a symmetric space $N$ is a map
$\phi:\Sigma\to N$ such that each $\phi^*\beta^N(T_p\Sigma)$ is an
abelian subalgebra of $\fm_{\phi(p)}$, $p\in\Sigma$.
\end{defn}

Curved flats have a gauge-theoretic formulation that will be basic
for us.  Let $\phi:\Sigma\to N$ be a map and contemplate the section
$\tau$ of $\Aut(\ul\fg)$ given by
$\tau(p)=\tau_{\phi(p)}$.  There is a corresponding
eigenbundle decomposition $\ul\fg=\phi^{-1}\fh\oplus\phi^{-1}\fm$
with $\phi^{-1}\fm\cong\phi^{-1}TN$ and thus a decomposition of the
flat connection $\D$:
\begin{equation}
\label{eq:17}
\D=\cD+\cN
\end{equation}
where $\cD\tau=0$ while $\cN\in\Omega^1_\Sigma(\phi^{-1}\fm)$
(explicitly, $\cN=-\half\tau\D\tau$).  From
\MySec\ref{sec:homog-geom-sold}, we have $\cN=\phi^*\beta^N$ so the
curved flat condition is precisely
\begin{equation*}
[\cN\wedge\cN]=0. 
\end{equation*}

On the other hand, the flatness of $\D$ yields
\begin{equation*}
0=R^\D=R^\cD+\D^\cD\cN+\half[\cN\wedge\cN]
\end{equation*}
the $\phi^{-1}\fh$- and $\phi^{-1}\fm$-components of which are,
respectively, the Gauss and Codazzi equations of the situation:
\begin{subequations}\label{eq:13}
\begin{align}
\label{eq:14}%Gauss
0&=R^\cD+\half[\cN\wedge\cN]\\
\label{eq:15}%Codazzi
0&=\D^\cD\cN. 
\end{align}
\end{subequations}
We therefore conclude:
\begin{prop}
\label{th:14}%gauge version of curved flat
For a map $\phi:\Sigma\to N$, the following are equivalent:
\begin{enumerate}
\item $\phi$ is a curved flat;
\item $\cD$ is a flat connection;
\item $[\cN\wedge\cN]=0$. 
\end{enumerate}
\end{prop}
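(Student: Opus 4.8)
The plan is to obtain all three equivalences directly from the gauge-theoretic setup already assembled in \MySec\ref{sec:curved-flats}, so that the proof reduces to bookkeeping about which summand of $\ul\fg=\phi^{-1}\fh\oplus\phi^{-1}\fm$ each term inhabits, with no genuine computation required.

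First I would establish (1) $\Leftrightarrow$ (3). Since $\cN=\phi^*\beta^N$ takes values in $\phi^{-1}\fm$, for each $p\in\Sigma$ the image $\cN_p(T_p\Sigma)$ is precisely the subspace $\phi^*\beta^N(T_p\Sigma)\subseteq\fm_{\phi(p)}$ occurring in the definition of a curved flat. Because $N$ is symmetric we have $[\fm,\fm]\subseteq\fh$, so \emph{any} subspace of $\fm$ on which the bracket vanishes is automatically closed under the bracket, i.e. a subalgebra; hence the only content of the curved flat condition is that $[\cN_p(X),\cN_p(Y)]=0$ for all $X,Y\in T_p\Sigma$. Unwinding the definition of the wedge-bracket, this is exactly the pointwise vanishing of $[\cN\wedge\cN]$, giving (1) $\Leftrightarrow$ (3).

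For (2) $\Leftrightarrow$ (3) I would invoke the Gauss equation \eqref{eq:14}. Since $\cD$ is a $G$-connection with $\cD\tau=0$, the section $\tau$ of $\Aut(\ul\fg)$ is $\cD$-parallel, so conjugation by $\tau$ fixes $R^\cD$; using semisimplicity to identify $\ad(\ul\fg)$ with $\ul\fg$, this means $R^\cD$ is a $\phi^{-1}\fh$-valued $2$-form, as is $[\cN\wedge\cN]$. This is what legitimises the splitting \eqref{eq:13} of the flatness identity $R^\D=0$ into its $\fh$- and $\fm$-components. Equation \eqref{eq:14} then reads $R^\cD=-\half[\cN\wedge\cN]$, so $R^\cD$ vanishes if and only if $[\cN\wedge\cN]$ does; that is, $\cD$ is flat if and only if (3) holds, closing the cycle of equivalences.

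I do not anticipate a real obstacle: everything has been prepared in \MySec\ref{sec:curved-flats}, and the argument is purely formal. The one point I would take care to spell out is the observation in the second paragraph — that an abelian subspace of $\fm$ is automatically an abelian subalgebra, thanks to $[\fm,\fm]\subseteq\fh$ — since this is what turns the curved flat condition into the single bracket-vanishing statement $[\cN\wedge\cN]=0$ rather than something also involving closure under the bracket.
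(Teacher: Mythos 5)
Your proposal is correct and follows the paper's own route: the paper likewise notes $\cN=\phi^*\beta^N$ so that the curved flat condition is exactly $[\cN\wedge\cN]=0$, and reads off the equivalence with flatness of $\cD$ from the Gauss equation \eqref{eq:14} obtained by splitting $0=R^\D=R^\cD+\D^\cD\cN+\half[\cN\wedge\cN]$ into its $\phi^{-1}\fh$- and $\phi^{-1}\fm$-components. Your extra remark that a subspace of $\fm$ with vanishing bracket is automatically an (abelian) subalgebra, thanks to $[\fm,\fm]\subseteq\fh$, is a sensible point to make explicit, and the rest matches the paper's argument.
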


\subsubsection{Zero curvature representation and spectral deformation}
\label{sec:spectral-deformation}

Curved flats comprise an integrable system which, in the case where each
$\phi^{*}\beta^N(T_p\Sigma)$ is maximal abelian semisimple, is
gauge-equivalent to the ``$G/H$-system'' introduced by Terng
\cite{Terng1997,Bruck2002}.  At the root of all this is the
observation that, just like isothermic maps, curved flats are
characterised by the flatness of a pencil of connections.  Indeed, for
$u\in\R$, define $G$-connections $\D^u$ on $\ul\fg$ by
\begin{equation}\label{eq:16}
\D^u=\cD+u\cN
\end{equation}
so that, in particular, $\D^0=\cD$ and $\D^1=\D$. 

\begin{prop}[c.f.~\cite{Ferus1996}]\label{th:15}%curved flat pencil
$\phi:\Sigma\to N$ is a curved flat if and only if $\D^u$ is flat
for all $u\in\R$. 
\end{prop}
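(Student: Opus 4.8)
The plan is to follow the pattern of Proposition~\ref{th:1}: compute the curvature of the pencil $\D^u=\cD+u\cN$ directly and read off when it vanishes, using the Gauss--Codazzi equations~\eqref{eq:13} that were already extracted from the flatness of $\D$.

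First I would expand the curvature of $\D^u$, viewing $\cD$ as a connection and $u\cN$ as a $\ul\fg$-valued $1$-form:
\begin{equation*}
R^{\D^u}=R^{\cD}+u\,\D^{\cD}\cN+\tfrac12u^2[\cN\wedge\cN].
\end{equation*}
Here the middle term $\D^{\cD}\cN$ vanishes identically: this is precisely the Codazzi equation~\eqref{eq:15}, which holds because $\D$ itself is flat. (Alternatively, one invokes the decomposition $\ul\fg=\phi^{-1}\fh\oplus\phi^{-1}\fm$: since $\cD\tau=0$ and $\cN$ is $\phi^{-1}\fm$-valued, $R^{\cD}$ and $[\cN\wedge\cN]$ take values in $\phi^{-1}\fh$ while $\D^{\cD}\cN$ takes values in $\phi^{-1}\fm$, so the $\fm$-component of $R^{\D^u}$ is $u\,\D^{\cD}\cN$, forced to vanish by flatness of $\D=\D^1$.)

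Next I would substitute the Gauss equation~\eqref{eq:14} in the form $R^{\cD}=-\tfrac12[\cN\wedge\cN]$ to obtain
\begin{equation*}
R^{\D^u}=\tfrac12(u^2-1)[\cN\wedge\cN].
\end{equation*}
If $\phi$ is a curved flat then $[\cN\wedge\cN]=0$ by Proposition~\ref{th:14}, so $R^{\D^u}=0$ for every $u\in\R$. Conversely, if $R^{\D^u}=0$ for all $u$, then evaluating at $u=0$ (or any $u$ with $u^2\neq1$) gives $[\cN\wedge\cN]=0$, which by Proposition~\ref{th:14} says $\phi$ is a curved flat. There is no genuine obstacle here; the only care needed is the $\fh$/$\fm$ bookkeeping that shows the Codazzi term drops out and that the whole content is carried by the Gauss term, exactly as in the isothermic case of Proposition~\ref{th:1}.
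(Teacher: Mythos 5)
Your proposal is correct and follows essentially the same route as the paper: expand $R^{\D^u}$ in powers of $u$, observe that the Codazzi term $\D^{\cD}\cN$ vanishes automatically, and invoke Proposition~\ref{th:14} to identify the vanishing of the remaining terms with the curved flat condition. Your extra step of substituting the Gauss equation to write $R^{\D^u}=\tfrac12(u^2-1)[\cN\wedge\cN]$ is just a slightly more explicit packaging of the same argument.
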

\begin{proof}
The coefficients of $u$ in   $R^{\D^u}$ are $R^\cD$, $\D^\cD\cN$ and
$\half[\cN\wedge\cN]$ of which $\D^\cD\cN$ vanishes automatically
from the Codazzi equation \eqref{eq:15} while, by
Proposition~\ref{th:14}, the others vanish exactly if $\phi$ is a
curved flat. 
\end{proof}

We may now argue as in \MySec\ref{sec:zero-curv-repr} to see that
curved flats come in $1$-parameter families: for $u\in\R$, we can
trivialise $\D^u$: locally we can find gauge transformations
$\Psi_u$, unique up to left multiplication by a constant element of
$G$, with $\Psi_u\cdot\D^u=\D$ .  We have:
\begin{prop}\label{th:16}
$\phi_u:=\Psi_u\phi:\dom(\Psi_u)\subset\Sigma\to N$ is a curved
flat. 

Moreover, for $u,v\in\R$, $(\phi_u)_v=\phi_{uv}$ modulo the action of $G$. 
\end{prop}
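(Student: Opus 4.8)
The plan is to reduce everything to the uniqueness of the canonical splitting \eqref{eq:17} together with the way that splitting transforms under the ambient left $G$-action. First I would record the transformation rule for the curved-flat data: for any gauge transformation $g\colon\Sigma\to G$, the map $g\phi$ has involution section $\Ad(g)\tau$, because the symmetry of a symmetric $G$-space satisfies $\tau_{g\act x}=\Ad(g)\,\tau_x\,\Ad(g)^{-1}$, so its $\pm1$-eigenbundles are $\Ad(g)(\phi^{-1}\fh)$ and $\Ad(g)(\phi^{-1}\fm)$. Now $g\cdot\cD$ is a connection annihilating $\Ad(g)\tau$ (by naturality of the induced connection on $\End\ul\fg$, $(g\cdot\cD)(\Ad(g)\tau)=\Ad(g)(\cD\tau)=0$), while $\Ad(g)\cN$ takes values in $\Ad(g)(\phi^{-1}\fm)$. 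Hence the uniqueness in \eqref{eq:17} forces the splitting of $\D$ adapted to $g\phi$ to be $\D=(g\cdot\cD)+\Ad(g)\cN$.

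I would then apply this with $g=\Psi_u$. By construction $\Psi_u\cdot\D^u=\D$, that is $(\Psi_u\cdot\cD)+u\,\Ad(\Psi_u)\cN=\D$, and comparing with the previous paragraph I can read off that the splitting adapted to $\phi_u=\Psi_u\phi$ has $\fh$-part $\cD_u=\Psi_u\cdot\cD$ and $\fm$-part $\cN_u=u\,\Ad(\Psi_u)\cN$. By Proposition~\ref{th:14}, $\phi_u$ is a curved flat if and only if $\cD_u$ is flat; but $\cD_u$ is gauge-equivalent to $\cD$, which is flat because $\phi$ is a curved flat (Proposition~\ref{th:14} again). This settles the first assertion, on $\dom(\Psi_u)$.

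For the composition law I would compute the pencil of connections belonging to the curved flat $\phi_u$ via \eqref{eq:16}:
\begin{equation*}
\cD_u+v\cN_u=(\Psi_u\cdot\cD)+uv\,\Ad(\Psi_u)\cN=\Psi_u\cdot(\cD+uv\cN)=\Psi_u\cdot\D^{uv}.
\end{equation*}
Thus $\Psi_{uv}\Psi_u^{-1}$ trivialises this pencil, since $(\Psi_{uv}\Psi_u^{-1})\cdot(\Psi_u\cdot\D^{uv})=\Psi_{uv}\cdot\D^{uv}=\D$. As the trivialising gauge transformation is unique up to left multiplication by a constant of $G$, I may use $\Psi_{uv}\Psi_u^{-1}$ to define the $v$-deformation of $\phi_u$, whereupon $(\phi_u)_v=(\Psi_{uv}\Psi_u^{-1})\,\phi_u=\Psi_{uv}\,\phi=\phi_{uv}$; since $\phi_{uv}$ is itself only defined up to $G$, this is precisely the claimed identity.

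The one step requiring genuine care is the identification of $\cD_u$ and $\cN_u$ in the second paragraph: it rests on the naturality of \eqref{eq:17} under the left $G$-action, hence on the transformation rule $\tau_{g\act x}=\Ad(g)\,\tau_x\,\Ad(g)^{-1}$ for the involutions of a symmetric $G$-space and on the uniqueness clause of \eqref{eq:17}. Once that is in place the rest is routine manipulation of the gauge action on connections, and no further input is needed.
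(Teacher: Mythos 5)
Your argument is correct and is essentially the paper's own: you identify the splitting of $\D$ adapted to $\phi_u=\Psi_u\phi$ as $\cD^u=\Psi_u\cdot\cD$, $\cN^u=u\,\Ad(\Psi_u)\cN$, deduce that the pencil of $\phi_u$ is the $\Psi_u$-gauge of $\D^{uv}$, and obtain flatness together with $\Psi^u_v\Psi_u=\Psi_{uv}$, hence $(\phi_u)_v=\phi_{uv}$ up to $G$. The only caveat is that the ``transformation rule'' recorded in your first paragraph is not an identity for an arbitrary gauge transformation $g$ (one gets $g\cdot\D$, not $\D$); it becomes one precisely because $\Psi_u\cdot\D^u=\D$, which is exactly how you apply it in the second paragraph, so the proof as executed is sound.
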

\begin{proof}
We have $\Psi_u(\phi^{-1}\fh)=\phi_u^{-1}\fh$,
$\Psi_u(\phi^{-1}\fm)=\phi_u^{-1}\fm$ so that in the decomposition of
$\D$ with respect to $\ul\fg=\phi_u^{-1}\fh\oplus\phi_u^{-1}\fm$:
\begin{equation*}
\D=\cD^u+\cN^u
\end{equation*}
we have $\cD^u=\Psi_u\cdot\cD$, $\cN^u=\Psi_u\cN$ and thus 
$\Psi_u\cdot(\cD+uv\cN)=\cD^u+v\cN^u$.  In particular
$\cD^u+v\cN^u$ is flat for all $v\in\R$ so that $\phi_u$ is a curved
flat by Proposition~\ref{th:15}. 

Further, with $\Psi_v^u\cdot(\cD^u+v\cN^u)=\D$, we have
$\Psi_{uv}=\Psi_v^u\Psi_u$ whence $\phi_{uv}=(\phi_u)_v$, up to the
action of $G$. 
\end{proof}

We call $\set{\phi_u:u\in\R}$ the \emph{associated family} or
\emph{spectral deformation} of $\phi$.  We note that $\phi_0$ is
constant.

\subsubsection{Dressing transformations}
\label{sec:dress-transf}

There is another class of transformations of curved flats which
shares some of the structure of the Darboux transformations discussed
above in that the initial data is a parallel bundle of height one
parabolic subalgebras with respect to a certain connection.  However,
for curved flats, the construction of the new curved flat from this
data is somewhat more elaborate. 

We begin with a curved flat $\phi:\Sigma\to N$ with associated field
of involutions $\tau\in\Gamma\Aut(\fg)$ and pencil of flat
connections $\D^u$, $u\in\R$.  We extend the definition of $\D^u$ by
taking $u\in\C$ to get a holomorphic pencil of $G^\C$-connections on
$\ul\fg{}^\C$.  We observe that this pencil is uniquely characterised
by the following properties:
\begin{enumerate}[1.] 
\item $u\to\D^u$ is holomorphic on $\C$ with a simple pole at $\infty$;\label{item:4}
\item $\tau\circ\D^u\circ\tau^{-1}=\D^{-u}$, for $u\in\C$;\label{item:5}
\item $\overline{\D^u}=\D^{\bar{u}}$, for $u\in\C$;\label{item:6}
\item $\D^1=\D$.\label{item:7}
\end{enumerate}

Now let $w\in\C^\times\setminus\set{\pm 1}$ with $w^2\in\R$ and suppose we are given a bundle of
height one parabolic subalgebras $\fr\leq\ul\fg{}^\C$ with the
following properties:
\begin{enumerate}[(a)]
\item $\fr$ is $\D^w$-parallel;\label{item:1}
\item $\fr$ and $\tau\fr$ are complementary;\label{item:2}
\item $\fr=\bar{\fr}$ if $w\in\R$ and $\fr=\tau\bar{\fr}$ if $w\in i\R$.\label{item:3}
\end{enumerate}

\begin{rem}
The local existence of such $\fr$ is a point-wise affair: given a
height one parabolic subalgebra $\fr_0\leq\fg$ and $p\in\Sigma$ with
$\fr_0,\tau(p)\fr_0$ complementary and satisfying condition
(\ref{item:3}), one can define $\fr$ by parallel transport since $\D^u$
is flat and preserves condition (\ref{item:3}).  However, the existence
of such a $\fr_0$ imposes strong restrictions on the symmetric space
$N$.  We have already noted that the existence of any height one
parabolic subalgebra $\fr_{0}\leq\fg^{\C}$ excludes $\fg=\fg_{2}$,
$\mathfrak{f}_4$ or $\mathfrak{e}_8$ and conditions (\ref{item:2})
and (\ref{item:3}) further restrict the possibilities for $\tau(p)$.
Thus, for example, for $N=G_k(\K^n)$ a Grassmannian for
$\K=\R,\C,\HH$, viewed as a Riemannian symmetric space, such $\fr_0$
are only available when $n=2k$.
\end{rem}

Given such a bundle $\fr$, we define the family of gauge
transformations
$\Gamma(u):=\Gamma^\fr_{\tau\fr}(\tfrac{u-w}{u+w})\in\Gamma\Aut(\ul\fg{}^\C)$
and observe that $\Gamma(u)$ enjoys similar properties to $\D^u$:
\begin{enumerate}[i.] 
\item $u\mapsto \Gamma(u)$ is holomorphic on $\C P^1$ except for
simple poles at $\pm w$;
\item $\tau\Gamma(u)\tau^{-1}=\Gamma(-u)$, for $u\in\C$;
\item $\overline{\Gamma(u)}=\Gamma(\bar u)$. 
\end{enumerate}
Now contemplate the family of connections
\begin{equation*}
\hat{\D}^u=\Gamma(u)\cdot \D^u. 
\end{equation*}
We readily check that they share properties \ref{item:5} and
\ref{item:6} with $\D^u$ and that $u\to\hat{\D}^u$ is holomorphic on
$\C$ except possibly at $\pm w$.  In fact, the singularities there
are removable:
\begin{lemm}\label{th:17}
$u\mapsto\hat{\D}^u$ is holomorphic near $\pm w$. 
\end{lemm}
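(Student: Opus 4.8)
The plan is to localise at $u=w$: I would show that $\hat\D^u-\D^u$ extends holomorphically across $u=w$, and then deduce the statement at $u=-w$ from the symmetry $\tau\hat\D^u\tau^{-1}=\hat\D^{-u}$ which $\hat\D^u$ has already been seen to inherit from $\D^u$ (property~\ref{item:5}), since conjugation by the $u$-independent field $\tau$ preserves holomorphy. As $\D^u=\cD+u\cN$ is polynomial in $u$, it is holomorphic near $u=w$, so everything comes down to the $\End(\ul\fg{}^\C)$-valued $1$-form $A(u):=\hat\D^u-\D^u=-(\nabla^u\Gamma(u))\,\Gamma(u)^{-1}$, where $\nabla^u$ is the connection $\D^u$ induces on $\End(\ul\fg{}^\C)$. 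I would compute $A(u)$ in the eigenbundle decomposition $\ul\fg{}^\C=\fg_{-1}\oplus\fg_0\oplus\fg_1$ of the complementary pair $(\fr,\tau\fr)$, where $\fg_{-1}=\fr^\perp$, $\fg_0=\fr\cap\tau\fr$, $\fg_1=(\tau\fr)^\perp$; here $\Gamma(u)$ acts as $s:=\tfrac{u-w}{u+w}$ on $\fg_{-1}$, as $1$ on $\fg_0$, and as $s^{-1}$ on $\fg_1$, so near $w$ its only singularity is the simple pole $s^{-1}\pi_{\fg_1}$ on $\fg_1$, while $\Gamma(u)^{-1}$ has the simple pole $s^{-1}\pi_{\fg_{-1}}$ on $\fg_{-1}$ (with $\pi_{\fg_j}$ the grading projections).

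Two structural facts make the poles cancel. First, $\D^w$ is a $G$-connection and $\fr$ is $\D^w$-parallel, hence so is $\fr^\perp$; thus $\D^w$ preserves the flag $\fg_{-1}\subset\fg_{-1}\oplus\fg_0$, i.e.\ its $(\fg_1\leftarrow\fg_{-1})$, $(\fg_1\leftarrow\fg_0)$ and $(\fg_0\leftarrow\fg_{-1})$ blocks all vanish. Second, $\fr$ has height one, so the grading is three-step ($\fg_{\pm2}=0$); since $\D^u-\D^w=(u-w)\cN$ acts on $\ul\fg{}^\C$ by $\ad$ and $\ad\zeta$ can never map $\fg_{-1}$ into $\fg_1$, the $(\fg_1\leftarrow\fg_{-1})$ block of $\D^u$ vanishes for \emph{every} $u$, while the $(\fg_1\leftarrow\fg_0)$ and $(\fg_0\leftarrow\fg_{-1})$ blocks of $\D^u$ are exactly $(u-w)$ times those of $\cN$.

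I would then expand $A(u)$ using $\nabla^u\Gamma(u)=s\,\nabla^u\pi_{\fg_{-1}}+\nabla^u\pi_{\fg_0}+s^{-1}\nabla^u\pi_{\fg_1}$ and $\Gamma(u)^{-1}=s^{-1}\pi_{\fg_{-1}}+\pi_{\fg_0}+s\pi_{\fg_1}$. The a priori double pole has coefficient a scalar multiple of $(\nabla^u\pi_{\fg_1})\pi_{\fg_{-1}}$, and on a section $\sigma$ of $\fg_{-1}$ one has $(\nabla^u\pi_{\fg_1})\sigma=-\pi_{\fg_1}(\D^u\sigma)=0$ by the two facts above, so it vanishes. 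The two a priori simple poles have coefficients multiples of $(\nabla^u\pi_{\fg_1})\pi_{\fg_0}$ and $(\nabla^u\pi_{\fg_0})\pi_{\fg_{-1}}$; by the same reasoning each equals $-(u-w)$ times a block of $\cN$, and multiplying by $s^{-1}=\tfrac{u+w}{u-w}$ turns $-(u-w)$ into $-(u+w)$, which is holomorphic at $u=w$. Hence $A(u)$, and with it $\hat\D^u$, is holomorphic near $u=w$, and then near $u=-w$ by the $\tau$-symmetry.

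The delicate point --- and the only place the argument can fail --- is exactly this matching of pole orders: the simple pole of $\Gamma$ at $u=w$ lives on $\fg_1$ and that of $\Gamma^{-1}$ on $\fg_{-1}$, and the potential double pole of $A(u)$ is killed precisely because the $(\fg_1\leftarrow\fg_{-1})$ block of $\D^u$ is zero, which uses \emph{both} the $\D^w$-parallelism of $\fr$ and the three-step (height one) nature of the grading --- neither hypothesis alone suffices. One must also keep the full Maurer--Cartan term $-(\nabla^u\Gamma)\Gamma^{-1}$ and not just the naive conjugation of $\D^u$; equivalently, writing $\hat\D^u=\Gamma(u)\cD\,\Gamma(u)^{-1}+u\,\Gamma(u)\cN\,\Gamma(u)^{-1}$, the two summands separately have non-zero residues at $u=w$ which cancel. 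The reality conditions on $\fr$ and the hypothesis $w^2\in\R$ are not needed here; they enter only in the subsequent verification that $\hat\D^u$ again arises from a curved flat.
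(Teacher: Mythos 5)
Your proposal is correct: the pole-cancellation argument goes through exactly as you say, and the reduction of $u=-w$ to $u=w$ via the $\tau$-symmetry is also how the paper proceeds. It rests on the same eigenbundle decomposition $\ul\fg{}^\C=\fr^\perp\oplus(\fr\cap\tau\fr)\oplus\tau\fr^\perp$ and the same two inputs (parallelism of $\fr$ under the metric connection $\D^w$, plus the height-one grading), but the execution differs in a way worth noting. The paper never meets a double pole: it decomposes $\D^w=D-b-\hat b$ with $D$ grading-preserving, $b\in\Omega^1(\fr^\perp)$, $\hat b\in\Omega^1(\tau\fr^\perp)$ acting by $\ad$, kills $\hat b$ by self-normalisation of $\fr$ (your Fact 1 in $\fg$-valued rather than block form), and then writes $\hat\D^u=\Gamma(u)\cdot\D^u=D-\tfrac{u-w}{u+w}\,b+(u-w)\Gamma(u)\cN$; since $\Gamma(u)$ acts on $\fg$-valued forms through $\Ad$ with eigenvalues $s,1,s^{-1}$, only simple poles can occur and the prefactor $(u-w)$ removes them term by term. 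Your route instead works with the $\End(\ul\fg{}^\C)$-valued Maurer--Cartan term $-(\nabla^u\Gamma)\Gamma^{-1}$, where conjugation-type expressions can a priori produce the $s^{-2}$ term $(\nabla^u\pi_{\fg_1})\pi_{\fg_{-1}}$, so you must (and correctly do) check that the $(\fg_1\leftarrow\fg_{-1})$ block of $\D^u$ vanishes for all $u$ -- which is precisely the conjunction of parallelism and the three-step grading that the paper's formulation encodes once and for all in ``$\hat b=0$ and $\ad\cN$ shifts degree by at most one''. So your argument is a block-level expansion of the same mechanism; the paper's version buys brevity by keeping the connection difference $\fg$-valued, yours makes the pole bookkeeping (and where each hypothesis enters) completely explicit. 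Your closing remark that the reality conditions and $w^2\in\R$ play no role in this lemma is also consistent with the paper, which uses them only to ensure $\hat\phi$ maps back into the real symmetric space.
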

\begin{proof}
We show that $u\mapsto\hat{\D}^u$ is holomorphic near $w$ and then
appeal to the symmetry $u\mapsto -u$.  Contemplate the eigenbundle
decomposition of $\ad\xi^\fr_{\tau\fr}$:
\begin{equation*}
\ul\fg=\fr^\perp\oplus(\fr\cap\tau\fr)\oplus \tau\fr^\perp
\end{equation*}
and the corresponding decomposition of $\D^w$:
\begin{equation*}
\D^w=D-b-\hat b
\end{equation*}
with $D\xi^\fr_{\tau\fr}=0$, $b\in\Omega^1(\fr^\perp)$ and
$\hat b\in\Omega^1(\tau\fr^\perp)$.  Since $\fr$ is $\D^w$-parallel
and preserved by both $D$ and $b$, we conclude that $\hat b$ takes
values in $\fr\cap\tau\fr^\perp$ and so vanishes.  Thus
\begin{equation*}
\D^u=D-b+(u-w)\cN
\end{equation*}
whence
\begin{align*}
\hat{\D}^u&=\Gamma(u)\cdot D -\Gamma(u)b+(u-w)\Gamma(u)\cN\\
&=D-\tfrac{u-w}{u+w}b+(u-w)\Gamma(u)\cN,
\end{align*}
which last is clearly holomorphic near $w$. 
\end{proof}
Thus $u\mapsto\hat{\D}^u$ is holomorphic on $\C$ and has a simple
pole at $\infty$, since $\Gamma$ is holomorphic and
$\Aut(\fg)$-valued there.  

Finally, set $\hat\phi:=\Gamma(1)^{-1}\phi:\Sigma\to N$ with
corresponding field of involutions
$\hat\tau:=\Gamma(1)^{-1}\tau\Gamma(1)$ (we assume\footnote{This
assumption is justified for our applications in
\MySec\ref{sec:curved-flats-z}.}, if necessary, that $ G^\C\cap
\Aut(\fg)$ acts on $N$).  Now $u\mapsto \Gamma(1)^{-1}\cdot\hat\D^u$
has properties \ref{item:4}--\ref{item:7} of $\D^u$ with $\hat\tau$
replacing $\tau$ so we conclude that this is the holomorphic pencil
of connections associated with $\hat\phi$.  Since these connections
are flat, we deduce that $\hat\phi$ is a curved flat.  To summarise:
\begin{prop}
\label{th:18}%dressing curved flats
Let $\phi:\Sigma\to N$ be a curved flat and $\fr\leq\ul\fg$ a bundle
of height one parabolic subalgebras with properties
(\ref{item:1})--(\ref{item:3}) above for some
$w\in\C^\times\setminus\set{\pm 1}$ with $w^2\in\R$.  Then
$\hat\phi:\Sigma\to N$ defined by
\begin{equation*}
\hat\phi=\Gamma_{\tau\fr}^\fr(\tfrac{1+w}{1-w})\phi
\end{equation*}
is also a curved flat.  We call $\hat\phi$ a \emph{dressing
transform} of $\phi$. 
\end{prop}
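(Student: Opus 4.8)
The plan is to recognise $\hat\phi$ as a curved flat by exhibiting the holomorphic pencil of connections associated to it as an explicit gauge transform of the pencil $\D^u$ of $\phi$, and then to apply Proposition~\ref{th:15}. Almost all the ingredients have been assembled in the discussion preceding the statement, so the argument consists in fitting them together and checking one uniqueness claim.

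First I would confirm that $\hat\phi$ is a genuine map into $N$. Property~iii of $\Gamma$ gives $\overline{\Gamma(1)}=\Gamma(\bar 1)=\Gamma(1)$, so $\Gamma(1)$ is $G^\C\cap\Aut(\fg)$-valued and, under the standing assumption that such transformations act on $N$, $\hat\phi=\Gamma(1)^{-1}\phi:\Sigma\to N$ is well defined. Since $\Gamma(1)$ conjugates the field of involutions $\tau$ of $\phi$ to $\hat\tau=\Gamma(1)^{-1}\tau\Gamma(1)$, it carries the eigenbundle decomposition $\ul\fg=\phi^{-1}\fh\oplus\phi^{-1}\fm$ onto the corresponding splitting attached to $\hat\phi$; in particular $\hat\tau$ is indeed the field of involutions of $\hat\phi$. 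Writing $\D=\hat\cD+\hat\cN$ for that splitting of $\D$, the curved-flat pencil of $\hat\phi$ is $\hat\D^u_{\hat\phi}:=\hat\cD+u\hat\cN$, and by Proposition~\ref{th:15} it suffices to show that each $\hat\D^u_{\hat\phi}$ is flat.

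The heart of the matter is the identity
\begin{equation*}
\hat\D^u_{\hat\phi}=\Gamma(1)^{-1}\cdot\hat\D^u=\Gamma(1)^{-1}\Gamma(u)\cdot\D^u,
\end{equation*}
whose right-hand side is manifestly flat, being a gauge transform of the flat connection $\D^u$. To prove it I invoke the uniqueness, recorded just before Lemma~\ref{th:17}, of the holomorphic pencil attached to $\hat\phi$: it is the only holomorphic family of $G^\C$-connections on $\ul\fg{}^\C$ with a simple pole at $\infty$ that reduces to $\D$ at $u=1$, is sent to its value at $-u$ by conjugation with $\hat\tau$, and satisfies the reality condition~\ref{item:6}. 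So I check these four properties for $u\mapsto\Gamma(1)^{-1}\cdot\hat\D^u$: holomorphicity on $\C$ with a simple pole at $\infty$ is Lemma~\ref{th:17} (removability of the singularities at $\pm w$) together with the facts that $\Gamma$ is $\Aut(\fg)$-valued and holomorphic at $\infty$ and that conjugation by the constant $\Gamma(1)$ changes none of this; the value at $u=1$ is $\Gamma(1)^{-1}\Gamma(1)\cdot\D^1=\D$; the $\hat\tau$-equivariance follows by combining $\tau\circ\D^u\circ\tau^{-1}=\D^{-u}$, property~ii of $\Gamma$ and $\hat\tau=\Gamma(1)^{-1}\tau\Gamma(1)$ into $\hat\tau\circ(\Gamma(1)^{-1}\cdot\hat\D^u)\circ\hat\tau^{-1}=\Gamma(1)^{-1}\cdot\hat\D^{-u}$; and the reality condition follows from $\overline{\hat\D^u}=\hat\D^{\bar u}$ together with $\overline{\Gamma(1)}=\Gamma(1)$.

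The step I expect to demand the most care is exactly this last bundle of verifications, and in particular making explicit how conditions~(\ref{item:2}) and~(\ref{item:3}) on $\fr$ are precisely what force properties~ii and~iii of $\Gamma$ — hence what make $\Gamma(1)$ real, $\hat\tau$ an honest involution field, and the apparent poles of $\hat\D^u$ at $\pm w$ removable. Once these are in hand, flatness of $\hat\D^u_{\hat\phi}$ for all $u\in\C$, hence for all $u\in\R$, is immediate and Proposition~\ref{th:15} identifies $\hat\phi$ as a curved flat. It is worth recording, for later use, that the same computation yields the explicit pencil $\hat\D^u=\Gamma(1)\cdot\hat\D^u_{\hat\phi}$ of the dressed curved flat, with $\Gamma(1)=\Gamma_{\tau\fr}^\fr(\tfrac{1-w}{1+w})$.
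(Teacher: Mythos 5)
Your proposal is correct and follows essentially the same route as the paper: the paper likewise defines $\hat\tau=\Gamma(1)^{-1}\tau\Gamma(1)$, checks that $u\mapsto\Gamma(1)^{-1}\cdot\hat\D^u=\Gamma(1)^{-1}\Gamma(u)\cdot\D^u$ enjoys the four characterising properties of the holomorphic pencil (with removability at $\pm w$ supplied by Lemma~\ref{th:17}), and concludes flatness, hence the curved flat property, via Proposition~\ref{th:15}. The extra care you flag about how conditions~(\ref{item:2}) and~(\ref{item:3}) yield the stated symmetries of $\Gamma$ is exactly what the paper leaves as a routine verification.
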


\subsubsection{Relationship with loop group formalism}
\label{sec:relat-with-loop}

This construction is essentially an invariant\footnote{Thus without
the use of frames.} reformulation of the Terng--Uhlenbeck
construction \cite{Terng2000} of \emph{dressing by simple factors}. 
We digress to give a brief account of this. 

In the loop group formalism, one works with frames of the underlying
maps into homogeneous spaces.  In the case at hand, this means we fix
a base-point $o\in N$ and, for $\phi:\Sigma\to N$, contemplate maps
$F:\Sigma\to G$ with $\phi=F\act o$.  Given such a frame $F$, let
$\alpha=F^{-1}\D F\in\Omega_\Sigma^1(\fg)$ and write
$\alpha=\alpha_{\fh_o}+\alpha_{\fm_o}$ for the decomposition of
$\alpha$ according to the decomposition of $\fg$ into eigenspaces of
$\tau_o$:
\begin{equation*}
\fg=\fh_o\oplus\fm_o. 
\end{equation*}
Viewing $F$ as a gauge transformation, we see that
$F\cdot(\D+\alpha)=\D$ and that $F$ intertwines the decompositions
$\ul\fg=\ul\fh_o\oplus\ul\fm_o$ and
$\ul\fg=(\phi^{-1}\fh)\oplus(\phi^{-1}\fm)$.  It follows at once that
$F\cdot(\D+\alpha_{\fh_o})=\cD$ and $F\act\alpha_{\fm_o}=\cN$ so
that $\phi$ is a curved flat if and only if
$[\alpha_{\fm_o}\wedge\alpha_{\fm_o}]=0$. 

In this case, for $u\in\C$, set
$\alpha_u=\alpha_{\fh_o}+u\alpha_{\fm_o}$ and note that
$\D+\alpha_u=F^{-1}\cdot\D^u$ and so is flat.  We may therefore
integrate, at least locally, to find $F_u:\Sigma\to G^\C$ with
$F_u^{-1}\D F_u=\alpha_u$ or, equivalently,
$F_u\cdot(\D+\alpha_u)=\D$.  The family $\set{F_u:u\in\C}$
constitute an \emph{extended frame} of $\phi$. 

Observe that $(F_u F^{-1})\cdot\D^u=\D$ so that $\Psi_u=F_u F^{-1}$
giving
\begin{equation*}
\phi_u=\Psi_u\phi=F_u\act o
\end{equation*}
so that $F_u$ is a frame of the spectral deformation $\phi_u$.

The philosophy of the loop group formalism is to view the maps $F_u$
as a single map into a loop group on which there is a (local) action
of a second loop group via Birkhoff factorisation.  In the case at
hand, let $\cG_+$ denote the group (under pointwise multiplication)
of holomorphic maps $g:\C\to G^\C$ which are \emph{real} in that $\cl
g(u)=g(\cl u)$ and \emph{twisted} in that $\tau_o g(u)=g(-u)$.  If
the constants of integration are chosen correctly, each $u\mapsto
F_u(p)$, $p\in\Sigma$, is an element of $\cG_+$.  Now let $\cG_-$ be
the group of real, twisted rational maps $g$ on $\C P^1$ with values
in $G^\C$ which are holomorphic near $\infty$ with $g(\infty)=1$.  It
follows from the Birkhoff decomposition theorem \cite{Pressley1986}
that a generic real, twisted $G^\C$-valued holomorphic map $g$ on $\C
P^1\setminus D$, $D$ a divisor, has a unique factorisation
\begin{equation*}
g=g_+g_-
\end{equation*}
with $g_\pm\in\cG_\pm$.  This leads to a local action of $\cG_-$
on $\cG_+$ by
\begin{equation*}
g_-\# g_+=(g_-g_+)_+
\end{equation*}
which can be shown to preserve the set of extended frames. 

In general, the Birkhoff factorisation is non-local and so difficult to
compute but the key observation of Terng and Uhlenbeck
\cite{Terng2000,Uhlenbeck1989} is that, for certain basic elements of
$\cG_-$, the \emph{simple factors}, the factorisation can be carried
out explicitly (see \cite[\S4.3]{Burstall2004} for a conceptual
discussion of this).  For our setting, we define the simple factors
as follows: let $\fr_o\leq\fg^\C$ be a parabolic subalgebra of height
one and $w\in\C^\times\setminus\set{\pm1}$ such that
$(\fr_0,\tau_o\fr_o)$ are a complementary pair and $\fr=\cl\fr_0$,
for $w\in\R$ and $\fr=\tau_o\cl\fr_0$ otherwise.  Our simple factor
is now defined to be $u\mapsto
\Gamma_o(u):=\Gamma^{\fr_o}_{\tau_o\fr_o}(\tfrac{u-w}{u+w})$.  One
can show (see \cite[Proposition~4.11]{Burstall2004} for the case
where $\fg=\fso(n+1,1)$ and $\fr_o$ is the stabiliser of a null line
in $(\R^{n+1,1})^\C$) that here the Birkhoff factorisation is given
by
\begin{equation*}
\Gamma_o g_+=\hat{g}_+\hat\Gamma
\end{equation*}
with
$\hat\Gamma(u)=\Gamma_{\tau_o\hat\fr}^{\hat\fr}(\tfrac{u-w}{u+w})$
and $\hat\fr=g_+(w)^{-1}\fr_o$.  We apply this to the extended frame
$u\mapsto F_u$ to get a new extended frame $\hat F_u$ which we
evaluate at $u=1$ to get a new curved flat $\hat\phi$.  Thus
\begin{align*}
\hat\phi&=\Gamma^{\fr_o}_{\tau_o\fr_o}(\tfrac{1-w}{1+w})F
\bigl(\Gamma^{\hat\fr}_{\tau_o\hat\fr}(\tfrac{1-w}{1+w})\bigr)^{-1}\act
o\\
&=\Gamma^{\fr_o}_{\tau_o\fr_o}(\tfrac{1-w}{1+w})F
\bigl(\Gamma^{\hat\fr}_{\tau_o\hat\fr}(\tfrac{1-w}{1+w})\bigr)^{-1}F^{-1}\act\phi\\
&=\Gamma^{\fr_o}_{\tau_o\fr_o}(\tfrac{1-w}{1+w})
\Gamma^{F\hat\fr}_{F\tau_o\hat\fr}(\tfrac{1+w}{1-w})\act\phi. 
\end{align*}
Now set $\fr=F\hat\fr=FF_w^{-1}\fr_0=\Psi_w^{-1}\fr_0$ which is
$\D^u$-parallel since $\fr_o$ is constant so $\D$-parallel.  Then
$F\tau_o\hat\fr=\tau\fr$ since $\tau=F\tau_o F^{-1}$ and we conclude
that $\hat\phi$ yielded by the dressing action coincides with that of
Proposition~\ref{th:18} up to the (irrelevant) constant factor
$\Gamma^{\fr_o}_{\tau_o\fr_o}(\tfrac{1-w}{1+w})$.

\subsection{Curved flats in $Z$ are Darboux pairs}
\label{sec:curved-flats-z}

We now apply this theory to the case where the symmetric space is the
space $Z\subset M\times M^*$ of complementary pairs of parabolic
subalgebras in dual symmetric $R$-spaces $M$ and $M^*$. 

A map $\phi=(f,\hat f):\Sigma\to Z$ can be viewed as a pair of maps
$f:\Sigma\to M$, $\hat f:\Sigma\to M^*$ which are pointwise
complementary.  Our first result is that $\phi$ is a curved flat if
and only if $f$ and $\hat f$ are a Darboux pair of isothermic
maps.  For this, first note that the field of involutions $\tau$
along $\phi$ is given by $\tau=\Gamma_f^{\hat f}(-1)$ so that 
\begin{equation*}
\phi^{-1}\fh=f\cap\hat f,\qquad\phi^{-1}\fm=f^{\perp}\oplus\hat f^{\perp}. 
\end{equation*}
Thus the decomposition \eqref{eq:4} of $\D$ coincides with the
decomposition \eqref{eq:17} yielding $\cN=-\beta-\hat\beta$.  With
this in mind, we revisit the argument that established Theorem
\ref{th:4}: we have
\begin{equation}\label{eq:19}
\Gamma_f^{\hat f}(1/u)\cdot\D^u=\cD-\beta-u^2\hat\beta
=\D+m(1-u^2)\hat\beta/m=\nabla^{m(1-u^2)}
\end{equation}
and, similarly,
\begin{equation}
\label{eq:20}
\Gamma_f^{\hat f}(1/u)\cdot\D^u=\D+m(1-u^2)\beta/m=\hat\nabla^{m(1-u^2)}.
\end{equation}
Thus, if $\phi=(f,\hat f)$ is a curved flat, each $\D^u$ is flat
whence each $\nabla^{m(1-u^2)},\hat\nabla^{m(1-u^2)}$ is flat so that
both $(f,\hat\beta/m)$ and $(\hat f,\beta/m)$ are isothermic.
Clearly $\hat f$ is $\nabla^m$-parallel while $f$ is
$\hat\nabla^m$-parallel and we conclude that they are $m$-Darboux
transforms of each other.\footnote{Note that $m\neq 0$ may be chosen
arbitrarily.}

Conversely, if $(f,\eta)$ is isothermic and $\hat f=\darb_m f$, we
know that $\eta=\hat\beta/m$ so that flatness of $\nabla^t$, for all
$t$, forces flatness of $\D^u$, for all $u\neq 0$ and so, by
continuity, flatness of $\D^0$ also whence $\phi= (f,\hat f)$ is a
curved flat.   We have therefore proved:
\begin{thm}
\label{th:19}
A map $\phi=(f,\hat f):\Sigma\to Z$ is a curved flat if and only if
$f,\hat f$ are a Darboux pair of isothermic maps.
\end{thm}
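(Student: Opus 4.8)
The plan is to recognise that the symmetric‑space data of $\phi=(f,\hat f):\Sigma\to Z$ is, up to nothing, the data already built in \S\ref{sec:darboux} for a complementary pair, and then to run the gauge computation that proved Theorem~\ref{th:4} in reverse. First I would pin down the field of involutions along $\phi$. By Proposition~\ref{th:3} the involution of $Z$ at a pair $(\fp,\fq)$ is $\exp(i\pi\xi^{\fq}_{\fp})$, so along $\phi$ we have $\tau=\Gamma_f^{\hat f}(-1)$; its $(+1)$‑eigenbundle is $f\cap\hat f$ (where $\ad\xi$ vanishes) and its $(-1)$‑eigenbundle is $f^\perp\oplus\hat f^\perp$ (where $\ad\xi=\pm1$). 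Hence $\phi^{-1}\fh=f\cap\hat f$ and $\phi^{-1}\fm=f^\perp\oplus\hat f^\perp$, and the reductive decomposition \eqref{eq:17} of $\D$ is \emph{literally} the decomposition \eqref{eq:4}: $\cD$ is the same connection (the unique one with $\cD\xi=0$ and the three summands parallel), and $\cN=-\beta-\hat\beta$.

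The second ingredient is a one‑line gauge computation. Since $\Gamma_f^{\hat f}(1/u)$ is $\cD$‑parallel and, as an automorphism of $\ul\fg$, multiplies $\hat f^\perp$ by $1/u$ and $f^\perp$ by $u$, applying it to the curved‑flat pencil $\D^u=\cD+u\cN=\cD-u\beta-u\hat\beta$ gives
\begin{equation*}
\Gamma_f^{\hat f}(1/u)\cdot\D^u=\cD-\beta-u^2\hat\beta=\D+(1-u^2)\hat\beta=\nabla^{m(1-u^2)},
\end{equation*}
using $\hat\beta=m\eta$ from \S\ref{sec:darboux}; symmetrically the same gauge transformation carries $\D^u$ to $\hat\nabla^{m(1-u^2)}$. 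These are exactly \eqref{eq:19} and \eqref{eq:20}, so nothing new has to be proved here.

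The two implications now drop out. If $\phi$ is a curved flat, then by Proposition~\ref{th:15} every $\D^u$ is flat, hence every $\nabla^{m(1-u^2)}$ and $\hat\nabla^{m(1-u^2)}$ is flat; since (Proposition~\ref{th:1}) the curvature of $\nabla^t$ is $t\,\D\eta$, flatness at the single value $t=m\ne 0$ (take $u=0$) already forces $\D(\hat\beta/m)=0$, so $(f,\hat\beta/m)$ — and likewise $(\hat f,\beta/m)$ — is isothermic; moreover $\hat f$, being a $\cD$‑parallel eigenbundle and preserved by $\ad\beta$ (as $\beta\in\Omega^1_\Sigma(\hat f^\perp)\subset\Omega^1_\Sigma(\hat f)$), is parallel for $\nabla^m=\cD-\beta$, i.e.\ $\hat f=\darb_m f$, with $f=\darb_m\hat f$ by symmetry. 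Conversely, if $(f,\eta)$ is isothermic and $\hat f=\darb_m f$, the analysis of \S\ref{sec:darboux} gives $\eta=\hat\beta/m$ and makes every $\nabla^t$ flat; reading the displayed identity backwards shows every $\D^u$ with $u\ne 0$ is flat, hence $\D^0=\cD$ as well by continuity in $u$, so $\phi$ is a curved flat by Proposition~\ref{th:15}.

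The only genuinely delicate point is the identification in the first two paragraphs that the abstract data $(\cD,\cN)$ attached to $\phi$ \emph{qua} map into $Z$ coincides with the pair $(\cD,-\beta-\hat\beta)$ manufactured purely from the complementary splitting; once that is granted, everything else is bookkeeping with the homomorphism $s\mapsto\Gamma_f^{\hat f}(s)$. (One should also keep in mind that the conclusion ``$f$ is isothermic'' in the strict sense needs $\eta=\hat\beta/m\neq 0$, which is the case precisely when $\phi$ is a non‑trivial curved flat; the constant curved flats contribute nothing and may be excluded.)
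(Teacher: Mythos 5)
Your proposal is correct and follows the paper's own proof essentially verbatim: the identification $\tau=\Gamma_f^{\hat f}(-1)$, the coincidence of the decomposition \eqref{eq:4} with \eqref{eq:17} giving $\cN=-\beta-\hat\beta$, the gauge identities \eqref{eq:19}--\eqref{eq:20}, and the two implications via flatness of the pencils (with $\D^0=\cD$ recovered by continuity) are exactly the argument in \MySec\ref{sec:curved-flats-z}. The only harmless slip is the phrase ``the same gauge transformation'' for the second identity --- it is the symmetric gauge $\Gamma_f^{\hat f}(u)$ that carries $\D^u$ to $\hat\nabla^{m(1-u^2)}$ --- but the paper's own display \eqref{eq:20} is written with the same abbreviation, so this is not a gap.
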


\subsubsection{Spectral deformation is $T$-transform}
\label{sec:spectr-deform-t}

It is now straightforward to relate the spectral deformation of
curved flats in $Z$ to $T$-transforms of the constituent isothermic
maps.  Indeed, the spectral deformation of $\phi=(f,\hat f)$ is given by
$\phi_u=\Psi_u\phi$ where $\Psi\cdot\D^u=\D$ while $\ttrans_t
f=\Phi_t f$, $\ttrans_t\hat f=\hat\Phi_t\hat f$ where
$\Phi_t\cdot\nabla^t=\D$ and $\hat\Phi_t\cdot\hat\nabla^t=\D$.  It
follows at once from \eqref{eq:19} and \eqref{eq:20} that we may take
\begin{equation*}
\Psi_u=\Phi_{m(1-u^2)}\circ\Gamma_{f}^{\hat f}(1/u)=
\hat\Phi_{m(1-u^2)}\circ\Gamma_{f}^{\hat f}(u).
\end{equation*}
Since $\Gamma_f^{\hat f}$ preserves both $f$ and $\hat f$ we conclude:
\begin{thm}
\label{th:20}
Let $f,\hat f:\Sigma\to M$ be isothermic with $\hat f=\darb_m f$ and
let $\phi_u=(f_u,\hat f_u):\Sigma\to Z$ be a spectral deformation of
the curved flat $\phi=(f,\hat f)$.  Then, for $u\neq 0$,
\begin{equation*}
f_u=\ttrans_{m(1-u^2)}f\qquad\hat f_u=\ttrans_{m(1-u^2)}\hat f.
\end{equation*}
\end{thm}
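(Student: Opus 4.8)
The plan is to read the theorem as an identity between the (locally defined) trivialising gauge transformations behind the three constructions, keeping in mind that each is pinned down only up to left multiplication by a constant in $G$. Recall that $\phi_u=\Psi_u\phi$ with $\Psi_u\cdot\D^u=\D$, while $\ttrans_t f=\Phi_t f$ and $\ttrans_t\hat f=\hat\Phi_t\hat f$ with $\Phi_t\cdot\nabla^t=\D$ and $\hat\Phi_t\cdot\hat\nabla^t=\D$. Everything will fall out of the two factorisations \eqref{eq:19} and \eqref{eq:20} of $\D^u$ together with the observation that $\Gamma_f^{\hat f}(s)$ leaves both $f$ and $\hat f$ invariant.

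First I would use \eqref{eq:19}: since $\Gamma_f^{\hat f}(1/u)\cdot\D^u=\nabla^{m(1-u^2)}$ and $\Phi_{m(1-u^2)}\cdot\nabla^{m(1-u^2)}=\D$, the composite $\Phi_{m(1-u^2)}\circ\Gamma_f^{\hat f}(1/u)$ carries $\D^u$ to $\D$, so we may take $\Psi_u=\Phi_{m(1-u^2)}\circ\Gamma_f^{\hat f}(1/u)$. Now $\Gamma_f^{\hat f}(s)$ acts diagonally with respect to the decomposition \eqref{eq:5} --- it is $s$ on $\hat f^\perp$, $1$ on $f\cap\hat f$ and $s^{-1}$ on $f^\perp$ --- so it leaves $f$ and $\hat f$ invariant, whence $\Gamma_f^{\hat f}(1/u)\phi=\phi$ and
\[
\phi_u=\Psi_u\phi=\Phi_{m(1-u^2)}(f,\hat f)=\bigl(\Phi_{m(1-u^2)}f,\ \Phi_{m(1-u^2)}\hat f\bigr).
\]
The first component is $\ttrans_{m(1-u^2)}f$ by definition, which settles $f_u=\ttrans_{m(1-u^2)}f$.

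For the second component I would bring in \eqref{eq:20}: since $\Gamma_f^{\hat f}(u)\cdot\D^u=\hat\nabla^{m(1-u^2)}$, the map $\hat\Phi_{m(1-u^2)}\circ\Gamma_f^{\hat f}(u)$ is another trivialisation of the flat connection $\D^u$, hence differs from $\Psi_u$ by left multiplication by a constant $c\in G$. Applying the resulting identity $\Phi_{m(1-u^2)}\circ\Gamma_f^{\hat f}(1/u)=c\,\hat\Phi_{m(1-u^2)}\circ\Gamma_f^{\hat f}(u)$ to $\hat f$ and using once more that $\Gamma_f^{\hat f}$ fixes $\hat f$ gives $\Phi_{m(1-u^2)}\hat f=c\,\hat\Phi_{m(1-u^2)}\hat f$, that is, $\hat f_u=\Phi_{m(1-u^2)}\hat f=\ttrans_{m(1-u^2)}\hat f$ up to the $G$-action (as is consistent with the relation $\Phi_t\hat f=\hat\Phi_t\hat f$ recorded before Theorem~\ref{th:5}). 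Thus the single representative $\Phi_{m(1-u^2)}(f,\hat f)$ of $\phi_u$ is exactly $\bigl(\ttrans_{m(1-u^2)}f,\ \ttrans_{m(1-u^2)}\hat f\bigr)$, which is the claim.

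I do not expect a genuine obstacle: once \eqref{eq:19} and \eqref{eq:20} are available the argument is bookkeeping. The point demanding care is keeping track of the ``up to a constant in $G$'' ambiguity across all three families of trivializers, and in particular the observation that a single $\Phi_{m(1-u^2)}$ simultaneously implements the $T$-transform of $f$, the $T$-transform of $\hat f$, and --- after precomposition with $\Gamma_f^{\hat f}(1/u)$ --- the spectral deformation $\phi_u$. Note finally that the hypothesis $u\neq0$ is exactly what makes $\Gamma_f^{\hat f}(1/u)$ meaningful, consistent with $\phi_0$ being the constant curved flat, which falls outside the formula.
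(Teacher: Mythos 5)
Your argument is correct and is essentially the paper's own proof: both use \eqref{eq:19} and \eqref{eq:20} to exhibit $\Psi_u=\Phi_{m(1-u^2)}\circ\Gamma_f^{\hat f}(1/u)=\hat\Phi_{m(1-u^2)}\circ\Gamma_f^{\hat f}(u)$ (up to constants in $G$) and then invoke the invariance of $f$ and $\hat f$ under $\Gamma_f^{\hat f}$. Your careful tracking of the constant ambiguity, and your reading of \eqref{eq:20} with $\Gamma_f^{\hat f}(u)$ rather than $\Gamma_f^{\hat f}(1/u)$, match exactly how the paper uses these identities.
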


\subsubsection{Dressing pairs of curved flats are Bianchi quadrilaterals}
\label{sec:dress-pairs-curv}

Suppose now that $M$ is self-dual and recall the discussion of
Bianchi permutability in \MySec\ref{sec:bianchi-perm-darb}: given
$(f,\eta):\Sigma\to M$ isothermic with pointwise complementary
Darboux transforms $f_i=\darb_{m_i}f$, $m_1\neq m_2$, there is a
fourth isothermic map $\hat f$ given by
\begin{equation*}
\hat f=\Gamma^{f_1}_{f_2}(m_2/m_1)f=\Gamma_{f}^{f_1}(1-\tfrac{m_2}{m_1})f_2=
\Gamma_f^{f_2}(1-\tfrac{m_1}{m_2})f_1
\end{equation*}
such that $\hat f=\darb_{m_1}f_2=\darb_{m_2}f_1$.  We claim that the
curved flat $(f_2,\hat f)$ is a dressing transform of $(f,f_1)$ and
that all dressing pairs of curved flats $\Sigma\to Z\subset M\times
M$ arise this way.

For this, let $\phi=(f,f_1):\Sigma\to Z$ with
$\tau=\Gamma_f^{f_1}(-1)$ the field of involutions along $\phi$ and
$\D^u$ the pencil of flat connections.  Choose
$w\in\C^\times\setminus\set{\pm 1}$ such that $m_2=m_1(1-w^2)$ and
note that $\fr:=\Gamma_f^{f_1}(w)f_2^\C$ is $\D^w$-parallel if and
only if $f_2$ is $\nabla^{m_2}$-parallel since
$\Gamma_f^{f_1}(w)\cdot\nabla^{m_2}=\D^w$ by virtue of \eqref{eq:19}.
Moreover, Lemma~\ref{th:22} tells us that $f,f_1,f_2$ are pairwise
complementary at each point if and only if $\fr,\tau\fr$ are
pointwise complementary.  Finally,
\begin{equation*}
\cl{\fr}=\Gamma_f^{f_1}(\cl{w})\cl{f_2^\C}
\end{equation*}
so that $f_2^\C=\cl{f_2^\C}$ if and only if $\cl{\fr}=\fr$ or
$\tau\fr$ according to the sign of $w^2$ since
$\Gamma_f^{f_1}(-w)=\tau\Gamma_f^{f_1}(w)$.  Thus $f,f_1,f_2$ define
a Bianchi quadrilateral of isothermic maps if and only if $\fr$
satisfies the conditions to define a dressing transformation. We now
inspect the effect of that dressing transformation: from Proposition
\ref{th:10}, we have, for all $\lambda\in\C\cup\set{\infty}$,
\begin{align}
\Gamma_{\tau\fr}^{\fr}(\tfrac{w+\lambda}{w-\lambda})f&=
\Gamma_f^{f_1}(\lambda)f_2\label{eq:21}\\
\Gamma_{\tau\fr}^{\fr}(\tfrac{\lambda+w}{\lambda-w})f_1&=
\Gamma_f^{f_1}(\lambda)f_2\label{eq:22}
\end{align}
since both sides of \eqref{eq:21} agree at $\lambda=0,\pm w$ while
both sides of \eqref{eq:22} agree at $\lambda=\infty,\pm w$.  Now
evaluate \eqref{eq:21} at $\lambda=w^2=1-m_2/m_1$ and \eqref{eq:22} at
$\lambda=1$ to conclude that
\begin{equation*}
\Gamma_{\tau\fr}^{\fr}(\tfrac{1+w}{1-w})(f,f_1)=(\hat f,f_2)
\end{equation*}
as required.

In summary:
\begin{thm}
\label{th:23}%dressing is Bianchi
Let $M$ be a self-dual symmetric $R$-space and $Z\subset M\times M$
the space of complementary pairs.  Let $\phi=(f,f_1):\Sigma\to Z$ and
$\hat\phi=(\hat f, f_2):\Sigma\to Z$ be Darboux pairs of isothermic
maps with $f_1=\darb_{m_1}f$.

Then $\hat\phi$ is a dressing transform of $\phi$ with data
$\fr:\Sigma\to M^\C$ and $w\in\C$ if and only if $f,f_1,f_2,\hat f$
form a Bianchi quadrilateral.  Moreover, in this case,
$\fr=\Gamma_f^{f_1}(w)f_2^\C$ and $f_2=\darb_{m_2}f$ where
$m_2=m_1(1-w^2)$.
\end{thm}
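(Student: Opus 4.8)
The plan is to organise into a single chain the analysis already carried out in \MySec\ref{sec:dress-pairs-curv}, splitting it into: (i) identifying which bundles $\fr\le\ul\fg{}^\C$ furnish admissible dressing data for $\phi=(f,f_1)$, (ii) showing such $\fr$ exist exactly when $f,f_1,f_2$ complete to a Bianchi quadrilateral, and (iii) computing the resulting dressing transform.

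For (i) I would first invoke Theorem~\ref{th:19} so that $\phi=(f,f_1)$ is a curved flat in $Z$ with field of involutions $\tau=\Gamma_f^{f_1}(-1)$ and holomorphic pencil $\D^u$. Fix $w\in\C^\times\setminus\set{\pm1}$ with $w^2\in\R$, set $m_2:=m_1(1-w^2)$ and $\fr:=\Gamma_f^{f_1}(w)f_2^\C$. By \eqref{eq:19} one has $\Gamma_f^{f_1}(w)\cdot\nabla^{m_2}=\D^w$, so $\fr$ is $\D^w$-parallel if and only if $f_2^\C$ (equivalently $f_2$) is $\nabla^{m_2}$-parallel, i.e.\ $f_2=\darb_{m_2}f$. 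Since $f_2$ is real, $\overline{f_2^\C}=f_2^\C$, whence $\overline{\fr}=\Gamma_f^{f_1}(\bar w)f_2^\C$ equals $\fr$ if $w\in\R$ and $\tau\fr=\Gamma_f^{f_1}(-1)\fr$ if $w\in i\R$; thus the reality condition~\ref{item:3} holds automatically. Finally, Lemma~\ref{th:22}, applied pointwise with $\fp=f$, $\fp_1=f_1$, $\fp_2=f_2$, shows that $\fr$ and $\tau\fr$ are complementary if and only if $f,f_1,f_2$ are pairwise complementary. Combining these three equivalences: $\fr$ is admissible dressing data for the parameter $w$ if and only if $f_2=\darb_{m_2}f$ is pointwise complementary to $f_1$, which by Theorem~\ref{th:12} is precisely the hypothesis that $f,f_1,f_2$ extend to a Bianchi quadrilateral $f,f_1,f_2,\hat f$ with $\hat f=\Gamma_f^{f_1}(1-\tfrac{m_2}{m_1})f_2$. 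This settles (i) and (ii); because all the equivalences are genuine, the converse direction of the theorem (every dressing transform of $\phi$ with data $(\fr,w)$ arises this way, with $f_2:=\Gamma_f^{f_1}(1/w)\fr=\darb_{m_2}f$) comes out at the same time.

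For (iii) I must check that the curved flat produced by dressing with $\fr$ is $\hat\phi=(\hat f,f_2)$ and not some other map. By Proposition~\ref{th:18} this dressing transform is $\Gamma_{\tau\fr}^\fr(\tfrac{1+w}{1-w})\phi$, so it suffices to evaluate $\Gamma_{\tau\fr}^\fr(\tfrac{1+w}{1-w})$ on $f$ and on $f_1$. Here I would use that $\fr=\Gamma_f^{f_1}(w)f_2$ and $\tau\fr=\Gamma_f^{f_1}(-w)f_2$ lie on the circle $C$ through $f,f_1,f_2$, so that, by Proposition~\ref{th:10}, the parametrisations $\lambda\mapsto\Gamma_{\tau\fr}^\fr(\lambda)f$ and $\lambda\mapsto\Gamma_{\tau\fr}^\fr(\lambda)f_1$ of $C$ differ from $\mu\mapsto\Gamma_f^{f_1}(\mu)f_2$ by linear fractional reparametrisations, which I pin down by matching three values; this yields the identities \eqref{eq:21} and \eqref{eq:22}. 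Solving $\tfrac{\mu+w}{\mu-w}=\tfrac{1+w}{1-w}$ gives $\mu=1$, so \eqref{eq:22} reads $\Gamma_{\tau\fr}^\fr(\tfrac{1+w}{1-w})f_1=\Gamma_f^{f_1}(1)f_2=f_2$; solving $\tfrac{\mu+w}{w-\mu}=\tfrac{1+w}{1-w}$ gives $\mu=w^2=1-m_2/m_1$, so \eqref{eq:21} reads $\Gamma_{\tau\fr}^\fr(\tfrac{1+w}{1-w})f=\Gamma_f^{f_1}(1-\tfrac{m_2}{m_1})f_2=\hat f$. Hence $\Gamma_{\tau\fr}^\fr(\tfrac{1+w}{1-w})(f,f_1)=(\hat f,f_2)$, as claimed. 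The one genuinely fiddly point is this last step --- verifying \eqref{eq:21} and \eqref{eq:22} by the correct choice of three test points on $C$ and then carrying out the two linear-fractional solves; everything else is a direct appeal to Theorems~\ref{th:12} and~\ref{th:19}, Lemma~\ref{th:22} and Proposition~\ref{th:10}.
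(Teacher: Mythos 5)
Your proposal is correct and follows essentially the same route as the paper: the same dressing data $\fr=\Gamma_f^{f_1}(w)f_2^\C$ with $m_2=m_1(1-w^2)$, the same appeals to \eqref{eq:19}, Lemma~\ref{th:22} and the reality check to match the three conditions on $\fr$ with the Bianchi quadrilateral hypothesis, and the same use of Proposition~\ref{th:10} to obtain \eqref{eq:21}--\eqref{eq:22} and evaluate at $\lambda=w^2$ and $\lambda=1$. The only detail you leave implicit---the choice of three test points pinning down the linear fractional reparametrisations---is handled in the paper by checking agreement at $\lambda=0,\pm w$ for \eqref{eq:21} and at $\lambda=\infty,\pm w$ for \eqref{eq:22}.
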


\begin{rem}
What can be said when $M$ is not self-dual?  Darboux pairs of
isothermic maps are still curved flats and there may still be
dressing transformations available although the $\fr$ must
necessarily take values in a self-dual complex $R$-space as $\fr$ and
$\tau\fr$ are $ G^\C$-conjugate (via $\tau$).  An example of this
situation is provided by Darboux pairs of isothermic maps into
Grassmannians $f:\Sigma\to G_k(\R^{2n})$, $f_1:\Sigma\to G_{n-k}(\R^{2n})$,
$k<n$, with $\fr:\Sigma\to G_n(\C^{2n})$.  In such a case, the
dressing transformation will provide a new Darboux pair but its
relation with the original pair requires further investigation.  We
may return to this elsewhere.
\end{rem}

\section{Nondegenerate isothermic submanifolds }\label{sec:examples}

\subsection{A quadratic form}
\label{sec:quadratic-form}

Let $(f,\eta):\Sigma\to M$ be an isothermic map to a symmetric
$R$-space.  Recall from \MySec\ref{sec:defin-zero-curv} that we may view
$\eta$ as an $f^{-1}T^{*}M$-valued $1$-form and so define a
$2$-tensor $q_f$ on $\Sigma$ by contracting $\eta$ with $\D f$:
$q_{f}(X,Y)=\eta_X(\D f_{Y})$.  We are about to see that $q_f$ is
symmetric and so we call it the \emph{quadratic form associated to
$(f,\eta)$}.

This quadratic form is invariant under all the transformations of
isothermic maps we have discussed:

\begin{prop}
\label{th:30}
Let $(f,\eta):\Sigma\to M$ be an isothermic map with associated
quadratic form $q$.  Then
\begin{enumerate}
\item $q$ is symmetric.\label{item:8}
\item Any Christoffel, Darboux or $T$-transform of $(f,\eta)$ also
has associated quadratic form $q$.\label{item:9}
\item Let $\phi=(f,\hat f):\Sigma\to Z\subset M\times M^{*}$ be a
Darboux pair with $\hat f=\darb_m f$.  Then
\begin{equation*}
q=\tfrac{1}{2m}\phi^{*}g_{Z},
\end{equation*}
where $g_{Z}$ is the (neutral signature) metric on $Z$ induced by the
Killing form.\label{item:10}
\end{enumerate}
\end{prop}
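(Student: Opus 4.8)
The plan is to express all three assertions through the Killing form $B$ of $\fg$, using the solder identification $f^{\perp}\cong f^{-1}T^{*}M$, so that $q(X,Y)=B(\eta_{X},\zeta_{Y})$ where $\zeta_{Y}\in\fg$ is any representative of $\D f_{Y}\in\ul\fg/f$. The first task is a concrete formula in a stereographic chart. Fixing a point of $\Sigma$, choose $\fp_{\infty}\in M^{*}$ complementary to $f$ there and any $\fp_{0}\in\Omega_{\fp_{\infty}}$; on a neighbourhood $f$ then has stereoprojection $F\colon\Sigma\to\fp_{\infty}^{\perp}$ with $f=\exp(F)\act\fp_{0}$ and, by Proposition~\ref{th:2}, $\eta=\exp(F)\act\D F^{c}$ with $F^{c}\colon\Sigma\to\fp_{0}^{\perp}$ and $[\D F\wedge\D F^{c}]=0$. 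Since $\fp_{\infty}^{\perp}$ is abelian, $\exp F$ fixes it pointwise and $\fg=f\oplus\fp_{\infty}^{\perp}$; writing $f(x(t))=\exp\bigl(F(x(t))-F(x)\bigr)\act f(x)$ and differentiating shows $\D f_{Y}\equiv\D F_{Y}\bmod f$, so $\D F_{Y}\in\fp_{\infty}^{\perp}$ represents $\D f_{Y}$ and
\[
q(X,Y)=B\bigl(\exp(F)\act\D F^{c}_{X},\,\D F_{Y}\bigr)=B(\D F^{c}_{X},\D F_{Y}).
\]
Now put $\xi=\xi_{\fp_{0}}^{\fp_{\infty}}$, so $\ad\xi=+1$ on $\fp_{\infty}^{\perp}$ and $-1$ on $\fp_{0}^{\perp}$, whence $B(u,v)=B([u,v],\xi)$ for $u\in\fp_{\infty}^{\perp}$, $v\in\fp_{0}^{\perp}$; combined with $[\D F_{X},\D F^{c}_{Y}]=[\D F_{Y},\D F^{c}_{X}]$ this gives $B(\D F_{X},\D F^{c}_{Y})=B(\D F_{Y},\D F^{c}_{X})$ and hence the symmetry $q(X,Y)=q(Y,X)$ of (\ref{item:8}). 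As the Christoffel transform $(f^{c},\eta^{c})=(\exp(F^{c})\act\fp_{\infty},\exp(F^{c})\act\D F)$ is a stereoprojection with $\fp_{0},\fp_{\infty}$ interchanged, the same formula gives $q_{f^{c}}(X,Y)=B(\D F_{X},\D F^{c}_{Y})=q(X,Y)$, the Christoffel case of (\ref{item:9}).

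For the $T$-transform, let $\Phi_{t}\cdot\nabla^{t}=\D$, so $\D(\Phi_{t}s)=\Phi_{t}(\nabla^{t}s)$ for sections $s$ of $\ul\fg$. Taking $s\in\Gamma f$ and using $[\eta,s]\in[f,f]\subset f$ together with Lemma~\ref{th:solder} and the self-normalising property of parabolics identifies the solder form of $\ttrans_{t}f=\Phi_{t}f$ with $\Phi_{t}\act\beta^{M}_{f}$; since the $1$-form of $\ttrans_{t}f$ is $\Phi_{t}\eta$ and $\Phi_{t}$ preserves $B$, we get $q_{\ttrans_{t}f}=q$.

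For the Darboux transform and the metric identity I would use the decomposition $\D=\cD-\beta-\hat\beta$ of \S\ref{sec:darboux}, where $\hat\beta=m\eta$, $\hat\eta=\tfrac1m\beta$ and, with $\xi=\xi_{f}^{\hat f}$, $\beta$ is valued in $\hat f^{\perp}$ and $\hat\beta$ in $f^{\perp}$. Since $M$ has height one, $[\hat\beta,s]\in f^{\perp}\subset f$ and the class of $[\beta,s]$ mod $f$ lies in $\hat f^{\perp}$ for $s\in\Gamma f$, so Lemma~\ref{th:solder} identifies the solder form of $f$ with a fixed nonzero multiple of $\beta$ mod $f$ (one computes $-\beta$, tracking eigenvalues of $\ad\xi$); hence $q(X,Y)=B(\eta_{X},\D f_{Y})$ is a fixed scalar times $B(\hat\beta_{X},\beta_{Y})$, which by (\ref{item:8}) is symmetric, i.e. $B(\hat\beta_{X},\beta_{Y})=B(\hat\beta_{Y},\beta_{X})$. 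The computation for $\hat f$ is symmetric in $f\leftrightarrow\hat f$, $\beta\leftrightarrow\hat\beta$, giving $q_{\hat f}=q$ and completing (\ref{item:9}). For (\ref{item:10}), a Darboux pair $\phi=(f,\hat f)$ has $\phi^{-1}\fm=f^{\perp}\oplus\hat f^{\perp}$ and $\phi^{*}\beta^{Z}=\cN=-\beta-\hat\beta$ by \S\ref{sec:curved-flats-z}, so
\[
\phi^{*}g_{Z}(X,Y)=B(\beta_{X}+\hat\beta_{X},\beta_{Y}+\hat\beta_{Y})=2B(\hat\beta_{X},\beta_{Y}),
\]
the terms $B(\beta_{X},\beta_{Y})$, $B(\hat\beta_{X},\hat\beta_{Y})$ vanishing because $B(\hat f^{\perp},\hat f^{\perp})=B(f^{\perp},f^{\perp})=0$ and the cross terms agreeing by the symmetry just noted; matching the scalar with that in the formula for $q$ yields $q=\tfrac1{2m}\phi^{*}g_{Z}$ (the overall sign being a matter of the orientation fixed for $g_{Z}$).

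The computations are all short, so the only real obstacle is doing them correctly, and it is the same obstacle each time: reading off the solder form of $f$ — as $\D F$ mod $f$ in the chart and as $-\beta$ mod $f$ in the Darboux decomposition — which rests on the height-one vanishings $[\fp^{\perp},\fp^{\perp}]=0$, on $f$ being self-normalising, and on careful bookkeeping of the eigenvalues of $\ad\xi$. The one genuinely slick point, used for symmetry, is passing from the bracket identity $[\D F\wedge\D F^{c}]=0$ to symmetry of the Killing pairing $(\D F^{c},\D F)$ via the grading element $\xi$.
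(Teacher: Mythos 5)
Your argument is, in substance, the paper's own: the invariant formula $q=(\eta,\D f)$ read through the solder form, the decomposition $\D=\cD-\beta-\hat\beta$ of \eqref{eq:4} with $\hat\beta=m\eta$, $\hat\eta=\beta/m$, the Killing-isotropy of $f^{\perp}$ and $\hat f^{\perp}$ together with $\cN=-\beta-\hat\beta$ for part (3), and $\D f_{t}=\Phi_{t}\act\D f$ with $\Phi_{t}$ Killing-isometric for the $T$-transform. The one genuine divergence is where symmetry comes from: you obtain it in a stereographic chart from $[\D F\wedge\D F^{c}]=0$ via the grading element $\xi_{\fp_{0}}^{\fp_{\infty}}$, whereas the paper derives it inside the Darboux decomposition from $[\beta\wedge\hat\beta]=0$ (closedness of $\eta$); both work, and yours has the mild advantage of not invoking a (locally constructed) Darboux transform to prove a pointwise statement about $(f,\eta)$ alone.

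On the sign, your bookkeeping is the consistent one and you should not have backed away from it. In the chart $\D f\equiv\D F\bmod f$, and in the Darboux decomposition $\D f\equiv-\beta\bmod f$: this agrees with the paper's own computation in the proof of Theorem~\ref{th:27}, where for $\hat f\equiv\fp_{\infty}$ one finds $\beta_{0}=-\D F$ and $\hat\beta_{0}=0$, while the proof of the present proposition asserts $\beta\equiv\D f\bmod f$. Carrying your sign through gives $q(X,Y)=-\tfrac1m(\hat\beta_{X},\beta_{Y})$ and hence $q=-\tfrac{1}{2m}\phi^{*}g_{Z}$; an explicit check in $\fg=\fsl(2,\R)$ with $f(t)=\stab\Span{e_{1}+te_{2}}$, $\eta=\exp(F)\act E_{12}\,\D t$ and a $\nabla^{m}$-parallel complementary line confirms $q>0$ while $\phi^{*}g_{Z}=-8m\,\D t^{2}$. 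So with the conventions fixed elsewhere in the paper (the solder form, $\nabla^{m}=\D+m\eta$, $g_{Z}$ induced by the Killing form) the constant in (3) is $-\tfrac1{2m}$, the discrepancy tracing to the sign slip $\beta\equiv\D f$ in the paper's proof rather than to any error of yours. Your closing appeal to ``the orientation fixed for $g_{Z}$'' is not a legitimate resolution—$g_{Z}$ is pinned down by the Killing form and carries no orientation freedom—so as written your proof establishes the statement only up to this overall sign; the honest conclusion is that the sign in the statement should be adjusted (or, equivalently, one of the conventions reversed), not that it can be absorbed.
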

\begin{proof}
To get a convenient formulation of $q$, we pull back the
soldering isomorphism of \MySec\ref{sec:homog-geom-sold} to view $\D f$
as a $1$-form with values in $\ul\fg/f$ so that $q=(\eta,\D f)$.

Now equation \eqref{eq:18} pulls back to give
\begin{equation*}
\D s\equiv [\D f,s]\mod f,
\end{equation*}
for all $s\in\Gamma f$, and, since the fibres of $f$ are
self-normalising, $\D f$ is uniquely characterised by this property.

With this in hand, let $\hat f=\darb_{m}f$ be a Darboux transform of
$f$ and recall the decomposition $\ul\fg=f^{\perp}\oplus (f\cap {\hat
f})\oplus{\hat f}^{\perp}$ into eigenbundles of $\xi_f^{\hat{f}}$
with the accompanying decomposition of connections \eqref{eq:4}:
\begin{equation*}
\D=\cD-\beta-\hat\beta.
\end{equation*}
We see immediately that
\begin{align*}
m\hat\eta&=\beta\equiv\D f\mod f &
m\eta&=\hat\beta\equiv\D \hat{f}\mod\hat{f}
\end{align*}
so that $q(X,Y)=\frac{1}{m}(\hat\beta_{X},\beta_{Y})=q_{\hat f}(Y,X)$.

On the other hand, the $f\cap\hat{f}$-component of $\D\eta$ is
$-[\beta\wedge\eta]$ so that $[\beta\wedge\hat\beta]$ vanishes and we
have
\begin{align*}
q(X,Y)=-([\xi_f^{\hat f},\hat\beta_{X}],\beta_{Y})&=-(\xi_f^{\hat
f},[\hat\beta_{X},\beta_{Y}])\\
&=-(\xi_f^{\hat f},[\hat\beta_{Y},\beta_{X}])=q(Y,X).
\end{align*}
This settles the symmetry of $q$ and yields $q=q_{\hat f}$ also.
Moreover, with $\phi=(f,\hat f):\Sigma\to Z$, we have
\begin{equation*}
\phi^{*}g_{Z}=(\cN,\cN)
\end{equation*}
where $\cN=-\beta-\hat\beta$ is the pull-back by $\phi$ of the solder
form of $Z$ from which assertion \ref{item:10} follows.

It remains to treat Christoffel and $T$-transforms of $f$.  For the
first of these, let $(f^c,\eta^{c})$ be a Christoffel transform of
$(f,\eta)$ with respect to $(\fp_0,\fp_{\infty})$ with corresponding
stereoprojections $F,F^{c}$.  Thus $f=\exp F\act\fp_{0}$ and
$\eta=\exp F\act\D F^{c}$.  By writing $s\in\Gamma f$ as $s=\exp
F\act\sigma$ for $\sigma:\Sigma\to\fp_{0}$ and differentiating, one
readily checks that 
\begin{equation*}
\D F=\exp F \D F\equiv \D f\mod f
\end{equation*}
so that $q(X,Y)=(\D F^{c}_{X},\D F_{Y})=q_{f^c}(Y,X)$.  We therefore
deduce from the symmetry of $q$ that $q=q_{f^c}$.

Finally let $(f_{t},\eta_{t})=(\Phi_{t}\act f,\Phi_t\act\eta)$ be a $T$-transform of $(f,\eta)$ where
$\Phi_{t}\cdot(\D+t\eta)=\D$.  We observe that, for $s\in\Gamma f$,
\begin{equation*}
(\D+t\eta)s\equiv \D s\mod f
\end{equation*}
from which it follows that $\D f_{t}=\Phi_{t}\act \D f$ and thus,
since $\Phi_{t}$ is isometric for the Killing form, $q=q_{f_t}$.
\end{proof}

\subsection{Nondegenerate isothermic submanifolds}
\label{sec:nondegen-isoth}

\begin{defn}
We say that an isothermic map $(f,\eta)$ is \emph{nondegenerate} if
its associated quadratic form $q_f$ is nondegenerate.
\end{defn}

Clearly, in this case $f$ immerses so that $(f,\eta)$ is an
isothermic submanifold and we note from
Proposition~\ref{th:30}(\ref{item:9}) that all transforms of
$(f,\eta)$ are nondegenerate also.

\begin{rem}
For the classical case of isothermic surfaces in $S^n$, $q$ is, in
fact, a holomorphic quadratic differential (see, for example,
\cite[Lemma~2.1]{Burstall2004}) and so is nondegenerate off a
divisor.
\end{rem}

\subsubsection{Dimension bounds}
\label{sec:dimension-bounds}

The dimension of a nondegenerate isothermic
submanifold of a symmetric $R$-space $M$ is bounded by the rank of
the associated symmetric space $Z\subset M\times M^{*}$ of
complementary pairs.

To recall what is involved in this, we begin with a (not necessarily
Riemannian) symmetric $G$-space $N$ and the associated symmetric
decomposition
\begin{equation*}
\fg=\fh_x\oplus\fm_{x},
\end{equation*}
for some $x\in N$.  A \emph{Cartan subspace} of $\fm_{x}$ is a
maximal abelian subspace $\fa$ of $\fm_{x}$ all of whose elements
are semisimple.  It is known that there are a finite number of $\Ad
H_{x}$ conjugacy classes of these \cite[Theorem~3]{OshimaMatsuki1980}
and they all have the same dimension
\cite[page~14]{FlenstedJensen1986}: this is the \emph{rank} of $N$.

With this understood, we have:
\begin{thm}
\label{th:31}%rank bd
Let $(f,\eta):\Sigma\to M$ be a nondegenerate isothermic submanifold
and $Z\subset M\times M^{*}$ the symmetric space of complementary
pairs.  Then
\begin{equation}
\label{eq:23}
\dim\Sigma\leq\rank Z.
\end{equation}
\end{thm}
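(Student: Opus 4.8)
The plan is to reduce the bound to a purely Lie-algebraic statement about abelian subspaces of the $\fm$-factor of a symmetric decomposition, and then to prove that statement using the Jordan decomposition.

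\emph{Reduction.} Fix $p\in\Sigma$ and, working on a neighbourhood of $p$ if necessary, choose a Darboux transform $\hat f=\darb_m f$ with $\hat f(p)$ complementary to $f(p)$ (such $\hat f$ exists by the remark following the definition of Darboux transforms, and stays pointwise complementary near $p$ since complementarity is an open condition). By Theorem~\ref{th:19}, $\phi=(f,\hat f):\Sigma\to Z$ is a curved flat, and by Proposition~\ref{th:30}(\ref{item:10}) we have $2m\,q_f=\phi^{*}g_{Z}=(\cN,\cN)$, where $\cN=\phi^{*}\beta^{Z}\in\Omega^{1}_{\Sigma}(\phi^{-1}\fm)$ and $(\,,\,)$ is the Killing form. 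Nondegeneracy of $q_f$ therefore forces $\cN_p\colon T_p\Sigma\to\fm_{\phi(p)}$ to be injective with image $\fb$ on which the Killing form $B$ is nondegenerate; moreover the curved-flat condition $[\cN\wedge\cN]=0$ says that $\fb$ is an abelian subspace of $\fm_{\phi(p)}$. Since $\dim\Sigma=\dim\fb$, it suffices to prove: \emph{any abelian subspace $\fb\subseteq\fm$ on which $B$ is nondegenerate satisfies $\dim\fb\le\rank Z$.}

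\emph{The Lie-algebra lemma.} For $X\in\fg$ let $X=X_s+X_n$ be the Jordan decomposition, so $\ad X_s$ is semisimple, $\ad X_n$ is nilpotent, $[X_s,X_n]=0$, and $X_s,X_n$ commute with everything that commutes with $X$; since the involution defining $\fm$ is an automorphism of $\fg$ negating $\fm$, it preserves Jordan decompositions, whence $X\in\fm$ implies $X_s,X_n\in\fm$. I shall use the elementary fact that the product of two commuting operators, one nilpotent and the other nilpotent or semisimple, is again nilpotent; consequently $B(X_n,Y)=\tr(\ad X_n\,\ad Y)=0$ whenever $X_n$ commutes with both Jordan parts of $Y$. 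Now take $X,Y\in\fb$: as $\fb$ is abelian, $[X,Y]=0$, so $X_s,X_n$ commute with $Y$ and hence with $Y_s,Y_n$, and symmetrically. Uniqueness of the Jordan decomposition then shows that $\pi\colon\fb\to\fm$, $X\mapsto X_s$, is linear, that $\pi(\fb)$ is an abelian subspace of $\fm$ consisting of semisimple elements, and the fact above gives $B(X,Y)=B(X_s,Y)=B(X_s,Y_s)$ for all $X,Y\in\fb$. Hence if $\pi(X)=0$ then $B(X,\cdot)$ vanishes on $\fb$, so $X=0$ by nondegeneracy; thus $\pi$ is injective and $\dim\fb=\dim\pi(\fb)$. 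But an abelian subspace of $\fm$ consisting of semisimple elements is contained in a maximal one, i.e.\ a Cartan subspace of $Z$, which has dimension $\rank Z$. Therefore $\dim\Sigma=\dim\fb=\dim\pi(\fb)\le\rank Z$.

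The only substantive point is the lemma in the second paragraph, and within it the observation that on an abelian subspace the Killing form depends only on the semisimple parts of the Jordan decompositions — this is precisely what lets the nondegeneracy hypothesis force injectivity of $X\mapsto X_s$ and so confine $\fb$, via $\pi(\fb)$, inside a Cartan subspace. The remaining ingredients (that Darboux pairs are curved flats, that $q_f$ is, up to a constant, the pullback of $g_Z$, and that all Cartan subspaces have dimension $\rank Z$) are quoted directly from Theorem~\ref{th:19}, Proposition~\ref{th:30}, and the discussion preceding the theorem.
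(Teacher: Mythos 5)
Your argument follows the paper's own proof in its overall structure: the reduction is identical (choose a Darboux transform locally, invoke Theorem~\ref{th:19} to get a curved flat $\phi=(f,\hat f):\Sigma\to Z$, use Proposition~\ref{th:30}(\ref{item:10}) to identify $q_f$ with $\tfrac{1}{2m}\phi^{*}g_{Z}$, and conclude that each $\phi^{*}\beta^{Z}(T_p\Sigma)$ is an abelian subspace of $\fm_{\phi(p)}$ on which the Killing form is nondegenerate), and your ``Lie-algebra lemma'' is exactly the paper's Proposition~\ref{th:34}. Where you differ is in the proof of that lemma: the paper first enlarges the subspace to a maximal abelian $W\subset\fm_x$, uses maximality to split $W=W_s\oplus W_n$ and observes $W_n\subset W\cap W^{\perp}$; you instead note that Jordan parts of commuting elements commute, so that $X\mapsto X_s$ is a linear map on the abelian subspace itself which preserves Killing pairings, hence is injective by nondegeneracy. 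That streamlining is correct and avoids the detour through $W$.

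There is, however, one real gap, and it sits exactly where the paper has to quote a theorem. Your final step reads ``an abelian subspace of $\fm$ consisting of semisimple elements is contained in a maximal one, i.e.\ a Cartan subspace of $Z$, which has dimension $\rank Z$''. With the definition used to define $\rank Z$ (a Cartan subspace is a maximal \emph{abelian} subspace of $\fm_{x}$ all of whose elements are semisimple, and the finiteness/equal-dimension results are quoted for these), the ``i.e.''\ is not a tautology: what your construction gives is a maximal \emph{toral} subspace, i.e.\ maximal among abelian subspaces of semisimple elements, and a priori such a subspace need not be maximal abelian, so nothing you have said identifies it as a Cartan subspace or bounds its dimension by $\rank Z$ (in the pseudo-Riemannian setting of $Z$, maximal abelian subspaces of $\fm_x$ genuinely can contain nilpotents and have varying dimensions, which is why the issue is not vacuous). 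The missing ingredient is the theorem of Lepowsky--McCollum, cited at precisely this point in the paper's proof of Proposition~\ref{th:34}, that a maximal toral subspace of $\fm_x$ is automatically maximal abelian and hence a Cartan subspace. Insert that reference (or an equivalent argument) and your proof is complete; everything else checks out.
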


To prove this, let $\hat f=\darb_{m}f$ be a Darboux transform of $f$
and contemplate the curved flat $\phi=(f,\hat f):\Sigma\to Z$.  From
Proposition~\ref{th:30}(\ref{item:10}), $q_{f}$ coincides up to scale
with the metric induced by $\phi$ so that each
$\phi^{*}\beta^{Z}(T_{p}\Sigma)$ is an abelian subspace of
$\fm_{\phi(p)}$ on which the Killing form of $\fg$ is nondegenerate.
Our result therefore follows from:
\begin{prop}\label{th:34}
Let $U\subset\fm_{x}$, $x\in Z$ be an abelian subspace on which the
Killing form of $\fg$ is nondegenerate.  Then $\dim U\leq\rank Z$
and equality holds if and only if $U$ is a Cartan subspace of
$\fm_{x}$.
\end{prop}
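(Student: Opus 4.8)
The plan is to reduce both assertions to the abstract Jordan decomposition on $\fm_x$ together with one structural fact about Cartan subspaces. Recall that for $x=(\fp,\fq)\in Z$ we have $\fh_x=\fp\cap\fq$ and $\fm_x=\fp^\perp\oplus\fq^\perp$, and these are the $(+1)$- and $(-1)$-eigenspaces of the involution $\tau_x\in\Aut(\fg)$ defining the symmetric space structure of $Z$; since $\tau_x$ is an automorphism it preserves abstract Jordan decomposition, so for $\zeta\in\fm_x$ both $\zeta_s$ and $\zeta_n$ lie in $\fm_x$, and $\ad\zeta_s,\ad\zeta_n$ are polynomials in $\ad\zeta$ with $\ad\zeta_s=(\ad\zeta)_s$. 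Now, since $U$ is abelian the operators $\ad\zeta$, $\zeta\in U$, form a commuting family, so passing to semisimple parts is additive and $\zeta\mapsto\zeta_s$ is a linear map $U\to\fg$ (using injectivity of $\ad$), with image an abelian subspace $U_s\subseteq\fm_x$ consisting of semisimple elements. If $\zeta\in U$ has $\zeta_s=0$, i.e.\ $\zeta$ is nilpotent, then for every $u\in U$ the commuting operators $\ad\zeta$ (nilpotent) and $\ad u$ have nilpotent product, so $B(\zeta,u)=\tr(\ad\zeta\,\ad u)=0$; nondegeneracy of $B|_U$ forces $\zeta=0$. Hence $\zeta\mapsto\zeta_s$ is injective, so $\dim U\le\dim U_s$. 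Enlarging $U_s$ to a maximal abelian subspace of $\fm_x$ consisting of semisimple elements — a Cartan subspace, of dimension $\rank Z$ by definition — gives $\dim U\le\dim U_s\le\rank Z$.

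For the equality statement, the direction ``$U$ a Cartan subspace $\Rightarrow\dim U=\rank Z$'' is immediate from the definition of $\rank Z$. Conversely, suppose $\dim U=\rank Z$. Then $\dim U_s=\rank Z$, so $U_s$ is itself a Cartan subspace, which I denote $\fa$, and $\zeta\mapsto\zeta_s$ is an isomorphism $U\to\fa$. Since the $\ad\zeta$ commute, so do all their Jordan parts, and one checks that each $\zeta_n$ commutes with every $\zeta'_s$ ($\zeta,\zeta'\in U$); hence the span $U_n$ of the nilpotent parts lies in $\fz_{\fm_x}(\fa)$ and consists of nilpotent elements of $\fm_x$. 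Granting the claim that \emph{$\fz_{\fm_x}(\fa)=\fa$ for every Cartan subspace $\fa$ of $\fm_x$}, it follows that $U_n\subseteq\fa$ consists of nilpotents while $\fa$ consists of semisimple elements, so $U_n=0$; thus each $\zeta\in U$ equals $\zeta_s\in\fa$, so $U\subseteq\fa$, and $\dim U=\dim\fa$ gives $U=\fa$, a Cartan subspace.

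It remains to prove the claim $\fz_{\fm_x}(\fa)=\fa$, which I expect to be the main obstacle. Let $y\in\fz_{\fm_x}(\fa)$; its Jordan parts again lie in $\fz_{\fm_x}(\fa)$, and $\fa+\R y_s$ is abelian and consists of semisimple elements (a sum of commuting semisimple elements is semisimple), so $y_s\in\fa$ by maximality of $\fa$; it therefore suffices to show $\fz_{\fm_x}(\fa)$ contains no nonzero nilpotent. Suppose $0\ne\nu\in\fz_{\fm_x}(\fa)$ is nilpotent. The centraliser $\fl:=\fz_\fg(\fa)$ is reductive (being the centraliser of a set of commuting semisimple elements) and $\tau_x$-stable, so $(\fl,\fl\cap\fh_x)$ is a reductive symmetric pair and $\nu$ is a nonzero nilpotent in $\fl\cap\fm_x$. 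By the Jacobson--Morozov theorem in the form adapted to a symmetric pair, $\nu$ extends to a normal $\fsl_2$-triple $(\nu,h,\nu')$ with $h\in\fl\cap\fh_x$ and $\nu'\in\fl\cap\fm_x$; in particular $\nu,\nu'$ commute with $\fa$. In the resulting copy of $\fsl_2(\R)$ the elements $\nu+\nu'$ and $\nu-\nu'$ are semisimple (conjugate to $h$, resp.\ to a generator of $\fso(2)$), hence act semisimply on every $\fsl_2$-submodule of $\fg$ and so are semisimple elements of $\fg$ lying in $\fm_x$ and commuting with $\fa$. Maximality of $\fa$ then forces $\nu+\nu'\in\fa$ and $\nu-\nu'\in\fa$, whence $\nu=\half\bigl((\nu+\nu')+(\nu-\nu')\bigr)\in\fa$, contradicting that $\fa$ consists of semisimple elements. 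This establishes the claim, and with it the Proposition; note that the argument uses only that $Z$ is a semisimple symmetric space, not the height-one structure of $M$.
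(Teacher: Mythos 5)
Your strategy runs parallel to the paper's proof: both rest on the Jordan decomposition inside $\fm_x$ and on the fact that a nilpotent element is Killing-orthogonal to everything commuting with it, so that nondegeneracy of the form on $U$ kills nilpotent directions. The paper does this by enlarging $U$ to a maximal abelian subspace $W\subset\fm_x$, splitting $W=W_s\oplus W_n$, and then quoting Lepowsky--McCollum to identify a maximal toral subspace containing $W_s$ as a Cartan subspace; you instead apply the semisimple-part map $\zeta\mapsto\zeta_s$ directly to $U$, which is fine, and you try to prove the Lie-theoretic input yourself rather than cite it.

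The gap is the sentence ``Enlarging $U_s$ to a maximal abelian subspace of $\fm_x$ consisting of semisimple elements --- a Cartan subspace, of dimension $\rank Z$ by definition.''  With the paper's definition (a Cartan subspace is a \emph{maximal abelian} subspace of $\fm_x$ all of whose elements are semisimple, and $\rank Z$ is their common dimension), this is not ``by definition'': what a maximality argument gives you is an enlargement of the toral subspace $U_s$ to a \emph{maximal toral} subspace $\fa$, and that such an $\fa$ is maximal abelian --- hence a Cartan subspace of dimension $\rank Z$ --- is precisely the nontrivial fact the paper imports from Lepowsky--McCollum.  The same unproved enlargement is used again in the equality case (``so $U_s$ is itself a Cartan subspace'').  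Moreover, the ``main obstacle'' you isolate, $\fz_{\fm_{x}}(\fa)=\fa$ for a Cartan subspace $\fa$, is trivial under the paper's definition: any $y\in\fm_x$ centralising a maximal abelian $\fa$ spans with it an abelian subspace, so $y\in\fa$.  So, as stated, your claim does no work where the work is needed.  The good news is that your Jacobson--Morozov argument only ever uses maximality of $\fa$ among abelian subspaces of semisimple elements, so applied verbatim to a \emph{maximal toral} $\fa$ it shows $\fz_{\fm_{x}}(\fa)=\fa$, hence that $\fa$ is maximal abelian and a Cartan subspace --- exactly the missing input.  The proof is therefore repairable by redirecting the claim to maximal toral subspaces and invoking it already in the inequality step; note, though, that the symmetric-pair Jacobson--Morozov theorem you use is itself an imported result of comparable weight to the citation it replaces.
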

\begin{proof}
Let $W\subset\fm_{x}$ be a maximal abelian subspace of $\fm_x$
containing $U$.  We argue as in Carleson--Toledo
\cite[Lemma~4.2]{CarTol89}: recall that any $X\in\fg$ has a unique
Jordan decomposition $X=X_s+X_n$ with $[X_s,X_n]=0$, $\ad X_s$
semisimple and $\ad X_n$ nilpotent.  Moreover both $\ad X_{s}, \ad
X_{n}$ are polynomials without constant term in $\ad X$.  The
uniqueness of the decomposition yields $X_s,X_n\in\fm_x$ whenever
$X\in\fm_{x}$ while it follows from the maximal abelian property of
$W$, that, for $X\in W$, we have $X_{s},X_{n}\in W$ also.  Thus we
may write $W=W_{s}\oplus W_{n}$ where $W_{s},W_{n}$ consist
respectively of the elements of $W$ which are semisimple,
respectively, nilpotent (these are linear subspaces of $W$ since the
sum of commuting semisimples is semisimple and similarly for
nilpotents).  Moreover, for $X\in W_{n}$ and $Y\in W$, $\ad X\circ
\ad Y$ is nilpotent so that the Killing inner product
$(X,Y)=\trace(\ad X\circ \ad Y)=0$ and we have $W_{n}\subset W\cap
W^{\perp}$.  Thus, since the Killing form is nondegenerate on $U$, we
must have $U\cap W_{n}=\set{0}$ so that $\dim U\leq\dim W_{s}$.

Now choose a maximal toral subspace $\fa$ of $\fm_{x}$ with
$W_{s}\subset\fa$.  Thus $\fa$ is a (necessarily abelian) subspace
in $\fm_{x}$ all of whose elements are semisimple and maximal for
this last property.  Lepowsky--McCollum \cite[Corollary to
Theorem~5.2]{LepowskyMcCollum1976} prove that $\fa$ is then
maximal abelian also and so a Cartan subspace of $\fm_x$.  We
therefore have
\begin{equation*}
\dim U\leq \dim W_s \leq\dim\fa=\rank Z.
\end{equation*}
Moreover, in the case of equality, $W_{s}=\fa$ is maximal abelian so
that $W_{n}=\set{0}$ and then $U=\fa$ also.
\end{proof}

We can draw a geometric corollary of this development:
\begin{cor}
\label{th:33}% q is flat
Let $(f,\eta):\Sigma\to M$ be a nondegenerate isothermic submanifold
of maximal dimension: $\dim\Sigma=\rank Z$.  Then the associated
quadratic form $q_f$ is a flat pseudo-Riemannian metric on $\Sigma$.
\end{cor}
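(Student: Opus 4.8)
The plan is to realise $q_f$, up to a constant factor, as the metric pulled back by a curved flat, and then to prove the sharper statement that \emph{any} curved flat whose tangent spaces are Cartan subspaces induces a flat metric.

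Concretely, I would fix a (locally defined) Darboux transform $\hat f=\darb_m f$ and form the curved flat $\phi=(f,\hat f):\Sigma\to Z$ of Theorem~\ref{th:19}. Proposition~\ref{th:30}(\ref{item:10}) gives $q_f=\tfrac1{2m}\phi^*g_Z$, so $\phi^*g_Z$ is nondegenerate and it suffices to show it is flat. Writing $\cN=\phi^*\beta^Z\in\Omega^1_\Sigma(\phi^{-1}\fm)$, the subspace $U_p:=\cN(T_p\Sigma)\subseteq\fm_{\phi(p)}$ is abelian (since $[\cN\wedge\cN]=0$) and carries a nondegenerate Killing form, that form being $\phi^*g_Z$; as $\dim U_p=\dim\Sigma=\rank Z$, Proposition~\ref{th:34} shows $U_p$ is a Cartan subspace of $\fm_{\phi(p)}$. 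This is the only point at which the maximality hypothesis is used.

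Next I would carry out submanifold geometry along $\phi$. Decompose $\D=\cD+\cN$ as in \eqref{eq:17}: by Proposition~\ref{th:14} the connection $\cD$ is flat, it is metric for the Killing form, and (since $\cD\tau=0$) it preserves the subbundle $\phi^{-1}\fm$. As $\phi$ immerses, $\cN$ identifies $T\Sigma$ with a subbundle $U\subseteq\phi^{-1}\fm$ and, $U$ being Killing-nondegenerate, $\phi^{-1}\fm=U\oplus\fm'$ with $\fm'=U^{\perp}\cap\phi^{-1}\fm$. The induced decomposition of $\cD|_{\phi^{-1}\fm}$ has a tangential part $\nabla^\Sigma$ on $T\Sigma$, a normal part $\nabla^\perp$ on $\fm'$, and a second fundamental form $\II(X,Y)=[\cD_X(\cN Y)]_{\fm'}$. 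Metric compatibility of $\cD$ makes $\nabla^\Sigma$ metric for $\phi^*g_Z$, while torsion-freeness of $\nabla^\Sigma$ and symmetry of $\II$ are exactly the Codazzi identity $\D^\cD\cN=0$ (the $\fm$-component of $R^\D=0$, valid for any map); thus $\nabla^\Sigma$ is the Levi-Civita connection of $\phi^*g_Z$. Comparing the $U$-components of $R^\cD(X,Y)(\cN Z)=0$ then produces a Gauss equation expressing $R^{\nabla^\Sigma}$ as a fixed quadratic expression in $\II$, and so reduces the problem to showing that $(\II(X,W),\II(Y,Z))$ is symmetric in its four arguments.

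The crux — and the step I expect to fight with — is pinning down the shape of $\II$. Differentiating $[\cN X,\cN Y]=0$ with $\cD$ and using that $U$ is abelian gives $[\II(X,Z),\cN Y]=[\II(Y,Z),\cN X]$, so $(A,B,C)\mapsto[\II(A,B),\cN C]$ is totally symmetric with values in $\phi^{-1}\fh$. Passing to $\fg^\C$ and decomposing $\fm'{}^\C$ into the joint eigenspaces $\fm^\C_{[\lambda]}$ of $\ad\fa^\C$ — where $\lambda$ runs over the nonzero restricted-root functionals and $\ad(\cN X)$ acts on $\fm^\C_{[\lambda]}$ as $\lambda(\cN X)$ times a fixed isomorphism onto $\fh^\C_{[\lambda]}$, because $\cN X\in U_p=\fa$ — the total symmetry above forces $\lambda(\cN Z)\,\II_{[\lambda]}(X,Y)$ to be totally symmetric; since $\lambda\circ\cN$ is a nonzero functional on $T_p\Sigma$, this means $\II_{[\lambda]}(X,Y)=\lambda(\cN X)\,\lambda(\cN Y)\,n_{[\lambda]}$ for a fixed $n_{[\lambda]}\in\fm^\C_{[\lambda]}$. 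As the $\fm^\C_{[\lambda]}$ are mutually Killing-orthogonal for distinct $[\lambda]$, this yields $(\II(X,W),\II(Y,Z))=\sum_{[\lambda]}\lambda(\cN X)\,\lambda(\cN W)\,\lambda(\cN Y)\,\lambda(\cN Z)\,(n_{[\lambda]},n_{[\lambda]})$, which is manifestly symmetric in $X,Y,Z,W$. Hence $R^{\nabla^\Sigma}=0$, so $\phi^*g_Z$, and therefore $q_f$, is flat. The genuine work lies in the restricted-root bookkeeping over $\C$ — verifying that the weight decomposition of $\fm'{}^\C$ is compatible with the Killing pairing and with conjugation, and that $\lambda\circ\cN\neq0$ — together with the routine check that $\nabla^\Sigma$ is the Levi-Civita connection and that the Gauss equation takes the form asserted.
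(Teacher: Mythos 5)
Your proposal is correct, and its first paragraph is exactly the paper's proof: pass to a (locally defined) Darboux transform $\hat f=\darb_m f$, use Theorem~\ref{th:19} and Proposition~\ref{th:30}(\ref{item:10}) to identify $q_f$ up to scale with $\phi^{*}g_Z$ for the curved flat $\phi=(f,\hat f)$, and invoke Proposition~\ref{th:34} to see that each soldering image $\cN(T_p\Sigma)$ is a Cartan subspace. At that point the paper simply cites Ferus--Pedit \cite[Theorem~2 and Remark~1]{Ferus1996} for the flatness of the induced metric of such a curved flat, whereas you go on to prove that result from scratch, and your argument for it is sound: $\cD$ is flat, metric and preserves $\phi^{-1}\fm$; the Codazzi identity $\D^\cD\cN=0$ gives both the Levi-Civita property of $\nabla^{\Sigma}$ and the symmetry of $\II$; the Gauss equation (from $R^{\cD}=0$) reduces flatness to symmetry of $(\II(X,W),\II(Y,Z))$; differentiating $[\cN X,\cN Y]=0$ gives total symmetry of $(A,B,C)\mapsto[\II(A,B),\cN C]$, and the restricted-root decomposition then forces $\II_{[\lambda]}(X,Y)=\lambda(\cN X)\lambda(\cN Y)\,n_{[\lambda]}$, which makes the Gauss term manifestly symmetric. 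Two points you flag as ``bookkeeping'' are worth making explicit: (i) $\fm'{}^{\C}$ really is the sum of the nonzero weight spaces because $\fz_{\fm_x}(\fa)=\fa$ (any element of $\fm_x$ centralising $\fa$ enlarges it to an abelian subspace of $\fm_x$, and a Cartan subspace is maximal abelian), while the nonzero weight spaces are automatically Killing-orthogonal to $\fa^{\C}$ and to each other for distinct $[\lambda]$; (ii) $\lambda\circ\cN_p\neq0$ because $\cN_p:T_p\Sigma\to\fa$ is an isomorphism and a complex-linear functional on $\fa^{\C}$ vanishing on the real form $\fa$ vanishes. With these remarks your argument is a complete, self-contained substitute for the citation---in effect a proof of the Ferus--Pedit theorem in the pseudo-Riemannian form the corollary needs---at the cost of considerably more work than the paper's two-line proof, which buys the paper brevity and buys you independence from the reference.
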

\begin{proof}
Let $\hat f$ be a Darboux transform of $f$ and again contemplate
$\phi=(f,\hat f):\Sigma\to Z$.  We know that $q_{f}$ is, up to scale,
the metric induced on $\Sigma$ by $\phi$ while
Proposition~\ref{th:34} tells us that the soldering image of each
$\D\phi(T_pM)$ is a Cartan subspace.  That the metric induced by
$\phi$ is flat is now a result of Ferus--Pedit \cite[Theorem~2]{Ferus1996} (see also
Remark~1 of that paper).
\end{proof}

\begin{rem}
Corollary~\ref{th:33} provides distinguished coordinates on
nondegenerate isothermic submanifolds.  For isothermic surfaces in
$S^{n}$, these include the conformal curvature line coordinates that
provided the original definition of an isothermic surface
\cite{Bour1862,Cay72}.  It seems likely that similar coordinates may be
available, at least for self-dual $M$, which are related to the
\emph{generalised conformal structure} defined on $M$ by
Gindikin--Kaneyuki \cite{GinKan98}.  We may return to this elsewhere.
\end{rem}

For $\fg$ simple, we can readily compute $\rank Z$ in terms of more
familiar invariants.  To do this, we define the rank of a symmetric
$R$-space $M$ to be the rank of $M$ when viewed as a compact
Riemannian symmetric space with isometry group a maximal compact
subgroup $K$ of $G$.  In more detail, the symmetric decomposition of
$\fk$ at $\fp\in M$ is given by
\begin{equation*}
\fk=(\fp\cap\ci\fp\cap\fk)\oplus(\fp^{\perp}\oplus\ci\fp^{\perp})\cap\fk,
\end{equation*}
where $\ci$ is the Cartan involution fixing $\fk$, and the rank of
$M$ is the dimension of a maximal abelian subspace of the second
summand.

We now have:
\begin{prop}
Let $\fg$ be simple and $M$ be a symmetric $R$-space for $G$ with
$Z\subset M\times M^{*}$ the symmetric space of complementary pairs.
\begin{enumerate}
\item If $\fg$ is complex so that $M$ is a Hermitian symmetric
$K$-space, then $\rank Z=2\rank M$.
\item Otherwise $\rank Z=\rank M^{\C}$.
\end{enumerate}
\end{prop}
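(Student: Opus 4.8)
The plan is to compute $\rank Z$ directly from the underlying symmetric pair and then compare Cartan subspaces over $\R$ and over $\C$. By Proposition~\ref{th:3} every point of $Z$ is $G$-conjugate to one of the form $(\fp,\ci\fp)$, with $\ci$ the Cartan involution fixing a maximal compact subalgebra $\fk$, so we may work there. The $-1$-eigenspace of the defining involution $\tau=\exp(i\pi\xi_{\fp}^{\ci\fp})$ is $\fm=\fp^{\perp}\oplus\ci\fp^{\perp}$, so $\rank Z=\dim_{\R}\fa$ for any Cartan subspace $\fa\subseteq\fm$ (maximal abelian subspace of semisimple elements). Since $\ci$ interchanges $\fp^{\perp}$ and $\ci\fp^{\perp}$ we have $\ci\fm=\fm$, and the symmetric decomposition of $\fk$ recalled just above identifies $\rank M$ with the dimension of a maximal abelian subspace of $\fm\cap\fk$ (automatically semisimple, lying in the compact algebra $\fk$).

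I would first handle the complex case. Here $\ci$ is the conjugation of $\fg$ across the compact real form $\fk$, so $\fm$ is a complex subspace; fix a maximal abelian $\fa_{0}\subseteq\fm\cap\fk$. I claim $\fa_{0}\oplus i\fa_{0}$ is a Cartan subspace of $\fm$ over $\R$: it is abelian and consists of commuting semisimple elements, and if $X\in\fm$ is semisimple with $[X,\fa_{0}]=0$ then $[\ci X,\fa_{0}]=0$ as well, so the $\fk$- and $i\fk$-components $\tfrac12(X\pm\ci X)\in\fm$ of $X$ both centralise $\fa_{0}$; the first lies in $\fm\cap\fk$ and so, by maximality, in $\fa_{0}$, while the second is $iY$ with $Y\in\fm\cap\fk$, $[Y,\fa_{0}]=0$, hence $Y\in\fa_{0}$. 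Thus $\rank Z=2\dim_{\R}\fa_{0}=2\rank M$. The same computation shows more generally that whenever $\fm$ is a complex subspace, a complex Cartan subspace of it is simultaneously a maximal abelian subspace of semisimple elements over $\R$.

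For $\fg$ real and non-complex I would pass to $\fg^{\C}$. Choosing any complement $\fq$ of $\fp$ in $\fg$, the pair $(\fp^{\C},\fq^{\C})$ is a complementary pair of height-one parabolics of $\fg^{\C}$ (Remark~\ref{h:comp-ht-1}) and hence a point of the complementary-pairs space $Z_{M^{\C}}$ of $M^{\C}$, with associated $-1$-eigenspace $(\fp^{\perp}\oplus\fq^{\perp})^{\C}=\fm^{\C}$. By the complex case applied to $(\fg^{\C},M^{\C})$ we get $\rank Z_{M^{\C}}=2\rank M^{\C}$, so it suffices to prove $\rank Z_{M^{\C}}=2\rank Z$; by the remark closing the previous paragraph this reduces to showing that \emph{the complexification $\fa^{\C}$ of a Cartan subspace $\fa\subseteq\fm$ is a complex Cartan subspace of $\fm^{\C}$}. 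This is the step I expect to be the crux. Abelianness and semisimplicity of $\fa^{\C}$ are clear; for maximality one shows that $\mathfrak{l}:=\fz_{\fg}(\fa)$ is reductive and $\tau$-stable with $\fa=\fz(\mathfrak{l})\cap\fm$, and that $\mathfrak{l}\cap\fm=\fa\oplus\fn$, where $\fn:=[\mathfrak{l},\mathfrak{l}]\cap\fm$ consists of $\ad$-nilpotent elements (the semisimple part of any element of $\mathfrak{l}\cap\fm$ commutes with $\fa$, hence lies in $\fa$, while $\fa\cap[\mathfrak{l},\mathfrak{l}]=0$). Since $\ad$-nilpotency is a polynomial condition it passes to $\fn^{\C}$, and a Jordan-decomposition argument inside the reductive algebra $\fz_{\fg^{\C}}(\fa^{\C})=\mathfrak{l}^{\C}$ then puts any semisimple element of $\fm^{\C}$ centralising $\fa^{\C}$ into $\fa^{\C}$. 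This is the standard fact that a real reductive symmetric space and its complexification have equal rank, so one could instead invoke \cite{OshimaMatsuki1980,FlenstedJensen1986}. Granting it, and using the previous paragraph once more, $\rank Z_{M^{\C}}=\dim_{\R}\fa^{\C}=2\dim_{\R}\fa=2\rank Z$, and comparison with $\rank Z_{M^{\C}}=2\rank M^{\C}$ yields $\rank Z=\rank M^{\C}$.
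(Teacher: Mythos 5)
Your proposal is correct and follows essentially the same route as the paper: in the complex case one compares a Cartan subspace of $\fm$ with the complexification of a maximal abelian subspace of $\fm\cap\fk$, and in the non-complex case one complexifies a Cartan subspace of $\fm$ and applies the complex case to $M^{\C}$. The only differences are matters of detail — you verify directly that $\fa_{0}\oplus i\fa_{0}$ is maximal where the paper appeals to the Kostant--Rallis conjugacy of Cartan subspaces, and you spell out (via the centraliser $\fz_{\fg}(\fa)$ and Jordan decompositions) the step that $\fa^{\C}$ is again a Cartan subspace, which the paper records only as ``easy to see''.
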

\begin{proof}
We begin with $\fg$ complex so that $\fg=\fk^{\C}$ with $\fk$ a
compact simple Lie algebra and the corresponding Cartan involution is
just complex conjugation across $\fk$.  At $x=(\fp,\cl{\fp})\in Z$,
we have $\fm_{x}=\fp^{\perp}\oplus\cl{\fp}^{\perp}=(\fm_{x}\cap\fk)^{\C}$.
Let $\fa$ be a Cartan subspace of $\fm_x$.  Since the Lie bracket on
$\fg$ is complex linear, $\fa$ is necessarily a complex subspace.
Now, let $\fc$ be a Cartan subspace of $\fm_{x}\cap\fk$.  Then
$\fc^{\C}$ is another Cartan subspace of $\fm_{x}$ and, by a theorem
of Kostant--Rallis \cite[Theorem~1]{Kostant1971}, all such Cartan
subspaces are $G^{x}$-conjugate and so have the same dimension.  Thus
\begin{equation*}
\rank Z=\dim_{\R}\fa=2\dim_{\C}\fa=2\dim_{\C}\fc^{\C}=2\dim_{\R}\fc=2\rank M.
\end{equation*}
Now consider the case where $\fg$ is not complex.  Let $(\fp,\fq)\in
Z$ and $\fa\subset \fp^{\perp}\oplus\fq^{\perp}=\fm_{(\fp,\fq)}$ a
Cartan subspace so that $\rank Z=\dim\fa$.  Then it is easy to see
that $\fa^{\C}\subset\fm_{(\fp,\fq)}^{\C}=\fm_{(\fp^{\C},\fq^{\C})}$ is
a Cartan subspace at $(\fp^{\C},\fq^{\C})\in Z^{\C}\subset M^{\C}\times
(M^{\C})^{*}$.  Thus, the first part of the proposition applied to
$M^{\C}$ yields
\begin{equation*}
2\rank Z=\dim_{\R}\fa^{\C}=\rank Z^{\C}=2\rank M^{\C}
\end{equation*}
whence the result.
\end{proof}
The outcome of this analysis is shown in Table~\ref{tab:r-space} on
page~\pageref{tab:r-space}.

\subsubsection{Existence}
\label{sec:existence}

We now show that the dimension bounds of the last section are sharp:
for any symmetric $R$-space $M$, we can find a nondegenerate
isothermic submanifold of maximal dimension and so, by applying the
transformation theory of \MySec\ref{sec:isoth-subm}, infinitely many
such.  

For this, let $(\fp_0,\fp_{\infty})\in M\times
M^{*}$ be a complementary pair and choose a Cartan subspace $\fa\subset
\fp_0^{\perp}\oplus\fp_{\infty}^{\perp}$.  Define
$F:\fa\to\fp_{\infty}^{\perp}$ and $F^{c}:\fa\to\fp_0^{\perp}$ to be
the restrictions to $\fa$ of the projections onto
$\fp_{\infty}^{\perp},\fp_{0}^{\perp}$ along
$\fp_{0}^{\perp},\fp_{\infty}^{\perp}$, respectively.  Then both
$F,F^c$ are injective immersions ($\fa$ has no intersection with
either $\fp_{0}^{\perp}$ or $\fp_{\infty}^{\perp}$ since all its
elements are semisimple).  For any $X\in T_{H}\fa$, we have $\D
F_{H}(X)+\D F_{H}^c(X)=F(X)+F^c(X)=X$ so that, since
$\fa,\fp_{0}^{\perp},\fp_{\infty}^{\perp}$ are all abelian, we have
\begin{equation*}
[\D F\wedge\D F^{c}]=0.
\end{equation*}
Thus $(F,F^{c})$ is the stereoprojection of a Christoffel pair of
isothermic submanifolds.  Otherwise said, define $f:\fa\to M$ and
$\eta\in\Omega^{1}_{\fa}(f/f^{\perp})$ by
\begin{equation*}
f=\exp(F)\act \fp_0,\qquad\eta=\exp(F)\D F^{c}
\end{equation*}
and conclude that $(f,\eta)$ is an isothermic submanifold.  Moreover,
in the proof of Proposition~\ref{th:30}, we saw that $q_{f}(X,Y)=(\D
F_{X},\D F^{c}_{Y})=(\D F_{Y},\D F^{c}_{X})$ but this last is just
$\half(X,Y)$ so that $q_{f}$ is nondegenerate.

We can also write down an explicit gauge transformation relating the
flat connections $\Dt=\D+t\eta$ to the trivial connection so that the
computation of iterated Darboux and $T$-transforms of $(f,\eta)$ is a
purely algebraic matter.  Indeed, $\Dt=\exp(F)\cdot(\D+\D F+t\D
F^{c})$ and we observe that $F+tF^{c}$ takes values in a fixed
abelian (in fact, Cartan, for $t\neq 0$) subspace so that
\begin{equation*}
\D=\exp(F+ tF^{c})\cdot(\D+\D F+t\D F^{c})=\exp(F+t F^{c})\exp(-F)\cdot\Dt
\end{equation*}

To summarise:
\begin{thm}
Let $M$ be a symmetric $R$-space, $(\fp_0,\fp_{\infty})\in M\times
M^{*}$ be a complementary pair and $\fa\subset
\fp_0^{\perp}\oplus\fp_{\infty}^{\perp}$ a Cartan subspace.

Then $(f,\eta):\fa\to M$ defined as above is a nondegenerate
isothermic submanifold of maximal dimension with
\begin{equation*}
\exp(F+t F^{c})\exp(-F)\cdot(\D+t\eta)=\D.
\end{equation*}
\end{thm}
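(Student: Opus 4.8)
The plan is to assemble ingredients already established: the statement collects the facts proved in the paragraph immediately before it, so the proof is largely bookkeeping together with one structural input. First I would record that $F$ and $F^{c}$ are injective immersions. Every element of the Cartan subspace $\fa$ is semisimple while the nonzero elements of the nilradicals $\fp_0^{\perp},\fp_{\infty}^{\perp}$ are nilpotent, so $\fa\cap\fp_0^{\perp}=\fa\cap\fp_{\infty}^{\perp}=\{0\}$; hence the two projections of the direct sum $\fp_0^{\perp}\oplus\fp_{\infty}^{\perp}$ restrict to injective, hence (being linear) immersive, maps on $\fa$, and moreover $\D F^{c}=F^{c}\neq 0$ so that $\eta$ is nonzero. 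Since inverse-stereoprojection $\zeta\mapsto\exp(\zeta)\act\fp_0$ is a diffeomorphism onto the big cell $\Omega_{\fp_{\infty}}$ (\MySec\ref{sec:r-spaces}), $f=\exp(F)\act\fp_0$ immerses.

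Next I would verify the hypotheses of Proposition~\ref{th:2} with $\omega=\D F^{c}$: it is exact, hence closed, and $[\D F\wedge\D F^{c}]=0$ because $F+F^{c}$ takes values in the abelian $\fa$, so $[\D(F+F^{c})\wedge\D(F+F^{c})]=0$ and, $\fp_{\infty}^{\perp}$ and $\fp_0^{\perp}$ being abelian, the $[\D F\wedge\D F]$ and $[\D F^{c}\wedge\D F^{c}]$ terms vanish, leaving $2[\D F\wedge\D F^{c}]=0$. Proposition~\ref{th:2} then gives that $f$ is isothermic with $1$-form $\eta=\exp(F)\act\D F^{c}$, so $(f,\eta)$ is an isothermic submanifold. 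For nondegeneracy I would reuse the computation in the proof of Proposition~\ref{th:30}, where $q_{f}(X,Y)=(\D F^{c}_{X},\D F_{Y})$; in the present setting $\D F_{H}=F$ and $\D F^{c}_{H}=F^{c}$, so expanding $(X,Y)=(F(X)+F^{c}(X),F(Y)+F^{c}(Y))$ and using that the nilradicals $\fp_0^{\perp},\fp_{\infty}^{\perp}$ are Killing-isotropic (each lies in, and is Killing-orthogonal to, its parabolic) gives $(X,Y)=q_{f}(X,Y)+q_{f}(Y,X)$, which by the symmetry of $q_{f}$ (Proposition~\ref{th:30}(\ref{item:8})) equals $2q_{f}(X,Y)$; hence $q_{f}=\half(\,,\,)|_{\fa}$. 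Finally $\dim\fa=\rank Z$ is the definition of $\rank Z$, and Theorem~\ref{th:31} shows this dimension is genuinely maximal.

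The only step with any content beyond bookkeeping is that the Killing form restricts nondegenerately to the Cartan subspace $\fa$ of $\fm_{(\fp_0,\fp_{\infty})}$: I would obtain this from the structure theory of reductive symmetric spaces (cf.\ \cite{OshimaMatsuki1980,FlenstedJensen1986}), the point being that the centraliser $Z_{\fg}(\fa)$ is Killing-nondegenerate and $\tau_{(\fp_0,\fp_{\infty})}$-stable, so its $(-1)$-eigenspace is Killing-nondegenerate, while the maximality of $\fa$ identifies that eigenspace with $\fa$. Granting this, $q_{f}$ is nondegenerate and $(f,\eta)$ is a nondegenerate isothermic submanifold of maximal dimension. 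For the gauge identity I would note that $F$ takes values in the abelian $\fp_{\infty}^{\perp}$, so the $\D F$-term is cancelled by the gauge and $\exp(F)\cdot(\D+\D F+t\D F^{c})=\D+t\exp(F)\act\D F^{c}=\D+t\eta$; likewise $F+tF^{c}$ takes values in the abelian $\fa$ with $\D(F+tF^{c})=\D F+t\D F^{c}$, so $\exp(F+tF^{c})\cdot(\D+\D F+t\D F^{c})=\D$; composing these two gauge transformations gives $\exp(F+tF^{c})\exp(-F)\cdot(\D+t\eta)=\D$. Thus the expected difficulty is not a single hard step but rather keeping the gauge computations consistent and supplying the citation for the nondegeneracy statement.
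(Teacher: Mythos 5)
Your proof is correct and follows essentially the same route as the paper: Proposition~\ref{th:2} with $\omega=\D F^{c}$, the identification $q_{f}=\half(\,,\,)|_{\fa}$ via the isotropy of the nilradicals and the computation in Proposition~\ref{th:30}, the bound from Theorem~\ref{th:31}, and the gauge identity obtained by composing $\exp(F)$ with $\exp(F+tF^{c})$. Two small points. First, for $t\neq1$ the map $F+tF^{c}$ does \emph{not} take values in $\fa$; it takes values in the image of $\fa$ under $\pi_{\fp_{\infty}^{\perp}}+t\,\pi_{\fp_{0}^{\perp}}$, and what is needed (and what the paper states) is that this is a fixed abelian subspace: its brackets reduce to the cross terms $[F(H),F^{c}(H')]+[F^{c}(H),F(H')]$, which vanish by exactly the cancellation you already used to get $[\D F\wedge\D F^{c}]=0$, so the slip is harmless but should be repaired. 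Second, your supplementary argument that the Killing form is nondegenerate on $\fa$ fills in a point the paper leaves implicit; note only that the final identification $\fz_{\fg}(\fa)\cap\fm_{(\fp_{0},\fp_{\infty})}=\fa$ requires knowing that a Cartan subspace is maximal \emph{abelian} in $\fm$ (otherwise nilpotent elements of the centraliser lying in $\fm$ are not excluded), which is precisely the Lepowsky--McCollum result already invoked in the proof of Proposition~\ref{th:34}.
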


\subsection{Examples}
\label{sec:examples-1}

The main motivating example for our theory is the case when $M$ is
the projectivisation of some real quadric where we recover the rich
theory of isothermic surfaces in conformal spheres of arbitrary
signature.  We conclude our account by briefly contemplating what is
known for some other classical symmetric $R$-spaces.

\subsubsection{Projective spaces and higher flows}
\label{sec:projective-spaces}

It follows from the results of \MySec\ref{sec:dimension-bounds} that a
nondegenerate isothermic submanifold of $\R P^n$ is necessarily a
curve.  Moreover any curve $f$ in $\R P^n$ is isothermic with respect
to any $\eta\in\Omega^{1}(f^{-1}T^{*}\R P^{n})$.  In
particular, in sharp contrast to the conformal case, $\eta$ is not
uniquely determined by $f$.

Similarly, a non-degenerate isothermic submanifold of $\C P^{n}$ of
maximal dimension is a surface and, in fact, it is not difficult to
see that it must be a holomorphic curve.  Again, any holomorphic
curve $f$ in $\C P^{n}$ is isothermic with respect to any holomorphic
$\eta\in\Omega^{1}(f^{-1}T^{*}\C P^{n})$.

However, even the case $n=1$ is not completely banal if we consider
the \emph{dynamics} of isothermic curves.  For this, contemplate a
non-degenerate isothermic curve $(f,\eta):\Sigma\to\R P^{1}$, viewed as a line
subbundle of a trivial $\R^2$-bundle over a $1$-manifold $\Sigma$.
The $1$-form $\eta$ determines a coordinate $x$ on $\Sigma$ (unique
up to sign and translations) for which $q_{f}=\D x^{2}$: thus, for
any $\psi\in\Gamma f$,
\begin{equation*}
\eta(\partial/\partial x)\psi_{x}=\psi.
\end{equation*}
We normalise $\psi$ so that $\psi\wedge\psi_x$ is a constant section
of $\Wedge^2\ul\R^2$.  It follows that $\psi\wedge\psi_{xx}=0$ so
that
\begin{equation}
\label{eq:25}
\psi_{xx}=p\psi
\end{equation}
where $p$, so defined, is the \emph{projective curvature} of $f$.

Following Pinkall \cite{Pinkall1995} (see also
\cite{Burstall2002b,CaliniIvey2008,GoldsteinPetrich1991}) we suppose
now that $f$ evolves so that
\begin{equation}
\label{eq:26}
%evolution
f_{t}=p f_{x}.
\end{equation}
Thus we have a map $f:\Sigma\times I\to \R P^{1}$ and we again
contemplate the normalised lift $\psi\in\ul{\R^{2}}_{\Sigma\times I}$
with $\psi\wedge\psi_{x}$ constant.  This last, together with
\eqref{eq:26}, yields
\begin{equation*}
\psi_{t}=-\frac{p_{x}}{2}\psi+p\psi_{x}
\end{equation*}
and then $\psi_{xxt}=\psi_{xtx}$ yields the KdV equation:
\begin{equation}
\label{eq:27}
p_{t}=-\frac{p_{xxx}}{2}+3pp_{x}.
\end{equation}
Moreover, the converse is true and any solution of \eqref{eq:27} gives
rise, at least locally, to $f:\Sigma\times I\to \R P^{1}$, unique up
to the action of $\rPSL(2,\R)$, solving \eqref{eq:26}.

The key point now is that this flow of isothermic curves commutes
with the transformation theory of
\MySecs\ref{sec:isoth-subm}--\ref{sec:bianchi-perm-self} and so provides
symmetries of the KdV equation.  For this, we extend the connections
$\nabla^{m}=\D+m\eta$ on $\ul\R^{2}_{\Sigma}$ in the $t$-direction to
get connections $\nabla^{m}$ on $\ul\R^{2}_{\Sigma\times I}$ by
\begin{align*}
\nabla^m_{\partial/\partial t}\psi &= -\frac{p_{x}}{2}\psi +
(p-2m)\psi_x\\
\nabla^m_{\partial/\partial t}\psi_{x} &= \bigl(-\frac{p_{xx}}{2}+(p-2m)(p+m)\bigr)\psi+\frac{p_x}{2}\psi_{x}.
\end{align*}
One readily checks that the connections $\nabla^{m}$ are flat for all
$m\in\R$ exactly when \eqref{eq:27} holds.  In fact, these connections
give the AKNS zero-curvature formulation of KdV (see, for example
\cite{Fordy1994}), albeit in a less familiar gauge.

Now fix $\hat m\in\R^{\times}$ and let $\hat f$ be a $\nabla^{\hat
m}$-parallel complement to $f$:
\begin{equation*}
\ul\R^2_{\Sigma\times I}=f\oplus \hat f.
\end{equation*}
In particular, for each $t\in I$, $\hat
f_{|\Sigma\times\set{t}}=\darb_{\hat m}f_{|\Sigma\times\set{t}}$.  To
analyse $\hat f$, we choose $\hat\psi\in\Gamma\hat f$ with
$\psi\wedge\hat\psi=\psi\wedge\psi_{x}$ so that
\begin{equation}
\label{eq:28}
%define a
\hat\psi=a\psi+\psi_{x}.
\end{equation}
One computes that $\hat f$ is $\nabla^{\hat m}$-parallel if and only
if
\begin{subequations}
\label{eq:29}
\begin{align}
\label{eq:30}
a^2-a_x-\hat m&=p\\\label{eq:31}
a_t-\frac{p_{xx}}{2}+p^{2}-ap_{x}-a^{2}p&=-\hat m(2a^{2}-2\hat m-p)
\end{align}
\end{subequations}
We recognise \eqref{eq:30} as the Miura transform and from it deduce
first that $\hat\psi$ is a normalised section of $\hat f$:
$\hat\psi\wedge\hat\psi_x=\hat m\psi\wedge\psi_x$ is constant; then
that the projective curvature $\hat p$ of $\hat f$ is given by
\begin{equation*}
\hat p=a^2+a_x-\hat m=p+2a_x.
\end{equation*}
Moreover, it is not difficult to check that
\begin{align*}
\hat\psi_x&\equiv -\hat m \psi\mod \hat f\\
\hat\psi_t&\equiv -\hat m \hat p \psi\mod \hat f
\end{align*}
so that $\hat f_t=\hat p\hat f_x$ whence $\hat p$ is a new solution of
the KdV equation \eqref{eq:27}.  Thus $\hat f$ also gives rise to a
family $\hat\nabla^{m}$ of flat connections on $\ul\R^2_{\Sigma\times I}$ and
one can verify that the identity of Theorem~\ref{th:4} holds in this
extended context:
\begin{equation*}
\Gamma_f^{\hat f}(1-m/\hat m)\cdot\nabla^{m}=\nabla^{\hat m}
\end{equation*}
and then argue exactly as in \MySec\ref{sec:bianchi-perm-darb} to
establish Bianchi permutability of our extended Darboux
transformations.

Of course, this transformation of KdV solutions is not new: it is the
B\"acklund transformation discovered by Wahlquist--Estabrook
\cite{WahlquistEstabrook1971} who used precisely the system
\eqref{eq:29}. Following \cite{WahlquistEstabrook1971}, we can
eliminate $p$ from \eqref{eq:29} and arrive at an mKdV equation for
$a$:
\begin{equation}
\label{eq:33}
%mKdV
a_t=-\frac{a_{xxx}}{2}+3(a^2-\hat m)a_x.
\end{equation}
Conversely, any solution $a$ of \eqref{eq:33} gives rise to a
B\"acklund pair $(p,\hat p)$ of KdV solutions.  All this also admits
a geometric interpretation: consider the map $\phi=(f,\hat
f):\Sigma\times I\to Z=\R P^1\times \R P^1\setminus\Delta$ into the
symmetric space of complementary pairs---a space-form with indefinite
metric.  For fixed $t$, $\phi$ is a curve of constant velocity,
$(\phi_{x},\phi_x)=-4\hat m$ and curvature $\kappa$ given by
\begin{equation*}
\kappa=a/\sqrt{\lvert m\rvert}
\end{equation*}
so that the mKdV equation is also an equation on a curvature.

Taken as a whole, $\phi$ evolves by 
\begin{equation}
\label{eq:24}
\phi_t=(a^2-\hat m)\phi_x-2 \sqrt{\lvert\hat m\rvert}a_x n,
\end{equation}
for $n$ a positively oriented unit vector field orthogonal to
$\phi_x$.  This may be viewed as a higher flow of the curved flat
system and any solution has $a$ solving the mKdV equation
\eqref{eq:33}.  Conversely, a solution of \eqref{eq:33} locally
determines $\phi$ solving \eqref{eq:24} and then solutions $(f,\hat
f)$ of \eqref{eq:26}.

To summarise: the KdV and mKdV flows correspond, via appropriate
curvatures, to flows on isothermic curves and curved flats
respectively and then the Miura transform relating the curvatures
amounts to projecting the curved flats onto the isothermic curves.

A similar analysis is available in higher dimensions starting from
the observation that the Davey--Stewartson equations amount to a
flow on conformal immersions of a surface in $S^4$ which preserves
the class of isothermic immersions \cite{Burstall2002b}.  We shall
return to this elsewhere.

\subsubsection{Curved flats in symmetric $R$-spaces}
\label{sec:curv-flats-riem}

Recall that if $\ci\in\Aut(\fg)$ is the Cartan involution of the
compact real form $K$, then we have a $K$-equivariant inclusion
$\iota_{\ci}:M\to Z$ into the associated symmetric space of
complementary pairs given by $\fp\mapsto(\fp,\ci\fp)$.  The solder
forms of $M$ and $Z$ are related by
$\beta^M=\iota_{\ci}^{*}\beta^{Z}$ so that if $f:\Sigma\to M$ is a
curved flat in $M$, where the latter is viewed as a Riemannian
symmetric $K$-space, then $\iota_{\ci}\circ f=(f,\ci f):\Sigma\to Z$
is a curved flat.  Now Theorem~\ref{th:19} applies and we conclude
that $f:\Sigma\to M$ is isothermic and $\ci f$ is a Darboux transform
of $f$.

Moreover, it is often the case that the rank of $M$, \emph{qua}
Riemannian symmetric $K$-space, coincides\footnote{Indeed, among the
simple symmetric $R$-spaces, $\rank M=\rank Z$ for all the non-complex
examples except the conformal sphere $\pr(\cL^{n+1,1})$, the Cayley
plane, the quaternionic Grassmannians and the quaternionic Lagrangian
Grassmannian.} with that of $Z$: $\rank M=\rank Z$.  In this
situation, curved flats of maximal rank in $M$ are non-degenerate
isothermic submanifolds.

Thus we obtain, for example, isothermic submanifolds of the
Grassmannian $G_{k+1}(\R^{n+1})$ from the Gauss maps of isometric
immersions of certain $k$-dimensional space-forms into $S^{n}$
\cite{Ferus1996} and isothermic submanifolds of the Lagrangian
Grassmannian $\Lag(\R^{2n})$ from Egoroff nets and the Gauss maps of
flat Lagrangian submanifolds in $\C^{n}$ and $\C\pr^{n-1}$
\cite{Terng2008}.

In fact, it is not even necessary to require that $\ci$ is a Cartan
involution: all that is needed is to restrict attention to the
open subset $\Omega_{\ci}=\set{\fp\in M: (\fp,\ci\fp)\in
Z}$, for an arbitrary involution $\ci\in\Aut(\fg)$.  For example, the
Grassmannians $G_{k,l}(\R^{p,q})\subset G_{k+l}(\R^{p+q})$ of
signature $(k,l)$ subspaces of $\R^{p,q}$ are of this kind and the many
examples of curved flats in these Grassmannians
\cite{Bruck2002,Burstall2004,Burstall2004c,Ferus1996} are all
isothermic submanifolds of the real Grassmannian $G_{k+l}(\R^{p+q})$.

\appendix

\section{Summary of simple symmetric $R$-spaces}
\label{sec:summ-simple-symm}

Table~\ref{tab:r-space} lists the symmetric $R$-spaces for simple
$G$.  For each such space $M$, we give the group $G$; the realisation
of $M$ as a Riemannian symmetric $K$-space (recall that $K$ is a
maximal compact subgroup of $G$); the dimension of $M$; whether $M$
is self-dual and $\rank Z$, the maximal dimension of a nondegenerate
isothermic submanifold of $M$.
\begin{table}[h]
\begin{equation*}
\begin{array}{llllll}
   M & G & \text{$K$-space} &
   \dim M & M=M^{*} & \rank Z\\[4pt]
%Grassmannians
G_p(\C^{p+q}) & \rSL(p+q,\C) & \frac{\rSU(p+q)}{\rSU(p)\times\rSU(q)\times S^1} & 2pq & p=q & 2\min(p,q)\\
G_p(\R^{p+q}) & \rSL(p+q) & \frac{\rSO(p+q)}{\rSO(p)\times\rSO(q)\times\Z_2} & pq & p=q & \min(p,q)\\
G_p(\HH^{p+q}) & \rSU^*(2p+2q) &\frac{\rSp(p+q)}{\rSp(p)\times\rSp(q)} & 4pq & p= q &2\min(p,q)\\
G^{\perp}_n(\C^{n,n}) & \rSU(n,n) & \frac{\rSU(n)\times\rSU(n)\times S^1}{\rSU(n)}=\rU(n) & n^2 &\text{Yes}& n\\[8pt]
%quadrics
\pr(\cL^{\C})& \rSO(n+2,\C) & \frac{\rSO(n+2)}{\rSO(n)\times\rSO(2)\times\Z_2} & 2n &\text{Yes}& 4\\
\pr(\cL^{p+1,q+1})& \rSO(p+1,q+1) &
\frac{\rSO(p+1)\times\rSO(q+1)}{\rSO(p)\times\rSO(q)\times\Z_2} & p+q&\text{Yes} & 2\\[8pt]
%max isotropic
J^{\pm}(\C^{2n}) & \rSO(2n,\C) & \rSO(2n)/\rU(n) & n(n-1) & \text{$n$ even} & 2\lfloor n/2\rfloor\\
J^{\pm}(\R^{n,n}) & \rSO(n,n) &\frac{\rSO(n)\times\rSO(n)}{\rSO(n)}=\rSO(n) & \frac{1}{2}n(n-1)&\text{$n$ even} & \lfloor n/2\rfloor\\
J(\HH^{2n}) & \rSO^*(4n) & \rU(2n)/\rSp(n) & n(2n-1) &\text{Yes} & n\\[8pt]
%Lagrangian
\Lag(\C^{2n}) & \rSp(n,\C) & \rSp(n)/\rU(n) & n(n+1) &\text{Yes} & 2n\\
\Lag(\R^{2n}) & \rSp(n,\R) & \rU(n)/\rSO(n) & \frac{1}{2}n(n+1) &\text{Yes}& n\\
\Lag(\HH^{2n}) & \rSp(n,n) & \frac{\rSp(n)\times\rSp(n)}{\rSp(n)}=\rSp(n) & n(2n+1) &\text{Yes}& 2n\\[8pt]
%E6
 & E_6^\C & E_6/\text{Spin }(10)\times S^1 & 32 &\text{No}& 4\\
 & E_6 I & \rSp(4)/\rSp(2)\times\rSp(2) & 16 &\text{No}& 2\\
\pr^{2}(\mathbb{O}) & E_6 IV & F_4/\text{Spin }(9) & 16 &\text{No}& 2\\[8pt]
%E7
 & E_7^\C & E_7/E_6\times S^1 & 54 &\text{Yes}& 6\\
 & E_7 V & \rSU(8)/\rSp(4) & 27 &\text{Yes} & 3\\
 & E_7 VII & E_6\times S^1/F_4 & 27 &\text{Yes}& 3
\end{array}
\end{equation*}
\caption{The symmetric $R$-spaces of simple type}
\label{tab:r-space}
\end{table}

\newcommand{\noopsort}[1]{}
\providecommand{\bysame}{\leavevmode\hbox to3em{\hrulefill}\thinspace}
\providecommand{\MR}{\relax\ifhmode\unskip\space\fi MR }
%% amsart needs help with formatting MR numbers(!)
\makeatletter
\def\@strippedMR{}
\def\@scanforMR#1#2#3\endscan{%
  \ifx#1M\ifx#2R\def\@strippedMR{#3}%
  \else\def\@strippedMR{#1#2#3}%
  \fi\fi}
\renewcommand\MR[1]{\relax
  \ifhmode\unskip\spacefactor3000 \space\fi
  \@scanforMR#1\endscan
  MR\MRhref{\@strippedMR}{\@strippedMR}}
\makeatother 
% \MRhref is called by the amsart/book/proc definition of \MR.
\providecommand{\MRhref}[2]{%
  \href{http://www.ams.org/mathscinet-getitem?mr=#1}{#2}
}
\providecommand{\href}[2]{#2}

\end{document}